\documentclass[reqno]{amsart}
\usepackage{amsfonts}
\usepackage{amsmath}
\usepackage{amssymb}
\usepackage{mathtools}
\usepackage[hidelinks]{hyperref}

\newtheorem{theorem}{Theorem}[section]
\theoremstyle{plain}

\newtheorem{corollary}[theorem]{Corollary}

\newtheorem{definition}[theorem]{Definition}
\newtheorem{example}[theorem]{Example}

\newtheorem{lemma}[theorem]{Lemma}

\newtheorem{question}[theorem]{Question}
\newtheorem{proposition}[theorem]{Proposition}
\newtheorem{remark}[theorem]{Remark}

\numberwithin{equation}{section}
\newcommand{\func}[1]{\operatorname{#1}}
\newtheorem{maintheorem}{Theorem}

\begin{document}
\title[Abelian maximal pattern complexity and extremal words]{Abelian
maximal pattern complexity and extremal words}
\author{Qingcheng Zeng}
\address{School of Mathematical Sciences, Beihang University, Beijing
100083, P. R. China}
\email{qczeng@buaa.edu.cn}
\author{Yumei Xue}
\address{School of Mathematical Sciences, Beihang University, Beijing
100083, P. R. China}
\email{yxue@buaa.edu.cn}
\author{Cheng Zeng}
\address{School of Mathematics and Information Science, Shandong Technology
and Business University, Yantai, Shandong Province 264003, P. R. China}
\email{czeng@sdtbu.edu.cn}

\begin{abstract}
In this paper, we study the Abelian maximal pattern complexity $p_{\alpha
}^{\ast \mathrm{ab}}(k)$, introduced by Kamae, Widmer and Zamboni, of
infinite words $\alpha \in \mathbb{A}^{\mathbb{N}_{0}}$ over finite
alphabets $\mathbb{A}$.

For recurrent aperiodic words, we determine a lower bound and prove its
sharpness. We further give a structural characterization of the words with
minimal Abelian maximal pattern complexity.

In the general case, we prove that an infinite word $\alpha $ is aperiodic
if and only if $\binom{p_{\alpha }^{\ast \mathrm{ab}}(k)}{2}\geq k$ for
every $k.$ For aperiodic words over $\ell \geq 2$ letters, each occurring
infinitely often, we further prove that $p_{\alpha }^{\ast \mathrm{ab}%
}(k)\geq m$ whenever $\binom{m}{2}\leq (\ell -1)(k-\ell +2)$, for all $m,k$.
Together with a matching construction, this shows that the minimum Abelian
maximal pattern complexity in this class is $\sqrt{2(\ell -1)k}+O_{\ell }(1)$%
.

We call a word an Abelian pattern Sturmian word if, at every $k$, its
Abelian maximal pattern complexity is the least positive integer $m$
satisfying $\binom{m}{2}\geq k$. We show that a word is Abelian pattern
Sturmian if and only if, after relabeling its alphabet, it is the
characteristic word of an infinite set $E\subset \mathbb{N}_{0}$ for which
the bipartite graph on two disjoint copies of $\mathbb{N}_{0}$, with a left
vertex $r$ adjacent to a right vertex $s$ exactly when $r+s\in E$, is a
forest.
\end{abstract}

\subjclass[2020]{Primary 37B10; Secondary 68R15, 37B05}
\thanks{Yumei Xue is the corresponding author. }
\keywords{Abelian maximal pattern complexity, infinite words, eventual
periodicity, Abelian pattern Sturmian words}
\maketitle
\tableofcontents

\section{Introduction}

Topological entropy is one of the fundamental invariants of dynamical
systems, measuring the exponential growth of distinguishable orbit segments.
For systems of zero entropy, this exponential scale no longer distinguishes
the remaining orbit complexity, and finer growth functions become natural.
In symbolic dynamics, one such fundamental invariant is factor complexity.
Let $\mathbb{A}$ be a finite alphabet. An element $\alpha =\alpha _{0}\alpha
_{1}\alpha _{2}\cdots \in \mathbb{A}^{\mathbb{N}_{0}},$\ where $\mathbb{N}%
_{0}=\{0,1,2,\ldots \},$ is called a (right) \emph{infinite word}. For an
infinite word $\alpha \in \mathbb{A}^{\mathbb{N}_{0}}$ and a positive
integer $n\in \mathbb{N}$, let $\mathcal{F}_{\alpha }(n)=\{\alpha _{i}\alpha
_{i+1}\cdots \alpha _{i+n-1}:i\in \mathbb{N}_{0}\}$ denote the set of all 
\emph{factors }(\emph{blocks}) of $\alpha $ of length $n$, and define the 
\emph{factor complexity} of $\alpha $ by $p_{\alpha }(n)=\sharp \mathcal{F}%
_{\alpha }(n),$ where $\sharp E$ denotes the cardinality of a finite set $E.$
If $\mathcal{O}(\alpha )$ denotes the orbit closure of $\alpha $ under the
left shift, then%
\begin{equation*}
h_{\mathrm{top}}(\mathcal{O}(\alpha ))=\lim_{n\rightarrow \infty }\frac{1}{n}%
\log p_{\alpha }(n).
\end{equation*}%
Thus, in the zero-entropy regime, the growth of $p_{\alpha }(n)$ provides a
finer measure of the combinatorial complexity that is invisible at the
entropy scale.

A classical theorem of Morse and Hedlund \cite{MH1940} identifies the
minimal growth of factor complexity compatible with aperiodicity. Namely, an
infinite word $\alpha $ is aperiodic if and only if $p_{\alpha }(n)\geq n+1$
for all $n$. Here we call $\alpha $ \emph{eventually periodic} if there
exist $N\geq 0$ and $p\geq 1$ such that $\alpha _{n+p}=\alpha _{n}$ for all $%
n\geq N$, and \emph{aperiodic} otherwise. Words attaining this lower bound
at every length, i.e. $p_{\alpha }(n)=n+1$ for all $n,$ are called \emph{%
Sturmian words}. Thus Sturmian words realize the smallest possible factor
complexity among aperiodic infinite words.

A coarser complexity is obtained by ignoring the order of letters inside a
factor. For a finite word $u\in \mathbb{A}^{\ast }$, let $\Psi
(u)=(|u|_{a})_{a\in \mathbb{A}}$ denote its \emph{Parikh vector}, where $%
|u|_{a}$ denotes the number of occurrences of the letter $a$ in $u$. Two
finite words $u,v\in \mathbb{A}^{\ast }$ are said to be \emph{Abelian
equivalent}, denoted by $u\sim _{\mathrm{ab}}v,$ if $\Psi (u)=\Psi (v).$
Clearly $\sim _{\mathrm{ab}}$ defines an equivalence relation on $\mathbb{A}%
^{\ast }.$ The \emph{Abelian complexity} of $\alpha $ is defined by $%
p_{\alpha }^{\mathrm{ab}}(n)=\sharp \left( \mathcal{F}_{\alpha }(n)/\sim _{%
\mathrm{ab}}\right) .$ For background on Abelian complexity and related
notions, see \cite{FP2023,RSZ2011}.

Like factor complexity, Abelian complexity gives characterizations of
periodic two-sided words and of Sturmian words \cite{CH1973}. For infinite
words, however, Abelian complexity does not characterize eventual
periodicity: every Sturmian word and the eventually periodic word $%
01^{\infty }=0111\cdots $ have Abelian complexity $2$ at every length.

Kamae and Zamboni \cite{KZ2002a,KZ2002b} introduced a different notion of
complexity of an infinite word, called maximal pattern complexity. For $%
k\geq 1,$ let $\Sigma _{k}(\mathbb{N}_{0}):=\{S\subset \mathbb{N}_{0}:\sharp
S=k\}.$ An element $S=\{s_{1}<s_{2}<\cdots <s_{k}\}\in \Sigma _{k}(\mathbb{N}%
_{0})$ is called a $k$-\emph{pattern}. Define $\alpha \lbrack S]=\alpha
_{s_{1}}\alpha _{s_{2}}\cdots \alpha _{s_{k}}\in \mathbb{A}^{k}.$ For each $%
n\in \mathbb{N}_{0}$, we call the word $\alpha \lbrack n+S]$ an $S$-\emph{%
factor} of $\alpha $, where $n+S:=\{n+s_{1},\ldots ,n+s_{k}\}.$ We denote
the set of all $S$-factors of $\alpha $ by $\mathcal{F}_{\alpha }(S).$ The 
\emph{pattern complexity} $p_{\alpha }(S)$ is defined by $p_{\alpha
}(S)=\sharp \mathcal{F}_{\alpha }(S),$ and the \emph{maximal pattern
complexity} of $\alpha $ is 
\begin{equation*}
p_{\alpha }^{\ast }(k)=\sup_{S\in \Sigma _{k}(\mathbb{N}_{0})}p_{\alpha }(S).
\end{equation*}

Kamae and Zamboni \cite{KZ2002a} proved the maximal-pattern analogue of the
Morse--Hedlund theorem: 
\begin{equation}
\alpha \text{ is aperiodic}\Longleftrightarrow p_{\alpha }^{\ast }(k)\geq 2k%
\text{\ for all }k\in \mathbb{N}.  \label{KZ2002}
\end{equation}%
An aperiodic word is called a \emph{pattern Sturmian word} if $p_{\alpha
}^{\ast }(k)=2k$ for all $k$. Kamae, Rao, Tan and Xue \cite{KRTX2006}
studied the language structure of recurrent pattern Sturmian words and
proposed a classification in terms of their primitive structures. More
recently, Le, Pavlov and Schlortt \cite{LPS} obtained a complete
classification of recurrent pattern Sturmian words and structural results in
the nonrecurrent case.

The relation in (\ref{KZ2002}) was generalized by Kamae and Rao in \cite%
{KR2006}. An infinite word $\alpha $ is called \emph{periodic by projection }%
if there exists a set $\varnothing \neq B\subsetneq \mathbb{A}$ such that 
\begin{equation*}
\pi _{B}(\alpha ):=\mathbf{1}_{B}(\alpha _{0})\mathbf{1}_{B}(\alpha _{1})%
\mathbf{1}_{B}(\alpha _{2})\cdots \in \{0,1\}^{\mathbb{N}_{0}}
\end{equation*}%
is eventually periodic (where $\mathbf{1}_{B}$ denotes the characteristic
function of $B$). We say $\alpha $ is \emph{aperiodic by projection} if $%
\alpha $ is not periodic by projection. Kamae and Rao proved the following
connection between maximal pattern complexity and the above notion of
periodicity.

\begin{theorem}[\protect\cite{KR2006}]
\label{thm:KR2006} Assume $\sharp \mathbb{A}=\ell \geq 2$ and let $\alpha
\in \mathbb{A}^{\mathbb{N}_{0}}$ be aperiodic by projection. Then $p_{\alpha
}^{\ast }(k)\geq \ell k$ for all $k\in \mathbb{N}.$
\end{theorem}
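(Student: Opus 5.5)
Our plan follows the Kamae--Zamboni strategy behind (\ref{KZ2002}). We induct on $k$ and construct a pattern $S_k$ realizing at least $\ell k$ distinct $S_k$-factors. For $k=1$, take $S=\{0\}$. Aperiodicity by projection forces every letter of $\mathbb{A}$ to occur infinitely often: if some letter $a$ occurred only finitely often, then $\pi_{\{a\}}(\alpha)$ would be eventually $0$, hence eventually periodic. So $p_\alpha(\{0\})=\ell$.

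For the inductive step, we work in the orbit-closure-type setting. Let $S=\{s_1<\dots<s_k\}$ with $p_\alpha(S)\ge \ell k$. Take $N$ large and consider how the $S$-factors extend when a new position $s_1+t$ is adjoined, for suitable large $t$. The key quantity is the number of \emph{branching} $S$-factors: those $w\in\mathcal{F}_\alpha(S)$ whose extensions by the new coordinate take several values. We aim to show that for a well-chosen $t$ (chosen by a compactness/pigeonhole argument over the infinitely many shifts) one can guarantee at least $\ell$ additional factors. Then $p_\alpha(S\cup\{s_1+t\})\ge p_\alpha(S)+\ell$, with total extension count at least $\ell k+\ell$. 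Concretely, we partition the alphabet-valued extensions by the letter $b\in\mathbb{A}$ appearing at the new position.

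The core step replaces "$\alpha$ is not eventually periodic" with the projection hypothesis. Suppose, for contradiction, that for every large $t$ the extension adds fewer than $\ell$ new factors. Then the extension graph (factors $w$ versus letters $b$) is nearly a function, missing at most $\ell-1$ edges beyond a matching. We would show this forces a nontrivial partition of $\mathbb{A}$: the letters linked through branching words form connected classes, and fewer than $\ell$ new factors means at least two classes. Hence there is a set $\varnothing\ne B\subsetneq\mathbb{A}$ such that, for all large $t$ and $n$, $\mathbf{1}_B(\alpha_{n+s_1+t})$ is determined by $\alpha[n+S]$. Since $S$ is fixed and finite, the function $n\mapsto\alpha[n+S]$ takes finitely many values. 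A Kamae--Zamboni style argument (choosing $t$ along a sequence making the determination uniform) then shows that $\pi_B(\alpha)$ is eventually periodic, contradicting the hypothesis.

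We expect the main obstacle to be this last step: producing a single $B$ that works uniformly in $t$, and converting "determined by a window $S$ at distance $t$ for infinitely many $t$" into genuine eventual periodicity of $\pi_B(\alpha)$. To handle it, we would first pass to a subsequence of $t$ along which the classes, and hence $B$, stabilize; there are only finitely many partitions of $\mathbb{A}$, so some $B$ recurs. We would then apply the binary case of (\ref{KZ2002}) to $\pi_B(\alpha)$. The idea is that the bounded growth forced on $p^*_{\pi_B(\alpha)}$ along these patterns contradicts $p^*_{\pi_B(\alpha)}(k)\ge 2k$, unless $\pi_B(\alpha)$ is eventually periodic.

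An alternative, possibly cleaner, route is induction on $\ell$ via merging two letters: the merged word is still aperiodic by projection over $\ell-1$ letters. The difficulty in that route is showing that the lost distinctions contribute at least $k$ extra factors. We would keep that route as a fallback.
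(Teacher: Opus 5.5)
The paper does not prove this statement. It quotes it from Kamae--Rao \cite{KR2006} as background, so your proposal has to stand on its own. It does not, because the counting at the heart of your inductive step is off by one. Fix $S$, put $W:=\mathcal{F}_\alpha(S)$, take $t\notin S$, and let $G_t$ be the bipartite graph on $W\sqcup\mathbb{A}$ with an edge $\{w,b\}$ whenever $\alpha[n+S]=w$ and $\alpha_{n+t}=b$ for some $n$. Then $p_\alpha(S\cup\{t\})=\sharp E(G_t)$. Every vertex has positive degree, since each letter occurs infinitely often, so
$\sharp E(G_t)\ge p_\alpha(S)+\ell-c_t$, where $c_t$ is the number of connected components; equality holds exactly when $G_t$ is a forest.

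Hence a \emph{connected} acyclic $G_t$ adds exactly $\ell-1$ new factors and yields no nontrivial class $B$. For instance, one $S$-factor followed by two letters, with every other vertex hanging off as a tree, is such a graph. Your assertion that ``fewer than $\ell$ new factors means at least two classes'' is therefore false. It fails already for $\ell=2$: a single branching $S$-factor gives $+1$ and a connected graph.

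What your contradiction argument actually establishes is only that $c_t=1$ for all large $t$. That part is easy and does not need (\ref{KZ2002}). If $c_t\ge 2$ for infinitely many $t$, pigeonhole over the finitely many pairs (a set $B$, a map $W\to\{0,1\}$) gives $t<t'$ with $\mathbf{1}_B(\alpha_{n+t})=\mathbf{1}_B(\alpha_{n+t'})$ for all $n$, so $\pi_B(\alpha)$ is eventually periodic. Connectivity yields only $p_\alpha(S\cup\{t\})\ge p_\alpha(S)+\ell-1$, hence $p^{\ast}_\alpha(k)\ge (\ell-1)k+1$. That is the shape of the bound in Theorem~\ref{thm:KWZ2015}, not $\ell k$; for $\ell=2$ it is just $k+1$ instead of $2k$.

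The missing idea is a mechanism that forces, for a suitably chosen $t$ (not all large $t$), a cycle in $G_t$: one branching beyond what connectivity provides, i.e.\ one extra factor per step. Alternatively you need a non-greedy global construction. Nothing in your sketch supplies either, and this is exactly where the real content of the Kamae--Zamboni and Kamae--Rao arguments lies. Your merging-letters fallback has the same deficit, as you acknowledge: you would have to recover $k$ extra factors from the merged distinction, and no argument for that is given.
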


For $\alpha \in \mathbb{A}^{\mathbb{N}_{0}},$ write $\mathbb{A}_{\alpha
}=\{a\in \mathbb{A}:a$ occurs in $\alpha \}$. An infinite word $\alpha $ is
called \emph{periodic} if there exists $q\geq 1$ such that $\alpha
_{i+q}=\alpha _{i}$ for all $i\in \mathbb{N}_{0}.$ An infinite word is
called \emph{recurrent} if its every factor occurs infinitely often.

One of the main tools introduced by Kamae and Rao in \cite{KR2006} is the 
\emph{singular decomposition}. For a recurrent word $\beta $, a letter $a\in 
\mathbb{A}_{\beta }$ is called singular if the indicator word $\pi
_{\{a\}}(\beta )$ is periodic. If $m$ is the least common period of the
indicator words of all singular letters (with $m=1$ when there are no
singular letters), then the subwords 
\begin{equation*}
\beta ^{(j)}:=(\beta _{j+nm})_{n\in \mathbb{N}_{0}},\text{\ }0\leq j<m,
\end{equation*}%
form the singular decomposition of $\beta $. For a non-recurrent word, the
corresponding decomposition is defined through an auxiliary recurrent word
in its orbit closure; we refer to \cite{KR2006} for the precise formulation.

Motivated by this decomposition, for an arbitrary $m\geq 1$ we call 
\begin{equation*}
\alpha ^{(j)}:=(\alpha _{j+nm})_{n\in \mathbb{N}_{0}},\text{\ }0\leq j<m,
\end{equation*}%
the $j$th \emph{residue word} of $\alpha $ modulo $m$. We say that $\alpha $
is an $m$\emph{-interleaving} if, for some $p\in \{0,\ldots ,m-1\}$ and
letters $c_{j}$ $(j\neq p)$, $\alpha ^{(j)}=c_{j}^{\infty }$\ $(j\neq p).$
Writing $x:=\alpha ^{(p)}$, we also say that $\alpha $ is an $m$%
-interleaving of $x$. Such an interleaving is called \emph{separated} if the
letters $c_{j}$ $(j\neq p)$ are pairwise distinct and 
\begin{equation*}
\{c_{j}:j\neq p\}\cap \mathbb{A}_{x}=\varnothing .
\end{equation*}

More recently, Schlortt \cite{Schlortt2026} proved that if $\alpha $ is
aperiodic and $\sharp \mathbb{A}_{\alpha }=\ell \geq 2,$ then $p_{\alpha
}^{\ast }(k)\geq 2k+\ell -2$\ for all $k\in \mathbb{N},$ and this bound is
sharp. Schlortt further determined the structure of the words attaining this
minimum for every $k$.

For $\ell \geq 2,$ let $\mathcal{A}_{\ell }$ denote the set of aperiodic
infinite words over an $\ell $-letter\ alphabet in which every letter occurs
infinitely often.

\begin{theorem}[\protect\cite{Schlortt2026}]
\label{thm:Schlortt} Let $\alpha \in \mathcal{A}_{\ell }$ with $\ell \geq 3$%
. If $p_{\alpha }^{\ast }(k)=2k+\ell -2$ for all $k\in \mathbb{N},$ then $%
\alpha $ is an $m$-interleaving of a binary pattern Sturmian word for some $%
m\geq 2$.
\end{theorem}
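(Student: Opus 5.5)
The plan is to reduce to the $\ell=2$ case by projecting onto one letter, then show that the minimal complexity forces all letters but one to be "frozen" into single residue classes modulo some $m$, with the remaining class carrying a binary pattern Sturmian word.

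\textbf{Step 1: the minimal words are periodic by projection.} Since $\ell\ge3$, Theorem~\ref{thm:KR2006} gives $p_\alpha^\ast(k)\ge \ell k>2k+\ell-2$ for large $k$ whenever $\alpha$ is aperiodic by projection. So there is a proper nonempty $B\subsetneq\mathbb{A}$ with $\pi_B(\alpha)$ eventually periodic.

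\textbf{Step 2: locate the frozen letters.} Pass to a recurrent word $\beta$ in the orbit closure of $\alpha$. Pattern complexity can only drop under this passage, and the minimality hypothesis, together with Schlortt's lower bound applied to $\beta$, should force equality $p_\beta^\ast(k)=2k+\ell-2$. Take the singular decomposition of $\beta$ with period $m$. I would aim to prove the following. If two residue words $\beta^{(j)}$ and $\beta^{(j')}$ were both nonconstant, or one were aperiodic over three or more letters, then patterns straddling different residue classes would multiply the counts. Concretely, choose $S=S_1\cup(S_2+N)$ with $S_1\subset m\mathbb{N}_0+j$ and $S_2\subset m\mathbb{N}_0+j'$ realizing the complexities of the two pieces. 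For large gaps $N$, using recurrence, this yields roughly a sum-plus-product lower bound exceeding $2k+\ell-2$. Hence exactly one residue class $p$ carries an aperiodic word $x$ and every other class is constant, $\beta^{(j)}=c_j^\infty$. Moreover, each letter occurs infinitely often, and $\ell$ letters are needed to make the additive constant $\ell-2$. Counting distinct letters then gives that $x$ is binary and the $c_j$ are the remaining $\ell-2$ letters. This makes $m\ge2$ automatic.

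\textbf{Step 3: transfer back to $\alpha$ and identify $x$.} Because $\alpha$ has every letter infinitely often and complexity exactly matching that of $\beta$, an eventual mismatch on a residue class would create an extra pattern-factor. The frozen structure should therefore hold for $\alpha$ itself from the start, and not only eventually. Finally, $p_\alpha^\ast(k)$ restricted to patterns inside the class $p$ equals $p_x^\ast(k)$ plus the contributions of the constant letters. The exact value $2k+\ell-2$ then forces $p_x^\ast(k)\le 2k$, while \eqref{KZ2002} forces $p_x^\ast(k)\ge2k$. So $x$ is binary pattern Sturmian.

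\textbf{Main obstacle.} I expect the main obstacle to be Step 2's counting: producing, for a word with two "active" residue classes, a $k$-pattern with strictly more than $2k+\ell-2$ factors for some $k$. I would first try splitting $k=k_1+k_2$, with a pattern of $k_1$ points in the aperiodic class achieving $2k_1$ (via \eqref{KZ2002}) and $k_2$ points probing the other class. The hard part is showing that these factor sets combine nearly multiplicatively and not merely additively.
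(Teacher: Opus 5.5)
The paper does not prove this theorem; it is quoted from Schlortt. Your outline therefore has to stand on its own, and it has a genuine gap at the first move of Step~2.

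Replacing $\alpha$ by a recurrent $\beta\in\mathcal{O}(\alpha)$ only gives $p^{\ast}_{\beta}\le p^{\ast}_{\alpha}$. Schlortt's lower bound applies to $\beta$ only if $\beta$ is aperiodic, and then only with $\sharp\mathbb{A}_{\beta}$ in place of $\ell$. So nothing forces $p^{\ast}_{\beta}(k)=2k+\ell-2$.

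The theorem covers non-recurrent $\alpha$, and for such $\alpha$ every recurrent point of $\mathcal{O}(\alpha)$ can be periodic. Take the word $\varpi_{\ell-1}$ of Section~\ref{sec:general-lower-AW}, the separated $(\ell-1)$-interleaving of $\mathbf{1}_{\{3^{j}\}}$. That binary word is pattern Sturmian by Corollaries~\ref{cor:sparse-upper} and~\ref{cor:ordinary}. Running the argument of Lemma~\ref{lem:interleaving-decomposition} on factors instead of Parikh vectors gives $p^{\ast}_{\varpi_{\ell-1}}(k)\le 2k+\ell-2$, and Schlortt's bound gives equality. So $\varpi_{\ell-1}$ satisfies the hypothesis.

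However, by Theorem~\ref{thm:dynamics} the only recurrent point of $\mathcal{O}(\mathbf{1}_{\{3^{j}\}})$ is $0^{\infty}$. Hence every recurrent $\beta\in\mathcal{O}(\varpi_{\ell-1})$ is the periodic interleaving of $0^{\infty}$ with the constant letters, and $p^{\ast}_{\beta}(k)\le \ell-1$ for all $k$. Its singular decomposition has no aperiodic residue, and Step~3 has nothing to transfer back. The structure has to be read off $\alpha$ itself, not off a recurrent point of its orbit closure; one possible route is an eventually periodic alphabet quotient as in Lemma~\ref{lem:finest-partition}.

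Even for recurrent $\alpha$, the decisive counting in Step~2 is missing, as you acknowledge. Your proposed mechanism also does not cover every case it must exclude. Suppose no single letter has a periodic indicator, e.g.\ a $2$-interleaving of a Sturmian word over $\{a,b\}$ with one over $\{c,d\}$. Then the singular decomposition has $m=1$, and there are no two residue classes for a pattern to straddle. The case ``one residue aperiodic over three or more letters'' is not handled by what you describe either. You would first need a counterpart of Lemma~\ref{lem:two-aperiodic-projections}, showing that exactly two letters have aperiodic indicators. You would then work modulo the least period of the quotient obtained by merging those two letters, as the paper does for the Abelian analogue in Section~\ref{sec:stru-RAW}.

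Finally, Step~3 only asserts that a finite defect in a constant residue class creates an extra pattern factor; this also needs an argument.
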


Kamae, Widmer and Zamboni \cite{KWZ2015} introduced the Abelian counterpart
of maximal pattern complexity. For a $k$-pattern $S\in \Sigma _{k}(\mathbb{N}%
_{0})$, define $p_{\alpha }^{\mathrm{ab}}(S)=\sharp (\mathcal{F}_{\alpha
}(S)/\sim _{\mathrm{ab}}),$ and the \emph{Abelian maximal pattern complexity}
by 
\begin{equation*}
p_{\alpha }^{\ast \mathrm{ab}}(k)=\sup_{S\in \Sigma _{k}(\mathbb{N}%
_{0})}p_{\alpha }^{\mathrm{ab}}(S).
\end{equation*}%
Clearly $p_{\alpha }^{\ast \mathrm{ab}}(k)\leq p_{\alpha }^{\ast }(k).$ They
proved the following lower bound.

\begin{theorem}[\protect\cite{KWZ2015}]
\label{thm:KWZ2015}Let $\sharp \mathbb{A}=\ell \geq 2.$ If $\alpha \in 
\mathbb{A}^{\mathbb{N}_{0}}$ is recurrent and aperiodic by projection, then $%
p_{\alpha }^{\ast \mathrm{ab}}(k)\geq (\ell -1)k+1$ for all\ $k\in \mathbb{N}%
.$ When $\ell =2$, equality always holds.
\end{theorem}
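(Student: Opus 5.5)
The equality for $\ell=2$ is immediate, so I would dispose of it first. Over a two-letter alphabet $\{a,b\}$ the Parikh vector of a word of length $k$ is determined by $|u|_a\in\{0,\dots,k\}$. Hence $p_{\alpha}^{\mathrm{ab}}(S)\le k+1$ for every $S\in\Sigma_k(\mathbb{N}_0)$, and the lower bound then gives $p_{\alpha}^{\ast\mathrm{ab}}(k)=k+1$. All the work is therefore in the lower bound $(\ell-1)k+1$.

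For the lower bound I would fix an ordering $a_1,\dots,a_\ell$ of the letters and the chain $B_i=\{a_1,\dots,a_i\}$ for $1\le i\le \ell-1$. To an $S$-factor $u$ I attach the tuple $\tau(u)=(|u|_{B_1},\dots,|u|_{B_{\ell-1}})$. This tuple satisfies $0\le |u|_{B_1}\le\cdots\le|u|_{B_{\ell-1}}\le k$ and is a function of $\Psi(u)$. A monotone lattice path from $(0,\dots,0)$ to $(k,\dots,k)$ that raises one coordinate at a time has exactly $(\ell-1)k+1$ points. So it suffices to build, for each $k$, a pattern $S$ whose $S$-factors realize all tuples on one such path. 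Concretely, I would aim for the path whose points have Parikh vectors $j e_{a_{i+1}}+(k-j)e_{a_i}$ for $0\le j\le k$ and $1\le i\le \ell-1$.

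The construction is by induction on $k$, in the spirit of the Kamae--Zamboni and Kamae--Rao proofs. The invariant is a pattern $S_k$ together with shifts $n_{i,j}$ such that $\alpha[n_{i,j}+S_k]$ has the prescribed Parikh vector $j e_{a_{i+1}}+(k-j)e_{a_i}$. To pass from $k$ to $k+1$ I add one new element $s$ to $S_k$, so each old vector $v$ becomes $v+e_{\alpha_{n_{i,j}+s}}$. I must choose $s$, and possibly replace the $n_{i,j}$ by other occurrences of the same window, so that the letters at positions $n_{i,j}+s$ extend the path correctly: $a_i$ on the lower part of segment $i$, $a_{i+1}$ on the upper part, with the two segments overlapping at one new point.

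Recurrence lets me replace each $n_{i,j}$ by infinitely many shifts $n'$ with $\alpha[n'+S_k]=\alpha[n_{i,j}+S_k]$, so there is considerable freedom in choosing the witnesses.

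The main obstacle is this extension step: finding a single $s$ that simultaneously places the prescribed letters at all the relevant witness positions. Here I would use aperiodicity by projection. For two shifts $n\neq n'$ and any nonempty $B\subsetneq\mathbb{A}$, the word $\pi_B(\alpha)$ is not eventually periodic. Hence $\pi_B(\alpha_{n+t})\neq\pi_B(\alpha_{n'+t})$ for infinitely many $t$, and this gives infinitely many $s$ that separate $B$ from its complement at two chosen witnesses. I would try first to reduce the requirement to such pairwise separations along the path, applying the projections $B_i$ one at a time. Recurrence then lets me re-choose witnesses so that the separations already obtained are preserved. If handling all segments at once proves too rigid, I would fall back to proving the stronger but more flexible invariant that, for each $i$, all values of $|u|_{B_i}$ compatible with the earlier coordinates are attained. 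That version can be obtained segment by segment from the binary case applied to $\pi_{B_i}(\alpha)$, which is recurrent and aperiodic.
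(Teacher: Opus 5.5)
Your treatment of $\ell=2$ is fine, and $\tau$ is just a re-encoding of $\Psi$ on words of length $k$. The trouble is the target.

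\textbf{The chain target can be unrealizable.} Realized tuples satisfy $t_1\le\cdots\le t_{\ell-1}$. So any monotone path from $(0,\dots,0)$ to $(k,\dots,k)$ through them starts with the classes $k\mathbf{e}_{a_\ell}$, $(k-1)\mathbf{e}_{a_\ell}+\mathbf{e}_{a_{\ell-1}}$ and ends with $(k-1)\mathbf{e}_{a_1}+\mathbf{e}_{a_2}$, $k\mathbf{e}_{a_1}$. For $\ell\ge 4$ this can fail for every ordering.

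Take $\theta$ irrational and $I=[0,\tfrac{1}{12})$. Let $\alpha_n=b,c,d$ according as $\{n\theta\}\in I$, $I+\tfrac13$, $I+\tfrac23$, and $\alpha_n=a$ otherwise. This word is recurrent. It is aperiodic by projection, since each $\pi_B(\alpha)$ codes an irrational rotation by a nontrivial finite union of arcs.

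Fix a pattern $S$. The set of $z\in\mathbb{R}/\mathbb{Z}$ with $z+s\theta\in I\cup(I+\tfrac13)\cup(I+\tfrac23)$ for all $s\in S$ is the preimage under $z\mapsto 3z$ of an intersection of arcs of length $\tfrac14$. That intersection is a single arc, so the set is three arcs differing by $\tfrac13$. On each arc the coding of $z+S$ is constant, and translation by $\tfrac13$ relabels $b\mapsto c\mapsto d\mapsto b$. Hence the $S$-factors over $\{b,c,d\}$ form one orbit of this relabeling, and a constant one never coexists with a nonconstant one.

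One of $\{a_1,a_2\}$, $\{a_3,a_4\}$ avoids $a$, so its two boundary classes are a constant and a nonconstant $S$-factor over $\{b,c,d\}$. Thus no pattern realizes your path once $k\ge 2$, whatever the ordering. In this word the $3k+1$ classes form a star centred at $a$: a tightly clustered $S$ sweeping across the ends of the arcs. In general what can be forced is a spanning tree of two-letter segments, not a chain.

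\textbf{The extension step is missing.} It carries the whole content of the theorem, and the proposed mechanism cannot supply it. Aperiodicity of $\pi_B(\alpha)$ gives infinitely many offsets separating two given witnesses. It does not control which of the two lands in $B$, nor the letters at the other roughly $(\ell-1)(k+1)$ witnesses. Re-choosing a witness among the recurrences of its window changes the letters at every candidate offset. So these pairwise separations cannot be synchronized into one new element $s$, and the nesting $S_{k+1}\supset S_k$ is a further unjustified constraint.

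The fallback does not close the gap. What the binary case yields is that each marginal $|u|_{B_i}$ takes all $k+1$ values on a common pattern; this is Corollary \ref{cor:simultaneous-all-counts}. But the diagonal tuples $(t,\dots,t)$ already realize every marginal with only $k+1$ classes. The stronger reading, with all compatible tuples realized, is false for the word above.

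\textbf{What supplies the simultaneity.} The paper quotes this theorem from \cite{KWZ2015} rather than proving it. The simultaneity you lack is what Proposition \ref{prop:simultaneous-selection} provides: for $\Omega=\mathcal{O}(\alpha)[\mathcal{N}]$, every word of $\Omega[\{0,\dots,k-1\}]$ occurs on the single pattern $\{N_0,\dots,N_{k-1}\}$. A possible completion inside $\Omega$, which I have not checked against \cite{KWZ2015}, is as follows.

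Lemma \ref{lem:binary-two-coordinate}, applied to every $\pi_B(\alpha)$, shows that the graph joining $x\ne y$ whenever $x$ precedes $y$ in some $\omega\in\Omega$ is connected. By $2$-superstationarity, place such a pair at positions $M,M+1$ with $M=\ell(k-1)$. Some letter $z$ then occurs $k-1$ times before $M$, so $z^{k-1}x$ and $z^{k-1}y$ lie in $\Omega[\{0,\dots,k-1\}]$. Tail-freezing with superstationarity turns these into all $z^jx^{k-j}$ and $z^jy^{k-j}$, $0\le j\le k$.

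Hence the pairs $\{x,y\}$ for which all $x^jy^{k-j}$ (in one of the two orders) lie in $\Omega[\{0,\dots,k-1\}]$ form a connected graph on $\mathbb{A}$. A spanning tree of it gives $\ell+(\ell-1)(k-1)=(\ell-1)k+1$ classes.
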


For general $\ell \geq 3$ and $k\geq 3$, it was left open whether this lower
bound can be attained. Theorem \ref{thm:KWZ2015} is not true without the
assumption of recurrence. In fact, 
\begin{equation}
x:=10^{3}10^{3^{2}}10^{3^{3}}1\cdots \in \{0,1\}^{\mathbb{N}_{0}}
\label{ex:alpha-3}
\end{equation}%
is aperiodic and $p_{x}^{\ast \mathrm{ab}}(3)=3.$

\medskip \noindent\textbf{Main results.} Our first result provides a lower
bound for the Abelian maximal pattern complexity of recurrent aperiodic
words.

\begin{maintheorem}
\label{thm:main-lower-bound}Let $\alpha \in \mathbb{A}^{\mathbb{N}_{0}}$ be
a recurrent aperiodic word with $\sharp \mathbb{A}_{\alpha }=\ell \geq 2$.
Then $p_{\alpha }^{\ast \mathrm{ab}}(k)\geq k+\ell -1$ for all $k\in \mathbb{%
N}.$
\end{maintheorem}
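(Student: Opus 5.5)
The plan is to reduce to a binary word, get $k$ classes from Theorem~\ref{thm:KWZ2015}, and then add one far-away position to pick up the remaining letters.

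\emph{Step 1: reduction.} If $\pi_{\{b\}}(\alpha)$ were eventually periodic for every $b\in\mathbb{A}_\alpha$, then taking $N$ and $p$ to be the maximum of the thresholds and a common multiple of the periods would make $\alpha$ eventually periodic. Hence there is a letter $a$ with $x:=\pi_{\{a\}}(\alpha)$ aperiodic. Since $\alpha$ is recurrent, so is $x$. On the binary alphabet, aperiodic by projection just means aperiodic. So Theorem~\ref{thm:KWZ2015} with $\ell=2$ gives $p_x^{\ast\mathrm{ab}}(j)\ge j+1$ for every $j$.

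For any pattern $S_0$ with $\sharp S_0=j$, the Abelian class of $x[n+S_0]$ is determined by the number of $1$'s. This number lies in $\{0,\dots,j\}$. So a pattern $S_0$ of size $j=k-1$ realizing $k$ classes realizes every $a$-count $0,1,\dots,k-1$ on $\alpha[n+S_0]$. In particular some $n_0$ has $\alpha[n_0+S_0]=a^{k-1}$. Note that Abelian classes of $\alpha$ refine the $a$-count.

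\emph{Step 2: adding one position.} Take $S=S_0\cup\{t\}$ with $t>\max S_0$ large, to be chosen. Split the $S$-factors by their restriction to $S_0$.

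For each $j\le k-2$ some $S$-factor has $a$-count on $S_0$ equal to $j$, so its total $a$-count is $j$ or $j+1$. The goal is at least $k-1$ classes with total $a$-count at most $k-1$ that contain two non-$a$ letters or have $a$-count $\le k-2$. The clean way to secure this is to compare the two extreme choices of the letter at $t$ and count distinct $a$-counts. If this bookkeeping loses one class, I would instead start from a size-$(k-1)$ pattern and argue with $a$-counts $\le k-2$ directly.

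From $S_0$-restriction $a^{k-1}$, the letter at $t$ produces the class $a^{k}$ when it is $a$, and $a^{k-1}b$ when it is some $b\neq a$. These classes are pairwise distinct and distinct from all classes counted above. Therefore, if all $\ell$ letters of $\mathbb{A}_\alpha$ appear at position $m+t$ for some $m$ with $\alpha[m+S_0]=a^{k-1}$, we get at least $(k-1)+\ell=k+\ell-1$ classes.

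\emph{Step 3: choosing $t$ (main obstacle).} Let $w=\alpha_{n_0}\cdots\alpha_{n_0+\max S_0}$. By recurrence $w$ occurs at $n_0+p$ for $p$ in an infinite set $P$. We need a single $t$ such that $\{\alpha_{n_0+p+t}:p\in P\}=\mathbb{A}_\alpha$.

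My first attempt is to argue by contradiction. Suppose every large $t$ misses some letter. Then by pigeonhole some letter $b$ is missed for infinitely many $t$. Combining this with recurrence of longer blocks containing both $w$ and occurrences of $b$, I would try to show that $\pi_{\{b\}}(\alpha)$, or $x$, is forced to be periodic along a subsequence, contradicting aperiodicity.

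If this direct route fails, the fallback is to use the freedom in the choice of $S_0$ itself, since Theorem~\ref{thm:KWZ2015} applies for each size. In that version I would add $\ell-1$ extra far positions $t_1<\dots<t_{\ell-1}$ and shrink $S_0$ to size $k-\ell+1$. Each $t_i$ is chosen greedily via recurrence, so that it exhibits one new non-$a$ letter next to an all-$a$ block, and one then checks that each addition creates at least one new class. I expect this choice of $t$ (or of the $t_i$) to be the only genuinely delicate step; the rest is counting $a$-counts.
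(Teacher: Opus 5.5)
\textbf{There is a genuine gap.} The configuration your Step~2 aims for is a single $k$-pattern carrying the Abelian classes of $a^{k}$ and of $a^{k-1}b$ for every $b\neq a$, together with $k-1$ classes of smaller $a$-count. Such a pattern does not exist in general, so Step~3 cannot be carried out.

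Take $\ell=3$ and set $\alpha_{2n}=x_n$, $\alpha_{2n+1}=c$, where $x$ is recurrent and aperiodic over $\{a,b\}$. Then $\alpha$ is recurrent and aperiodic, and $\pi_a(\alpha)$ is aperiodic. For $k\ge 2$, any $S_0$ realizing $a^{k-1}$ lies in one parity class. Hence every $m$ with $\alpha[m+S_0]=a^{k-1}$ has the same parity. For a fixed $t$, the letters $\alpha_{m+t}$ are therefore either all $c$ or all in $\{a,b\}$, so no $t$ sees all of $\mathbb{A}_\alpha$. More bluntly, $k\mathbf{e}_a$ and $(k-1)\mathbf{e}_a+\mathbf{e}_c$ never occur on the same $k$-pattern: the first forces one parity class, the second forces both. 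Since this word is recurrent and aperiodic, the contradiction you hope to derive in Step~3 (every $t$ misses a letter, hence periodicity) is false.

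The Step~2 bookkeeping is also not closed on its own terms. An $S_0$-restriction with $a$-count $k-2$ followed by $a$ at $t$ has vector $(k-1)\mathbf{e}_a+\mathbf{e}_c$, which collides with your class $a^{k-1}c$. The totals ``$j$ or $j+1$'' can also collapse onto one another.

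The fallback does not add up either. It gives at most $(k-\ell+2)+(\ell-1)=k+1<k+\ell-1$ classes for $\ell\ge 3$. Moreover, enlarging a pattern can merge Abelian classes, so ``each addition creates a new class'' is not a valid monotone count.

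\textbf{The missing idea} is to pick up the other letters as constant words $c^k$ on a pattern that already realizes every $a$-count $0,\dots,k$. The vectors $k\mathbf{e}_c$ have $a$-coordinate $0$ or $k$, so they coincide with at most two of the $k+1$ count classes. This gives $(k+1)+\ell-2=k+\ell-1$. In the example above, a one-parity pattern sees $a^ib^{k-i}$ and $c^k$.

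A pattern supplied by Theorem~\ref{thm:KWZ2015} for $\pi_a(\alpha)$ gives no control over which constant words occur on it. The paper obtains both properties at once. It chooses an infinite $\mathcal{N}$ such that $\Omega=\mathcal{O}(\alpha)[\mathcal{N}]$ is $k$-superstationary and has the tail-freezing property (Proposition~\ref{prop:simultaneous-selection}). Lemma~\ref{lem:binary}, applied to $\pi_a(\Omega)$, yields every $a$-count on $\{N_0,\dots,N_{k-1}\}$. Recurrence together with tail-freezing at $q=0$ gives $c^{\infty}\in\Omega$ for each $c\in\mathbb{A}_\alpha$, hence $k\mathbf{e}_c$ on the same pattern.
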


For $\ell =2$, Theorem \ref{thm:main-lower-bound} reduces to the binary case
of Theorem \ref{thm:KWZ2015}, since aperiodicity and aperiodicity by
projection are equivalent in the binary case. Moreover, the word in (\ref%
{ex:alpha-3}) also shows that the recurrence assumption in Theorem \ref%
{thm:main-lower-bound} is essential.

Our second result characterizes the recurrent aperiodic words with minimal
Abelian maximal pattern complexity.

\begin{maintheorem}
\label{thm:main-interleaving}Let $\alpha $ be a recurrent aperiodic word
with $\sharp \mathbb{A}_{\alpha }=\ell \geq 2.$ Then $p_{\alpha }^{\ast 
\mathrm{ab}}(k)=k+\ell -1$ for all $k\in \mathbb{N}$ if and only if $\alpha $
is a separated $(\ell -1)$-interleaving of a recurrent aperiodic binary word.
\end{maintheorem}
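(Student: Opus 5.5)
We prove the two directions separately. Throughout we use Theorem \ref{thm:main-lower-bound}, so only the matching upper bound (for sufficiency) and the structure (for necessity) need argument.

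\emph{Sufficiency.} Let $m=\ell-1$ and let $\alpha$ be a separated $m$-interleaving of a recurrent aperiodic binary word $x$ over $\{a,b\}$, with constant letters $c_j$ ($j\neq p$). Fix a $k$-pattern $S$ and split it by residues mod $m$: $S_j=\{s\in S: s\equiv j\}$, with $k_j=\sharp S_j$. For a shift $n$, the residue class $r$ with $n+S_r\equiv p \pmod m$ carries a factor of $x$, and every other class carries the constant letter $c_{(r+n)\bmod m}$. Because the $c_j$ are pairwise distinct and differ from $a,b$, the Parikh vector of $\alpha[n+S]$ records two things: the residue $n \bmod m$, read off from the counts of the $c$-letters (when $k_r\ge 1$), and the number of $a$'s in the binary $S'$-factor of $x$, where $S'$ is $S_r$ rescaled. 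For fixed $n\bmod m$ the number of $a$'s takes at most $k_r+1$ values. Summing over the residues $r$ with $k_r\geq 1$ gives $\sum_r (k_r+1)\le k+(\#\text{nonempty classes})\le k+m=k+\ell-1$. Combined with Theorem \ref{thm:main-lower-bound}, this gives equality. Recurrence and aperiodicity of $\alpha$ follow from those of $x$.

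\emph{Necessity.} Suppose $p_\alpha^{\ast\mathrm{ab}}(k)=k+\ell-1$ for all $k$. We would proceed in three steps.

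\textbf{Step 1.} For $k=1$ there is nothing to show. At $k=2$ we get $p^{\ast\mathrm{ab}}_\alpha(2)=\ell+1$. Since $\binom{\ell+1}{2}$ Abelian classes of pairs would be possible, this forces strong restrictions. We aim to show that all but two letters are \emph{singular}, meaning each has periodic indicator word. The tool is a projection argument from the proof of Theorem \ref{thm:main-lower-bound}: whenever two letters $a,b$ both have aperiodic indicators, a suitable pattern $S$ produces, on top of the $k+1$ classes coming from the $\{a,b\}$-mixing, extra classes involving a third aperiodic letter. This would give at least $k+\ell$ classes, a contradiction.

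\textbf{Step 2.} Let $m$ be the common period of the singular letters. Using recurrence, a singular letter occupies a union of residue classes mod $m$. We show that each singular letter occupies exactly one class and that the two nonsingular letters share one class $p$, which is the residue word $x$. The two claims rest on different counts. If a letter occupied two classes, or if two classes carried mixed content, we take $S$ containing one point in each residue class plus a long arithmetic progression in class $p$; counting classes then exceeds $k+\ell-1$, since each extra mixed class adds a further independent count. Once the singular letters fill single classes, we get $m-1=\ell-2$ singular classes plus the class $p$, so $m=\ell-1$. The letters are distinct because each singular letter is a single letter in its own class, which makes the interleaving separated.

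\textbf{Step 3.} The residue word $x$ is binary, recurrent (as a residue word of a recurrent word with periodic complement structure), and aperiodic, since otherwise $\alpha$ would be periodic.

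\emph{Main obstacle.} The hardest part is Step 1–2: excluding configurations in which letters are only partially singular, or in which a nonsingular letter also appears in several residue classes. The difficulty is that the Abelian count can collapse across classes. I would first try to handle this by choosing patterns $S$ concentrated in a single residue class together with a few isolated probe points. The aim is that the lower bound mechanism of Theorem \ref{thm:main-lower-bound}, applied to the residue word, adds up additively across residues.
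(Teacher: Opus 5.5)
\textbf{Sufficiency.} Your argument here is correct. It is Lemma~\ref{lem:interleaving-decomposition} for the upper bound, with the lower bound taken from Theorem~\ref{thm:main-lower-bound} instead of the paper's explicit pattern $mP$. One slip: your sum omits the shifts $n$ whose $x$-carrying class $S_r$ is empty. Each such residue of $n$ still contributes one Parikh vector, so the correct count is $\sum_{k_r\ge 1}(k_r+1)+\sharp\{r:k_r=0\}=k+m$.

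\textbf{Step~1 of necessity.} This can be completed with the machinery of Theorem~\ref{thm:main-lower-bound}. Take one superstationary, tail-freezing selection at $k=2$. It gives $2\mathbf{e}_d$ for every letter $d$, and a vector $\mathbf{e}_c+\mathbf{e}_{\delta(c)}$ for every letter $c$ with aperiodic indicator. Two such letters can share the mixed vector $\mathbf{e}_a+\mathbf{e}_b$, but three cannot. At least two such letters exist, by summing the indicators. You need to state this explicitly.

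\textbf{The gap is Step~2.} This is the substance of the theorem, and you leave it as a plan. The two concrete devices you propose would not work.
\begin{itemize}
\item An arithmetic progression $p+m\{0,\dots,L-1\}$ reads consecutive letters of the residue word. A recurrent aperiodic binary word can have ordinary Abelian complexity $2$ (a Sturmian word), so this does not produce the roughly $L+1$ classes you need. You need patterns $P$ with $p_x^{\mathrm{ab}}(P)=\sharp P+1$. When two residues over $\{a,b\}$ are compared, you need a single $P$ achieving this for both. The paper gets it by applying superstationarity and tail-freezing to the block word $\beta_n=(\alpha_{mn},\dots,\alpha_{mn+m-1})$, with all points of $P$ congruent modulo a prescribed $q$ when needed (Proposition~\ref{prop:simultaneous-binary-factors}).
\item One point in each residue class gives, for every shift, the same multiset of residue classes, so it carries no information about $n \bmod m$.
\item Your claim that ``each extra mixed class adds an independent count'' is false as stated. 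Vectors coming from two $\{a,b\}$-residues live in the same coordinates and can coincide.
\end{itemize}

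\textbf{The missing ideas.} Write $C$ for the singular letters and $\sigma=\rho(\alpha)$ for the word obtained by merging $a,b$ into one symbol $\ast$. Let $m$ be the least period of $\sigma$, which is your common period.
\begin{itemize}
\item \emph{A probe point.} Choose $h$ with $\sigma_{p+h}=c_0\in C$ and $\sigma_{i+h}\neq c_0$; it exists because $m$ is the least period of $\sigma$. In $S=mP\cup\{h\}$, the $c_0$-coordinate then separates the two residues. With $\sharp P=2\ell-2$ this excludes a second aperiodic residue. With $\sharp P=2$ and $P$ aligned modulo the residue's period, it excludes a nonconstant periodic one (Lemmas~\ref{lem:unique-aper} and~\ref{lem:else-constant}).
\item \emph{Binary weights.} Once $\alpha$ is known to be an $m$-interleaving, weight class $j$ by $2^j$: take $S=\bigcup_j(mP_j+j)$ with $p_x^{\mathrm{ab}}(P_j)=2^j+1$. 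The $C$-coordinates $\sum_{\sigma_{t+j}=c}2^j$ then determine $t \bmod m$ by binary expansion. This gives $p_\alpha^{\mathrm{ab}}(S)=K+m$ with $K=2^m-1$, hence $m\le\ell-1$.
\end{itemize}
``Each singular letter fills exactly one class'' is then not proved directly. It follows because a period block of $\sigma$ contains all $\ell-1$ symbols of $C\cup\{\ast\}$.

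\textbf{Step~3.} Recurrence of the residue word also needs an argument. For example, minimality of the period of $\sigma$ forces every occurrence of a long factor of $\alpha$ to start at a multiple of $m$.
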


We next turn to Abelian maximal pattern complexity for aperiodic infinite
words without assuming recurrence.

\begin{maintheorem}
\label{thm:finite}Let $\ell \geq 2$ and $\alpha \in \mathcal{A}_{\ell }$.
Then for every $m,k\in \mathbb{N}$, 
\begin{equation*}
k\geq \frac{m(m-1)}{2(\ell -1)}+\ell -2\Longrightarrow p_{\alpha }^{\ast 
\mathrm{ab}}(k)\geq m.
\end{equation*}
\end{maintheorem}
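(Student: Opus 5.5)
The plan is to construct, for given $m$ and $k$ satisfying the hypothesis, an explicit pattern $S\in\Sigma_k(\mathbb{N}_0)$ together with $m$ shifts $n_1,\dots,n_m$ whose $S$-factors have pairwise distinct Parikh vectors. The construction proceeds by induction on $m$, and the cost is accounted for by the graph picture from the abstract. Think of a bipartite graph with left vertices the shifts and right vertices the elements of $S$. Each element $s$ assigns a letter $\alpha_{n_i+s}$ to every chosen shift, and the Parikh vector of $\alpha[n_i+S]$ is the sum of the corresponding unit vectors $e_{\alpha_{n_i+s}}$. The aim is to show that going from $i$ to $i+1$ distinguished shifts costs about $i/(\ell-1)$ new pattern elements. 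Summing gives $\sum_{i<m} i/(\ell-1)=m(m-1)/(2(\ell-1))$. An additional $\ell-2$ elements are spent once at the start, to make every letter of $\mathbb{A}_\alpha$ ``active''.

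\emph{Basic tool (separation lemma).} Since $\alpha$ is aperiodic, for any two shifts $n\neq n'$ the tails $(\alpha_{n+j})_j$ and $(\alpha_{n'+j})_j$ differ at infinitely many $j$. More strongly, using that every letter occurs infinitely often, I will prove the following. For any finite set of shifts $N$ and any target shift $n\notin N$, there are infinitely many $s$ at which $\alpha_{n+s}$ differs from the letter seen by at least one other shift. Moreover, by pigeonhole on the finitely many letter-configurations $(\alpha_{n'+s})_{n'\in N\cup\{n\}}$, one configuration occurs for infinitely many $s$. This lets me pass to infinite sets of candidate elements on which the ``column'' of letters read by the chosen shifts is constant. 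Adding any number of such elements then shifts all Parikh vectors in a fully controlled way.

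\emph{Inductive step.} Suppose $S_i$ and shifts $n_1,\dots,n_i$ with distinct Parikh vectors $v_1,\dots,v_i$ are built. Choose a new shift $n_{i+1}$ far to the right; its vector $w$ on $S_i$ may coincide with some $v_j$. I then add elements from a constant-column infinite family in which $n_{i+1}$ reads a letter different from that of at least one $n_j$. In each added column, one needs a letter assignment under which the new shift's vector escapes every coincidence, while the old vectors stay pairwise distinct. Here the $\ell-1$ degrees of freedom enter: the differences $v_j-w$ live in the sum-zero sublattice of rank $\ell-1$. Each well-chosen column can break coincidences in several coordinate directions simultaneously, so about $\lceil i/(\ell-1)\rceil$ columns suffice to separate $w$ from all $i$ previous vectors. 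To keep old vectors apart, I will use columns that are constant on $\{n_1,\dots,n_i\}$, obtained by choosing $n_{i+1}$ inside a long recurrence window. If exact constancy fails, I will fall back on a linear functional ordering the $v_j$ strictly, maintained as an invariant.

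\emph{Main obstacle.} The hard step is guaranteeing columns that are constant on the old shifts but differ at the new shift, without recurrence. The example \eqref{ex:alpha-3} shows that sparse words force the count to be genuinely quadratic. I would first try to choose the old shifts so that the columns used are all ``far'' (all old shifts read a common letter) and the new shift reads something else. Aperiodicity together with infinitely many occurrences of each letter makes such $s$ exist, after replacing $n_{i+1}$ by a suitable translate. Checking that the accounting gives exactly the constant $\ell-2$ is the remaining bookkeeping; if it does not, I would adjust the initial segment.
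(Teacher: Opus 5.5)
There is a genuine gap, and it sits exactly at the step you flag as the main obstacle. Your plan needs columns that are constant on \emph{all} previously chosen shifts, and in the cases that matter no such columns exist, so the $1/(\ell-1)$ saving never appears.

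Take the paper's extremal word $\varpi_g\in\mathcal{A}_\ell$, the separated $g$-interleaving of $\mathbf{1}_{\{3^j\}}$ with $g=\ell-1\geq 2$. Suppose $n\not\equiv n'\pmod g$. Then for every $s$ the positions $n+s$ and $n'+s$ lie in different residue classes, so the letters there differ: at least one is some $c_r$, and the other is either a different $c_{r'}$ or a letter of $\{0,1\}$. So once your old shifts occupy two residue classes, no admissible column exists at all, not even one for padding $S$ up to size $k$. Your induction is therefore confined to one class. There the only direction is $\mathbf{e}_0-\mathbf{e}_1$, and at most $p_x^{\ast\mathrm{ab}}(k)=f(k)\approx\sqrt{2k}$ Abelian classes can be produced. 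The theorem asserts about $\sqrt{2(\ell-1)k}$ for this word.

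The heuristic that one column ``breaks coincidences in several directions'' is also off. A column constant on the old shifts adds the same vector $\mathbf{e}_a-\mathbf{e}_b$ to every difference $w-v_j$, so with one direction you may need $i$ columns. A saving arises only when $r$ linearly independent such directions are available. You then choose multiplicities $\lambda\in\mathbb{N}_0^{r}$, and each coincidence excludes at most one $\lambda$; this is the paper's $\rho_r(t)$, defined by $\binom{L+r}{r}\geq t+1$. Your ``linear functional'' fallback comes with no mechanism or cost count. ``Recurrence windows'' are not available when $\alpha$ is not recurrent.

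The missing idea is to require constancy only on each \emph{group} of old shifts, and to separate different groups by a separate cheap device.

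\begin{itemize}
\item The paper first disposes of the case where $\mathcal{O}(\alpha)$ has no periodic point; there Lemma \ref{lem:noperiodic} gives the bound $k+1$.
\item Otherwise it takes the finest eventually periodic alphabet partition $\mathcal{P}_{\ast}$, with $c_{\ast}$ blocks, and sets $r_{\ast}=\ell-c_{\ast}$. The shifts $z_\nu=Tn_\nu+s_{h_\nu}$ are placed into $c_{\ast}$ groups.
\item Within a group, $r_{\ast}$ linearly independent directions $u_e=\mathbf{e}_{a_e}-\mathbf{e}_{b_e}$ come from spanning trees on the residue alphabets. These are built from left-infinite words realized along subsequences (Lemma \ref{lem:leftlimits}).
\item Ramsey's theorem (Lemma \ref{lem:columns}) makes the letter an old shift reads in a new column depend only on that shift's group. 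Hence earlier within-group separations survive.
\item Different groups are separated at the end, together with the padding. This uses periodic columns supplied by the alignment with a periodic point, and the affine-independence Lemmas \ref{lem:affine} and \ref{lem:function}. It needs only $c_{\ast}-1$ further coordinates.
\end{itemize}

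With balanced group sizes the total cost is about $\sum_h\binom{m_h}{2}/r_{\ast}\approx m^2/(2c_{\ast}r_{\ast})\leq m^2/(2(\ell-1))$, because $c_{\ast}r_{\ast}\geq c_{\ast}+r_{\ast}-1=\ell-1$. The additive term $\ell-2$ absorbs the group-separating coordinates and the small-$t$ excess of $\rho_{r_{\ast}}(t)$ over $t/r_{\ast}$ (Lemmas \ref{lem:Westimate} and \ref{lem:balanced}). It does not come from an initial activation of letters.
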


Define 
\begin{equation*}
\mathcal{L}_{\ell }^{\mathrm{ab}}(k)=\inf_{\alpha \in \mathcal{A}_{\ell
}}p_{\alpha }^{\ast \mathrm{ab}}(k).
\end{equation*}%
Together with an upper bound obtained by a separated interleaving
construction, the preceding lower bound implies 
\begin{equation*}
\lim_{k\rightarrow \infty }\frac{\mathcal{L}_{\ell }^{\mathrm{ab}}(k)}{\sqrt{%
k}}=\sqrt{2(\ell -1)}.
\end{equation*}%
See Theorem \ref{thm:extremal}.

For the binary case, define 
\begin{equation*}
f(k):=\min \left\{ m\geq 1:\binom{m}{2}\geq k\right\} =\left\lceil \frac{1+%
\sqrt{8k+1}}{2}\right\rceil \text{ for }k\in \mathbb{N}_{0}.
\end{equation*}%
We obtain the following analogue of the Morse--Hedlund theorem for Abelian
maximal pattern complexity.

\begin{maintheorem}
\label{thm:threshold} Let $\alpha $ be an infinite word over a finite
alphabet. The following conditions are equivalent:

\begin{enumerate}
\item $\alpha $ is eventually periodic;

\item $\sup_{k\in \mathbb{N}}p_{\alpha }^{\ast \mathrm{ab}}(k)<\infty $;

\item $\binom{p_{\alpha }^{\ast \mathrm{ab}}(k)}{2}<k$ for some $k\in 
\mathbb{N}$.
\end{enumerate}

Moreover, $\mathcal{L}_{2}^{\mathrm{ab}}(k)=f(k)$.
\end{maintheorem}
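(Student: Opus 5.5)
The plan is to prove (1)$\Rightarrow$(2)$\Rightarrow$(3)$\Rightarrow$(1), and then derive $\mathcal{L}_{2}^{\mathrm{ab}}(k)=f(k)$ from Theorem~\ref{thm:finite} together with an explicit binary construction.

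\textbf{(1)$\Rightarrow$(2).} Suppose $\alpha_{n+p}=\alpha_n$ for $n\ge N$. For any $k$-pattern $S$, the factor $\alpha[n+S]$ depends only on $n \bmod p$ once $n\ge N$. Hence $p_\alpha(S)\le N+p$ for every $S$, and so $p_\alpha^{\ast\mathrm{ab}}(k)\le N+p$ uniformly in $k$.

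\textbf{(2)$\Rightarrow$(3).} If $p_\alpha^{\ast\mathrm{ab}}(k)\le M$ for all $k$, pick $k>\binom{M}{2}$. Since $\binom{m}{2}$ is increasing in $m$, we get $\binom{p_\alpha^{\ast\mathrm{ab}}(k)}{2}\le\binom{M}{2}<k$.

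\textbf{(3)$\Rightarrow$(1).} This is the contrapositive: an aperiodic $\alpha$ satisfies $\binom{p_\alpha^{\ast\mathrm{ab}}(k)}{2}\ge k$ for all $k$. I would reduce to Theorem~\ref{thm:finite} with $\ell=2$.

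Take $\alpha$ aperiodic. Some letter $a$ occurs infinitely often, and some other letter also does; otherwise $\alpha$ is eventually $a^\infty$. Suppose first that $\pi_{\{a\}}(\alpha)$ is aperiodic. The projection $\pi_{\{a\}}$ maps Abelian classes of $S$-factors onto Abelian classes of its own $S$-factors, so $p_\alpha^{\ast\mathrm{ab}}(k)\ge p_{\pi_{\{a\}}(\alpha)}^{\ast\mathrm{ab}}(k)$. The projection lies in $\mathcal{A}_2$, since both symbols occur infinitely often. Theorem~\ref{thm:finite} with $\ell=2$ says $k\ge\binom{m}{2}$ implies $p^{\ast\mathrm{ab}}(k)\ge m$. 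Taking $m=f(k)-1$, which satisfies $\binom{m}{2}<k$, gives $p^{\ast\mathrm{ab}}(k)\ge f(k)-1$. That is weaker than needed by one, so this step is the main obstacle and needs a sharper argument.

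The idea for closing the gap: if $\binom{m}{2}=p(p-1)/2<k$ with $p=p_\alpha^{\ast\mathrm{ab}}(k)$, then we are in the regime of Theorem~\ref{thm:finite}'s proof with $m=p+1$, since $k\ge\binom{p}{2}+1$. I would therefore apply Theorem~\ref{thm:finite} directly with $m=p+1$ and $k$ replaced by $k'=\binom{p+1}{2}$. This requires $k\ge k'$, which is false in general. So the real plan is to reprove the binary case and track where the bound $\binom{m}{2}\le k$ comes from. The mechanism I expect: a $k$-pattern built greedily from pairwise difference conditions, each new pattern element forcing a new Abelian class. Each pair of classes is separated by at most one "difference" constraint, so $k$ pattern points can be absorbed by $m$ classes only if $k\le\binom{m}{2}$. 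I would aim to show that aperiodicity forbids equality being absorbed, giving strictly $\binom{m}{2}\ge k$.

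If instead every projection $\pi_{\{a\}}(\alpha)$ is eventually periodic, then $\alpha$ is a letter-coloring of a combination of periodic indicators. With all indicators eventually periodic, $\alpha$ would itself be eventually periodic. So some projection $\pi_B$ is aperiodic with both symbols infinitely often, and the argument above applies to it.

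\textbf{$\mathcal{L}_2^{\mathrm{ab}}(k)=f(k)$.} The lower bound is the inequality just proved. For the upper bound I would exhibit an aperiodic binary word attaining $f(k)$. Take $\alpha=\mathbf 1_E$ with $E=\{3^j\}$, a lacunary set, or any set making the graph $r\sim s\iff r+s\in E$ a forest. For a pattern $S$, the Abelian class of $\alpha[n+S]$ is $\sharp\{(n,s):n+s\in E\}$. A forest bounds how many distinct counts arise, and I would show the number of classes $m$ satisfies $\binom{m-1}{2}<k$, i.e.\ $m\le f(k)$. This is done by choosing, for each count value, a shift $n$ realizing it and counting incidences without cycles.
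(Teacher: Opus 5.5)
You prove (1)$\Rightarrow$(2) and (2)$\Rightarrow$(3) correctly, and these match the paper. The genuine gap is (3)$\Rightarrow$(1), i.e.\ the bound $p_{\alpha}^{\ast\mathrm{ab}}(k)\geq f(k)$ for every aperiodic $\alpha$. You correctly note that Theorem~\ref{thm:finite} with $\ell=2$ (which the paper obtains from Corollary~\ref{cor:coarse}) gives only $f(k)-1$ when $k$ is not triangular. What you offer instead (``each pair of classes is separated by at most one difference constraint \ldots aperiodicity forbids equality being absorbed'') is a heuristic, not an argument. Generic pairwise separation cannot close the gap: separating $m$ arbitrary positions costs $\binom{m}{2}$ pattern points, and that is exactly where the loss of one comes from.

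The missing idea is structural. Work with an aperiodic binary projection $x=\pi_{a}(\alpha)$.
\begin{enumerate}
\item If $\mathcal{O}(x)$ has no periodic point, an infinite minimal subsystem supplies a recurrent aperiodic point, and Theorem~\ref{thm:KWZ2015} gives $k+1\geq f(k)$ (Lemma~\ref{lem:noperiodic}).
\item If $\mathcal{O}(x)$ contains a $T$-periodic point, Lemma~\ref{lem:alignment} yields an aperiodic residue $z=x^{(s)}$ with a constant word, say $0^{\infty}$, in $\mathcal{O}(z)$. Long zero blocks followed by a $1$ then let you build the pattern greedily (Lemma~\ref{lem:separation}). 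Start with $S_{0}=\{0\}$ and $r=f(k)-1$ positions $u_{i}$ with $z_{u_{i}}=1$. At step $j$, add at most $j$ shifts $p$ with $z_{u_{i}+p}=0$ $(i<j)$ and $z_{u_{j}+p}=1$. This makes the $r$ counts distinct and positive using at most $1+\binom{r}{2}\leq k$ points. Finally pad with shifts that hit zeros at every $u_{i}$.
\end{enumerate}
In case (2), the count $0$, realized on a far zero block, is the extra class you are missing. It exists precisely because of the constant point in $\mathcal{O}(z)$. The identity $z[n+Q]=x[Tn+s+TQ]$ transfers the bound to $x$, and hence to $\alpha$.

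Consequently the lower half of $\mathcal{L}_{2}^{\mathrm{ab}}(k)=f(k)$ is also unproved as written. Your upper-bound plan is the paper's (Proposition~\ref{prop:forest-upper}). With distinct positive counts $b_{1}<\cdots<b_{r}$ realized on a set $R$ of shifts, the forest $\Gamma_{E}[R,S]$ has $\sum b_{i}\geq\binom{r+1}{2}$ edges and at most $r+k-1$ edges. Hence $\binom{r}{2}\leq k-1$ and $r+1\leq f(k)$, with at most one further class for the count $0$.

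You still need to prove that $\Gamma_{E}$ is a forest for $E=\{3^{j}\}$. On a cycle, take an edge $(u,v)$ of maximal sum $M$; both neighbouring cycle edges have sums at most $M/3$, forcing $u+v\leq 2M/3$, a contradiction. Note that lacunarity alone is not enough: $\{2^{j}\}$ produces a $6$-cycle (Example~\ref{ex:powers}).
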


An infinite word $x$ is called an \emph{Abelian} \emph{pattern Sturmian word}
if $p_{x}^{\ast \mathrm{ab}}(k)=f(k)$ for all $k\in \mathbb{N}$.

For an infinite set $D\subset \mathbb{N}_{0}$, let $\mathbf{1}_{D}\in
\{0,1\}^{\mathbb{N}_{0}}$ denote its characteristic word, where $(\mathbf{1}%
_{D})_{n}=1$ if and only if $n\in D$. Define a simple bipartite graph $%
\Gamma _{D}$ with vertex classes $\mathbb{N}_{0}\times \{\mathrm{L}\}$ and $%
\mathbb{N}_{0}\times \{\mathrm{R}\}$ by 
\begin{equation}
\{(r,\mathrm{L}),(s,\mathrm{R})\}\in E(\Gamma _{D})\Longleftrightarrow
r+s\in D.  \label{eq:graph}
\end{equation}%
The graph-theoretic notation and conventions used below are recalled in
Section \ref{sec:def-preliminary}.

\begin{maintheorem}
\label{thm:main-APS} An infinite word $x$ is Abelian pattern Sturmian if and
only if it uses exactly two letters and, after identifying them with $0$ and 
$1$, one has $x=\mathbf{1}_{D}$\ for an infinite set $D\subset \mathbb{N}%
_{0} $ such that $\Gamma _{D}$ is a forest$.$
\end{maintheorem}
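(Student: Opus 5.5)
The plan is to translate Abelian pattern complexity of a binary word into a degree-counting statement about $\Gamma_D$. Then the forest condition gives the upper bound by edge counting, and Theorem \ref{thm:threshold} gives the lower bound.

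\emph{Reduction to degrees.} First, for $S=\{s\}$ the $S$-factors are the letters occurring at positions $\ge s$. Letting $s=0$ gives $p_x^{\ast\mathrm{ab}}(1)=\sharp\mathbb{A}_x$. Since $f(1)=2$, an Abelian pattern Sturmian word uses exactly two letters, which we call $0,1$, and we write $x=\mathbf{1}_D$. Since $p_x^{\ast\mathrm{ab}}$ is unbounded, $x$ is aperiodic by Theorem \ref{thm:threshold}, so $D$ is infinite.

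For binary $x=\mathbf{1}_D$, the Abelian class of $x[n+S]$ is determined by $\sharp((n+S)\cap D)$. This number is exactly the degree of the left vertex $(n,\mathrm{L})$ in the subgraph $\Gamma_D[S]$ spanned by $\mathbb{N}_0\times\{\mathrm{L}\}$ and $S\times\{\mathrm{R}\}$. Hence
\[
p_x^{\mathrm{ab}}(S)=\sharp\{\deg_{\Gamma_D[S]}(n,\mathrm{L}):n\in\mathbb{N}_0\}.
\]
Note also the elementary equivalence $p\le f(k)\iff \binom{p-1}{2}\le k-1\iff \binom{p}{2}\le k+p-2$.

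\emph{Forest $\Rightarrow$ Abelian pattern Sturmian.} Suppose $\Gamma_D$ is a forest with $D$ infinite. Then $\mathbf{1}_D$ is aperiodic. Indeed, if $D$ agreed with a $q$-periodic set beyond some $N$, pick $a\ge N$ in $D$. Then $a,a+q,a+2q\in D$, and the vertices $(0,\mathrm{L}),(q,\mathrm{L}),(a,\mathrm{R}),(a+q,\mathrm{R})$ form a $4$-cycle. Theorem \ref{thm:threshold} then gives $p^{\ast\mathrm{ab}}(k)\ge f(k)$.

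For the reverse inequality, fix $S$ with $\sharp S=k$ and choose left vertices $n_1,\dots,n_p$ realizing the $p=p_x^{\mathrm{ab}}(S)$ distinct degrees $d_1<\dots<d_p$. Then $\sum d_i\ge 0+1+\dots+(p-1)=\binom p2$.

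The induced subgraph on these $p$ left vertices and $S$ is a forest. The vertex of smallest degree contributes no edges if $d_1=0$, and in any case one can discard it. So the edges form a forest on at most $p-1+k$ vertices, and its number of edges is at most $p+k-2$. More carefully, I would count $\binom p2\le\sum d_i\le p+k-2$ when $d_1=0$, and when $d_1\ge1$ use the stronger bound $\sum d_i\ge\binom{p}{2}+p$. Both cases give $\binom p2\le k+p-2$, hence $p\le f(k)$.

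\emph{Abelian pattern Sturmian $\Rightarrow$ forest.} We already have $x=\mathbf{1}_D$ with $D$ infinite. Assume $\Gamma_D$ contains a cycle; we seek $k$ and $S$ with $p_x^{\mathrm{ab}}(S)>f(k)$. The plan is to redo the lower-bound construction of Theorem \ref{thm:threshold} while keeping track of edges.

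That construction presumably builds, greedily using aperiodicity, a pattern $S$ and shifts whose degree configuration is a tree with degrees $0,1,\dots,m-1$ using exactly $m+k-2$ edges. I would start with $S_0$ consisting of the right vertices of a shortest cycle, together with suitable left vertices on the cycle, where the cycle supplies one more edge than a tree on the same vertices. I would then extend greedily by adding right vertices, chosen far to the right using infiniteness of $D$ and aperiodicity, so that each new vertex increases the degree of exactly the intended shifts. The invariant to maintain is $\binom{m}{2}\le \sum(\text{degrees})\le(\text{edges})$, with the edge count now one larger than a forest allows. This forces $\binom{m-1}{2}\ge k$ at some stage, i.e.\ $m>f(k)$.

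I expect this last step to be the main obstacle: showing that the extension can be made while keeping all degrees distinct and not destroying the surplus edge from the cycle. I would first try to handle it by taking shifts $n$ so large that $n+S_0$ avoids $D$ except at controlled positions.
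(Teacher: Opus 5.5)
Your converse direction (forest $\Rightarrow$ Abelian pattern Sturmian) is correct and matches the paper: it is the edge count of Proposition \ref{prop:forest-upper} combined with the universal lower bound $f(k)$. The gap is in the forward direction, and it begins before the obstacle you flag. You identify the two letters arbitrarily and then try to show that any cycle in $\Gamma_D$ yields a pattern $S$ with $p_x^{\mathrm{ab}}(S)>f(k)$. For one of the two labelings this is false. The word $x=\mathbf{1}_{\{3^j:j\geq 0\}}$ is Abelian pattern Sturmian. Yet for $D'=\mathbb{N}_0\setminus\{3^j:j\geq 0\}$, the left vertices $0,1$ and right vertices $5,6$ form a $4$-cycle in $\Gamma_{D'}$, since the sums $5,6,6,7$ all lie in $D'$. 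The missing idea is to use the hypothesis at $k=3$ to fix the labeling. From $p_x^{\ast\mathrm{ab}}(3)=f(3)=3$, Proposition \ref{prop:background} shows that, after exchanging letters, $0^\infty\in\mathcal{O}(x)$. An infinite minimal subsystem would contain a recurrent aperiodic point, which has complexity $4$ at $k=3$ by Theorem \ref{thm:KWZ2015}. A nonconstant periodic point is excluded by Lemma \ref{lem:nonconstant-periodic}.

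This property is also exactly what your greedy extension lacks. Infiniteness of $D$ and aperiodicity cannot supply a new pattern element $p$ that meets $D$ only at the intended shift. In $\mathbf{1}_{D'}$ above, $D'$ is infinite and the word is aperiodic, but $00$ never occurs, so no $p$ gives $x_{u+p}=x_{u+1+p}=0$. Moving the shifts $n$ far to the right does not help: the shifts are the fixed left vertices of the cycle, and the freedom lies only in the right vertices you add. With $0^\infty\in\mathcal{O}(x)$ and infinitely many $1$'s, Lemma \ref{lem:zero-blocks} gives what you need. For shifts $u_0<\dots<u_j$ there are arbitrarily large $p$ with $x_{u_i+p}=0$ for $i<j$ and $x_{u_j+p}=1$, and every pattern attains the counts $0$ and $1$ somewhere. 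The construction is then short, with no edge-surplus invariant:
\begin{itemize}
\item Take a $2r$-cycle with left vertices $R$ and right vertices $S_0$; every $u\in R$ has count at least $2$ on $S_0$.
\item At most $\binom{r}{2}$ added elements make these $r$ counts distinct while keeping them at least $2$ (Lemma \ref{lem:separation}).
\item Padding with elements $p$ satisfying $x_{u+p}=0$ for all $u\in R$ reaches size $k=\binom{r+1}{2}$ without changing these counts.
\item Adding the counts $0$ and $1$ gives $r+2>f\bigl(\binom{r+1}{2}\bigr)=r+1$ Abelian classes (Theorem \ref{thm:cycle}).
\end{itemize}
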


The paper is organized as follows. Section \ref{sec:def-preliminary}
collects the definitions and preliminary results used throughout. Sections %
\ref{sec:lower-bound-RAW} and \ref{sec:stru-RAW} treat recurrent aperiodic
words: Section \ref{sec:lower-bound-RAW} establishes the lower bound in
Theorem \ref{thm:main-lower-bound}, while Section \ref{sec:stru-RAW}
characterizes the equality case in Theorem \ref{thm:main-interleaving}.
Section \ref{sec:basic-lower-AW} develops basic lower bounds without
assuming recurrence and proves the periodicity criterion in Theorem \ref%
{thm:threshold}. Section \ref{sec:general-lower-AW} proves Theorem \ref%
{thm:finite} and the extremal square-root law. Finally, Section \ref%
{sec:APSW} is devoted to Abelian pattern Sturmian words, including their
incidence-graph characterization and the structure of their orbit closures.

\bigskip

\section{Definitions and preliminaries}

\label{sec:def-preliminary}

We use the following standard graph-theoretic terminology. For a graph $G,$
write $E(G)$ for its edge set. An undirected graph is \emph{simple} if it
has no loops or multiple edges. A \emph{path} is a finite sequence of
vertices in which consecutive vertices are joined by an edge. A \emph{cycle}
is a path of length at least $3$ whose initial and terminal vertices
coincide and whose remaining vertices are distinct. A graph is \emph{%
connected} if any two of its vertices can be joined by a path, and a \emph{%
connected component} is a maximal connected subgraph. A \emph{bipartite graph%
} has two disjoint vertex classes, with every edge joining a vertex in one
class to a vertex in the other. The \emph{degree} of a vertex is the number
of edges incident to it. A \emph{tree} is a connected undirected graph
without cycles, and a \emph{forest} is an undirected graph without cycles. A 
\emph{spanning tree} of a connected graph is a tree containing all of its
vertices, and a \emph{leaf} is a vertex of degree one.

For the bipartite graph $\Gamma _{D}$ defined by (\ref{eq:graph}), the left
and right vertex classes are regarded as disjoint copies of $\mathbb{N}_{0}$%
. For brevity, we write $(r,s)\in E(\Gamma _{D})$ to mean $\{(r,\mathrm{L}%
),(s,\mathrm{R})\}\in E(\Gamma _{D}).$ For finite $R,S\subset \mathbb{N}_{0}$%
, let $\Gamma _{D}[R,S]$ denote the bipartite subgraph induced by the
corresponding left vertices indexed by $R$ and right vertices indexed by $S$%
. Since every cycle is finite, $\Gamma _{D}$ is a forest if and only if $%
\Gamma _{D}[R,S]$ is a forest for every finite $R,S\subset \mathbb{N}_{0}$.

Throughout this paper, we endow $\mathbb{A}$ with the discrete topology and $%
\mathbb{A}^{\mathbb{N}_{0}}$ with the product topology. We write $a^{\infty
}=aaa\cdots $ for the constant word with value $a$. For a finite word $u\in 
\mathbb{A}^{\ast }$, let $\left\vert u\right\vert $ denote its length. For $%
a\in \mathbb{A}$, let $\mathbf{e}_{a}$ denote the standard basis vector of $%
\mathbb{R}^{\mathbb{A}}$ corresponding to $a$. For a set $\mathcal{U}\subset 
\mathbb{A}^{\ast }$ of finite words, write $\Psi (\mathcal{U}):=\{\Psi
(u):u\in \mathcal{U}\}.$

For an infinite set $\mathcal{I}=\{i_{0}<i_{1}<\cdots \}\subset \mathbb{N}%
_{0}\ $and $\beta \in \mathbb{A}^{\mathbb{N}_{0}},$ we define $\beta \lbrack 
\mathcal{I}]\in \mathbb{A}^{\mathbb{N}_{0}}$ by $(\beta \lbrack \mathcal{I}%
])_{n}:=\beta _{i_{n}}$ $(n\in \mathbb{N}_{0}).$ For a finite or infinite
set $\mathcal{I}\subset \mathbb{N}_{0}$ and $\Omega \subset \mathbb{A}^{%
\mathbb{N}_{0}},$ we define $\Omega \lbrack \mathcal{I}]:=\{\beta \lbrack 
\mathcal{I}]:\beta \in \Omega \}.$

Let $\mathsf{T}:\mathbb{A}^{\mathbb{N}_{0}}\rightarrow \mathbb{A}^{\mathbb{N}%
_{0}}$ be the \emph{shift map,} defined by $(\mathsf{T}\alpha )_{n}=\alpha
_{n+1}$ for $n\in \mathbb{N}_{0}$. The \emph{orbit closure} of $\alpha $ is 
\begin{equation*}
\mathcal{O}(\alpha ):=\overline{\{\mathsf{T}^{n}\alpha :n\in \mathbb{N}_{0}\}%
}.
\end{equation*}

If $P\subset \mathbb{N}_{0}$ is finite, then 
\begin{equation}
\mathcal{O}(\alpha )[P]=\mathcal{F}_{\alpha }(P)=\{\alpha \lbrack n+P]:n\in 
\mathbb{N}_{0}\}.  \label{eq:rel}
\end{equation}%
Indeed, restriction to the coordinates in $P$ is continuous and takes values
in a finite discrete space, so passing to the orbit closure produces no
additional $P$-factors.

For simplicity, we write $\pi _{a}(\alpha )$ for $\pi _{\{a\}}(\alpha )$
when $a$ is a letter. More generally, a map of alphabets $\varphi :\mathbb{A}%
\rightarrow \mathbb{B}$ is understood to act coordinatewise on both finite
and infinite words. Thus, for $u=u_{0}\cdots u_{r-1}\in \mathbb{A}^{\ast },$ 
$\varphi (u):=\varphi (u_{0})\cdots \varphi (u_{r-1}).$

\begin{lemma}
\label{lem:projection} If $\alpha \in \mathbb{A}^{\mathbb{N}_{0}}$ is
aperiodic, then there exists $a\in \mathbb{A}$ such that $\pi _{a}(\alpha )$
is aperiodic. If $\alpha $ is recurrent, then $\pi _{a}(\alpha )$ is
recurrent for every $a\in \mathbb{A}$.
\end{lemma}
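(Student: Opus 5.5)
For the first assertion I will argue by contraposition. Suppose that for every letter $a\in\mathbb{A}$ the indicator word $\pi_a(\alpha)$ is eventually periodic. Then there are $N_a\ge 0$ and $p_a\ge 1$ with $(\pi_a(\alpha))_{n+p_a}=(\pi_a(\alpha))_n$ for all $n\ge N_a$. I set $N:=\max_a N_a$ and $p:=\operatorname{lcm}_a p_a$, which is possible because $\mathbb{A}$ is finite. Then for every $n\ge N$ and every $a$ we have $\mathbf{1}_{\{a\}}(\alpha_{n+p})=\mathbf{1}_{\{a\}}(\alpha_n)$. Taking $a=\alpha_n$ gives $\alpha_{n+p}=\alpha_n$, so $\alpha$ is eventually periodic. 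Hence if $\alpha$ is aperiodic, some $\pi_a(\alpha)$ is aperiodic.

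For the second assertion, fix $a$ and a factor $w$ of $\pi_a(\alpha)$, say $w=\pi_a(\alpha_i\cdots\alpha_{i+n-1})$. By recurrence of $\alpha$, the factor $u=\alpha_i\cdots\alpha_{i+n-1}$ occurs at infinitely many positions $j$. Since $\pi_a$ acts coordinatewise, $w=\pi_a(u)$ occurs at each of those positions $j$ in $\pi_a(\alpha)$. So every factor of $\pi_a(\alpha)$ occurs infinitely often, and $\pi_a(\alpha)$ is recurrent.

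Both parts are routine. The only point needing care is the finiteness of $\mathbb{A}$, which guarantees that a common period and threshold exist; I expect no real obstacle beyond that.
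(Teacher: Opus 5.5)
Your proposal is correct and follows essentially the same route as the paper: for the first part, a common eventual period (lcm of the periods, max of the thresholds) for all indicator words forces $\alpha$ to be eventually periodic; for the second, every factor of $\pi_a(\alpha)$ is the image of a factor of $\alpha$, which recurs infinitely often. Your explicit choice $a=\alpha_n$ just spells out the paper's final step.
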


\begin{proof}
Suppose that $\pi _{a}(\alpha )$ is eventually periodic for every $a\in 
\mathbb{A}$. Then for each $a\in \mathbb{A}$, there exist $q_{a}\geq 1$ and $%
N_{a}\geq 0$ such that $\mathbf{1}_{\{a\}}(\alpha _{n+q_{a}})=\mathbf{1}%
_{\{a\}}(\alpha _{n})$ for all $n\geq N_{a}.$ Let $q:=\func{lcm}_{a\in 
\mathbb{A}}q_{a}$ and $N:=\max_{a\in \mathbb{A}}N_{a}.$ It follows that $%
\mathbf{1}_{\{a\}}(\alpha _{n+q})=\mathbf{1}_{\{a\}}(\alpha _{n})$ for every 
$a\in \mathbb{A}$ and every $n\geq N$. Hence $\alpha _{n+q}=\alpha _{n}$ for
all $n\geq N,$ which contradicts the aperiodicity of $\alpha $.

Now assume that $\alpha $ is recurrent and fix $a\in \mathbb{A}$. Let $v$ be
a factor of $\pi _{a}(\alpha )$. Then there exists a factor $w$ of $\alpha $
such that $v=\pi _{a}(w).$ Since $w$ occurs infinitely often in $\alpha $,
the factor $v$ occurs infinitely often in $\pi _{a}(\alpha )$. Thus $\pi
_{a}(\alpha )$ is recurrent.
\end{proof}

\begin{lemma}
\label{lem:transfer}Let $\alpha \in \mathbb{A}^{\mathbb{N}_{0}}.$ For every $%
k\in \mathbb{N}$, the following statements hold.

\begin{enumerate}
\item If $y\in \mathcal{O}(\alpha )$, then $p_{y}^{\ast \mathrm{ab}}(k)\leq
p_{\alpha }^{\ast \mathrm{ab}}(k)$.

\item For $N\geq 0$, $p_{\mathsf{T}^{N}\alpha }^{\ast \mathrm{ab}}(k)\leq
p_{\alpha }^{\ast \mathrm{ab}}(k)\leq p_{\mathsf{T}^{N}\alpha }^{\ast 
\mathrm{ab}}(k)+N.$

\item If $\varphi :\mathbb{A}\rightarrow \mathbb{B}$ is a map of alphabets,
then $p_{\varphi (\alpha )}^{\ast \mathrm{ab}}(k)\leq p_{\alpha }^{\ast 
\mathrm{ab}}(k)$.

\item If two words agree at all positions $n\geq N$, their Abelian maximal
pattern complexities differ by at most $N$ for every $k$.
\end{enumerate}
\end{lemma}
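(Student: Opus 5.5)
The plan is to derive all four parts from the description of $S$-factors of orbit-closure points given by (\ref{eq:rel}), together with elementary comparisons of factor sets. Throughout we fix $k$ and a $k$-pattern $S$. It suffices to compare $p^{\mathrm{ab}}(S)$ for the words involved and then take the supremum over $S$.

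For (1), let $y\in\mathcal{O}(\alpha)$. Then $\mathcal{O}(y)\subset\mathcal{O}(\alpha)$, so by (\ref{eq:rel}) we have
\begin{equation*}
\mathcal{F}_y(S)=\mathcal{O}(y)[S]\subset\mathcal{O}(\alpha)[S]=\mathcal{F}_\alpha(S).
\end{equation*}
Passing to Abelian classes gives $p^{\mathrm{ab}}_y(S)\le p^{\mathrm{ab}}_\alpha(S)$. The first inequality of (2) is the special case $y=\mathsf{T}^N\alpha$.

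For the second inequality of (2), observe that $\mathcal{F}_\alpha(S)=\{\alpha[n+S]:0\le n<N\}\cup\mathcal{F}_{\mathsf{T}^N\alpha}(S)$. The first set has at most $N$ elements, so $p^{\mathrm{ab}}_\alpha(S)\le p^{\mathrm{ab}}_{\mathsf{T}^N\alpha}(S)+N\le p^{\ast\mathrm{ab}}_{\mathsf{T}^N\alpha}(k)+N$.

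For (3), note that $\varphi(\alpha)[n+S]=\varphi(\alpha[n+S])$. Moreover, if $u\sim_{\mathrm{ab}}v$ then $\varphi(u)\sim_{\mathrm{ab}}\varphi(v)$, because $\Psi(\varphi(u))_b=\sum_{a\in\varphi^{-1}(b)}\Psi(u)_a$ depends only on $\Psi(u)$. Hence $\varphi$ induces a surjection from $\mathcal{F}_\alpha(S)/\!\sim_{\mathrm{ab}}$ onto $\mathcal{F}_{\varphi(\alpha)}(S)/\!\sim_{\mathrm{ab}}$, which gives the inequality.

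For (4), suppose $\alpha$ and $\beta$ agree at all positions $n\ge N$. Then $\mathsf{T}^N\alpha=\mathsf{T}^N\beta$. Applying (2) to $\alpha$ and to $\beta$ gives
\begin{equation*}
p^{\ast\mathrm{ab}}_\alpha(k)\le p^{\ast\mathrm{ab}}_{\mathsf{T}^N\alpha}(k)+N=p^{\ast\mathrm{ab}}_{\mathsf{T}^N\beta}(k)+N\le p^{\ast\mathrm{ab}}_\beta(k)+N,
\end{equation*}
and symmetrically with $\alpha$ and $\beta$ exchanged.

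One point needs care: the supremum defining $p^{\ast\mathrm{ab}}$ could a priori be infinite. It is not, since $p^{\mathrm{ab}}(S)\le\binom{k+\sharp\mathbb{A}-1}{k}$, the number of possible Parikh vectors of length-$k$ words, so all quantities are finite. There is no real obstacle here; the only step needing justification is the inclusion in (1), and that is supplied by (\ref{eq:rel}).
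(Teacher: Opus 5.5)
Your proposal is correct and follows essentially the same route as the paper. Part (1) uses the identity $\mathcal{O}(\alpha)[P]=\mathcal{F}_{\alpha}(P)$; part (2) splits off the first $N$ starting positions; part (3) uses the fact that the Parikh vector of $\varphi(u)$ depends only on that of $u$, which the paper packages as the linear map $Q_{\varphi}$; and part (4) applies (2) to both words. The only differences are cosmetic: you get the first inequality of (2) as a special case of (1) and add an explicit finiteness remark, both of which are fine.
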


\begin{proof}
For (1), (\ref{eq:rel}) shows directly that every finite pattern occurring
in $y$ also occurs in $\alpha $. Hence $p_{y}^{\mathrm{ab}}(S)\leq p_{\alpha
}^{\mathrm{ab}}(S)$ for every $S\in \Sigma _{k}(\mathbb{N}_{0})$, and the
claim follows. For (2), if $n\geq N$, then $\alpha \lbrack n+S]=(\mathsf{T}%
^{N}\alpha )[n-N+S].$ Hence the $S$-factors of $\alpha $ based at positions $%
n\geq N$ are precisely the $S$-factors of $\mathsf{T}^{N}\alpha $. The
remaining $N$ starting positions $0,\ldots ,N-1$ contribute at most $N$
additional Parikh vectors. Therefore one has $p_{\mathsf{T}^{N}\alpha }^{%
\mathrm{ab}}(S)\leq p_{\alpha }^{\mathrm{ab}}(S)\leq p_{\mathsf{T}^{N}\alpha
}^{\mathrm{ab}}(S)+N,$ and taking the supremum over all $k$-patterns proves
(2).\ For (3), define $Q_{\varphi }:\mathbb{R}^{\mathbb{A}}\rightarrow 
\mathbb{R}^{\mathbb{B}}$ by $Q_{\varphi }(\mathbf{e}_{a}):=\mathbf{e}%
_{\varphi (a)}.$ Then, for every $S\in \Sigma _{k}(\mathbb{N}_{0})$ and $%
n\in \mathbb{N}_{0}$, we have $\Psi (\varphi (\alpha )[n+S])=Q_{\varphi
}(\Psi (\alpha \lbrack n+S])).$ Hence $p_{\varphi (\alpha )}^{\mathrm{ab}%
}(S)\leq p_{\alpha }^{\mathrm{ab}}(S),$ and taking the supremum over all $k$%
-patterns proves (3). For (4), if $\alpha _{n}=\beta _{n}$ for all $n\geq N$%
, then $\mathsf{T}^{N}\alpha =\mathsf{T}^{N}\beta .$ Applying (2) to both
words gives $p_{\mathsf{T}^{N}\alpha }^{\ast \mathrm{ab}}(k)\leq p_{\alpha
}^{\ast \mathrm{ab}}(k)\leq p_{\mathsf{T}^{N}\alpha }^{\ast \mathrm{ab}%
}(k)+N $ and the same inequalities for $\beta $. Therefore one has $%
|p_{\alpha }^{\ast \mathrm{ab}}(k)-p_{\beta }^{\ast \mathrm{ab}}(k)|\leq N.$
\end{proof}

A set $Z\subset \mathbb{A}^{\mathbb{N}_{0}}$ is called \emph{shift-invariant}
if $\mathsf{T}(Z)\subset Z$. A nonempty closed shift-invariant set is \emph{%
minimal} if it contains no proper nonempty closed shift-invariant subset. A
word $\alpha \in \mathbb{A}^{\mathbb{N}_{0}}$ is \emph{uniformly recurrent}
if, for every nonempty finite factor $u$ of $\alpha $, there exists an
integer $L\geq |u|$ such that every factor of $\alpha $ of length $L$
contains $u$.

We shall use the following standard facts about minimal dynamical systems;
see, for example, \cite{Bruin2022}.

\begin{lemma}
\label{lem:minimal}Every nonempty closed shift-invariant set contains a
minimal closed shift-invariant subset. Each point of a minimal subset is
uniformly recurrent, and hence recurrent. A finite minimal set is a periodic
orbit, while no point of an infinite minimal set is eventually periodic.
\end{lemma}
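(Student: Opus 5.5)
The statement to be proved is Lemma~\ref{lem:minimal}, which collects standard facts of topological dynamics. The plan is to prove each claim directly from compactness of $\mathbb{A}^{\mathbb{N}_{0}}$ in the product topology.

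\emph{Existence of minimal subsets.} Let $Z$ be a nonempty closed shift-invariant set. Order the family of nonempty closed shift-invariant subsets of $Z$ by inclusion. For a chain, the intersection is closed and shift-invariant. It is nonempty by compactness, since it is an intersection of closed sets with the finite intersection property. Zorn's lemma then yields a minimal element.

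\emph{Uniform recurrence.} Let $M$ be minimal, $x\in M$, and $u$ a factor of $x$, occurring at position $i$. The cylinder $C=\{y:y[i,i+|u|)=u\}$ is open, and $M\cap C\neq\varnothing$ since $x$ lies in it. Because $M$ is minimal, every $y\in M$ has dense orbit in $M$, as its orbit closure is a closed invariant subset. Hence $M\subset\bigcup_{n\ge0}\mathsf{T}^{-n}C$. By compactness, finitely many $n\le L_0$ suffice. Consequently every $\mathsf{T}^{j}x$ has an occurrence of $u$ starting within the first $L_0+i$ positions. This gives uniform recurrence with $L=L_0+i+|u|$, and recurrence follows immediately.

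\emph{Finite versus infinite minimal sets.} If $M$ is finite, the orbit of any $x\in M$ is finite, so $\mathsf{T}^{a}x=\mathsf{T}^{a+p}x$ for some $a\ge0$, $p\ge1$. Then $\{\mathsf{T}^{n}\mathsf{T}^{a}x\}$ is a closed invariant subset of $M$, so by minimality it equals $M$. This forces $x$ itself to be periodic, and $M$ is that periodic orbit.

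Conversely, suppose $x\in M$ is eventually periodic. Then the orbit of $x$ is finite, hence closed, so its closure equals $M$ and $M$ is finite. Thus no point of an infinite minimal set is eventually periodic.

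The only mildly delicate step is the uniform recurrence argument, where the position offset $i$ of $u$ must be handled. Everything else is routine.
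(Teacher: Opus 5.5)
The paper gives no proof of this lemma. It records these as standard facts about minimal systems and refers to \cite{Bruin2022}. Your argument is the standard self-contained proof, and it is correct:
\begin{enumerate}
\item Zorn's lemma, with the finite intersection property along chains, gives existence of a minimal subset.
\item For uniform recurrence, every orbit in a minimal $M$ is dense because its orbit closure is a nonempty closed invariant subset of $M$. Hence $M\subset\bigcup_{n\geq 0}\mathsf{T}^{-n}C$, and a finite subcover yields the window length $L=L_{0}+i+|u|\geq |u|$. This is exactly the paper's definition of uniform recurrence, and recurrence follows by looking at the disjoint windows $[jL,(j+1)L)$.
\item For the last two claims, you use that an eventually periodic point has a finite, hence closed, orbit.
\end{enumerate}
Your route relies only on compactness of $\mathbb{A}^{\mathbb{N}_{0}}$ and continuity of $\mathsf{T}$, whereas the paper simply outsources these routine facts.

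One step deserves an explicit line. In the finite case, minimality gives $M=\{\mathsf{T}^{a+n}x:n\geq 0\}$, and $x\in M$ yields $x=\mathsf{T}^{a+n}x$ for some $n\geq 0$. If $a\geq 1$, this is a genuine period $a+n\geq 1$. If $a=0$, the relation $\mathsf{T}^{p}x=x$ already shows $x$ is periodic. In either case $M$ is the orbit of the periodic point $x$.
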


\begin{lemma}
\label{lem:noperiodic}Let $\alpha \in \mathbb{A}^{\mathbb{N}_{0}}.$ If $%
\mathcal{O}(\alpha )$ contains no periodic point, then $p_{\alpha }^{\ast 
\mathrm{ab}}(k)\geq k+1$ for every $k\in \mathbb{N}$.
\end{lemma}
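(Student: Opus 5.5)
The plan is to pass from $\alpha$ to a recurrent aperiodic binary word whose Abelian maximal pattern complexity is no larger, and then apply Theorem \ref{thm:KWZ2015} in the case $\ell =2$. No direct counting of $S$-factors should be needed.

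First, find a good point in the orbit closure. By Lemma \ref{lem:minimal}, the nonempty closed shift-invariant set $\mathcal{O}(\alpha )$ contains a minimal subset $M$. If $M$ were finite, it would be a periodic orbit, which is excluded by hypothesis. So $M$ is infinite. Again by Lemma \ref{lem:minimal}, any $y\in M$ is uniformly recurrent, hence recurrent, and $y$ is not eventually periodic. Lemma \ref{lem:transfer}(1) gives $p_{y}^{\ast \mathrm{ab}}(k)\leq p_{\alpha }^{\ast \mathrm{ab}}(k)$ for all $k$. It therefore suffices to prove the bound for $y$.

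Second, reduce to a binary word. Since $y$ is aperiodic, Lemma \ref{lem:projection} provides a letter $a$ such that $z:=\pi _{a}(y)\in \{0,1\}^{\mathbb{N}_{0}}$ is aperiodic. The same lemma shows that $z$ is recurrent, because $y$ is. Lemma \ref{lem:transfer}(3), applied to the alphabet map $\mathbf{1}_{\{a\}}$, gives $p_{z}^{\ast \mathrm{ab}}(k)\leq p_{y}^{\ast \mathrm{ab}}(k)$.

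Third, check the hypotheses of Theorem \ref{thm:KWZ2015} for $z$ over the alphabet $\{0,1\}$, with $\ell =2$.
\begin{itemize}
\item The only nonempty proper subsets are $B=\{0\}$ and $B=\{1\}$. The word $\pi _{\{1\}}(z)$ is $z$ itself, and $\pi _{\{0\}}(z)$ is its complement.
\item Both of these are aperiodic, since $z$ is. Hence $z$ is aperiodic by projection.
\item The word $z$ is recurrent, as shown above.
\end{itemize}
Theorem \ref{thm:KWZ2015} then yields $p_{z}^{\ast \mathrm{ab}}(k)\geq (2-1)k+1=k+1$. Chaining the inequalities gives
\begin{equation*}
p_{\alpha }^{\ast \mathrm{ab}}(k)\geq p_{y}^{\ast \mathrm{ab}}(k)\geq p_{z}^{\ast \mathrm{ab}}(k)\geq k+1 .
\end{equation*}

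The main point requiring care is the first step. The original $\alpha $ need not be recurrent, and recurrence is essential: the word in (\ref{ex:alpha-3}) shows the bound fails without it. The role of the hypothesis "no periodic point in $\mathcal{O}(\alpha )$" is exactly to guarantee that a minimal subset is infinite, so that we obtain a recurrent aperiodic point. The subsequent hypothesis check for Theorem \ref{thm:KWZ2015} is routine in the binary case.
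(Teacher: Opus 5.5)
Your proposal is correct and follows the paper's proof essentially step for step. You pass to an infinite minimal subset of $\mathcal{O}(\alpha)$ to obtain a recurrent aperiodic point $y$, project to the recurrent aperiodic binary word $\pi_a(y)$ via Lemma~\ref{lem:projection}, apply Theorem~\ref{thm:KWZ2015} with $\ell=2$, and transfer the bound back with Lemma~\ref{lem:transfer}(3) and (1).
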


\begin{proof}
By Lemma \ref{lem:minimal}, $\mathcal{O}(\alpha )$\ contains a minimal
closed shift-invariant subset $M.$ Let $y\in M.$ Then $M$ is infinite since $%
\mathcal{O}(\alpha )$ contains no periodic point. Hence, again by Lemma \ref%
{lem:minimal}, $y$ is recurrent and aperiodic. By Lemma \ref{lem:projection}%
, $\pi _{a}(y)$ is recurrent and aperiodic for some $a\in \mathbb{A}$. Hence
Theorem \ref{thm:KWZ2015} and Lemma \ref{lem:transfer}(3),(1) give 
\begin{equation*}
k+1=p_{\pi _{a}(y)}^{\ast \mathrm{ab}}(k)\leq p_{y}^{\ast \mathrm{ab}%
}(k)\leq p_{\alpha }^{\ast \mathrm{ab}}(k),
\end{equation*}%
which proves the conclusion.
\end{proof}

For $x\in \{0,1\}^{\mathbb{N}_{0}}$, $S\in \Sigma _{k}(\mathbb{N}_{0})$ and $%
n\in \mathbb{N}_{0},$ define%
\begin{equation*}
c_{x,S}(n):=\sum_{s\in S}x_{n+s}\text{, }\mathcal{C}_{x}(S)=\{c_{x,S}(n):n%
\geq 0\}.
\end{equation*}%
When $x$ is fixed, we write simply $c_{S}(n).$ This is the number of $1$'s
in the $S$-factor $x[n+S],$ hence its Parikh vector is $\Psi
(x[n+S])=(k-c_{S}(n))\mathbf{e}_{0}+c_{S}(n)\mathbf{e}_{1}.$ Thus the
Abelian class of $x[n+S]$ is uniquely determined by the integer $c_{S}(n).$
Consequently,%
\begin{equation}
p_{x}^{\mathrm{ab}}(S)=\sharp \mathcal{C}_{x}(S).
\label{eq:binary-abelian-count}
\end{equation}%
Since a binary word of length $k$ has at most $k+1$ distinct Parikh vectors,
one has 
\begin{equation}
p_{x}^{\ast \mathrm{ab}}(k)\leq k+1\ \text{for every }k\in \mathbb{N}.
\label{eq:binary-upper-bound}
\end{equation}

\begin{lemma}
\label{lem:zero-blocks} Let $x\in \{0,1\}^{\mathbb{N}_{0}}$ have infinitely
many occurrences of $1$, and suppose that $0^{\infty }\in \mathcal{O}(x)$.
Then the following statements hold.

\begin{enumerate}
\item For every $L\geq 1$ and $N\geq 0$, there exists $Y>\max \{N,L\}$ such
that 
\begin{equation*}
x_{Y-L}\cdots x_{Y-1}=0^{L},\text{\ }x_{Y}=1.
\end{equation*}

\item For every nonempty finite $S\subset \mathbb{N}_{0}$ and every $N\in 
\mathbb{N}_{0}$, there exist $n_{0},n_{1}>N$ such that $c_{S}(n_{0})=0$ and $%
c_{S}(n_{1})=1.$

\item For every nonempty finite $R\subset \mathbb{N}_{0}$, there exist
arbitrarily large $p$ such that $x[p+R]=0^{\sharp R}.$
\end{enumerate}
\end{lemma}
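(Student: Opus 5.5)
We argue directly from the two hypotheses: $x$ has infinitely many $1$'s, and $0^{\infty}\in\mathcal{O}(x)$. By (\ref{eq:rel}), the second hypothesis means that for every $L$ the block $0^{L}$ is a factor of $x$; equivalently, $x$ contains arbitrarily long blocks of zeros. The block $0^L$ occurs at arbitrarily large positions, since for each $L'\geq L$ the word contains $0^{L'}$, which has $0^L$ at each offset inside it. More simply, suppose $0^{L}$ occurred only at positions $<M$. Then every block $0^{L'}$ with $L'\geq L$ would start before $M$, so $x$ would contain arbitrarily long zero runs starting before $M$, which forces $x_n=0$ for all $n\geq M$. That contradicts the infinitude of $1$'s. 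So $0^L$ occurs at positions beyond any given bound.

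For (1), fix $L,N$. Choose an occurrence $x_{q}\cdots x_{q+L-1}=0^{L}$ with $q>\max\{N,L\}$. Since $1$ occurs infinitely often, let $Y$ be the least index $\geq q+L$ with $x_Y=1$. By minimality, $x_{q+L},\dots,x_{Y-1}$ are all $0$, so $x_{Y-L}\cdots x_{Y-1}=0^L$ and $x_Y=1$. Moreover $Y\geq q+L>\max\{N,L\}$.

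For (3), given finite nonempty $R$, let $L:=\max R+1$. Take occurrences of $0^{L}$ at arbitrarily large positions $p$. Then $x[p+R]=0^{\sharp R}$, because $p+R\subset[p,p+L-1]$.

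For (2), write $S=\{s_1<\dots<s_k\}$. The value $c_S(n_0)=0$ follows immediately from (3) with $R=S$, taking $n_0>N$. For $c_S(n_1)=1$, apply (1) with $L:=s_k-s_1$ and with $N$ replaced by $N+s_k$. This gives $Y$ with $x_{Y-L}\cdots x_{Y-1}=0^{L}$ and $x_Y=1$. Set $n_1:=Y-s_k$, so that $n_1>N$. Then $n_1+s_k=Y$ contributes a $1$. The other positions satisfy $n_1+s_i\in[Y-L,Y-1]$ for $i<k$, because $s_k-s_i\leq L$, and these are all zeros. Hence $c_S(n_1)=1$. If $k=1$, then $L=0$ and the zero-block condition is vacuous, which is fine. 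Formally (1) requires $L\geq1$; in that case we use $L=\max(1,s_k-s_1)$.

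The only point needing care is the opening observation that zero blocks recur at arbitrarily large positions. This is where both hypotheses are genuinely used together.
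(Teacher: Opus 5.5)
Your proof is correct and follows essentially the same route as the paper. You first show that zero blocks of every length occur arbitrarily far to the right, using a direct covering argument where the paper uses pigeonhole on left endpoints; then you take the first $1$ after such a block for (1), shift by $\max S$ to get the count $1$ in (2), and place the pattern inside a far zero block for (3) and for the count $0$, with your choices $L=\max R+1$ and $\max(1,s_k-s_1)$ in place of the paper's $\max S-\min S+1$ being harmless variations.
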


\begin{proof}
Since $0^{\infty }\in \mathcal{O}(x)$, the word $x$ contains arbitrarily
long zero blocks. Moreover, such blocks occur arbitrarily far to the right.
Indeed, otherwise their left endpoints would belong to a fixed finite set.
By the pigeonhole principle, one of these positions would be the left
endpoint of arbitrarily long zero blocks, which contradicts the assumption
that $1$ occurs infinitely often.

For (1), take a zero block of length at least $L$ whose left endpoint is
larger than $N$. Since $1$ occurs infinitely often, there is a first
occurrence $Y$ of $1$ following this block. Then $Y>\max \{N,L\}$ and $%
x_{Y-t}=0$ for all $1\leq t\leq L$.

For (2), fix a nonempty finite $S\subset \mathbb{N}_{0}$ and $N\in \mathbb{N}%
_{0}$. Let $L:=\max S-\min S+1$. By (1), there exists $Y>\max \{N+\max S,L\}$
such that $x_{Y-t}=0$ $(1\leq t\leq L)$ and $x_{Y}=1.$ Set $n_{1}:=Y-\max
S>N.$ Then $x_{n_{1}+\max S}=x_{Y}=1,$ while $x_{n_{1}+s}=x_{Y-(\max S-s)}=0$
for every $s\in S$ with $s<\max S.$ Hence $c_{S}(n_{1})=1.$

To obtain the value $0$, take a zero block $x_{a}x_{a+1}\cdots
x_{a+L-1}=0^{L}$ with $a>N+\min S.$ Set $n_{0}:=a-\min S>N.$ Then $%
n_{0}+S\subset \lbrack a,a+L-1],$ and hence $c_{S}(n_{0})=0.$

For (3), take an arbitrarily far zero block $x[J],$ $J=[a,a+\max R-\min R]$%
.\ Set $p:=a-\min R.$ Then $p+R\subset J$, and hence $x[p+R]=0^{\sharp R}$.
Since $a$ may be chosen arbitrarily large, so may $p$.
\end{proof}

We shall use the following Ramsey theorem. Its statement is restricted to
the forms needed here.

\begin{theorem}[\protect\cite{Ramsey}]
\label{thm:Ramsey} If the two-element subsets of an infinite subset of $%
\mathbb{N}_{0}$ are assigned finitely many colors, then there is an infinite
subset all of whose two-element subsets have the same color.
\end{theorem}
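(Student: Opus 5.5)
We prove the infinite Ramsey theorem for pairs by the standard diagonal argument, using only the infinite pigeonhole principle: if an infinite set is split into finitely many pieces, one piece is infinite. Let $X\subset \mathbb{N}_{0}$ be infinite, and let $\chi$ be a coloring of the two-element subsets of $X$ with values in a finite set $C$ of colors.

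\textbf{Step 1: a nested sequence.} We build recursively elements $x_{0}<x_{1}<x_{2}<\cdots$ of $X$, infinite sets $X=X_{0}\supset X_{1}\supset X_{2}\supset \cdots$, and colors $c_{0},c_{1},\ldots \in C$. Suppose $X_{i}$ is infinite. Put $x_{i}:=\min X_{i}$. For each $y\in X_{i}\setminus \{x_{i}\}$ the pair $\{x_{i},y\}$ has one of finitely many colors. These colors partition the infinite set $X_{i}\setminus \{x_{i}\}$ into finitely many classes, so by pigeonhole some color $c_{i}$ has an infinite class. Let
\begin{equation*}
X_{i+1}:=\{y\in X_{i}\setminus \{x_{i}\}:\chi (\{x_{i},y\})=c_{i}\}.
\end{equation*}
Then $X_{i+1}$ is infinite, so the recursion continues. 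Every element of $X_{i+1}$ exceeds $x_{i}$, hence $x_{i+1}>x_{i}$.

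\textbf{Step 2: the color of a pair depends only on its smaller index.} Let $i<j$. By construction $x_{j}\in X_{j}\subset X_{i+1}$, so $\chi (\{x_{i},x_{j}\})=c_{i}$.

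\textbf{Step 3: a second pigeonhole.} The map $i\mapsto c_{i}$ sends $\mathbb{N}_{0}$ into the finite set $C$. Hence there are an infinite set $I\subset \mathbb{N}_{0}$ and a color $c\in C$ with $c_{i}=c$ for all $i\in I$. Put $H:=\{x_{i}:i\in I\}$. This set is infinite because the $x_{i}$ are strictly increasing. Any two-element subset of $H$ has the form $\{x_{i},x_{j}\}$ with $i<j$ both in $I$, and by Step 2 its color is $c_{i}=c$. So $H$ is the required infinite monochromatic subset.

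There is no real obstacle here. The only point needing care is Step 1: the pigeonhole principle must be applied to an infinite set at every stage, so that each $X_{i+1}$ stays infinite and the recursion never stops.
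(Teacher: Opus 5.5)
Your proof is correct and complete. It is the standard diagonal argument: nested infinite sets $X_{i+1}\subset X_i$ on which the color of $\{x_i,y\}$ is fixed, so the color of $\{x_i,x_j\}$ for $i<j$ depends only on $i$, followed by a second pigeonhole on the sequence $(c_i)$. The paper gives no proof of this statement. It quotes it from Ramsey as a black box and uses it only for pairs with finitely many colors, in the proof of Lemma~\ref{lem:columns}, so your argument supplies exactly what is needed there.
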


\bigskip

\section{The lower bound for recurrent aperiodic words}

\label{sec:lower-bound-RAW}

In this section, we shall prove Theorem \ref{thm:main-lower-bound}, which
gives the lower bound for the Abelian maximal pattern complexity of
recurrent aperiodic words.

\subsection{Superstationarity and tail-freezing}

\ 

We recall the following three selection lemmas from \cite{KWZ2015}, which
will be used below. We also recall the notion of superstationarity from \cite%
{Kamae2011}.

\begin{lemma}[\protect\cite{KWZ2015}]
\label{lem:tail-selection}Let $\alpha \in \mathbb{A}^{\mathbb{N}_{0}}$ be
recurrent. Then there exists an infinite subset $\mathcal{N}%
=\{N_{0}<N_{1}<N_{2}<\cdots \}\subset \mathbb{N}_{0}$ such that for every $%
t,q\in \mathbb{N}_{0}$, 
\begin{equation}
\alpha _{t+N_{0}}\alpha _{t+N_{1}}\cdots \alpha _{t+N_{q-1}}\alpha
_{t+N_{q}}^{\infty }\in \mathcal{O}(\alpha )[\mathcal{N}].
\label{eq:tail-selection}
\end{equation}
\end{lemma}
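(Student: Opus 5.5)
We construct $\mathcal{N}$ by a diagonal recursion that uses recurrence to make the tail freeze at each stage. The key reformulation: by (\ref{eq:rel}) and closedness of $\mathcal{O}(\alpha)$, condition (\ref{eq:tail-selection}) holds once, for every $t,q$ and every $M>q$, there is $n\in\mathbb{N}_0$ with
\begin{equation*}
\alpha_{n+N_i}=\alpha_{t+N_i}\ (0\le i<q),\qquad \alpha_{n+N_i}=\alpha_{t+N_q}\ (q\le i\le M).
\end{equation*}
Indeed, the points $\mathsf{T}^{n}\alpha$ so obtained, restricted to $\mathcal{N}$, converge as $M\to\infty$ to $\alpha_{t+N_0}\cdots\alpha_{t+N_{q-1}}\alpha_{t+N_q}^\infty$. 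A limit point $\beta\in\mathcal{O}(\alpha)$ then satisfies $\beta[\mathcal{N}]$ equal to this word.

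The construction uses positions arbitrarily far to the right where a finite block reappears. Put $N_0=0$. Suppose $N_0<\dots<N_j$ are chosen. The next element $N_{j+1}$ must make every finite word already present reappear, with its last symbol copied onto the new coordinate. Concretely, for each $t\le j$ (after reindexing, a finite window $t\in[0,T_j]$ with $T_j\to\infty$) and each $q\le j$, we want a shift $n$ that reproduces the prefix pattern at coordinates $N_0,\dots,N_{q-1}$ and copies $\alpha_{t+N_q}$ onto $N_q,\dots,N_j,N_{j+1}$.

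The standard trick is to choose $N_{j+1}$ as a return time of a long prefix. Let $w=\alpha_0\cdots\alpha_{L}$ with $L$ large, exceeding $T_j+N_j$. Recurrence gives infinitely many $r$ with $\alpha_{r}\cdots\alpha_{r+L}=w$. Taking $N_{j+1}-N_i$ to be such return times produces $\alpha_{t+N_{j+1}}=\alpha_{t+N_i}$. Using this, we require $N_{j+1}-N_j$ to be a return time of the block $\alpha_0\cdots\alpha_{L}$, and also to satisfy the corresponding condition relative to each shifted witness $n$ used at earlier stages. Since only finitely many constraints are imposed at each stage, and each is of the form ``$\alpha$ at $n'+N_{j+1}$ equals a prescribed letter already seen at $n'+N_j$'', we pick $N_{j+1}=N_j+r$. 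Here $r$ is a return time of a prefix long enough to cover $\max(\text{witnesses})+N_j$.

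The invariant we maintain by induction on $j$ is the following. For all $t\le T_j$ and $q\le j$ there is a witness $n=n(t,q,j)$ with $\alpha_{n+N_i}=\alpha_{t+N_i}$ for $i<q$ and $\alpha_{n+N_i}=\alpha_{t+N_q}$ for $q\le i\le j$. We extend it to $j+1$ by keeping the same witnesses. Choosing $N_{j+1}-N_j$ as a return time of $\alpha_0\cdots\alpha_{L}$ with $L\ge \max n+N_j$ gives $\alpha_{n+N_{j+1}}=\alpha_{n+N_j}$, which equals $\alpha_{t+N_q}$ as needed. For the new case $q=j+1$, we take the witness $n=t$ directly, together with $\alpha_{t+N_{j+1}}=\alpha_{t+N_j}$. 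That last equality is wrong in general for $q=j+1$, since the required value there is $\alpha_{t+N_{j+1}}$ itself, so $n=t$ works trivially.

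The main obstacle is the bookkeeping that keeps witnesses fixed while the constraint set grows. It is resolved because the witnesses never need to change: each stage only extends patterns by copying the value at $N_j$ to $N_{j+1}$, and a single sufficiently long prefix return handles all finitely many witnesses simultaneously. Letting $T_j\to\infty$ covers all $t$, and the reformulation above finishes the proof.
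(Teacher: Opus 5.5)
Your inductive invariant is not preserved once the window $[0,T_j]$ grows. At stage $j+1$ you keep the old witnesses for pairs $(t,q)$ with $t\le T_j$ and $q\le j$, and you use $n=t$ for $q=j+1$. The pairs $(t,q)$ with $T_j<t\le T_{j+1}$ and $q\le j$ never receive a witness, and letting $T_j\to\infty$ does not supply one.

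Such a witness would be an $n$ with $\alpha_{n+N_i}=\alpha_{t+N_i}$ $(i<q)$ and $\alpha_{n+N_q}=\cdots=\alpha_{n+N_{j+1}}=\alpha_{t+N_q}$. Here $N_1,\dots,N_j$ are already fixed by return times that only controlled an initial segment not reaching $t$, and in general no such $n$ exists.

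For example, let $y$ be a recurrent word over $\{a,b\}$ that contains $b$ and begins with $a^{K}$, and put $\alpha_{2n}=a$, $\alpha_{2n+1}=y_n$. Then $\alpha$ is recurrent and $b$ occurs only at odd positions. If $T_0\le L<2K$, the block $\alpha_0\cdots\alpha_L=a^{L+1}$ recurs at infinitely many odd positions, so your rule allows an odd $N_1$. Then no $n$ satisfies $\alpha_n=\alpha_{n+N_1}=b$. Hence $b^{\infty}\notin\mathcal{O}(\alpha)[\mathcal{N}]$, although $b=\alpha_{t+N_0}$ for suitable $t$, and (\ref{eq:tail-selection}) fails at $q=0$ however the later $N_j$ are chosen.

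The missing idea is a finiteness observation. The witness needed for $(t,q)$ depends only on the word $\alpha[t+\{N_0,\dots,N_q\}]$, and $\mathcal{F}_\alpha(\{N_0,\dots,N_q\})$ is finite. So, once $N_q$ is fixed, choose $T_q\ge T_{q-1}$ such that every $\{N_0,\dots,N_q\}$-factor of $\alpha$ occurs at some starting position $t'\le T_q$. Then let $N_{q+1}-N_q\ge 1$ be a return time of $\alpha_0\cdots\alpha_{T_q+N_q}$; this is where recurrence is used.

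Now take any $(t,q)$ and choose $t'\le T_q$ with $\alpha[t'+\{N_0,\dots,N_q\}]=\alpha[t+\{N_0,\dots,N_q\}]$. Since $t'\le T_i$ for all $i\ge q$, you get $\alpha_{t'+N_{i+1}}=\alpha_{t'+N_i}$ for all $i\ge q$. Hence $(\mathsf{T}^{t'}\alpha)[\mathcal{N}]=\alpha_{t+N_0}\cdots\alpha_{t+N_{q-1}}\alpha_{t+N_q}^{\infty}$ exactly.

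With this choice of $T_q$ your return-time scheme goes through. Your compactness reformulation also becomes unnecessary, because the target word is attained by a point of the orbit itself.
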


\begin{lemma}[\protect\cite{KWZ2015}]
\label{lem:subsequence-tail}Let $\mathcal{N}=\{N_{0}<N_{1}<N_{2}<\cdots
\}\subset \mathbb{N}_{0}$ satisfy condition (\ref{eq:tail-selection}), and
let $\mathcal{N}^{\prime }$ be any infinite subset of $\mathcal{N}$. Then $%
\mathcal{N}^{\prime }$ also satisfies condition (\ref{eq:tail-selection}).
\end{lemma}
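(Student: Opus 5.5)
The claim is that any infinite $\mathcal{N}'\subset\mathcal{N}$ again satisfies (\ref{eq:tail-selection}). The plan is to derive this from (\ref{eq:tail-selection}) for $\mathcal{N}$ itself, applied to a suitably chosen index $q$. Combining that with the fact that $\mathcal{O}(\alpha)$ is closed and shift-invariant should let us restrict to the coordinates indexed by $\mathcal{N}'$.

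Write $\mathcal{N}'=\{N_{j_0}<N_{j_1}<\cdots\}$ with $j_0<j_1<\cdots$, and fix $t,q\in\mathbb{N}_0$. The target word is
\[
w' := \alpha_{t+N_{j_0}}\cdots\alpha_{t+N_{j_{q-1}}}\alpha_{t+N_{j_q}}^{\infty}.
\]
We must show $w'\in\mathcal{O}(\alpha)[\mathcal{N}']$, i.e.\ there is $\beta\in\mathcal{O}(\alpha)$ with $\beta_{N_{j_i}}=\alpha_{t+N_{j_i}}$ for $i<q$ and $\beta_{N_{j_i}}=\alpha_{t+N_{j_q}}$ for $i\ge q$.

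\textbf{Main step.} Apply (\ref{eq:tail-selection}) for $\mathcal{N}$ with the same $t$ and with index $j_q$ in place of $q$. This produces $\beta\in\mathcal{O}(\alpha)$ with
\[
\beta[\mathcal{N}]=\alpha_{t+N_0}\alpha_{t+N_1}\cdots\alpha_{t+N_{j_q-1}}\alpha_{t+N_{j_q}}^{\infty}.
\]
In other words, $\beta_{N_n}=\alpha_{t+N_n}$ for $n<j_q$, and $\beta_{N_n}=\alpha_{t+N_{j_q}}$ for $n\ge j_q$.

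Now restrict $\beta$ to the indices $n=j_i$. For $i<q$ we have $j_i<j_q$, so $\beta_{N_{j_i}}=\alpha_{t+N_{j_i}}$. For $i\ge q$ we have $j_i\ge j_q$, so $\beta_{N_{j_i}}=\alpha_{t+N_{j_q}}$. Hence $\beta[\mathcal{N}']=w'$, which is exactly what is required.

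No closure or limit argument is needed: the whole proof is this reindexing, and the only point to check is that $j_i<j_q$ holds if and only if $i<q$, which is immediate from the strict monotonicity of $(j_i)$. I do not expect any real obstacle; the one place to be careful is the convention that the frozen tail letter must be $\alpha_{t+N_{j_q}}$, the letter at the $q$th element of $\mathcal{N}'$. Choosing the index $j_q$ when applying (\ref{eq:tail-selection}) for $\mathcal{N}$ is precisely what guarantees this.
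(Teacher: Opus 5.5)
Your proof is correct: applying (\ref{eq:tail-selection}) for $\mathcal{N}$ with the same $t$ and index $j_q$, then restricting the resulting $\beta\in\mathcal{O}(\alpha)$ to $\mathcal{N}'$, gives exactly the required word. As you note at the end, closedness and shift-invariance of $\mathcal{O}(\alpha)$ are never used, so drop that remark from your opening plan. The paper does not prove this lemma but cites it from \cite{KWZ2015}; your reindexing is the same device the paper uses for the tail-freezing part of Lemma \ref{lem:tail-hered}, where one freezes at $N_q$ and then restricts to $\mathcal{N}$.
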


Let $\Omega \subset \mathbb{A}^{\mathbb{N}_{0}}$ be nonempty and let $j\geq
1 $. We say that $\Omega $ is $j$-\emph{superstationary }if $\Omega \lbrack
S]=\Omega \lbrack S^{\prime }]$ for every $S,S^{\prime }\in \Sigma _{j}(%
\mathbb{N}_{0})$.

\begin{lemma}[\protect\cite{KWZ2015}]
\label{lem:superstationary}Let $\Omega \subset \mathbb{A}^{\mathbb{N}_{0}}$
be nonempty and let $\mathcal{N}\subset \mathbb{N}_{0}$ be an infinite
subset. Then for every positive integer $k,$ there exists an infinite subset 
$\mathcal{N}^{\prime }$ of $\mathcal{N}$ (depending on $k$) such that $%
\Omega \lbrack \mathcal{N}^{\prime }]$ is $j$-superstationary for every $%
1\leq j\leq k$.
\end{lemma}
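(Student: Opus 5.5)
We prove the lemma by coloring finite subsets of $\mathcal{N}$ according to the set of words that $\Omega$ produces on them, and then extracting homogeneous infinite subsets by Ramsey's theorem.

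\emph{Reduction to a coloring.} Write $\mathcal{N}'=\{n_0<n_1<\cdots\}$. By the definition of $\beta[\mathcal{I}]$, for $S=\{s_1<\cdots<s_j\}\in\Sigma_j(\mathbb{N}_0)$ we have $(\beta[\mathcal{N}'])[S]=\beta_{n_{s_1}}\cdots\beta_{n_{s_j}}=\beta[T]$, where $T=\{n_{s_1}<\cdots<n_{s_j}\}$. Hence
\begin{equation*}
\Omega[\mathcal{N}'][S]=\Omega[T],
\end{equation*}
and $S\mapsto T$ is a bijection between $\Sigma_j(\mathbb{N}_0)$ and the $j$-element subsets of $\mathcal{N}'$. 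So $\Omega[\mathcal{N}']$ is $j$-superstationary if and only if the map $T\mapsto\Omega[T]$ is constant on $j$-element subsets of $\mathcal{N}'$.

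\emph{Coloring.} Assign to each $j$-element subset $T\subset\mathcal{N}$ the color $\Omega[T]\subset\mathbb{A}^j$. There are at most $2^{(\sharp\mathbb{A})^j}$ colors, so this is a finite coloring.

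\emph{Extraction.} We proceed inductively in $j=1,\dots,k$. Set $\mathcal{N}_0:=\mathcal{N}$. Given an infinite $\mathcal{N}_{j-1}$, apply the infinite Ramsey theorem for $j$-element subsets to the coloring restricted to $\mathcal{N}_{j-1}$. This yields an infinite $\mathcal{N}_j\subset\mathcal{N}_{j-1}$ all of whose $j$-subsets have the same color, so $\Omega[\mathcal{N}_j]$ is $j$-superstationary.

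Homogeneity is inherited by infinite subsets, because the $j$-subsets of a subset are among the $j$-subsets of the larger set. Therefore $\mathcal{N}':=\mathcal{N}_k$ is $j$-superstationary for every $1\le j\le k$. For $j=1$ the infinite pigeonhole principle suffices in place of Ramsey's theorem.

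\emph{Main obstacle.} Theorem~\ref{thm:Ramsey} is stated only for two-element subsets, whereas the extraction step needs the $j$-subset version. I would either cite it as the standard infinite Ramsey theorem, or derive it by induction on $j$ by the usual argument. For that induction, pick the least element $a_0$ and color the $(j-1)$-subsets of the remainder by the color of $\{a_0\}\cup T$. The induction hypothesis gives an infinite homogeneous subset; pick its least element $a_1$ and repeat. This produces a sequence $a_0<a_1<\cdots$ in which the color of a $j$-set depends only on its least element. A final pigeonhole step on these least-element colors then gives an infinite homogeneous set.
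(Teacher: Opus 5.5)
Your argument is correct. The paper does not prove this lemma itself; it quotes it from \cite{KWZ2015}, with the notion of superstationarity taken from \cite{Kamae2011}. Your argument is the standard one for this selection: use the identity $\Omega[\mathcal{N}'][S]=\Omega[T]$ (the same one the paper uses in the proof of Lemma~\ref{lem:tail-hered}), colour $j$-subsets by $\Omega[T]$, and iterate the infinite Ramsey theorem for $j$-sets over $j=1,\dots,k$. You are also right that Theorem~\ref{thm:Ramsey}, as stated in the paper, covers only pairs, so the $j$-uniform version is genuinely needed, and your inductive sketch of it is correct.
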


\begin{lemma}
\label{lem:tail-closure}Let $\alpha \in \mathbb{A}^{\mathbb{N}_{0}}$, and
let $\mathcal{N}=\{N_{0}<N_{1}<N_{2}<\cdots \}\subset \mathbb{N}_{0}$ be an
infinite set satisfying (\ref{eq:tail-selection}). Write $\Omega :=\mathcal{O%
}(\alpha )[\mathcal{N}].$ Then $\Omega $ has the tail-freezing property,
i.e., $\omega _{0}\omega _{1}\cdots \omega _{q-1}\omega _{q}^{\infty }\in
\Omega $ for every $\omega \in \Omega $ and every $q\in \mathbb{N}_{0}$.
\end{lemma}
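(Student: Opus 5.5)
Fix $\omega\in\Omega$ and $q\in\mathbb{N}_0$; we must show $\omega_0\cdots\omega_{q-1}\omega_q^{\infty}\in\Omega$. Since $\Omega=\mathcal{O}(\alpha)[\mathcal{N}]$, write $\omega=\beta[\mathcal{N}]$ with $\beta\in\mathcal{O}(\alpha)$. The plan is to first treat the case where $\beta$ is a shift $\mathsf{T}^t\alpha$, where the claim is literally condition (\ref{eq:tail-selection}), and then pass to the closure by a compactness argument.

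First I will check that $\Omega$ is closed. The map $\beta\mapsto\beta[\mathcal{N}]$ is continuous from $\mathbb{A}^{\mathbb{N}_0}$ to $\mathbb{A}^{\mathbb{N}_0}$, since each output coordinate is a single input coordinate. As $\mathcal{O}(\alpha)$ is compact, its image $\Omega$ is compact and hence closed.

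Now take $\beta\in\mathcal{O}(\alpha)$. There is a sequence $t_i$ with $\mathsf{T}^{t_i}\alpha\to\beta$; if $\beta$ is itself a shift of $\alpha$, take the constant sequence. For each $i$, (\ref{eq:tail-selection}) with $t=t_i$ gives
\[
\omega^{(i)}:=\alpha_{t_i+N_0}\cdots\alpha_{t_i+N_{q-1}}\alpha_{t_i+N_q}^{\infty}\in\Omega .
\]
Set $M:=N_q$. For all large $i$, $\mathsf{T}^{t_i}\alpha$ agrees with $\beta$ on $[0,M]$, so $\alpha_{t_i+N_j}=\beta_{N_j}=\omega_j$ for $0\le j\le q$. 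For such $i$, $\omega^{(i)}=\omega_0\cdots\omega_{q-1}\omega_q^{\infty}$ exactly. Hence this word lies in $\Omega$.

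With this approach, closedness is in fact not even needed, since the approximating words are eventually equal to the target. The only point requiring care is that the orbit closure includes limit points, and this is handled by the agreement on the finite window $[0,N_q]$. I do not expect any step to be a real obstacle.
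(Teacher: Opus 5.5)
Your proof is correct and is essentially the paper's argument: the paper gets a $t$ with $\alpha_{t+N_i}=\omega_i$ for $0\le i\le q$ by citing (\ref{eq:rel}), while you derive the same $t$ directly from $\mathsf{T}^{t_i}\alpha\to\beta$ agreeing on the window $[0,N_q]$; both then apply (\ref{eq:tail-selection}). As you note yourself, the closedness paragraph is not needed.
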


\begin{proof}
Let $\omega \in \Omega $ and $q\geq 0$. Since $\omega _{0}\cdots \omega
_{q}\in \mathcal{O}(\alpha )[\{N_{0},\ldots ,N_{q}\}],$ it follows from (\ref%
{eq:rel}) that there exists$\ t\geq 0$ such that $\alpha _{t+N_{i}}=\omega
_{i}$\ $(0\leq i\leq q).$ Applying (\ref{eq:tail-selection}) with this $t$
and $q$, we obtain%
\begin{equation*}
\alpha _{t+N_{0}}\alpha _{t+N_{1}}\cdots \alpha _{t+N_{q-1}}\alpha
_{t+N_{q}}^{\infty }\in \mathcal{O}(\alpha )[\mathcal{N}]=\Omega .
\end{equation*}%
By the choice of $t$, this word is $\omega _{0}\omega _{1}\cdots \omega
_{q-1}\omega _{q}^{\infty },$ and hence $\Omega $ has the tail-freezing
property.
\end{proof}

\begin{lemma}
\label{lem:tail-hered}Let $\Omega \subset \mathbb{A}^{\mathbb{N}_{0}}$ be
nonempty and let $\mathcal{N}=\{N_{0}<N_{1}<N_{2}<\cdots \}\subset \mathbb{N}%
_{0}$ be infinite. If $\Omega $ is $k$-superstationary, then $\Omega \lbrack 
\mathcal{N}]$ is $k$-superstationary. Moreover, if $\Omega $ has the
tail-freezing property, then so does $\Omega \lbrack \mathcal{N}]$.
\end{lemma}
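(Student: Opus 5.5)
Both assertions follow directly from the definitions once we record how the operation $\beta \mapsto \beta[\mathcal{N}]$ composes with itself. Write $\mathcal{N}=\{N_{0}<N_{1}<\cdots\}$. For a finite set $S=\{s_{1}<\cdots<s_{j}\}\subset \mathbb{N}_{0}$, put $N_{S}:=\{N_{s_{1}}<\cdots<N_{s_{j}}\}$. Since $n\mapsto N_{n}$ is strictly increasing, $N_{S}$ is again a $j$-element set, listed in the same order. For every $\beta\in\mathbb{A}^{\mathbb{N}_{0}}$ we have $(\beta[\mathcal{N}])[S]=\beta[N_{S}]$ as words of length $j$, because $(\beta[\mathcal{N}])_{s_{i}}=\beta_{N_{s_{i}}}$. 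Taking this over all $\beta \in \Omega$ gives
\begin{equation*}
(\Omega\lbrack\mathcal{N}])[S]=\Omega\lbrack N_{S}].
\end{equation*}

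\emph{Superstationarity.} Let $S,S^{\prime}\in\Sigma_{k}(\mathbb{N}_{0})$. Then $N_{S},N_{S^{\prime}}\in\Sigma_{k}(\mathbb{N}_{0})$, so $k$-superstationarity of $\Omega$ gives $\Omega[N_{S}]=\Omega[N_{S^{\prime}}]$. By the identity above,
\begin{equation*}
(\Omega\lbrack\mathcal{N}])[S]=\Omega\lbrack N_{S}]=\Omega\lbrack N_{S^{\prime}}]=(\Omega\lbrack\mathcal{N}])[S^{\prime}],
\end{equation*}
so $\Omega[\mathcal{N}]$ is $k$-superstationary. The same argument works for every $j\le k$ whenever the hypothesis is available for all such $j$.

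\emph{Tail-freezing.} Let $\omega=\beta[\mathcal{N}]$ with $\beta\in\Omega$, and fix $q\geq 0$. Apply tail-freezing of $\Omega$ to $\beta$ at index $N_{q}$: the word $\gamma:=\beta_{0}\cdots\beta_{N_{q}-1}\beta_{N_{q}}^{\infty}$ lies in $\Omega$. Now read off $\gamma[\mathcal{N}]$ coordinate by coordinate:
\begin{itemize}
\item for $i<q$ we have $N_{i}<N_{q}$, so $\gamma_{N_{i}}=\beta_{N_{i}}=\omega_{i}$;
\item for $i\geq q$ we have $N_{i}\geq N_{q}$, so $\gamma_{N_{i}}=\beta_{N_{q}}=\omega_{q}$.
\end{itemize}
Hence $\gamma[\mathcal{N}]=\omega_{0}\cdots\omega_{q-1}\omega_{q}^{\infty}$, and this word belongs to $\Omega[\mathcal{N}]$ because $\gamma\in\Omega$.

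The only step that needs care is choosing the freezing index in $\Omega$: it must be $N_{q}$, not $q$. The strict monotonicity of $(N_{i})$ is exactly what guarantees that the prefix before $N_{q}$ is untouched and that every later sampled position falls inside the frozen tail.
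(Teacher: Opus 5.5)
Your proposal is correct and follows the paper's proof essentially verbatim. You use the identity $(\Omega[\mathcal{N}])[S]=\Omega[N_{S}]$ for the superstationarity part, and you apply tail-freezing of $\Omega$ at the index $N_{q}$ (not $q$) for the second part, exactly as the paper does.
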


\begin{proof}
Suppose first that $\Omega $ is $k$-superstationary. Let $S=\{s_{1}<\cdots
<s_{k}\},S^{\prime }=\{s_{1}^{\prime }<\cdots <s_{k}^{\prime }\}\in \Sigma
_{k}(\mathbb{N}_{0}).$ Define 
\begin{equation*}
\mathcal{N}(S):=\{N_{s_{1}},\ldots ,N_{s_{k}}\}\text{, }\mathcal{N}%
(S^{\prime }):=\{N_{s_{1}^{\prime }},\ldots ,N_{s_{k}^{\prime }}\}.
\end{equation*}%
Then $\Omega \lbrack \mathcal{N}][S]=\Omega \lbrack \mathcal{N}(S)]$, $%
\Omega \lbrack \mathcal{N}][S^{\prime }]=\Omega \lbrack \mathcal{N}%
(S^{\prime })].$ Since $\Omega $ is $k$-superstationary and $\sharp \mathcal{%
N}(S)=\sharp \mathcal{N}(S^{\prime })=k,$ one has $\Omega \lbrack \mathcal{N}%
(S)]=\Omega \lbrack \mathcal{N}(S^{\prime })].$ Then we have $\Omega \lbrack 
\mathcal{N}][S]=\Omega \lbrack \mathcal{N}][S^{\prime }],$ and hence $\Omega
\lbrack \mathcal{N}]$ is $k$-superstationary.

Now suppose that $\Omega $ has the tail-freezing property. Let $\xi \in
\Omega \lbrack \mathcal{N}]$ and $q\in \mathbb{N}_{0}$. Take $\omega \in
\Omega $ such that $\xi =\omega \lbrack \mathcal{N}].$ By the tail-freezing
property of $\Omega $, we have%
\begin{equation*}
\widetilde{\omega }:=\omega _{0}\omega _{1}\cdots \omega _{N_{q}-1}\omega
_{N_{q}}^{\infty }\in \Omega .
\end{equation*}%
For $i<q$, $\widetilde{\omega }_{N_{i}}=\omega _{N_{i}}=\xi _{i},$ while for 
$i\geq q$, $\widetilde{\omega }_{N_{i}}=\omega _{N_{q}}=\xi _{q}.$ Therefore
one has%
\begin{equation*}
\xi _{0}\xi _{1}\cdots \xi _{q-1}\xi _{q}^{\infty }=\widetilde{\omega }[%
\mathcal{N}]\in \Omega \lbrack \mathcal{N}].
\end{equation*}%
Thus $\Omega \lbrack \mathcal{N}]$ has the tail-freezing property.
\end{proof}

\begin{proposition}
\label{prop:simultaneous-selection} Let $\alpha $ be recurrent and let $k\in 
\mathbb{N}$. Then there exists an infinite set $\mathcal{N}\subset \mathbb{N}%
_{0}$ such that $\mathcal{O}(\alpha )[\mathcal{N}]$ is $j$-superstationary
for every $1\leq j\leq k$ and has the tail-freezing property.
\end{proposition}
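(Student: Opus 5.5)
The plan is to combine the three selection lemmas and the two hereditary lemmas in the natural order. Since $\alpha$ is recurrent, I first apply Lemma \ref{lem:tail-selection} to get an infinite set $\mathcal{N}_{1}=\{N_{0}<N_{1}<\cdots\}\subset \mathbb{N}_{0}$ satisfying (\ref{eq:tail-selection}). Next I apply Lemma \ref{lem:superstationary} with $\Omega:=\mathcal{O}(\alpha)$, the infinite set $\mathcal{N}_{1}$, and the given $k$. This yields an infinite subset $\mathcal{N}\subset \mathcal{N}_{1}$ such that $\mathcal{O}(\alpha)[\mathcal{N}]$ is $j$-superstationary for every $1\leq j\leq k$.

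It remains to check tail-freezing for this same $\mathcal{N}$. Because $\mathcal{N}$ is an infinite subset of $\mathcal{N}_{1}$, Lemma \ref{lem:subsequence-tail} shows that $\mathcal{N}$ still satisfies (\ref{eq:tail-selection}). Then Lemma \ref{lem:tail-closure}, applied to $\mathcal{N}$, gives that $\mathcal{O}(\alpha)[\mathcal{N}]$ has the tail-freezing property. Both required properties therefore hold for the single set $\mathcal{N}$.

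There is no real obstacle here; the only point needing care is the order of operations. Superstationarity must be obtained by passing to a subset of a set that already satisfies (\ref{eq:tail-selection}), because Lemma \ref{lem:subsequence-tail} makes that condition inherited by subsets. One could instead run the selection in the other order, using Lemma \ref{lem:tail-hered} to pass both properties down to $\Omega[\mathcal{N}']$. However, that would describe $\mathcal{O}(\alpha)[\mathcal{N}][\mathcal{N}']$ rather than $\mathcal{O}(\alpha)$ evaluated on a subset directly. The two agree, since the composition equals $\mathcal{O}(\alpha)$ restricted to the corresponding subset of $\mathcal{N}$, but the first route is more direct, and I would use it.
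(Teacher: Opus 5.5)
Your proposal is correct and is exactly the paper's argument: you obtain a set satisfying (\ref{eq:tail-selection}) from Lemma \ref{lem:tail-selection}, refine it inside that set via Lemma \ref{lem:superstationary}, and then recover tail-freezing through Lemmas \ref{lem:subsequence-tail} and \ref{lem:tail-closure}. The alternative ordering you sketch in your last paragraph is not needed.
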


\begin{proof}
By Lemma \ref{lem:tail-selection}, there exists an infinite subset $\mathcal{%
N}_{0}\subset \mathbb{N}_{0}$ satisfying (\ref{eq:tail-selection}). Applying
Lemma \ref{lem:superstationary} to $\mathcal{O}(\alpha )$ and $\mathcal{N}%
_{0}$, we obtain an infinite subset $\mathcal{N}\subset \mathcal{N}_{0}$
such that $\mathcal{O}(\alpha )[\mathcal{N}]$ is $j$-superstationary for
every $1\leq j\leq k$.

By Lemma \ref{lem:subsequence-tail}, the subset $\mathcal{N}$ still
satisfies condition (\ref{eq:tail-selection}). Hence Lemma \ref%
{lem:tail-closure} implies that $\mathcal{O}(\alpha )[\mathcal{N}]$ has the
tail-freezing property.
\end{proof}

\begin{lemma}
\label{lem:projection-preservation}Let $\mathbb{A}$ and $\mathbb{B}$ be
finite alphabets, and let $\varphi :\mathbb{A}\longrightarrow \mathbb{B}$ be
a map. Then for every $\alpha \in \mathbb{A}^{\mathbb{N}_{0}}$ and every
infinite set $\mathcal{N}\subset \mathbb{N}_{0}$, one has 
\begin{equation*}
\varphi (\mathcal{O}(\alpha )[\mathcal{N}])=\mathcal{O}(\varphi (\alpha ))[%
\mathcal{N}].
\end{equation*}%
Moreover, if $\Omega \subset \mathbb{A}^{\mathbb{N}_{0}}$ is $k$%
-superstationary, then $\varphi (\Omega )$ is $k$-superstationary; if $%
\Omega $ has the tail-freezing property, then so does $\varphi (\Omega )$.
\end{lemma}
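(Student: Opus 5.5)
The plan is to prove the three assertions of Lemma \ref{lem:projection-preservation} in order, each by unwinding definitions. Throughout, the only tool needed is that $\varphi $ acts coordinatewise and therefore commutes with both the shift $\mathsf{T}$ and coordinate restriction $\beta \mapsto \beta \lbrack \mathcal{I}]$.

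\emph{The identity.} Since $\varphi $ acts coordinatewise, $\varphi (\mathsf{T}\beta )=\mathsf{T}\varphi (\beta )$ and $\varphi (\beta \lbrack \mathcal{N}])=\varphi (\beta )[\mathcal{N}]$ for every $\beta \in \mathbb{A}^{\mathbb{N}_{0}}$. Also, $\varphi :\mathbb{A}^{\mathbb{N}_{0}}\rightarrow \mathbb{B}^{\mathbb{N}_{0}}$ is continuous.

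\begin{itemize}
\item[\textbullet] For ``$\subset $'': $\varphi $ maps the orbit $\{\mathsf{T}^{n}\alpha \}$ onto the orbit $\{\mathsf{T}^{n}\varphi (\alpha )\}$. By continuity, $\varphi (\mathcal{O}(\alpha ))\subset \mathcal{O}(\varphi (\alpha ))$.
\item[\textbullet] For ``$\supset $'': $\mathcal{O}(\alpha )$ is compact, so $\varphi (\mathcal{O}(\alpha ))$ is a compact, hence closed, set containing the orbit of $\varphi (\alpha )$. It therefore contains $\mathcal{O}(\varphi (\alpha ))$.
\end{itemize}

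Thus $\varphi (\mathcal{O}(\alpha ))=\mathcal{O}(\varphi (\alpha ))$. Applying the restriction to $\mathcal{N}$, which commutes with $\varphi $, gives $\varphi (\mathcal{O}(\alpha )[\mathcal{N}])=\mathcal{O}(\varphi (\alpha ))[\mathcal{N}]$. The compactness step is the only point requiring any care, and it is the step I expect to be the (mild) obstacle.

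\emph{Superstationarity.} For $S\in \Sigma _{k}(\mathbb{N}_{0})$ we have $\varphi (\Omega )[S]=\varphi (\Omega \lbrack S])$, where $\varphi $ acts on the finite words. If $\Omega $ is $k$-superstationary and $S,S^{\prime }\in \Sigma _{k}(\mathbb{N}_{0})$, then $\Omega \lbrack S]=\Omega \lbrack S^{\prime }]$. Applying $\varphi $ gives $\varphi (\Omega )[S]=\varphi (\Omega )[S^{\prime }]$, so $\varphi (\Omega )$ is $k$-superstationary.

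\emph{Tail-freezing.} Let $\xi \in \varphi (\Omega )$ and $q\in \mathbb{N}_{0}$, and choose $\omega \in \Omega $ with $\xi =\varphi (\omega )$. By the tail-freezing property of $\Omega $, the word $\omega _{0}\cdots \omega _{q-1}\omega _{q}^{\infty }$ lies in $\Omega $. Its image under $\varphi $ is $\xi _{0}\cdots \xi _{q-1}\xi _{q}^{\infty }$, which therefore lies in $\varphi (\Omega )$. Hence $\varphi (\Omega )$ has the tail-freezing property.
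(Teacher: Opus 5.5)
Your proposal is correct and matches the paper's proof essentially step for step. You use continuity and shift-equivariance for $\varphi(\mathcal{O}(\alpha))\subset\mathcal{O}(\varphi(\alpha))$ and compactness of $\mathcal{O}(\alpha)$ for the reverse inclusion, and the commutation of $\varphi$ with coordinate restriction gives both the superstationarity and the tail-freezing claims.
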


\begin{proof}
Since the coordinatewise extension of $\varphi $ is continuous and satisfies 
$\varphi \circ \mathsf{T}=\mathsf{T}\circ \varphi ,$ one has $\varphi (%
\mathcal{O}(\alpha ))\subset \mathcal{O}(\varphi (\alpha )).$ Conversely, $%
\mathcal{O}(\alpha )$ is compact, so $\varphi (\mathcal{O}(\alpha ))$ is
compact and therefore closed. Moreover, the set $\varphi (\mathcal{O}(\alpha
))$ contains 
\begin{equation*}
\{\varphi (\mathsf{T}^{n}\alpha ):n\in \mathbb{N}_{0}\}=\{\mathsf{T}%
^{n}\varphi (\alpha ):n\in \mathbb{N}_{0}\}.
\end{equation*}%
Thus $\mathcal{O}(\varphi (\alpha ))\subset \varphi (\mathcal{O}(\alpha )),$
and consequently $\varphi (\mathcal{O}(\alpha ))=\mathcal{O}(\varphi (\alpha
)).$ Since $\varphi $ commutes with restriction to $\mathcal{N}$, we have $%
\varphi (\mathcal{O}(\alpha )[\mathcal{N}])=\mathcal{O}(\varphi (\alpha ))[%
\mathcal{N}].$

Now suppose that $\Omega $ is $k$-superstationary. Then for $P,Q\in \Sigma
_{k}(\mathbb{N}_{0})$, one has $\Omega \lbrack P]=\Omega \lbrack Q].$ Since
coordinatewise application of $\varphi $ commutes with restriction, it
follows that 
\begin{equation*}
\varphi (\Omega )[P]=\varphi (\Omega \lbrack P])=\varphi (\Omega \lbrack
Q])=\varphi (\Omega )[Q].
\end{equation*}
Hence $\varphi (\Omega )$ is $k$-superstationary.

Finally, suppose that $\Omega $ has the tail-freezing property. Let $\xi \in
\varphi (\Omega )$ and $q\in \mathbb{N}_{0}$. Choose $\omega \in \Omega $
such that $\xi =\varphi (\omega ).$ Then $\omega _{0}\omega _{1}\cdots
\omega _{q-1}\omega _{q}^{\infty }\in \Omega .$ Applying $\varphi $
coordinatewise gives $\xi _{0}\xi _{1}\cdots \xi _{q-1}\xi _{q}^{\infty }\in
\varphi (\Omega ).$ Therefore $\varphi (\Omega )$ has the tail-freezing
property.
\end{proof}

\subsection{Proof of Theorem \protect\ref{thm:main-lower-bound}}

\begin{lemma}
\label{lem:binary-two-coordinate} Let $x\in \{0,1\}^{\mathbb{N}_{0}}$ be
aperiodic, and let $\mathcal{N}=\{N_{0}<N_{1}<N_{2}<\cdots \}\subset \mathbb{%
N}_{0}$ be infinite. Set $\Omega :=\mathcal{O}(x)[\mathcal{N}].$ Then for
every $i,j\in \mathbb{N}_{0}$ with $i<j$, $01,10\in \Omega \lbrack \{i,j\}].$
\end{lemma}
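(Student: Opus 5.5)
The plan is to reduce the statement to a monotonicity property along residue classes, and then contradict aperiodicity. First, by (\ref{eq:rel}) applied to the finite set $P=\{N_i,N_j\}$, one has
\begin{equation*}
\Omega[\{i,j\}]=\mathcal{O}(x)[\{N_i,N_j\}]=\{x_{n+N_i}x_{n+N_j}:n\in\mathbb{N}_0\}.
\end{equation*}
So it suffices to show that both $01$ and $10$ occur as $x_{n+N_i}x_{n+N_j}$ for suitable $n$. Put $d:=N_j-N_i\geq 1$ and substitute $m:=n+N_i$. The claim then becomes: there exist $m,m'\geq N_i$ with $(x_m,x_{m+d})=(0,1)$ and $(x_{m'},x_{m'+d})=(1,0)$.

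I argue by contradiction for $01$; the case $10$ is symmetric, obtained by exchanging the roles of $0$ and $1$. Suppose $01\notin\Omega[\{i,j\}]$. Then for every $m\geq N_i$ it is not the case that $x_m=0$ and $x_{m+d}=1$, which means $x_{m+d}\leq x_m$ as integers. Fix a residue $r\in\{N_i,\ldots,N_i+d-1\}$. The sequence $t\mapsto x_{r+td}$ $(t\geq 0)$ is then nonincreasing in $\{0,1\}$, so it is eventually constant, say from some $t_r$ on.

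Let $T:=\max_r t_r$ and $M:=N_i+Td$. For every $m\geq M$, the indices $m$ and $m+d$ lie in the same residue class beyond its stabilization point, so $x_{m+d}=x_m$. Thus $x$ is eventually periodic with period $d$, contradicting the aperiodicity of $x$. The same argument with $x_{m+d}\geq x_m$ handles $10$.

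There is no real obstacle here. The only points needing care are the translation from $\Omega[\{i,j\}]$ to pairs $(x_m,x_{m+d})$ via (\ref{eq:rel}), and the observation that only shifts $m\geq N_i$ are available; this is harmless because eventual periodicity only concerns tails.
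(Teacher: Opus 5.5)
Your proof is correct and follows essentially the same route as the paper. Both arguments use (\ref{eq:rel}) to turn the missing pair into a forbidden transition $(x_m,x_{m+d})$ for all $m\geq N_i$, deduce that each residue class modulo $d$ is monotone and hence eventually constant, and conclude that $x$ is eventually $d$-periodic; the only difference is that the paper treats $10$ first and gets $01$ by symmetry.
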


\begin{proof}
Fix $i<j$ and let $d=N_{j}-N_{i}>0$. Suppose first that $10\notin \Omega
\lbrack \{i,j\}]$. Then (\ref{eq:rel}) implies that $%
(x_{n+N_{i}},x_{n+N_{j}})\neq (1,0)$\ for all $n\in \mathbb{N}_{0},$
equivalently, 
\begin{equation}
x_{t}=1\Longrightarrow x_{t+d}=1\text{\ }(t\geq N_{i}).  \label{eq:no10}
\end{equation}%
For each $h\in \{0,\ldots ,d-1\}$, consider the residue word $%
(x_{h+md})_{m\in \mathbb{N}_{0}}$ from the first index $h+md\geq N_{i}$
onward. By (\ref{eq:no10}), this word is eventually constant. Since there
are only finitely many residue classes modulo $d$, there exists $t_{0}\in 
\mathbb{N}_{0}$ such that $x_{t+d}=x_{t}$ for all $t\geq t_{0}.$ Thus $x$ is
eventually periodic, a contradiction. Therefore $10\in \Omega \lbrack
\{i,j\}].$

The same argument, with $0$ and $1$ interchanged, gives $01\in \Omega
\lbrack \{i,j\}].$
\end{proof}

\begin{lemma}
\label{lem:binary}Let $x\in \{0,1\}^{\mathbb{N}_{0}}$ be aperiodic, and let $%
\mathcal{N}\subset \mathbb{N}_{0}$ be infinite. Write $\Omega :=\mathcal{O}%
(x)[\mathcal{N}].$ Suppose that $k\in \mathbb{N},$ $\Omega $ is $k$%
-superstationary and has the tail-freezing property. Then $\mathcal{\sharp }%
\Psi (\Omega \lbrack \{0,1,\ldots ,k-1\}])=k+1.$
\end{lemma}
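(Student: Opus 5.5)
Since $\Omega[\{0,\ldots,k-1\}]$ consists of binary words of length $k$, the number of Parikh vectors is at most $k+1$. It therefore suffices to show that for every $c\in\{0,1,\ldots,k\}$ some $\omega\in\Omega$ has exactly $c$ ones among $\omega_0,\ldots,\omega_{k-1}$. The plan is to build, for each $c$, a word of the form $1^c0^{\infty}$ or $0^{k-c}1^{\infty}$ inside $\Omega$ (suitably restricted), using tail-freezing to create the constant tail and superstationarity to move patterns to the front.

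First, the two extreme values. By Lemma \ref{lem:binary-two-coordinate}, $01\in\Omega[\{0,1\}]$. Take $\omega\in\Omega$ with $\omega_0=0$ and $\omega_1=1$, and freeze at $q=0$ and $q=1$ via the tail-freezing property. This gives $0^\infty\in\Omega$, so $c=0$ occurs, and $01^\infty\in\Omega$. Symmetrically, $10\in\Omega[\{0,1\}]$ gives $1^\infty\in\Omega$ (so $c=k$ occurs) and $10^\infty\in\Omega$.

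For intermediate $c$ with $1\le c\le k-1$, we aim to show $1^c0^{k-c}\in\Omega[\{0,\ldots,k-1\}]$, which I will do by induction on $c$. The base case $c=1$ follows from $10^\infty\in\Omega$. For the inductive step, suppose $1^{c}0^{k-c}\in\Omega[\{0,\ldots,k-1\}]$ with $c+1\le k-1$.
\begin{enumerate}
\item Take $\omega\in\Omega$ whose first $k$ coordinates are $1^c0^{k-c}$, and freeze at $q=c$ to get $1^c0^\infty\in\Omega$.
\item By $k$-superstationarity, the pattern $1^c0^{k-c}$ appears on every $k$-set of coordinates. In particular, choosing the set $\{1,\ldots,c\}\cup\{\text{far coordinates}\}$ only tells us about that set, so the freedom at coordinate $0$ still has to be produced separately.
\item To get that freedom, combine with the two-coordinate lemma: superstationarity at level $j=c+1\le k$ lets one transport patterns. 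Concretely, consider $\Omega[\{0,\ldots,c\}]$, which by $(c+1)$-superstationarity equals $\Omega[S]$ for any $(c+1)$-set $S$. We want $1^{c+1}$ in it. This already follows from $1^\infty\in\Omega$. Then pick $\omega$ with $\omega_0=\cdots=\omega_c=1$ and freeze. But that only yields $1^\infty$, so this approach does not directly produce the needed pattern.
\end{enumerate}

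A cleaner route is to aim directly at words of the form $0^{k-c}1^{c}$ via prefixes. From $0^\infty$ we know that $0^{k-1}$ lies in the projection onto $\{0,\ldots,k-2\}$; by $(k-1)$-superstationarity it also lies in $\Omega[\{1,\ldots,k-1\}]$. The key needed step is then a joint statement: some $\omega\in\Omega$ has $\omega_{c'}=1$ at a chosen coordinate with only zeros before it. Once such an $\omega$ with $\omega_0=\cdots=\omega_{k-c-1}=0$ and $\omega_{k-c}=1$ is in hand, freezing at $q=k-c$ gives $0^{k-c}1^\infty$, whose count of ones among the first $k$ coordinates is exactly $c$. To produce such an $\omega$, I would use Lemma \ref{lem:binary-two-coordinate} together with superstationarity to show by induction on $r$ that $0^r1\in\Omega[\{0,\ldots,r\}]$. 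The step is to take $\omega$ with prefix $0^{r-1}1$, freeze to get $0^{r-1}1^\infty$, and then use superstationarity on $(r+1)$-sets to shift. I expect this shifting step, turning $0^{r-1}1$ into $0^r1$, to be the main obstacle. If superstationarity alone is insufficient, I would instead use the aperiodicity of $x$ (through Lemma \ref{lem:binary-two-coordinate} applied to the pair $\{r-1,r\}$ after freezing) to force the needed zero-before-one transition.
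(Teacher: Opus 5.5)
Your treatment of the extreme counts is fine: $0^\infty,1^\infty,10^\infty,01^\infty\in\Omega$ give the counts $0,1,k-1,k$. The intermediate counts are never produced, however. Both of your routes stop at the decisive step, and the statement you aim for there fixes the letter in advance. That statement is false under the hypotheses of the lemma. Take
\[
\Omega=\{1^\infty\}\cup\{1^a01^\infty:a\ge 0\}\cup\{1^a0^\infty:a\ge 0\}.
\]
This set has the following properties:
\begin{enumerate}
\item it is $j$-superstationary for every $j$;
\item it has the tail-freezing property;
\item it contains $01$ and $10$ on every pair of coordinates;
\item it equals $\mathcal{O}(x)[\mathcal{N}]$ for $\mathcal{N}=\{10^{2i}:i\ge 0\}$ and the aperiodic $x$ whose zeros are exactly the integers $10^{2b+1}+10^{2j}$ with $j\ge b\ge 0$.
\end{enumerate}
For the last point, uniqueness of base-$10$ expansions shows that each sample $(x_{t+N_i})_i$ is $1^\infty$, has exactly one zero, or equals $1^b0^\infty$ when $t=10^{2b+1}$. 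In this $\Omega$ one has $001\notin\Omega[\{0,1,2\}]$, so your induction for $0^r1\in\Omega[\{0,\ldots,r\}]$ fails already at $r=2$. Exchanging the letters removes $110$ instead, which rules out your first route through $1^c0^{k-c}$. Your fallback does not help either. Lemma \ref{lem:binary-two-coordinate} on $\{r-1,r\}$ gives a $01$ there but no control over the earlier coordinates; in the example it only returns $1^{r-1}01^\infty$.

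The missing idea is to let the data choose the letter. Apply Lemma \ref{lem:binary-two-coordinate} to the far pair $\{2k-2,2k-1\}$ to get $\omega\in\Omega$ with $\omega_{2k-2}\omega_{2k-1}=01$. Freeze at $q=2k-2$ and at $q=2k-1$ to obtain $\xi0^\infty$ and $\xi01^\infty$ in $\Omega$, where $\xi=\omega_0\cdots\omega_{2k-3}$. Since $|\xi|=2k-2$, some letter $b$ occupies a set $P$ of $k-1$ positions in $\xi$. Adjoin $2k-1$ to $P$ if $b=0$, or $2k-2$ if $b=1$; this gives a $k$-set carrying $b^{k-1}(1-b)$. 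By $k$-superstationarity, $b^{k-1}(1-b)\in\Omega[\{0,\ldots,k-1\}]$. Freezing then gives $b^{k-1}(1-b)^\infty\in\Omega$ and $b^\infty\in\Omega$. Restrict these to $Q_s=\{0,\ldots,s-1\}\cup\{k,\ldots,2k-s-1\}$ and transport once more by superstationarity. This yields $b^s(1-b)^{k-s}\in\Omega[\{0,\ldots,k-1\}]$ for every $0\le s\le k$. So a single word with a run of length $k-1$ suffices, of whichever orientation the pigeonhole supplies; you do not need a separate word for each $r$.
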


\begin{proof}
For $k=1$, Lemma \ref{lem:binary-two-coordinate} shows that both letters $0$
and $1$ occur in $\Omega \lbrack \{0\}]$. Hence the assertion follows.

Assume that $k\geq 2.$ By Lemma \ref{lem:binary-two-coordinate}, there
exists $\omega \in \Omega $ such that $\omega \lbrack \{2k-2,2k-1\}]=01$.
Write $\xi :=\omega _{0}\omega _{1}\cdots \omega _{2k-3}.$ Applying the
tail-freezing property to $\omega $ at $q=2k-2$ and $q=2k-1,$ respectively,
we obtain 
\begin{equation}
\xi 0^{\infty }\in \Omega \text{ and }\xi 01^{\infty }\in \Omega .
\label{eq:two-tails}
\end{equation}%
Since $|\xi |=2k-2$, one of the letters in $\{0,1\}$ occurs in $\xi $ at
least $k-1$ times. Let $b\in \{0,1\}$ be such a letter, and choose a set $%
P\subset \{0,\ldots ,2k-3\}$ of cardinality $k-1$ such that $\omega \lbrack
P]=b^{k-1}.$ Then (\ref{eq:two-tails}) gives 
\begin{equation*}
\left\{ 
\begin{array}{cc}
0^{k-1}1\in \Omega \lbrack P\cup \{2k-1\}], & \text{if }b=0, \\ 
1^{k-1}0\in \Omega \lbrack P\cup \{2k-2\}], & \text{if }b=1.%
\end{array}%
\right.
\end{equation*}%
Thus, in either case, there exists a $k$-pattern $Q$ such that $%
b^{k-1}(1-b)\in \Omega \lbrack Q].$ As $\Omega $ is $k$-superstationary, we
have $b^{k-1}(1-b)\in \Omega \lbrack \{0,1,\ldots ,k-1\}].$ Applying the
tail-freezing property gives%
\begin{equation}
b^{k-1}(1-b)^{\infty }\in \Omega \text{ and }b^{\infty }\in \Omega .
\label{eq:step-word}
\end{equation}

For each $s\in \{0,1,\ldots ,k\}$, define $Q_{s}:=\{0,\ldots ,s-1\}\cup
\{k,\ldots ,2k-s-1\},$ with the usual convention that either set is empty
when its lower endpoint exceeds its upper endpoint. Then $\sharp Q_{s}=k.$
For $0\leq s\leq k-1$, the first word in (\ref{eq:step-word}) gives $%
b^{s}(1-b)^{k-s}\in \Omega \lbrack Q_{s}],$ while for $s=k$, the same
conclusion follows from $b^{\infty }\in \Omega $. Since $\Omega $ is $k$%
-superstationary, we have%
\begin{equation*}
b^{s}(1-b)^{k-s}\in \Omega \lbrack \{0,1,\ldots ,k-1\}]\text{ for every }%
0\leq s\leq k.
\end{equation*}%
These $k+1$ words have pairwise distinct Parikh vectors. Therefore 
\begin{equation*}
\mathcal{\sharp }\Psi (\Omega \lbrack \{0,1,\ldots ,k-1\}])\geq k+1.
\end{equation*}%
Since a binary word of length $k$ has at most $k+1$ possible Parikh vectors,
equality follows.
\end{proof}

\begin{proof}[Proof of Theorem \protect\ref{thm:main-lower-bound}]
Let $k\in \mathbb{N}$. By Lemma \ref{lem:projection}, there exists $a\in 
\mathbb{A}_{\alpha }$ such that $\pi _{a}(\alpha )$ is aperiodic. By
Proposition \ref{prop:simultaneous-selection}, there exists an infinite set $%
\mathcal{N}=\{N_{0}<N_{1}<N_{2}<\cdots \}\subset \mathbb{N}_{0}$ such that $%
\Omega :=\mathcal{O}(\alpha )[\mathcal{N}]$ is $k$-superstationary and has
the tail-freezing property. Set $P:=\{N_{0},N_{1},\ldots ,N_{k-1}\}.$

By Lemma \ref{lem:projection-preservation}, 
\begin{equation*}
\pi _{a}(\Omega )=\mathcal{O}(\pi _{a}(\alpha ))[\mathcal{N}]
\end{equation*}%
is also $k$-superstationary and has the tail-freezing property. Since $\pi
_{a}(\alpha )$ is aperiodic, Lemma \ref{lem:binary} implies that, for each $%
0\leq h\leq k$, there exists $u_{h}\in \Omega \lbrack \{0,1,\ldots ,k-1\}]$
such that $(\Psi (u_{h}))_{a}=h.$ Set 
\begin{equation*}
v_{h}:=\Psi (u_{h})\text{ }(0\leq h\leq k),\text{\ }\mathcal{V}%
:=\{v_{0},v_{1},\ldots ,v_{k}\}.
\end{equation*}%
Since the $a$-coordinates of $v_{0},\ldots ,v_{k}$ are $0,1,\ldots ,k$,
respectively, these vectors are pairwise distinct. Hence $\sharp \mathcal{V}%
=k+1.$

For each $c\in \mathbb{A}_{\alpha }$, recurrence of $\alpha $ implies that $%
c $ occurs infinitely often in $\alpha $. Hence there exists $t_{c}\in 
\mathbb{N}_{0}$ such that $\alpha _{t_{c}+N_{0}}=c.$ Therefore the word $(%
\mathsf{T}^{t_{c}}\alpha )[\mathcal{N}]\in \Omega $ has first letter $c$.
Applying the tail-freezing property at $q=0$ gives $c^{\infty }\in \Omega .$
Consequently, one has%
\begin{equation*}
\mathcal{C}:=\{k\mathbf{e}_{c}:c\in \mathbb{A}_{\alpha }\}\subset \Psi
(\Omega \lbrack \{0,1,\ldots ,k-1\}]),
\end{equation*}%
and $\sharp \mathcal{C}=\ell .$

For $1\leq h\leq k-1$, the $a$-coordinate of $v_{h}$ is $h$, while the $a$%
-coordinate of a vector in $\mathcal{C}$ is either $0$ or $k$. Therefore $%
v_{h}\notin \mathcal{C}$\ for all $1\leq h\leq k-1,$ and hence $\sharp (%
\mathcal{V}\cap \mathcal{C})\leq 2.$ It follows that 
\begin{equation*}
\sharp \Psi (\Omega \lbrack \{0,1,\ldots ,k-1\}])\geq \sharp (\mathcal{V}%
\cup \mathcal{C})=\sharp \mathcal{V}+\sharp \mathcal{C}-\sharp (\mathcal{V}%
\cap \mathcal{C})\geq (k+1)+\ell -2=k+\ell -1.
\end{equation*}

Finally, by (\ref{eq:rel}), we obtain that $\Omega \lbrack \{0,1,\ldots
,k-1\}]=\mathcal{O}(\alpha )[P]=\mathcal{F}_{\alpha }(P).$ Therefore one has%
\begin{equation*}
p_{\alpha }^{\ast \mathrm{ab}}(k)\geq p_{\alpha }^{\mathrm{ab}}(P)=\sharp
\Psi (\mathcal{F}_{\alpha }(P))\geq k+\ell -1.
\end{equation*}
\end{proof}

Since aperiodicity is equivalent to aperiodicity by projection in the binary
case, one has the following.

\begin{corollary}[\protect\cite{KWZ2015}]
\label{cor:binary}If $x\in \{0,1\}^{\mathbb{N}_{0}}$ is recurrent and
aperiodic, then $p_{x}^{\ast \mathrm{ab}}(k)=k+1$ for every $k\in \mathbb{N}$%
.
\end{corollary}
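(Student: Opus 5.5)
The corollary has two inequalities, and I will prove them separately. The lower bound comes from Theorem \ref{thm:main-lower-bound} applied with $\ell =2$. The upper bound comes from the trivial estimate (\ref{eq:binary-upper-bound}).

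\emph{Lower bound.} Let $x\in \{0,1\}^{\mathbb{N}_{0}}$ be recurrent and aperiodic. First I check that $\sharp \mathbb{A}_{x}=2$. If only one letter occurred in $x$, then $x$ would be constant and hence periodic, which is excluded. Theorem \ref{thm:main-lower-bound} now applies with $\ell =2$ and gives $p_{x}^{\ast \mathrm{ab}}(k)\geq k+2-1=k+1$ for every $k\in \mathbb{N}$. Alternatively, one can read the lower bound off Lemma \ref{lem:binary}, by applying it to the infinite set $\mathcal{N}$ supplied by Proposition \ref{prop:simultaneous-selection}.

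\emph{Upper bound.} Every $S$-factor of $x$ is a binary word of length $k$, so its Parikh vector is determined by its number of $1$'s, which lies in $\{0,\ldots ,k\}$. Hence $p_{x}^{\mathrm{ab}}(S)\leq k+1$ for every $S\in \Sigma _{k}(\mathbb{N}_{0})$, which is exactly (\ref{eq:binary-upper-bound}). Combining the two bounds gives $p_{x}^{\ast \mathrm{ab}}(k)=k+1$.

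I do not expect a real obstacle here. The only point needing care is the nondegeneracy condition $\sharp \mathbb{A}_{x}=2$ required by Theorem \ref{thm:main-lower-bound}, and aperiodicity handles it.
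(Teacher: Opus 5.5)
Your proposal is correct and matches the paper, which obtains the corollary as the $\ell=2$ case of Theorem \ref{thm:main-lower-bound} (equivalently, of Theorem \ref{thm:KWZ2015}, since binary aperiodicity coincides with aperiodicity by projection), combined with the trivial bound (\ref{eq:binary-upper-bound}). Your check that both letters occur is the right nondegeneracy verification, and the alternative route via Lemma \ref{lem:binary} with Proposition \ref{prop:simultaneous-selection} is also sound.
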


\bigskip

\section{Recurrent aperiodic words with minimal Abelian maximal pattern
complexity}

\label{sec:stru-RAW}

In this section, we study recurrent aperiodic words of minimal Abelian
maximal pattern complexity. We first develop the basic properties of
separated interleavings and then prove the structural characterization in
Theorem \ref{thm:main-interleaving}.

\subsection{Separated interleavings}

\ 

\begin{lemma}
\label{lem:interleaving-decomposition}Let $\alpha \in \mathbb{A}^{\mathbb{N}%
_{0}}$ be an $m$-interleaving of a word $x\in \mathbb{A}^{\mathbb{N}_{0}}$,
and let $S\in \Sigma _{k}(\mathbb{N}_{0}).$ For $0\leq r\leq m-1$, let 
\begin{equation*}
S_{r}:=\{s\in S:s\equiv r\text{ }(\func{mod}m)\},\text{\ }k_{r}:=\sharp
S_{r}.
\end{equation*}%
Then 
\begin{equation*}
p_{\alpha }^{\mathrm{ab}}(S)\leq \sharp \{r:k_{r}=0\}+\sum_{\substack{ 0\leq
r\leq m-1  \\ k_{r}>0}}p_{x}^{\ast \mathrm{ab}}(k_{r}).
\end{equation*}
\end{lemma}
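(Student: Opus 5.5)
The plan is to decompose each $S$-factor's Parikh vector by residue class and bound the number of possibilities residue class by residue class. Since the Parikh vector of $\alpha[n+S]$ is the sum over $r$ of the Parikh vectors of $\alpha[n+S_r]$, it suffices to show that, as $n$ varies, the tuple $(\Psi(\alpha[n+S_r]))_r$ takes only a controlled number of values. The key observation is that for a fixed $n$, all positions $n+s$ with $s\in S_r$ lie in the same residue class $j\equiv n+r \pmod m$. Hence they all read from a single residue word $\alpha^{(j)}$. If $j\neq p$, then $\alpha[n+S_r]=c_j^{k_r}$. If $j=p$, then $\alpha[n+S_r]=x[n'+S_r']$ for a suitable translate, where $S_r'=\{(s-r)/m : s\in S_r\}$ rescaled, $\sharp S_r'=k_r$, and $n'=(n+r-p)/m$.

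Next, for each $n$ exactly one residue class $r_n$ (namely $r_n\equiv p-n \pmod m$) is mapped onto the $p$-th residue word; every other class contributes the fixed word $c_{j}^{k_r}$ with $j\equiv n+r$. So $\Psi(\alpha[n+S])$ is determined by $n \bmod m$ together with $\Psi(x[n'+S'_{r_n}])$. I would split the starting positions $n$ according to $r:=r_n$, i.e.\ by the value of $n\bmod m$. For fixed $r$, all remaining contributions are constant.
\begin{itemize}
\item If $k_r=0$, the Parikh vector is a constant, giving at most one value.
\item If $k_r>0$, the Parikh vector equals a constant plus $\Psi(x[n'+S'_r])$, where $n'$ ranges over a subset of $\mathbb{N}_0$. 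This gives at most $p_x^{\mathrm{ab}}(S'_r)\le p_x^{\ast\mathrm{ab}}(k_r)$ values.
\end{itemize}

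Summing over $r\in\{0,\dots,m-1\}$, which is in bijection with the residues of $n$, yields the stated bound. The only care needed is in the bookkeeping: one has to verify that $S'_r:=\{(s-s_r^{\min})/m\}$ is an honest $k_r$-pattern in $\mathbb{N}_0$, and that the offset $n'$ is a nonnegative integer whenever $n+s\equiv p$. That offset is $n'=(n+\min S_r-p)/m$, which is nonnegative since $n+\min S_r\ge p$ holds automatically because $n+\min S_r\equiv p$ and it is $\ge 0$. This index check is the main (mild) obstacle; beyond it the argument is a direct counting.
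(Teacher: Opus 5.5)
Your proposal is correct and follows essentially the same argument as the paper. Both proofs split the starting positions $n$ by residue modulo $m$, note that exactly one class $S_r$ (the one with $n+r\equiv p \pmod m$) reads from $x$ while every other class contributes a fixed Parikh vector, bound each residue class of $n$ by $1$ or by $p_x^{\ast\mathrm{ab}}(k_r)$, and sum using the bijection $n \bmod m \mapsto r_n$; your check that $n'\ge 0$ and that the rescaled set is a genuine $k_r$-pattern plays the role of the paper's $\varepsilon_h\in\{0,1\}$ and $L_{r_h}$ bookkeeping.
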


\begin{proof}
Let $p\in \{0,\ldots ,m-1\}$ be such that $\alpha ^{(p)}=x,$ and write $%
\alpha ^{(j)}=c_{j}^{\infty }$\ $(j\neq p).$ For $0\leq h\leq m-1$, let 
\begin{equation*}
\Xi _{h}(S):=\{\Psi (\alpha \lbrack n+S]):n\in \mathbb{N}_{0},\ n\equiv h%
\text{ }(\func{mod}m)\}.
\end{equation*}%
Since every $n\in \mathbb{N}_{0}$ belongs to exactly one residue class
modulo $m$, one has%
\begin{equation*}
p_{\alpha }^{\mathrm{ab}}(S)\leq \sum_{h=0}^{m-1}\sharp \Xi _{h}(S).
\end{equation*}

Fix $h\in \{0,\ldots ,m-1\}$, and let $r_{h}\in \{0,\ldots ,m-1\}$ be the
unique integer satisfying $h+r_{h}\equiv p\pmod m.$ For $n=mq+h$ with $q\in 
\mathbb{N}_{0},$ the coordinates in $n+S_{r_{h}}$ lie in the residue $\alpha
^{(p)}=x$, while those in $n+(S\setminus S_{r_{h}})$ lie in constant
residues of $\alpha $.

If $k_{r_{h}}=0$, then $\Psi (\alpha \lbrack n+S])$ is independent of $q$,
and hence 
\begin{equation}
\sharp \Xi _{h}(S)\leq 1.  \label{eq:mcl-P<=1}
\end{equation}

Suppose that $k_{r_{h}}>0$, and define 
\begin{equation*}
L_{r_{h}}:=\left\{ \frac{s-r_{h}}{m}:s\in S_{r_{h}}\right\} .
\end{equation*}%
Then $L_{r_{h}}\in \Sigma _{k_{r_{h}}}(\mathbb{N}_{0}).$ Since $%
h+r_{h}\equiv p\pmod m$ and $0\leq h,r_{h},p<m$, we have $\varepsilon _{h}:=%
\frac{h+r_{h}-p}{m}\in \{0,1\}.$ If $s=mt+r_{h}\in S_{r_{h}}$ and $n=mq+h$,
then $n+s=m(q+\varepsilon _{h}+t)+p,$ and hence $\alpha
_{n+s}=x_{q+\varepsilon _{h}+t}.$

The coordinates in $n+(S\setminus S_{r_{h}})$ lie in constant residues of $%
\alpha $ and therefore contribute a fixed Parikh vector $C_{h}\in \mathbb{N}%
_{0}^{\mathbb{A}}$, independent of $q$. Hence we obtain that 
\begin{equation*}
\Psi (\alpha \lbrack n+S])=C_{h}+\Psi (x[(q+\varepsilon _{h})+L_{r_{h}}]).
\end{equation*}%
Consequently, 
\begin{equation}
\sharp \Xi _{h}(S)\leq p_{x}^{\mathrm{ab}}(L_{r_{h}})\leq p_{x}^{\ast 
\mathrm{ab}}(k_{r_{h}}).  \label{eq:mcl-P<=p_k}
\end{equation}

Since the map $h\longmapsto r_{h}$ is a permutation of $\{0,\ldots ,m-1\}$,
summing the preceding estimates (\ref{eq:mcl-P<=1}) and (\ref{eq:mcl-P<=p_k}%
) gives 
\begin{equation*}
p_{\alpha }^{\mathrm{ab}}(S)\leq \sharp \{r:k_{r}=0\}+\sum_{\substack{ 0\leq
r\leq m-1  \\ k_{r}>0}}p_{x}^{\ast \mathrm{ab}}(k_{r}).
\end{equation*}
\end{proof}

\begin{proposition}
\label{prop:interleaving-upper-bound} Let $x\in \{a,b\}^{\mathbb{N}_{0}}$ be
a binary word, and let $\alpha \in \mathbb{A}^{\mathbb{N}_{0}}$ be an $m$%
-interleaving of $x$. Then $p_{\alpha }^{\ast \mathrm{ab}}(k)\leq k+m$\ for
every $k\in \mathbb{N}.$
\end{proposition}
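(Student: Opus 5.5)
The plan is to apply Lemma \ref{lem:interleaving-decomposition} directly and then bound each summand using the elementary binary upper bound (\ref{eq:binary-upper-bound}). Fix $k\in\mathbb{N}$ and $S\in\Sigma_k(\mathbb{N}_0)$. Split $S$ into its residue classes $S_r$ modulo $m$, with $k_r=\sharp S_r$, so that $\sum_{r=0}^{m-1}k_r=k$. Lemma \ref{lem:interleaving-decomposition} gives
\begin{equation*}
p_{\alpha}^{\mathrm{ab}}(S)\leq \sharp\{r:k_r=0\}+\sum_{k_r>0}p_x^{\ast\mathrm{ab}}(k_r).
\end{equation*}

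Since $x$ takes values in a two-letter alphabet $\{a,b\}$, a word of length $k_r$ over $\{a,b\}$ has at most $k_r+1$ Parikh vectors. Identifying $a,b$ with $0,1$ (a relabeling, which does not change Parikh-vector counts), (\ref{eq:binary-upper-bound}) yields $p_x^{\ast\mathrm{ab}}(k_r)\leq k_r+1$ whenever $k_r>0$. If $x$ actually uses only one letter, the bound $p_x^{\ast\mathrm{ab}}(k_r)\le 1\le k_r+1$ holds trivially.

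Substituting, each residue class $r$ contributes at most $k_r+1$ in both cases: $1=k_r+1$ when $k_r=0$, and $k_r+1$ when $k_r>0$. Summing over $r$ gives
\begin{equation*}
p_\alpha^{\mathrm{ab}}(S)\leq\sum_{r=0}^{m-1}(k_r+1)=k+m.
\end{equation*}
Taking the supremum over $S\in\Sigma_k(\mathbb{N}_0)$ gives $p_\alpha^{\ast\mathrm{ab}}(k)\le k+m$.

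There is no real obstacle here. The only care needed is to note that (\ref{eq:binary-upper-bound}) is stated for $\{0,1\}$-words, so one must observe that the relabeling $\{a,b\}\to\{0,1\}$ is a bijection of alphabets and preserves Abelian classes, or simply re-derive the bound $k_r+1$ from counting Parikh vectors.
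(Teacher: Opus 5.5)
Your proposal is correct and follows the paper's own proof essentially verbatim: split $S$ by residues mod $m$, apply Lemma~\ref{lem:interleaving-decomposition}, bound each nonempty class by $k_r+1$ using (\ref{eq:binary-upper-bound}), and sum to get $k+m$. Your extra remarks on relabeling $\{a,b\}\to\{0,1\}$ and on the one-letter case are harmless refinements of the same argument.
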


\begin{proof}
Let $S\in \Sigma _{k}(\mathbb{N}_{0}).$ For $0\leq r\leq m-1$, set 
\begin{equation*}
S_{r}:=\{s\in S:s\equiv r\text{ }(\func{mod}m)\},\text{\ }k_{r}:=\sharp
S_{r}.
\end{equation*}%
By Lemma \ref{lem:interleaving-decomposition} and (\ref%
{eq:binary-upper-bound}), one has 
\begin{equation*}
p_{\alpha }^{\mathrm{ab}}(S)\leq \sharp
\{r:k_{r}=0\}+\sum_{k_{r}>0}(k_{r}+1)=\sum_{r=0}^{m-1}k_{r}+m=k+m.
\end{equation*}%
By taking the supremum over $S\in \Sigma _{k}(\mathbb{N}_{0}),$ we have $%
p_{\alpha }^{\ast \mathrm{ab}}(k)\leq k+m.$
\end{proof}

\begin{theorem}
\label{thm:interleaving-exact} Let $x\in \{a,b\}^{\mathbb{N}_{0}}$ be a
recurrent aperiodic binary word, and let $\alpha $ be a separated $m$%
-interleaving of $x$. Then $p_{\alpha }^{\ast \mathrm{ab}}(k)=k+m$\ for
every $k\in \mathbb{N}.$
\end{theorem}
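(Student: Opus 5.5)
The plan is to exhibit, for each $k$, a single $k$-pattern $S$ with $p_{\alpha }^{\mathrm{ab}}(S)\geq k+m$. Proposition \ref{prop:interleaving-upper-bound} already gives $p_{\alpha }^{\ast \mathrm{ab}}(k)\leq k+m$, so this will prove equality. The idea is to put the whole pattern inside one residue class modulo $m$. Then, depending on the starting residue, the $S$-factor either reads $k$ letters of $x$ or reads $k$ copies of a single constant letter $c_{j}$.

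\emph{Step 1: a good pattern for $x$.} Write $\alpha ^{(p)}=x$ and $\alpha ^{(j)}=c_{j}^{\infty }$ for $j\neq p$. Since $x$ is recurrent and aperiodic over $\{a,b\}$, Corollary \ref{cor:binary} gives $p_{x}^{\ast \mathrm{ab}}(k)=k+1$. Each $p_{x}^{\mathrm{ab}}(L)$ is an integer bounded by $k+1$, so the supremum is attained. Fix $L\in \Sigma _{k}(\mathbb{N}_{0})$ with $p_{x}^{\mathrm{ab}}(L)=k+1$. Thus every Parikh vector $(k-i)\mathbf{e}_{a}+i\mathbf{e}_{b}$, $0\leq i\leq k$, occurs among $\Psi (x[q+L])$, $q\in \mathbb{N}_{0}$.

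\emph{Step 2: the pattern for $\alpha$.} Set $S:=p+mL=\{p+m\ell :\ell \in L\}\in \Sigma _{k}(\mathbb{N}_{0})$.

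For $n=mq$, we have $n+S=\{m(q+\ell )+p:\ell \in L\}$, so $\alpha \lbrack n+S]=x[q+L]$. As $q$ runs over $\mathbb{N}_{0}$, this yields all $k+1$ vectors supported on $\{a,b\}$, with no loss from the shift $\varepsilon _{h}$ appearing in Lemma \ref{lem:interleaving-decomposition}.

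For $n=mq+h$ with $1\leq h\leq m-1$, every coordinate of $n+S$ is congruent to $p+h\not\equiv p \pmod m$. Let $j$ denote this residue. Then $\alpha \lbrack n+S]=c_{j}^{k}$, with Parikh vector $k\mathbf{e}_{c_{j}}$. As $h$ ranges over $1,\ldots ,m-1$, the residue $j$ ranges over all $m-1$ indices different from $p$.

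\emph{Step 3: counting.} Separatedness says the $c_{j}$ are pairwise distinct and not in $\mathbb{A}_{x}=\{a,b\}$. Both $a$ and $b$ occur in $x$, since $x$ is aperiodic. Hence the $m-1$ vectors $k\mathbf{e}_{c_{j}}$ are distinct from one another and from the $k+1$ vectors supported on $\{a,b\}$. This gives $p_{\alpha }^{\mathrm{ab}}(S)\geq (k+1)+(m-1)=k+m$, and so $p_{\alpha }^{\ast \mathrm{ab}}(k)=k+m$.

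There is no serious obstacle. The only point needing care is Step 2: choosing the offset $p$ aligns the residue class of $S$ with the $x$-residue at starting positions $n\equiv 0$. This ensures all shifts $q\geq 0$ of $L$ are realized, which is why the full count $k+1$ from Step 1 is available.
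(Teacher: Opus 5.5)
Your proof is correct and essentially the paper's argument. The only difference is where the offset sits: the paper takes $S=mP$ and reads $x$ from starting positions $n\equiv p \pmod m$, while you put the offset into the pattern, $S=p+mL$, and read $x$ from $n\equiv 0$. The count from separatedness and the appeal to Proposition \ref{prop:interleaving-upper-bound} are the same as in the paper.
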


\begin{proof}
Let $p\in \{0,\ldots ,m-1\}$ be such that $\alpha ^{(p)}=x,$ and write $%
\alpha ^{(j)}=c_{j}^{\infty }$\ $(j\neq p).$ By Corollary \ref{cor:binary},
one has $p_{x}^{\ast \mathrm{ab}}(k)=k+1.$ Hence there exists a $k$-pattern $%
P=\{t_{1}<\cdots <t_{k}\}$ such that $p_{x}^{\mathrm{ab}}(P)=k+1.$ Set 
\begin{equation*}
S:=mP=\{mt_{1},\ldots ,mt_{k}\}.
\end{equation*}

Let $n\in \mathbb{N}_{0}$, and write uniquely 
\begin{equation*}
n=mq+j,\text{\ }q\in \mathbb{N}_{0},\text{\ }0\leq j\leq m-1.
\end{equation*}%
If $j=p$, then 
\begin{equation*}
\alpha \lbrack n+S]=\alpha \lbrack mq+p+S]=x[q+P].
\end{equation*}%
If $j\neq p$, then 
\begin{equation*}
\alpha \lbrack n+S]=c_{j}^{k}.
\end{equation*}%
Therefore we have%
\begin{equation*}
\Psi (\mathcal{F}_{\alpha }(S))=\Psi (\mathcal{F}_{x}(P))\cup \{k\mathbf{e}%
_{c_{j}}:0\leq j\leq m-1,j\neq p\}.
\end{equation*}

Since the interleaving is separated, the vectors $k\mathbf{e}_{c_{j}}$\ $%
(j\neq p)$ are pairwise distinct and do not belong to $\Psi (\mathcal{F}%
_{x}(P))$. Consequently, 
\begin{equation*}
p_{\alpha }^{\mathrm{ab}}(S)=p_{x}^{\mathrm{ab}}(P)+(m-1)=(k+1)+(m-1)=k+m.
\end{equation*}%
Thus $p_{\alpha }^{\ast \mathrm{ab}}(k)\geq k+m.$ Together with Proposition %
\ref{prop:interleaving-upper-bound}, which gives the reverse inequality, we
obtain 
\begin{equation*}
p_{\alpha }^{\ast \mathrm{ab}}(k)=k+m.
\end{equation*}
\end{proof}

\begin{lemma}
\label{lem:interleaving-recurrent-aperiodic}Let $\alpha $ be a separated $m$%
-interleaving of a binary word $x$. Then the following statements hold.

\begin{enumerate}
\item If $x$ is recurrent, then $\alpha $ is recurrent.

\item If $x$ is aperiodic, then $\alpha $ is aperiodic.
\end{enumerate}
\end{lemma}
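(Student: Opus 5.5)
Both parts are direct arguments from the definition of an $m$-interleaving. Fix $p\in\{0,\ldots,m-1\}$ with $\alpha^{(p)}=x$ and $\alpha^{(j)}=c_j^\infty$ for $j\neq p$. The key identity is $\alpha_{mq+j}=c_j$ for $j\neq p$ and $\alpha_{mq+p}=x_q$.

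\emph{Recurrence.} Let $w=\alpha_i\cdots\alpha_{i+L-1}$ be a factor of $\alpha$. Enlarge it to a factor starting at a multiple of $m$ whose length is a multiple of $m$, say $\alpha_{mq_0}\cdots\alpha_{m(q_0+r)-1}$ containing $w$. This block is completely determined by $u:=x_{q_0}\cdots x_{q_0+r-1}$, because the positions outside residue $p$ carry the fixed letters $c_j$. Since $x$ is recurrent, $u$ occurs at infinitely many positions $q$. At each such $q$, the block $\alpha_{mq}\cdots\alpha_{m(q+r)-1}$ coincides with the enlarged block. Hence $w$ occurs at the infinitely many positions $i+m(q-q_0)$.

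\emph{Aperiodicity.} Suppose instead that $\alpha_{n+P}=\alpha_n$ for all $n\geq N$. Separatedness is used here: the letters of $x$ are exactly those appearing in residue $p$, and they differ from every $c_j$. Take $q$ large enough that $mq+p\geq N$, and let $n=mq+p$, so $\alpha_n=x_q\in\mathbb{A}_x$. Then $\alpha_{n+P}\in\mathbb{A}_x$, which forces $n+P\equiv p\pmod m$. Two cases follow.
\begin{itemize}
\item If $m=1$ there is nothing to check: $\alpha=x$.
\item Otherwise, for $m\ge2$ the residue $p$ is distinguished and the argument above gives $m\mid P$. Writing $P=mP'$, we get $x_{q+P'}=\alpha_{mq+p+P}=\alpha_{mq+p}=x_q$ for all large $q$.
\end{itemize}
Either way $x$ is eventually periodic, a contradiction.

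The only subtle point is this forcing of $m\mid P$. It needs $\mathbb{A}_x\cap\{c_j\}=\varnothing$; pairwise distinctness of the $c_j$ is not needed for this part. One small check remains: $\mathbb{A}_x$ is nonempty and every letter of $\mathbb{A}_x$ actually occurs at some position $mq+p\ge N$. This holds because $x_q\in\mathbb{A}_x$ for every $q$.
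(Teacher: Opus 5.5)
Your proof is correct and follows the paper's argument. For recurrence, you translate an occurrence of the residue-$p$ subfactor of $x$ by a multiple of $m$; padding $w$ to an $m$-aligned block simply replaces the paper's two-case split. For aperiodicity, separatedness forces $m\mid P$, and eventual periodicity then descends to $x$. The one difference is that you use the residue $p$ itself as the marker, via $\mathbb{A}_x\cap\{c_j\}=\varnothing$, whereas the paper uses a constant residue $c_j$ with $j\neq p$. That is why you correctly observe that pairwise distinctness of the $c_j$ is not needed. Your closing ``small check'' is superfluous: the argument only uses $x_q\in\mathbb{A}_x$, and the justification you give there does not support the stronger claim stated in it.
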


\begin{proof}
Let $p\in \{0,\ldots ,m-1\}$ be such that $\alpha ^{(p)}=x,$ and write $%
\alpha ^{(j)}=c_{j}^{\infty }$\ $(j\neq p).$

We first prove (1). Assume that $x$ is recurrent, and let $w=\alpha
_{i}\alpha _{i+1}\cdots \alpha _{i+L-1}$ be a factor of $\alpha $.

Suppose first that no integer in $[i,i+L-1]$ is congruent to $p$ modulo $m$.
Since every residue of $\alpha $ other than $\alpha ^{(p)}$\ is constant,
one has 
\begin{equation*}
\alpha _{i+rm}\alpha _{i+rm+1}\cdots \alpha _{i+rm+L-1}=w\text{\ }(r\in 
\mathbb{N}_{0}).
\end{equation*}%
Thus $w$ occurs infinitely often in $\alpha $.

Now suppose that $[i,i+L-1]$ contains an integer congruent to $p$ modulo $m$%
. Let $q_{0}\leq q_{1}$ be such that the integers in this interval congruent
to $p$ are precisely 
\begin{equation*}
p+q_{0}m,\text{\ }p+(q_{0}+1)m,\ldots ,p+q_{1}m.
\end{equation*}%
The letters of $\alpha $ at these coordinates form the factor $%
x_{q_{0}}x_{q_{0}+1}\cdots x_{q_{1}}$ of $x$. Since $x$ is recurrent, there
exist infinitely many $t\in \mathbb{N}$ such that 
\begin{equation*}
x_{q_{0}+t}x_{q_{0}+1+t}\cdots x_{q_{1}+t}=x_{q_{0}}x_{q_{0}+1}\cdots
x_{q_{1}}.
\end{equation*}%
For each such $t$, translation by $mt$ preserves every residue class modulo $%
m$. On the residue $p$, the preceding equality preserves the corresponding
letters, while all other residues are constant. Hence 
\begin{equation*}
\alpha _{i+mt}\alpha _{i+mt+1}\cdots \alpha _{i+mt+L-1}=w.
\end{equation*}%
Thus $w$ occurs infinitely often in $\alpha $, and therefore $\alpha $ is
recurrent.

We now prove (2). If $m=1$, then $\alpha =x$, and there is nothing to prove.
Assume that $m\geq 2$ and, contrary to the assertion, that $\alpha $ is
eventually periodic with period $d\geq 1$.

Let $j\neq p$. Since the interleaving is separated, one has 
\begin{equation*}
\alpha _{n}=c_{j}\Longleftrightarrow n\equiv j\text{ }(\func{mod}m).
\end{equation*}%
For all sufficiently large $n\equiv j\pmod m$, eventual periodicity gives $%
\alpha _{n+d}=\alpha _{n}=c_{j}.$ It follows that $n+d\equiv j\pmod m,$ and
therefore $m\mid d.$ Write $d=mr$ with $r\geq 1$. Then for all sufficiently
large $q$, we have%
\begin{equation*}
x_{q+r}=\alpha _{p+m(q+r)}=\alpha _{p+mq+d}=\alpha _{p+mq}=x_{q}.
\end{equation*}%
Thus $x$ is eventually periodic, a contradiction. Hence $\alpha $ is
aperiodic.
\end{proof}

\begin{corollary}
\label{cor:sharpness}Let $\sharp \mathbb{A}=\ell \geq 2$. Then there exists
a recurrent aperiodic word $\alpha \in \mathbb{A}^{\mathbb{N}_{0}}$ such
that 
\begin{equation*}
p_{\alpha }^{\ast \mathrm{ab}}(k)=k+\ell -1
\end{equation*}%
for every $k\in \mathbb{N}$.
\end{corollary}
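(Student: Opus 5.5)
The plan is to build the word as a separated $(\ell-1)$-interleaving of a recurrent aperiodic binary word. Once that is done, the statement follows directly from Theorem \ref{thm:interleaving-exact} with $m=\ell-1$, together with Lemma \ref{lem:interleaving-recurrent-aperiodic}.

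First I fix the alphabet. Write $\mathbb{A}=\{a,b,c_{1},\ldots ,c_{\ell -2}\}$ with all letters distinct; this is possible since $\sharp \mathbb{A}=\ell \geq 2$. Next I take a recurrent aperiodic binary word $x\in \{a,b\}^{\mathbb{N}_{0}}$. A Sturmian word (a mechanical word with irrational slope) works: it is uniformly recurrent, and it is aperiodic because its factor complexity is $n+1$, so by Morse--Hedlund it is not eventually periodic. Both letters $a,b$ occur in $x$.

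Set $m:=\ell -1$. Define $\alpha $ by $\alpha ^{(0)}:=x$ and $\alpha ^{(j)}:=c_{j}^{\infty }$ for $1\leq j\leq m-1$, i.e. $\alpha _{mq}=x_{q}$ and $\alpha _{mq+j}=c_{j}$. When $\ell =2$ we have $m=1$ and $\alpha =x$. The letters $c_{j}$ are pairwise distinct and disjoint from $\mathbb{A}_{x}=\{a,b\}$, so $\alpha $ is a separated $m$-interleaving of $x$.

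By Lemma \ref{lem:interleaving-recurrent-aperiodic}, $\alpha $ is recurrent and aperiodic. By Theorem \ref{thm:interleaving-exact}, $p_{\alpha }^{\ast \mathrm{ab}}(k)=k+m=k+\ell -1$ for every $k$.

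The only point needing any care is the existence of a recurrent aperiodic binary word, which is standard. As a further check, every letter of $\mathbb{A}$ occurs in $\alpha $, so $\sharp \mathbb{A}_{\alpha }=\ell $, consistent with the sharpness of Theorem \ref{thm:main-lower-bound}.
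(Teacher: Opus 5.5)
Your proposal is correct and follows the paper's proof: take a separated $(\ell-1)$-interleaving of a recurrent aperiodic binary word, get recurrence and aperiodicity from Lemma \ref{lem:interleaving-recurrent-aperiodic}, and get $p_{\alpha}^{\ast\mathrm{ab}}(k)=k+\ell-1$ from Theorem \ref{thm:interleaving-exact}. The one addition is that you name a concrete binary word (a Sturmian word) and check the separation condition explicitly; the paper leaves both implicit.
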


\begin{proof}
Let $\alpha \in \mathbb{A}^{\mathbb{N}_{0}}$ be a separated $(\ell -1)$%
-interleaving of a recurrent aperiodic binary word. By Lemma \ref%
{lem:interleaving-recurrent-aperiodic}, the word $\alpha $ is recurrent and
aperiodic. Theorem \ref{thm:interleaving-exact} then gives $p_{\alpha
}^{\ast \mathrm{ab}}(k)=k+\ell -1$ for every $k\in \mathbb{N}$.
\end{proof}

\subsection{Structural characterization}

\ 

We now prove Theorem \ref{thm:main-interleaving}. By Theorem \ref%
{thm:interleaving-exact}, it remains to prove the following structural
statement.

\begin{theorem}
\label{thm:interleaving}Let $\alpha $ be a recurrent aperiodic word with $%
\sharp \mathbb{A}_{\alpha }=\ell \geq 2.$ If $p_{\alpha }^{\ast \mathrm{ab}%
}(k)=k+\ell -1$\ for every $k\in \mathbb{N},$ then $\alpha $ is a separated $%
(\ell -1)$-interleaving of a recurrent aperiodic binary word.
\end{theorem}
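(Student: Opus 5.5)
The plan is to extract rigidity from the equality case of the proof of Theorem \ref{thm:main-lower-bound}. Fix $k$ and the selection $\mathcal{N}$ from Proposition \ref{prop:simultaneous-selection}, with $\Omega=\mathcal{O}(\alpha)[\mathcal{N}]$. Let $a$ be any letter with $\pi_a(\alpha)$ aperiodic. The lower-bound argument yields the vectors $\mathcal{V}\cup\mathcal{C}$ inside $\Psi(\Omega[\{0,\dots,k-1\}])$, and equality $p^{\ast\mathrm{ab}}_\alpha(k)=k+\ell-1$ forces
\begin{equation*}
\Psi(\Omega[\{0,\dots,k-1\}])=\mathcal{V}\cup\mathcal{C}, \qquad \sharp(\mathcal{V}\cap\mathcal{C})=2.
\end{equation*}
Hence $v_0=k\mathbf{e}_{b}$ for some letter $b\neq a$, so $v_0,v_k\in\mathcal{C}$.

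The first goal is to show that every Parikh vector realised along this superstationary pattern is supported on $\{a,b\}$. For each $h$ there is then a unique vector with $a$-coordinate $h$, and the Abelian classes form the single chain $h\mathbf{e}_a+(k-h)\mathbf{e}_b$ together with the $\ell-2$ constant vectors $k\mathbf{e}_c$, $c\neq a,b$. Tail-freezing and superstationarity then exclude any $\omega\in\Omega$ that mixes a letter $c\notin\{a,b\}$ with any other letter within its first $k$ coordinates. Indeed, from such an $\omega$ one can build, as in Lemma \ref{lem:binary}, a vector with a $c$-coordinate strictly between $0$ and $k$, which would be an extra class. Letting $k\to\infty$ yields the key dichotomy: for every letter $c\neq a,b$ and every pair of positions selected along $\mathcal{N}$, no factor sees $c$ at one position and a different letter at the other. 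Equivalently, $\pi_c(\alpha)$ restricted to two such positions is constant.

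Next I would turn this into periodicity of each $\pi_c(\alpha)$, $c\neq a,b$. By Lemma \ref{lem:binary-two-coordinate} applied to $\pi_c(\alpha)$, if $\pi_c(\alpha)$ were aperiodic then both $01$ and $10$ would occur in $\pi_c(\Omega)[\{i,j\}]$, contradicting the dichotomy. So each such $\pi_c(\alpha)$ is eventually periodic, and by recurrence (Lemma \ref{lem:projection}) it is periodic. The same argument shows that the set of letters with aperiodic projection is exactly $\{a,b\}$. This is the point where I expect to need care, because superstationarity is only available along the selected $\mathcal{N}$ and for bounded $k$. To handle this, I would choose $\mathcal{N}$ afresh for each $k$ and only use the two-coordinate consequences, which suffice for Lemma \ref{lem:binary-two-coordinate}.

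The main obstacle is the last step: showing that the singular letters $c\neq a,b$ occupy whole residue classes modulo $m=\ell-1$, one class each, and that $a,b$ fill the remaining class. Let $q$ be a common period of the $\pi_c$, and let $R_c\subset\mathbb{Z}/q$ be the support of $c$. I would construct an explicit pattern $S$ and count classes directly.
\begin{enumerate}
\item If some $R_c$ is not a full coset structure compatible with a single residue modulo $\ell-1$, pick $S$ inside a residue class modulo $q$ complementary to the positions of $c$, shifted so as to mix $c$ with $a/b$ positions.
\item Combine this with a pattern on which the binary $\{a,b\}$ part attains $k+1$ classes (Corollary \ref{cor:binary} applied to the word read along the non-singular positions, which is recurrent and aperiodic in $\{a,b\}$).
\end{enumerate}
The aim is to exhibit more than $k+\ell-1$ Parikh vectors, for instance vectors with mixed $c$- and $a$-coordinates, or two distinct vectors with the same $a$-coordinate. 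This forces the following structure: the $\{a,b\}$-positions form one residue class modulo some $m$, and each $c$ is constant on exactly one other class. Then Proposition \ref{prop:interleaving-upper-bound} and Theorem \ref{thm:interleaving-exact} force $m=\ell-1$. Lemma \ref{lem:interleaving-recurrent-aperiodic} and recurrence of $\alpha$ give that the residue word $x$ is recurrent, and $x$ is aperiodic since $\pi_a(\alpha)$ is.
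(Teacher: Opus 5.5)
Your first stage is correct, and it is a neat alternative to Lemma \ref{lem:two-aperiodic-projections}. At $k=2$ the equality case already forces $\Psi(\Omega[\{0,1\}])=\{2\mathbf{e}_c:c\in\mathbb{A}_\alpha\}\cup\{\mathbf{e}_a+\mathbf{e}_b\}$. So by $2$-superstationarity, every word of $\Omega[\{i,j\}]$ containing a letter $c\notin\{a,b\}$ is $cc$. Lemma \ref{lem:binary-two-coordinate} (via Lemma \ref{lem:projection-preservation}) then makes each such $\pi_c(\alpha)$ eventually periodic, hence periodic by recurrence. Two small repairs are needed. First, define $b$ through this mixed vector, not through $v_0$: every $k\mathbf{e}_c$ with $c\neq a$ has $a$-coordinate $0$, so ``$v_0=k\mathbf{e}_b$'' does not identify the partner of $a$. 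Second, aperiodicity of $\pi_b(\alpha)$ follows from $\sum_c\mathbf{1}_{\{c\}}(\alpha_n)=1$, not from ``the same argument''.

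The genuine gap is the second stage, which is where almost all of the paper's work lies. Periodicity of the $\pi_c$, $c\in C$, only tells you that $\sigma=\rho(\alpha)$ is periodic with some least period $m$. The structure you aim for (the $\{a,b\}$-positions form one residue class, each $c$ occupies exactly one other class) is essentially the conclusion itself, and items (1)--(2) do not prove it. Concretely, you must show three things:
(i) exactly one $\ast$-residue modulo $m$ is aperiodic;
(ii) the other $\ast$-residues are constant;
(iii) $m=\ell-1$, i.e.\ no letter repeats among the constant residues and none of them is $a^\infty$ or $b^\infty$.

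Your item (2) fails as stated. When $\sigma$ has several $\ast$'s per period, the word read along the $\ast$-positions is not a residue word, so a pattern for it does not correspond to a fixed pattern of $\alpha$ under shifts. The paper instead codes $\alpha$ by $m$-blocks $\beta$, which is recurrent because $m$ is the least period of $\sigma$. It applies Proposition \ref{prop:simultaneous-binary-factors} to two residues at once and uses an offset $h$ with $\sigma_{p+h}\in C$, $\sigma_{i+h}\neq\sigma_{p+h}$, in the pattern $mP\cup\{h\}$ of size $2\ell-1$. This yields $3\ell-1>3\ell-2$ classes. A separate $3$-pattern argument then gives constancy.

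For (iii), your appeal to Proposition \ref{prop:interleaving-upper-bound} and Theorem \ref{thm:interleaving-exact} is circular: the former is only an upper bound, and the latter assumes separatedness. For example, take residues $x,a^\infty,c^\infty$ modulo $3$ with $\ell=3$ and $x$ recurrent aperiodic over $\{a,b\}$. This is a recurrent aperiodic, non-separated interleaving satisfying (i) and (ii). Excluding it requires a lower bound $p^{\ast\mathrm{ab}}_\alpha(K)\geq K+m$. The paper gets this from $S=\bigcup_{j<m}(mP_j+j)$ with $\sharp P_j=2^j$ and $p^{\mathrm{ab}}_x(P_j)=2^j+1$. Uniqueness of binary expansion of the $C$-coordinates $\sum_{\sigma_{t+j}=c}2^j$ makes the classes of different shifts $t$ disjoint, which gives $m\leq\ell-1$.

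Finally, Lemma \ref{lem:interleaving-recurrent-aperiodic}(1) goes from $x$ to $\alpha$, so it cannot give recurrence of $x$; in the paper that also comes from the block coding $\beta$.
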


We shall use the following simultaneous selection result.

\begin{proposition}
\label{prop:simultaneous-binary-factors}Let $\mathbb{D}$ be a finite
alphabet and let $z\in \mathbb{D}^{\mathbb{N}_{0}}$ be recurrent. Let $%
\varphi _{1},\ldots ,\varphi _{s}:\mathbb{D}\longrightarrow \{0,1\}$ be maps
such that $\varphi _{j}(z)$ is aperiodic for every $1\leq j\leq s$. Then for
every $L,q\in \mathbb{N}$, there exists an $L$-pattern $P=\{M_{0}<M_{1}<%
\cdots <M_{L-1}\}$ such that $M_{0}\equiv M_{1}\equiv \cdots \equiv M_{L-1}%
\pmod q$ and $p_{\varphi _{j}(z)}^{\mathrm{ab}}(P)=L+1$ for all $1\leq j\leq
s.$
\end{proposition}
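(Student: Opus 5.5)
The plan is to run the proof of Theorem \ref{thm:main-lower-bound} simultaneously for all the projections $\varphi_j$, with the selected set $\mathcal{N}$ forced into one residue class modulo $q$.

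\textbf{Step 1: pass to a sub-word of $z$.} Since $z$ is recurrent, Lemma \ref{lem:tail-selection} gives an infinite set $\mathcal{N}_0$ satisfying (\ref{eq:tail-selection}). By pigeonhole, some residue class modulo $q$ contains infinitely many elements of $\mathcal{N}_0$. Let $\mathcal{N}_1\subset\mathcal{N}_0$ consist of these elements. By Lemma \ref{lem:subsequence-tail}, $\mathcal{N}_1$ still satisfies (\ref{eq:tail-selection}).

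\textbf{Step 2: superstationarity and tail-freezing.} Apply Lemma \ref{lem:superstationary} to $\mathcal{O}(z)$ and $\mathcal{N}_1$ with $k=L$. This yields an infinite $\mathcal{N}=\{N_0<N_1<\cdots\}\subset\mathcal{N}_1$ such that $\Omega:=\mathcal{O}(z)[\mathcal{N}]$ is $j$-superstationary for all $j\le L$. Again by Lemma \ref{lem:subsequence-tail}, $\mathcal{N}$ satisfies (\ref{eq:tail-selection}), so $\Omega$ has the tail-freezing property by Lemma \ref{lem:tail-closure}. This is just Proposition \ref{prop:simultaneous-selection} with the residue restriction built in. The key observation is that $\mathcal{N}$ is selected once, independently of the maps $\varphi_j$.

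\textbf{Step 3: transfer to each projection.} By Lemma \ref{lem:projection-preservation}, for each $j$ we have $\varphi_j(\Omega)=\mathcal{O}(\varphi_j(z))[\mathcal{N}]$. This set is $L$-superstationary and has the tail-freezing property. Since $\varphi_j(z)$ is binary and aperiodic, Lemma \ref{lem:binary} gives
\[
\sharp\Psi\bigl(\varphi_j(\Omega)[\{0,\ldots,L-1\}]\bigr)=L+1 .
\]

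\textbf{Step 4: conclude.} Set $P:=\{N_0,\ldots,N_{L-1}\}$, whose elements are all congruent modulo $q$. By (\ref{eq:rel}),
\[
\varphi_j(\Omega)[\{0,\ldots,L-1\}]=\mathcal{O}(\varphi_j(z))[P]=\mathcal{F}_{\varphi_j(z)}(P).
\]
Hence $p^{\mathrm{ab}}_{\varphi_j(z)}(P)=L+1$ for every $j$ at once.

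\textbf{Where care is needed.} The only delicate point is that Steps 1 and 2 must be done in this order: refine $\mathcal{N}_0$ to a residue class first, then superstationarize. Both operations only pass to infinite subsets, and Lemmas \ref{lem:subsequence-tail} and \ref{lem:tail-hered} guarantee that the relevant properties survive. I do not expect a real obstacle, since everything reduces to lemmas already established.
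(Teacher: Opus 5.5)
Your proof is correct and is essentially the paper's argument. The paper applies Proposition~\ref{prop:simultaneous-selection} first and only then passes, by pigeonhole, to a subsequence in a single residue class modulo $q$; it uses Lemma~\ref{lem:tail-hered} to see that $L$-superstationarity and tail-freezing survive, and then transfers via Lemmas~\ref{lem:projection-preservation} and~\ref{lem:binary} exactly as you do. Your claim that Steps 1 and 2 must be done in your order is inaccurate: superstationarity and tail-freezing pass to infinite subsets (Lemma~\ref{lem:tail-hered}), so either order works, and your order simply uses Lemma~\ref{lem:subsequence-tail} where the paper uses Lemma~\ref{lem:tail-hered}.
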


\begin{proof}
Let $L,q\in \mathbb{N}$. By Proposition \ref{prop:simultaneous-selection},
there exists an infinite set $\mathcal{N}=\{N_{0}<N_{1}<N_{2}<\cdots
\}\subset \mathbb{N}_{0}$ such that $\Omega _{0}:=\mathcal{O}(z)[\mathcal{N}%
] $ is $L$-superstationary and has the tail-freezing property.

By the pigeonhole principle, there exists an infinite set $\mathcal{I}%
=\{i_{0}<i_{1}<i_{2}<\cdots \}\subset \mathbb{N}_{0}$ such that $%
N_{i_{0}}\equiv N_{i_{1}}\equiv N_{i_{2}}\equiv \cdots \pmod q$. Set $%
M_{r}:=N_{i_{r}}$ $(r\in \mathbb{N}_{0})$ and $\mathcal{M}:=\{M_{r}\}_{r\in 
\mathbb{N}_{0}}.$ Then $\mathcal{O}(z)[\mathcal{M}]=\Omega _{0}[\mathcal{I}%
]. $ By Lemma \ref{lem:tail-hered}, $\Omega :=\mathcal{O}(z)[\mathcal{M}]$
is $L $-superstationary and has the tail-freezing property.

Let $P:=\{M_{0},M_{1},\ldots ,M_{L-1}\}.$ For every $1\leq j\leq s$, Lemma %
\ref{lem:projection-preservation} implies that $\varphi _{j}(\Omega )=%
\mathcal{O}(\varphi _{j}(z))[\mathcal{M}],$ and $\varphi _{j}(\Omega )$ is $%
L $-superstationary and has the tail-freezing property. Since $\varphi
_{j}(z)$ is aperiodic, Lemma \ref{lem:binary} gives $\sharp \Psi \left( 
\mathcal{O}(\varphi _{j}(z))[P]\right) =L+1.$ By (\ref{eq:rel}), we obtain $%
p_{\varphi _{j}(z)}^{\mathrm{ab}}(P)=L+1.$ This holds for every $1\leq j\leq
s$, while the construction of $\mathcal{M}$ gives $M_{0}\equiv M_{1}\equiv
\cdots \equiv M_{L-1}\pmod q.$
\end{proof}

\begin{corollary}
\label{cor:simultaneous-all-counts} Let $\alpha \in \mathbb{A}^{\mathbb{N}%
_{0}}$ be recurrent, and let $\{a_{1},\ldots ,a_{s}\}\subset \mathbb{A}$ be
such that $\pi _{a_{i}}(\alpha )$ is aperiodic for every $1\leq i\leq s$.
Then for every $L,q\in \mathbb{N}$, there exists an $L$-pattern $%
P=\{M_{0}<M_{1}<\cdots <M_{L-1}\}$ such that $M_{0}\equiv M_{1}\equiv \cdots
\equiv M_{L-1}\pmod q$ and $p_{\pi _{a_{i}}(\alpha )}^{\mathrm{ab}}(P)=L+1$
for all $1\leq i\leq s.$
\end{corollary}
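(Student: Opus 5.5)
This is a direct specialization of Proposition \ref{prop:simultaneous-binary-factors}. Take $\mathbb{D}:=\mathbb{A}$, $z:=\alpha$, and for $1\leq i\leq s$ let $\varphi _{i}:\mathbb{A}\to \{0,1\}$ be the indicator map $\varphi _{i}:=\mathbf{1}_{\{a_{i}\}}$. Applied coordinatewise, this map gives $\varphi _{i}(\alpha )=\pi _{a_{i}}(\alpha )$. By hypothesis each of these words is aperiodic, and $\alpha $ is recurrent. So every hypothesis of Proposition \ref{prop:simultaneous-binary-factors} holds with the same $s$.

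For the given $L,q\in \mathbb{N}$, the proposition yields an $L$-pattern $P=\{M_{0}<\cdots <M_{L-1}\}$ whose elements are all congruent modulo $q$. It also gives $p_{\varphi _{i}(\alpha )}^{\mathrm{ab}}(P)=L+1$ for every $i$. Since $\varphi _{i}(\alpha )=\pi _{a_{i}}(\alpha )$, this is exactly $p_{\pi _{a_{i}}(\alpha )}^{\mathrm{ab}}(P)=L+1$ for all $1\leq i\leq s$, which is the claim.

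There is essentially no obstacle here. The only point to check is the bookkeeping identification $\pi _{\{a\}}=\mathbf{1}_{\{a\}}$ applied letterwise, which holds by the definition of $\pi _{B}$. Recurrence of the projections is not needed separately, because Proposition \ref{prop:simultaneous-binary-factors} only uses recurrence of $z$.
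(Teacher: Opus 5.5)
Your proposal is correct and matches the paper's proof exactly: the paper also takes $\varphi_i:=\mathbf{1}_{\{a_i\}}$, notes that its coordinatewise extension is $\pi_{a_i}$, and applies Proposition \ref{prop:simultaneous-binary-factors} with $\mathbb{D}=\mathbb{A}$ and $z=\alpha$. Your observation that only the recurrence of $\alpha$ itself is needed is also consistent with how that proposition is stated.
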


\begin{proof}
For $1\leq i\leq s$, define $\varphi _{i}:\mathbb{A}\longrightarrow \{0,1\}$
by\ $\varphi _{i}(a):=\mathbf{1}_{\{a_{i}\}}(a).$ Its coordinatewise
extension is $\pi _{a_{i}}$. The result now follows from Proposition \ref%
{prop:simultaneous-binary-factors} with $\mathbb{D}=\mathbb{A}$ and $%
z=\alpha .$
\end{proof}

Throughout the remainder of this subsection, let $\alpha $ satisfy the
hypotheses of Theorem\ \ref{thm:interleaving}.

\begin{lemma}
\label{lem:two-aperiodic-projections}Let $B:=\{c\in \mathbb{A}_{\alpha }:\pi
_{c}(\alpha )$ is aperiodic$\}.$ Then $\sharp B=2.$
\end{lemma}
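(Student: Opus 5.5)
Throughout, $\alpha$ is recurrent and aperiodic with $p_{\alpha}^{\ast \mathrm{ab}}(k)=k+\ell-1$ for all $k$, and $B$ is as in the statement of Lemma \ref{lem:two-aperiodic-projections}. The plan is to prove $\sharp B\geq 2$ and $\sharp B\leq 2$ separately.

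\emph{Lower bound.} By Lemma \ref{lem:projection}, $B\neq\varnothing$. Suppose $B=\{a\}$. For every $c\in\mathbb{A}_\alpha\setminus\{a\}$ the word $\pi_c(\alpha)$ is eventually periodic. Since $\alpha$ is recurrent, so is $\pi_c(\alpha)$ by Lemma \ref{lem:projection}, and a recurrent eventually periodic word is periodic. Let $q$ be a common period of all these $\pi_c(\alpha)$. Then $\pi_a(\alpha)=1-\sum_{c\neq a}\pi_c(\alpha)$ is also $q$-periodic, contradicting $a\in B$. Hence $\sharp B\geq 2$.

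\emph{Upper bound.} Suppose $a_1,a_2,a_3\in B$ are distinct, and fix a large $k$. We want to improve the count in the proof of Theorem \ref{thm:main-lower-bound}.
\begin{enumerate}
\item Apply Corollary \ref{cor:simultaneous-all-counts} with $s=3$ and $q=1$. This gives a $k$-pattern $P$ such that $p^{\mathrm{ab}}_{\pi_{a_i}(\alpha)}(P)=k+1$ for $i=1,2,3$.
\item Better, rework the proof of Theorem \ref{thm:main-lower-bound} directly. Take $\mathcal{N}$ from Proposition \ref{prop:simultaneous-selection} and set $\Omega=\mathcal{O}(\alpha)[\mathcal{N}]$. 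Then $\Omega[\{0,\dots,k-1\}]$ contains:
\begin{itemize}
\item vectors $v_h^{(i)}$ with $a_i$-coordinate equal to $h$, for $0\leq h\leq k$ and $i=1,2,3$;
\item the constant vectors $k\mathbf{e}_c$, for $c\in\mathbb{A}_\alpha$.
\end{itemize}
\item For $1\leq h\leq k-1$, the vectors $v^{(1)}_h$ are $k-1$ pairwise distinct non-constant vectors.
\item The aim is to find one more non-constant vector that is not among the $v^{(1)}_h$.
\end{enumerate}

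To produce this extra vector, look at the $a_2$-coordinates. Each $v^{(2)}_h$ with $1\leq h\leq k-1$ is non-constant. If every $v^{(2)}_h$ coincided with some $v^{(1)}_{h'}$, we would get an injective correspondence in which all these vectors lie in the $\{a_1,a_2\}$-plane. The same would then hold for the $a_3$-vectors, forcing $a_3$-coordinate $0$ against $a_3$-coordinate $h\geq 1$, which is a contradiction. So some non-constant vector lies outside $\mathcal{V}^{(1)}$. This gives
\[
\sharp\Psi(\Omega[\{0,\dots,k-1\}])\geq (k-1)+1+\ell=k+\ell,
\]
contradicting $p_{\alpha}^{\ast \mathrm{ab}}(k)=k+\ell-1$.

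The main obstacle is making the previous paragraph rigorous: the vectors $v^{(1)}_h$ may themselves have nonzero $a_2$- and $a_3$-coordinates, so the collision argument needs care. The cleanest fix I would try first is to use the tail-freezing words $b^{s}(1-b)^{k-s}$ from Lemma \ref{lem:binary}, lifted to $\Omega$. With tail-freezing and superstationarity, each can be realised by a word whose coordinates take only two letters: the first $s$ coordinates equal to one letter, the remaining coordinates frozen to another letter. This yields families of vectors supported on two letters. If $\alpha$ has only two letters $\{a_i,c\}$ available for a family, then the families for $a_1,a_2,a_3$ have different supports, and a non-constant two-letter vector determines its support. This produces at least $2(k-1)$ non-constant vectors, far more than needed.

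The delicate point is guaranteeing that the frozen tail letter differs from $a_i$. I would handle this as in the first step of Lemma \ref{lem:binary}: use Lemma \ref{lem:binary-two-coordinate} applied to $\pi_{a_i}(\alpha)$ to get a word of $\Omega$ containing a transition between $a_i$ and a non-$a_i$ letter, and then freeze the tail there.
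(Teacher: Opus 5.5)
Your proof of $\sharp B\geq 2$ is correct and is essentially the paper's argument. The upper bound, however, is not established as written. The decisive claim is that if every $v^{(2)}_h$ $(1\leq h\leq k-1)$ coincides with some $v^{(1)}_{h'}$, then all these vectors lie in the $\{a_1,a_2\}$-plane. You state it without any reason, and you yourself flag it as the main obstacle.

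Your fallback does not close the gap either. Lifting the words $b^{s}(1-b)^{k-s}$ of Lemma \ref{lem:binary} from $\pi_{a_i}(\Omega)$ to $\Omega$ gives two-letter words only when $b$ corresponds to $a_i$. When $b$ stands for ``not $a_i$'', the lifted coordinates may carry several different letters. To repair this you would have to redo the pigeonhole step over all of $\mathbb{A}_\alpha$ on a longer prefix, and choose the frozen letter from an $a_i$/non-$a_i$ transition. Moreover, the three supports need not be pairwise different, since the pair $\{a_1,a_2\}$ could serve both $i=1$ and $i=2$. All you can say is that they do not all coincide.

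The missing step is a one-line double count. In your collision paragraph, the $k-1$ distinct vectors $v^{(2)}_h$ lie in the $(k-1)$-element set $V:=\{v^{(1)}_{h'}:1\leq h'\leq k-1\}$, hence exhaust it. So the $a_2$-coordinates on $V$ run through $1,\dots,k-1$ exactly once, as the $a_1$-coordinates do. Therefore
\[
\sum_{v\in V}\bigl((v)_{a_1}+(v)_{a_2}\bigr)=2\binom{k}{2}=k\,\sharp V .
\]
Each summand is at most $k$, so every $v\in V$ is supported on $\{a_1,a_2\}$. Thus $v^{(3)}_1\notin V$ is the required extra non-constant vector.

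Simpler still, no large $k$ is needed. At $k=2$, each $v^{(i)}_1$ equals $\mathbf{e}_{a_i}+\mathbf{e}_{d_i}$ with $d_i\neq a_i$. These three vectors cannot all coincide, because a vector of total mass $2$ has at most two positive coordinates. Together with the $\ell$ vectors $2\mathbf{e}_c$, this gives $\ell+2>\ell+1=p^{\ast\mathrm{ab}}_{\alpha}(2)$.

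This is exactly the paper's proof. The paper obtains the mixed vectors from Corollary \ref{cor:simultaneous-all-counts} with $L=2$. It gets the vectors $2\mathbf{e}_d$, $d\notin B$, by taking $q$ to be a common period of the periodic projections $\pi_d(\alpha)$. Your use of tail-freezing in $\Omega$ to produce all constant vectors is an equally valid substitute for that congruence condition.
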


\begin{proof}
We first show that $\sharp B\geq 2.$ Suppose that $\sharp B\leq 1$. Choose $%
b_{0}\in \mathbb{A}_{\alpha }$ such that $B\subset \{b_{0}\}$. Then $\pi
_{c}(\alpha )$ is eventually periodic for every $c\in \mathbb{A}_{\alpha
}\setminus \{b_{0}\}$. Taking a common eventual period and using 
\begin{equation*}
\sum_{c\in \mathbb{A}_{\alpha }}\mathbf{1}_{\{c\}}(\alpha _{n})=1\text{\ }%
(n\in \mathbb{N}_{0}),
\end{equation*}%
it follows that $\pi _{b_{0}}(\alpha )$ is also eventually periodic. Thus
every $\pi _{c}(\alpha )$ is eventually periodic. Taking a common eventual
period gives $\alpha _{n+q}=\alpha _{n}$ for all sufficiently large $n$,
which contradicts the aperiodicity of $\alpha $. Hence $\sharp B\geq 2.$

For each $d\in \mathbb{A}_{\alpha }\setminus B$, the word $\pi _{d}(\alpha )$
is eventually periodic. By Lemma \ref{lem:projection}, the word $\pi
_{d}(\alpha )$ is also recurrent, and hence periodic. Choose a period $q_{d}$
of $\pi _{d}(\alpha )$ for each $d\in \mathbb{A}_{\alpha }\setminus B$, and
let 
\begin{equation*}
q:=\func{lcm}\{q_{d}:d\in \mathbb{A}_{\alpha }\setminus B\},
\end{equation*}%
with $q=1$ if $\mathbb{A}_{\alpha }\setminus B=\varnothing $.

Apply Corollary \ref{cor:simultaneous-all-counts} to the letters in $B$,
with $L=2$ and the above value of $q$. We obtain a $2$-pattern $%
P=\{M_{0}<M_{1}\}$ with $M_{0}\equiv M_{1}\pmod q$ such that $p_{\pi
_{c}(\alpha )}^{\mathrm{ab}}(P)=3$ for every $c\in B.$ By (\ref%
{eq:binary-abelian-count}), one has 
\begin{equation}
\mathcal{C}_{\pi _{c}(\alpha )}(P)=\{0,1,2\}\ (c\in B).  \label{eq:0,1,2}
\end{equation}

For each $c\in B$, (\ref{eq:0,1,2}) yields a $P$-factor of $\alpha $ whose
two letters are both $c$, and hence $2\mathbf{e}_{c}\in \Psi (\mathcal{F}%
_{\alpha }(P)).$ It also yields a $P$-factor containing exactly one
occurrence of $c$. Thus, for some $\delta (c)\in \mathbb{A}_{\alpha
}\setminus \{c\}$, 
\begin{equation*}
\mathbf{e}_{c}+\mathbf{e}_{\delta (c)}\in \Psi (\mathcal{F}_{\alpha }(P)).
\end{equation*}

Now let $d\in \mathbb{A}_{\alpha }\setminus B$. By recurrence, choose $j\geq
M_{0}$ such that $\alpha _{j}=d.$ Put $n:=j-M_{0}.$ Since $q_{d}\mid q$ and $%
q\mid (M_{1}-M_{0})$, one has%
\begin{equation*}
\pi _{d}(\alpha )_{n+M_{1}}=\pi _{d}(\alpha )_{n+M_{0}}=1.
\end{equation*}%
Hence $\alpha _{n+M_{0}}=\alpha _{n+M_{1}}=d,$ and therefore $2\mathbf{e}%
_{d}\in \Psi (\mathcal{F}_{\alpha }(P)).$

Consequently, we have%
\begin{equation*}
\{2\mathbf{e}_{c}:c\in \mathbb{A}_{\alpha }\}\cup \{\mathbf{e}_{c}+\mathbf{e}%
_{\delta (c)}:c\in B\}\subset \Psi (\mathcal{F}_{\alpha }(P)).
\end{equation*}%
The two sets on the left are disjoint. Thus 
\begin{equation*}
\ell +\sharp \{\mathbf{e}_{c}+\mathbf{e}_{\delta (c)}:c\in B\}\leq p_{\alpha
}^{\mathrm{ab}}(P)\leq p_{\alpha }^{\ast \mathrm{ab}}(2)=\ell +1.
\end{equation*}%
It follows that $\sharp \{\mathbf{e}_{c}+\mathbf{e}_{\delta (c)}:c\in
B\}\leq 1.$ Since $B\neq \varnothing $, all these vectors $\mathbf{e}_{c}+%
\mathbf{e}_{\delta (c)}$ $(c\in B)$ coincide. Write their common value as $%
\mathbf{e}_{a}+\mathbf{e}_{b}$ with $a\neq b.$ For every $c\in B$, $\mathbf{e%
}_{c}+\mathbf{e}_{\delta (c)}=\mathbf{e}_{a}+\mathbf{e}_{b},$ and hence $%
\{c,\delta (c)\}=\{a,b\}.$ Hence $B\subset \{a,b\}.$ Since $\sharp B\geq 2$,
we conclude that $B=\{a,b\}.$
\end{proof}

Write $B=\{a,b\},$ $C:=\mathbb{A}_{\alpha }\setminus B.$ Let $\ast $ be a
symbol not in $C$, and define $\rho :\mathbb{A}_{\alpha }\longrightarrow
C\cup \{\ast \}$ by 
\begin{equation*}
\rho (a)=\rho (b)=\ast ,\text{\ }\rho (c)=c\text{\ }(c\in C).
\end{equation*}%
Write $\sigma :=\rho (\alpha ).$

The word $\sigma $ is periodic. In fact, for each $c\in C$, the word $\pi
_{c}(\alpha )$ is recurrent by Lemma \ref{lem:projection} and eventually
periodic since $c\notin B$. Hence $\pi _{c}(\alpha )$ is periodic. Take a
period $q_{c}$ of $\pi _{c}(\alpha ),$ and let 
\begin{equation*}
q:=\func{lcm}\{q_{c}:c\in C\},
\end{equation*}%
with $q:=1$ if $C=\varnothing $. Then $\mathbf{1}_{\{c\}}(\alpha _{n+q})=%
\mathbf{1}_{\{c\}}(\alpha _{n})$\ for every $c\in C$ and $n\in \mathbb{N}%
_{0}.$ It follows that $\rho (\alpha _{n+q})=\rho (\alpha _{n})$ for all $%
n\in \mathbb{N}_{0},$ so $\sigma $ is periodic with period $q$.

Let $m$ be the least period of $\sigma $. Then for $0\leq j\leq m-1$,%
\begin{equation*}
\left\{ 
\begin{array}{ll}
\alpha ^{(j)}=c^{\infty }, & \text{if }\sigma _{j}=c\in C, \\ 
\alpha ^{(j)}\in B^{\mathbb{N}_{0}}, & \text{if }\sigma _{j}=\ast .%
\end{array}%
\right.
\end{equation*}

\begin{lemma}
\label{lem:unique-aper}Every residue $\alpha ^{(j)}$ is recurrent. Moreover,
there exists a unique $p\in \{0,\ldots ,m-1\}$ such that $\alpha ^{(p)}$ is
aperiodic.
\end{lemma}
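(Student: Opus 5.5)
Recurrence of each residue comes first. Since $\alpha$ is recurrent and $\sigma=\rho(\alpha)$ is periodic with least period $m$, every factor $w=\alpha_{i}\cdots\alpha_{i+L-1}$ reoccurs at infinitely many positions $i'$. We want those reoccurrences to satisfy $i'\equiv i \pmod m$. To get this, extend $w$ to a longer factor $\alpha_i\cdots\alpha_{i+L+m-1}$, whose image under $\rho$ contains a full window of length $m$ of $\sigma$. Any other occurrence of this longer factor then has the same $\sigma$-window. Because $m$ is the least period of the purely periodic word $\sigma$, the $m$ shifts of one period are pairwise distinct, so the window determines the starting position modulo $m$. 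Hence $w$ occurs infinitely often at positions congruent to $i$ modulo $m$.

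Now fix $j$ and a factor $x_{q_0}\cdots x_{q_1}$ of $\alpha^{(j)}$. It is read off the factor of $\alpha$ occupying the interval $[j+q_0m,\,j+q_1m]$. By the previous paragraph, that factor reoccurs infinitely often at shifts $mt$. Each such reoccurrence gives a reoccurrence of the residue factor at shift $t$, so $\alpha^{(j)}$ is recurrent.

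Existence of an aperiodic residue follows from $a\in B$. If every $\alpha^{(j)}$ were eventually periodic, with periods $r_j$, then $\alpha$ would be eventually periodic with period $m\cdot\operatorname{lcm}_j r_j$. This contradicts aperiodicity of $\alpha$, so at least one residue is aperiodic.

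For uniqueness, suppose $p\neq p'$ are both aperiodic residues. Both lie in $B^{\mathbb{N}_0}$, i.e.\ are binary words over $\{a,b\}$, and both are recurrent. I will build one pattern that is rich on both residues simultaneously, and show it yields at least $k+\ell$ Parikh vectors for a suitable $k$, contradicting $p^{\ast\mathrm{ab}}_\alpha(k)=k+\ell-1$.

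The construction is as follows.
\begin{enumerate}
\item Let $z$ be the word over the alphabet $\mathbb{A}_\alpha^m$ whose $n$th letter is the block $\alpha_{nm}\cdots\alpha_{nm+m-1}$. Then $z$ is recurrent, by the first step applied at positions $\equiv 0\pmod m$.
\item The maps $\varphi_j$, sending a block to $\mathbf 1_{\{a\}}$ of its $j$th letter, satisfy $\varphi_j(z)=\pi_a(\alpha^{(j)})$. This is aperiodic for $j=p$ and $j=p'$.
\item Apply Proposition \ref{prop:simultaneous-binary-factors} with $q=1$ to get an $L$-pattern $P$ with $p^{\mathrm{ab}}_{\alpha^{(p)}}(P)=p^{\mathrm{ab}}_{\alpha^{(p')}}(P)=L+1$. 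Take $S:=mP$ and $k=L$.
\end{enumerate}

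Counting on $S$: shifts $n\equiv j\pmod m$ with $\sigma_j=c\in C$ give $k\mathbf e_c$, which accounts for $\ell-2$ vectors. Shifts with $n\equiv p$ give all vectors $h\mathbf e_a+(k-h)\mathbf e_b$, $0\le h\le k$, which accounts for $k+1$ more. That already reaches $k+\ell-1$, so the extra vector has to come from elsewhere.

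This is the main obstacle, since $p'$ produces exactly the same set of vectors. The fix I would try first is a mixed pattern $S=mP\cup(mP'+(p'-p))$ with two disjoint blocks of sizes $k_1,k_2$. At shifts $n\equiv p$, both blocks read the binary residues, giving $a$-counts in $\{0,\dots,k\}$. At shifts $n\equiv j$ with $\sigma_j=c$, the pattern straddles residues $j$ and $j+p'-p$, producing vectors such as $k_1\mathbf e_c+(\ldots)$ that are not pure. To make this work, choose $p'-p$ and the blocks so that the $\ell-2$ pure vectors $k\mathbf e_c$ still arise, for instance by letting $S$ lie in a single residue class for the constant-letter shifts. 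Meanwhile the cross terms should contribute a new non-pure vector with a $C$-letter.

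If that bookkeeping fails, the fallback is to use $k=2$ directly, as in Lemma \ref{lem:two-aperiodic-projections}. Take $P=\{0,\,p'-p+mt\}$. The $P$-factors at $n\equiv p$ read one letter of $\alpha^{(p)}$ and one of $\alpha^{(p')}$, and simultaneous richness gives $2\mathbf e_a$, $2\mathbf e_b$ and $\mathbf e_a+\mathbf e_b$. Then $\ell-2$ further vectors come from the $C$-residues, with at least one mixed vector $\mathbf e_c+\mathbf e_{a\text{ or }b}$ appearing when $m\ge 2$. This gives $\ell+2>\ell+1$. The residual case $m=1$ is vacuous, since then there is only one residue.
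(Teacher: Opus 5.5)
\textbf{Verdict.} Your recurrence argument and your existence argument are correct; they match what the paper does through the block word $\beta$, where a length-$m$ window of $\sigma$ fixes the starting position modulo $m$. The uniqueness part is not proved. The mixed-pattern idea is left as unfinished bookkeeping, and the $k=2$ fallback does not work.

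\textbf{Why the $k=2$ fallback fails.} Proposition~\ref{prop:simultaneous-binary-factors} only makes each residue rich on its own pattern. It gives no control over the joint pairs $(\alpha^{(p)}_u,\alpha^{(p')}_{u+t})$ that the pattern $\{0,(p'-p)+mt\}$ reads, so obtaining $2\mathbf{e}_a,2\mathbf{e}_b,\mathbf{e}_a+\mathbf{e}_b$ is not justified. More seriously, once the two coordinates lie in different residue classes, the $C$-residues no longer produce the pure vectors $2\mathbf{e}_c$. So neither your count of $\ell-2$ such vectors nor the extra mixed vector is justified.

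In fact no $2$-pattern can give a contradiction in general. Take $\ell=4$, $C=\{c_1,c_2\}$, and period block $\ast\, c_1\,\ast\, c_2$ of $\sigma$ (so $m=4$), with $\alpha^{(0)}$ and $\alpha^{(2)}$ both aperiodic.
\begin{itemize}
\item A $2$-pattern with difference $\equiv 0 \pmod 4$ yields only vectors among $2\mathbf{e}_a,2\mathbf{e}_b,\mathbf{e}_a+\mathbf{e}_b,2\mathbf{e}_{c_1},2\mathbf{e}_{c_2}$.
\item Difference $\equiv 2$ yields only the three $\{a,b\}$-vectors and $\mathbf{e}_{c_1}+\mathbf{e}_{c_2}$. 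This is your pattern, with $p=0$ and $p'=2$.
\item An odd difference yields only vectors $\mathbf{e}_x+\mathbf{e}_{c_i}$ with $x\in\{a,b\}$.
\end{itemize}
Hence $p^{\ast\mathrm{ab}}_\alpha(2)\le 5=\ell+1$, which is consistent with the hypothesis, so a longer pattern is unavoidable.

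\textbf{The missing idea.} You need a single marker coordinate that records which aperiodic residue a factor comes from, combined with a large rich block. Since $\sigma_p=\sigma_{p'}=\ast$ and $m$ is the least period of $\sigma$, there is $1\le h<m$ such that, after possibly swapping $p$ and $p'$, $\sigma_{p+h}=c_0\in C$ and $\sigma_{p'+h}\neq c_0$.

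Apply Proposition~\ref{prop:simultaneous-binary-factors} to the block word with the two maps $\varphi_p,\varphi_{p'}$. This gives an $L$-pattern $P$ with $L=2\ell-2$ that is rich for both $\pi_a(\alpha^{(p)})$ and $\pi_a(\alpha^{(p')})$. Set $S=mP\cup\{h\}$.
\begin{itemize}
\item Shifts $\equiv p$ give $L+1$ Parikh vectors, each with $c_0$-coordinate $1$.
\item Shifts $\equiv p'$ give vectors with $c_0$-coordinate $0$. There are $L+1$ of them if $\sigma_{p'+h}\in C$. If $\sigma_{p'+h}=\ast$ there are at least $\lceil (L+1)/2\rceil=\ell$, because the marker letter can merge adjacent counts.
\end{itemize}
Therefore $p^{\mathrm{ab}}_\alpha(S)\ge 3\ell-1>3\ell-2=p^{\ast\mathrm{ab}}_\alpha(2\ell-1)$. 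The size $2\ell-2$ of $P$ is exactly what makes the second count reach $\ell$.
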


\begin{proof}
Define 
\begin{equation*}
\beta =\beta _{0}\beta _{1}\beta _{2}\cdots \in (\mathbb{A}_{\alpha }^{m})^{%
\mathbb{N}_{0}},\text{\ }\beta _{n}:=(\alpha _{mn},\alpha _{mn+1},\ldots
,\alpha _{mn+m-1}).
\end{equation*}%
We first show that $\beta $ is recurrent. Let $\beta _{s}\beta _{s+1}\cdots
\beta _{s+r-1}$ be a factor of $\beta $. It corresponds to the factor $%
w:=\alpha _{ms}\alpha _{ms+1}\cdots \alpha _{m(s+r)-1}$ of $\alpha $. Since $%
\alpha $ is recurrent, $w$ occurs infinitely often in $\alpha $. Suppose
that an occurrence of $w$ begins at $t$. Applying $\rho $ to the first $m$
letters of this occurrence gives 
\begin{equation*}
\sigma _{t}\sigma _{t+1}\cdots \sigma _{t+m-1}=\sigma _{ms}\sigma
_{ms+1}\cdots \sigma _{ms+m-1}=\sigma _{0}\sigma _{1}\cdots \sigma _{m-1}.
\end{equation*}%
Since $\sigma $ has least period $m$, it follows that $t\equiv 0\pmod m.$
Thus every occurrence of $w$ begins at a multiple of $m$, and hence induces
an occurrence of $\beta _{s}\beta _{s+1}\cdots \beta _{s+r-1}$ in $\beta $.
Therefore $\beta $ is recurrent.

For each $0\leq j<m$, the residue $\alpha ^{(j)}$ is the $j$th coordinate
projection of $\beta $, and hence is recurrent.

At least one residue $\alpha ^{(j)}$ is aperiodic. Otherwise, by recurrence,
all residues would be periodic. Then a common period of the finitely many
residues would give a period of $\alpha $, a contradiction. Since $\alpha
^{(j)}$ is constant whenever $\sigma _{j}\in C$, every aperiodic residue
satisfies $\sigma _{j}=\ast $. Fix one such residue and denote it by $\alpha
^{(p)}.$

Suppose, for a contradiction, that $\alpha ^{(i)}$ is aperiodic for some $%
i\in \{0,\ldots ,m-1\}\setminus \{p\}$. Then $\sigma _{p}=\sigma _{i}=\ast .$
Since $m$ is the least period of $\sigma $, there exists $1\leq h<m$ such
that $\sigma _{p+h}\neq \sigma _{i+h}.$ Interchanging $p$ and $i$ if
necessary, we may assume that $\sigma _{p+h}\in C.$ Set 
\begin{equation*}
c_{0}:=\sigma _{p+h}\in C,\text{\ }d_{0}:=\sigma _{i+h}\in C\cup \{\ast \},%
\text{\ }d_{0}\neq c_{0}.
\end{equation*}

For $j\in \{p,i\}$, define 
\begin{equation*}
\varphi _{j}:\mathbb{A}_{\alpha }^{m}\longrightarrow \{0,1\},\text{ }\varphi
_{j}(u_{0},\ldots ,u_{m-1}):=\mathbf{1}_{\{a\}}(u_{j}).
\end{equation*}%
Then $\varphi _{j}(\beta )=\pi _{a}(\alpha ^{(j)}).$ Since $\alpha ^{(p)}$
and $\alpha ^{(i)}$ are aperiodic binary words over $\{a,b\}$, both $\pi
_{a}(\alpha ^{(p)})$ and\ $\pi _{a}(\alpha ^{(i)})$ are aperiodic.

Applying Proposition \ref{prop:simultaneous-binary-factors} to $\beta $ and
the maps $\varphi _{p},\varphi _{i},$ with $L:=2\ell -2$ and $q=1$, we
obtain an $L$-pattern $P=\{M_{0}<M_{1}<\cdots <M_{L-1}\}$ such that 
\begin{equation}
p_{\pi _{a}(\alpha ^{(j)})}^{\mathrm{ab}}(P)=L+1\text{\ }(j\in \{p,i\}).
\label{eq:L+1-i,p}
\end{equation}%
Set 
\begin{equation*}
S:=mP\cup \{h\}.
\end{equation*}%
Since $1\leq h<m$, we have $h\notin mP$, and hence $\sharp S=L+1=2\ell -1.$

It follows from (\ref{eq:L+1-i,p}) that for every $0\leq t\leq L,$ there
exists $q_{t}\in \mathbb{N}_{0}$ such that $\sharp \{s\in P:\alpha
_{q_{t}+s}^{(p)}=a\}=t.$ Since $\sigma _{p+h}=c_{0}\in C$, we have $\alpha
_{p+mq_{t}+h}=c_{0}.$ Hence%
\begin{equation*}
V_{p}:=\left\{ t\mathbf{e}_{a}+(L-t)\mathbf{e}_{b}+\mathbf{e}_{c_{0}}:0\leq
t\leq L\right\} \subset \Psi (\mathcal{F}_{\alpha }(S)).
\end{equation*}%
In particular, $\sharp V_{p}=L+1.$

If $d_{0}\in C$, the same argument applied to $\alpha ^{(i)}$ gives 
\begin{equation*}
V_{i}:=\left\{ t\mathbf{e}_{a}+(L-t)\mathbf{e}_{b}+\mathbf{e}_{d_{0}}:0\leq
t\leq L\right\} \subset \Psi (\mathcal{F}_{\alpha }(S)),
\end{equation*}%
and hence $\sharp V_{i}=L+1.$

Suppose now that $d_{0}=\ast $. For each $0\leq t\leq L$, choose $r_{t}\in 
\mathbb{N}_{0}$ such that $\sharp \{s\in P:\alpha _{r_{t}+s}^{(i)}=a\}=t.$
Since $\sigma _{i+h}=\ast $, we have $\alpha _{i+mr_{t}+h}\in \{a,b\}.$ Let $%
\varepsilon _{t}:=\mathbf{1}_{\{a\}}(\alpha _{i+mr_{t}+h}).$ Hence%
\begin{equation*}
V_{i}:=\left\{ (t+\varepsilon _{t})\mathbf{e}_{a}+(L-t+1-\varepsilon _{t})%
\mathbf{e}_{b}:0\leq t\leq L\right\} \subset \Psi (\mathcal{F}_{\alpha }(S)).
\end{equation*}%
Since $t+\varepsilon _{t}\in \{t,t+1\},$ each $a$-coordinate arises from at
most two values of $t$. Therefore $\sharp V_{i}\geq \left\lceil \frac{L+1}{2}%
\right\rceil =\ell .$

In either case, $V_{p}\cap V_{i}=\varnothing .$ Indeed, every vector in $%
V_{p}$ has $c_{0}$-coordinate $1$, while every vector in $V_{i}$ has $c_{0}$%
-coordinate $0$. Hence 
\begin{equation*}
p_{\alpha }^{\mathrm{ab}}(S)\geq \sharp V_{p}+\sharp V_{i}\geq (L+1)+\ell
=3\ell -1,
\end{equation*}%
which contradicts that%
\begin{equation*}
p_{\alpha }^{\mathrm{ab}}(S)\leq p_{\alpha }^{\ast \mathrm{ab}}(2\ell
-1)=(2\ell -1)+\ell -1=3\ell -2.
\end{equation*}%
Thus no such $i$ exists, and $\alpha ^{(p)}$ is the unique aperiodic residue.
\end{proof}

\begin{lemma}
\label{lem:else-constant}Assume that $\alpha ^{(p)}$ is aperiodic for some $%
p\in \{0,\ldots ,m-1\}$. Then $\alpha ^{(j)}$ is constant for every $j\in
\{0,\ldots ,m-1\}\setminus \{p\}$.
\end{lemma}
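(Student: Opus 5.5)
We argue by contradiction, in the spirit of the proof of Lemma \ref{lem:unique-aper}. If $\sigma_j\in C$, then $\alpha^{(j)}=(\sigma_j)^\infty$ is already constant. So it suffices to treat an index $i\neq p$ with $\sigma_i=\ast$. By Lemma \ref{lem:unique-aper}, $\alpha^{(i)}\in\{a,b\}^{\mathbb{N}_0}$ is recurrent and not aperiodic, hence periodic, say with period $r$. Suppose $\alpha^{(i)}$ is not constant. By periodicity both $a$ and $b$ then occur in $\alpha^{(i)}$ infinitely often, at positions in prescribed residue classes modulo $r$. The aim is to build a pattern $S$ with $p_\alpha^{\mathrm{ab}}(S)>\sharp S+\ell-1$, contradicting the hypothesis of Theorem \ref{thm:interleaving}.

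\emph{Selection step.} Apply Proposition \ref{prop:simultaneous-binary-factors} to the block word $\beta$ of Lemma \ref{lem:unique-aper} with the single map $\varphi_p$ ($\pi_a$ on the $p$th coordinate), a length $L$ to be fixed later, and modulus $q:=r$. This gives $P=\{M_0<\cdots<M_{L-1}\}$, all congruent modulo $r$, with $p^{\mathrm{ab}}_{\pi_a(\alpha^{(p)})}(P)=L+1$. The congruence has a useful consequence: for every $n'$, the word $\alpha^{(i)}[n'+P]$ is constant, so both $a^L$ and $b^L$ occur there. Thus residue $p$ realizes all $L+1$ mixtures $t\mathbf e_a+(L-t)\mathbf e_b$, while residue $i$ realizes only the two pure words.

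\emph{Adding one coordinate.} Set $S:=mP\cup\{d\}$ with $d\not\equiv 0\pmod m$. Choose $d$, as the shift $h$ was chosen in Lemma \ref{lem:unique-aper}, so that the extra coordinate separates the two residues.
\begin{itemize}
\item For starting points $n\equiv p$, the extra coordinate should fall in a residue where it contributes a fixed letter; this gives $L+1$ vectors from $V_p$ shifted by a common $\mathbf e_{x}$.
\item For starting points $n\equiv i$, the extra coordinate should fall in a residue with a \emph{different} behaviour. This gives vectors $L\mathbf e_a+\mathbf e_y$ and $L\mathbf e_b+\mathbf e_{y'}$, and these should not lie in the first family.
\item For each $c\in C$, starting points $n\equiv j$ with $\sigma_j=c$ give $L\mathbf e_c+\mathbf e_z$. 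These are $\ell-2$ further vectors, distinct from everything else because of their large $c$-coordinate.
\end{itemize}
The goal is the count $(L+1)+2+(\ell-2)>(L+1)+\ell-1$. Such a $d$ should exist because $\sigma$ has least period $m$: there is $h$ with $\sigma_{p+h}\neq\sigma_{i+h}$. Taking $d\equiv h$ then makes the extra letter differ between the two starting classes, which is exactly the mechanism used in Lemma \ref{lem:unique-aper}.

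\emph{Main obstacle.} The hard step is the case analysis ensuring that the two pure vectors from residue $i$ are genuinely new. When $\sigma_{p+h}=c_0\in C$, every vector of the $p$-family has $c_0$-coordinate $1$. The $i$-family then differs as long as $\sigma_{i+h}\neq c_0$, which is the case since $\sigma_{i+h}\neq\sigma_{p+h}$. The delicate case is $\sigma_{p+h}=\ast$ and $\sigma_{i+h}\in C$, which is the same case as before after swapping roles. Here the $i$-family carries $\mathbf e_{d_0}$ with $d_0\in C$, and its vectors $L\mathbf e_a+\mathbf e_{d_0}$ and $L\mathbf e_b+\mathbf e_{d_0}$ are new. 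However, the $p$-family then only gives about $L+2$ values of the $a$-count, and I must check that the extra gain of two still survives. If a single coordinate does not suffice, I would instead add two extra coordinates $\{d,d'\}$ realizing both orientations of the discrepancy, and redo the count with $\sharp S=L+2$.
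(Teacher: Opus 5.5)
There is a genuine gap. Your count is complete only when the extra coordinate $d\equiv h$ satisfies $\sigma_{p+h}=c_0\in C$ and $\sigma_{i+h}\neq c_0$. In that case it is exactly the paper's argument, provided $L\geq 2$: this makes $L\mathbf{e}_a+\mathbf{e}_y\neq L\mathbf{e}_b+\mathbf{e}_{y'}$ and gives the $C$-family a coordinate $\geq 2$. The paper takes $L=2$.

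The problem is the other case, $\sigma_{p+h}=\ast$ and $\sigma_{i+h}\in C$. You say it reduces to the first case by swapping roles, but it does not. In Lemma \ref{lem:unique-aper} both residues were aperiodic, so the two roles were symmetric. Here they are not: only residue $p$ realizes all $L+1$ mixtures along $P$, while residue $i$ realizes only $a^L$ and $b^L$.

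If $\sigma_{p+h}=\ast$, the extra letter at starting points $n\equiv p\pmod m$ lies in $\{a,b\}$ and varies with $n$. You can then guarantee only $\lceil (L+1)/2\rceil$ distinct vectors from that class. The total lower bound becomes $\lceil (L+1)/2\rceil+2+(\ell-2)\leq L+\ell=p_{\alpha}^{\ast\mathrm{ab}}(L+1)$, which is no contradiction. Your fallback with two extra coordinates is not carried out and does not obviously help. Any extra coordinate that lands in a $\ast$ residue from $p$'s starting points destroys the guaranteed $L+1$ distinct vectors in the same way.

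The missing idea is that the good orientation can always be chosen. Put $D_p:=\{0\leq h<m:\sigma_{p+h}\in C\}$ and $D_i:=\{0\leq h<m:\sigma_{i+h}\in C\}$. Both have cardinality equal to the number of $C$-symbols in a period block of $\sigma$. Suppose $\sigma_{i+h}=\sigma_{p+h}$ for every $h\in D_p$. Then $D_p\subset D_i$, so $D_p=D_i$, so $\sigma_{p+h}=\sigma_{i+h}$ for all $h$. The representative of $i-p$ in $\{1,\ldots,m-1\}$ would then be a period of $\sigma$, contradicting the minimality of $m$.

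Hence some $h$ has $\sigma_{p+h}=c_0\in C$ and $\sigma_{i+h}\neq c_0$. Such an $h$ is nonzero because $\sigma_p=\ast$. In particular, an $h$ of your delicate type lies in $D_i\setminus D_p$, and equal cardinalities force an element of $D_p\setminus D_i$, which is of the good type. With that $h$, your single-coordinate count $(L+1)+2+(\ell-2)>L+\ell$ goes through, and the two-coordinate detour is unnecessary.
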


\begin{proof}
Fix $i\in \{0,\ldots ,m-1\}\setminus \{p\}$. If $\sigma _{i}\in C$, then $%
\alpha ^{(i)}=\sigma _{i}^{\infty }$ is constant. Suppose that $\sigma
_{i}=\ast .$ By Lemma \ref{lem:unique-aper}, the residue $\alpha ^{(i)}$ is
recurrent and eventually periodic. Then $\alpha ^{(i)}$ is periodic. Let $q$
be its least period. If $q=1$, then $\alpha ^{(i)}$ is constant, so suppose
that $q\geq 2.$

We first claim that there exist $1\leq h<m$ and $c_{0}\in C$ such that 
\begin{equation}
\sigma _{p+h}=c_{0},\text{ }\sigma _{i+h}\neq c_{0}.  \label{eq:+h-2}
\end{equation}%
Indeed, suppose otherwise. Let 
\begin{equation*}
D_{p}:=\{0\leq h<m:\sigma _{p+h}\in C\},\text{\ }D_{i}:=\{0\leq h<m:\sigma
_{i+h}\in C\}.
\end{equation*}%
Our assumption implies that $\sigma _{i+h}=\sigma _{p+h}$ for every $h\in
D_{p},$ and hence $D_{p}\subset D_{i}.$ Since addition by $p$ and by $i$
permutes the residue classes modulo $m$, the sets $D_{p}$ and $D_{i}$ have
the same cardinality. Thus $D_{p}=D_{i}.$ It follows that $\sigma
_{p+h}=\sigma _{i+h}$ for all $0\leq h<m.$ If $\delta \in \{1,\ldots ,m-1\}$
satisfies $\delta \equiv i-p\pmod m$, then $\delta $ is a period of $\sigma $%
, which contradicts the minimality of $m$.

Since $\alpha ^{(p)}$ is a recurrent aperiodic word over $B=\{a,b\}$, the
word $\pi _{a}(\alpha ^{(p)})$ is aperiodic. Applying Corollary \ref%
{cor:simultaneous-all-counts} to $\alpha ^{(p)}$, with $L=2$ and the above
value of $q$, we obtain a $2$-pattern $P=\{M_{0}<M_{1}\}$ such that $%
M_{0}\equiv M_{1}\pmod q$ and 
\begin{equation}
p_{\pi _{a}(\alpha ^{(p)})}^{\mathrm{ab}}(P)=3.  \label{eq:3-p}
\end{equation}%
Set 
\begin{equation*}
S:=mP\cup \{h\}.
\end{equation*}%
Since $1\leq h<m$, we have $\sharp S=3.$

By (\ref{eq:3-p}), for every $0\leq t\leq 2$, there exists $r_{t}\in \mathbb{%
N}_{0}$ such that 
\begin{equation*}
\left\vert \alpha ^{(p)}[r_{t}+P]\right\vert _{a}=t.
\end{equation*}%
Since $\sigma _{p+h}=c_{0}$, we have $\alpha _{p+mr_{t}+h}=c_{0}.$ It
follows that 
\begin{equation*}
V_{p}:=\left\{ t\mathbf{e}_{a}+(2-t)\mathbf{e}_{b}+\mathbf{e}_{c_{0}}:0\leq
t\leq 2\right\} \subset \Psi (\mathcal{F}_{\alpha }(S)).
\end{equation*}

Now consider the residue $\alpha ^{(i)}$. Since $\alpha ^{(i)}$ is a
nonconstant periodic word over $\{a,b\}$, both $a$ and $b$ occur in it. Take 
$r_{a},r_{b}\in \mathbb{N}_{0}$ such that $\alpha _{r_{a}+M_{0}}^{(i)}=a$
and $\alpha _{r_{b}+M_{0}}^{(i)}=b.$ Since $q\mid (M_{1}-M_{0})$, one has $%
\alpha _{r_{a}+M_{1}}^{(i)}=a$ and $\alpha _{r_{b}+M_{1}}^{(i)}=b.$ Let $%
\gamma _{a}:=\alpha _{i+mr_{a}+h}$ and $\gamma _{b}:=\alpha _{i+mr_{b}+h}.$
By (\ref{eq:+h-2}), we have $\gamma _{a},\gamma _{b}\neq c_{0}.$
Consequently, 
\begin{equation*}
V_{i}:=\left\{ 2\mathbf{e}_{a}+\mathbf{e}_{\gamma _{a}},\,2\mathbf{e}_{b}+%
\mathbf{e}_{\gamma _{b}}\right\} \subset \Psi (\mathcal{F}_{\alpha }(S)),
\end{equation*}%
where the two vectors in $V_{i}$ are distinct. Moreover, every vector in $%
V_{p}$ has $c_{0}$-coordinate $1$, while every vector in $V_{i}$ has $c_{0}$%
-coordinate $0$. Hence $V_{p}\cap V_{i}=\varnothing .$

For each $c\in C$, choose $0\leq j_{c}<m$ such that $\sigma _{j_{c}}=c.$
Then $\alpha ^{(j_{c})}=c^{\infty }.$ Therefore the two coordinates
corresponding to $mP$ in the $S$-factor $\alpha \lbrack j_{c}+S]$ are both $%
c $. Let $w_{c}:=\Psi (\alpha \lbrack j_{c}+S]).$ Then $(w_{c})_{c}\geq 2.$
Thus the vectors in $W:=\{w_{c}:c\in C\}$ are pairwise distinct, and $\sharp
W=\sharp C=\ell -2.$ Furthermore, one has $W\cap (V_{p}\cup
V_{i})=\varnothing ,$ since every $C$-coordinate of a vector in $V_{p}\cup
V_{i}$ is at most $1$.

Since $V_{p},$ $V_{i}$ and $W$ are pairwise disjoint, one has 
\begin{equation*}
\ell +3=\sharp V_{p}+\sharp V_{i}+\sharp W\leq p_{\alpha }^{\mathrm{ab}%
}(S)\leq p_{\alpha }^{\ast \mathrm{ab}}(3)=\ell +2,
\end{equation*}%
a contradiction. Hence $\alpha ^{(i)}$ is constant.
\end{proof}

\begin{proof}[Proof of Theorem~\protect\ref{thm:interleaving}]
If $\ell=2$, the conclusion is immediate. Assume that $\ell\geq3$.

Recall that $m$ is the least period of $\sigma $. By Lemmas \ref%
{lem:unique-aper} and \ref{lem:else-constant}, there exists a unique $0\leq
p<m$ such that $x:=\alpha ^{(p)}$ is a recurrent aperiodic binary word,
while every other residue of $\alpha $ is constant. Thus $\alpha $ is an $m$%
-interleaving of $x$.

For each $0\leq j<m$, Corollary \ref{cor:binary} gives $p_{x}^{\ast \mathrm{%
ab}}(2^{j})=2^{j}+1.$ Choose a $2^{j}$-pattern $P_{j}$ such that $p_{x}^{%
\mathrm{ab}}(P_{j})=2^{j}+1,$ and set 
\begin{equation*}
S:=\bigcup_{j=0}^{m-1}(mP_{j}+j).
\end{equation*}%
Since the sets $j+mP_{j}$ lie in distinct residue classes modulo $m$, they
are pairwise disjoint. Hence $K:=\sharp S=\sum_{j=0}^{m-1}2^{j}=2^{m}-1.$

For $0\leq t<m$, define 
\begin{equation*}
V_{t}:=\{\Psi (\alpha \lbrack t+mr+S]):r\in \mathbb{N}_{0}\}.
\end{equation*}%
Let $0\leq j_{t}<m$ be the unique integer satisfying $t+j_{t}\equiv p\pmod m%
, $ and define $\varepsilon _{t}:=(t+j_{t}-p)/m\in \{0,1\}.$ Then for $r\in 
\mathbb{N}_{0}$ and $q\in P_{j_{t}}$, we have%
\begin{equation*}
\alpha _{t+mr+mq+j_{t}}=x_{r+q+\varepsilon _{t}}.
\end{equation*}%
If $j\neq j_{t}$, then all coordinates $t+mr+mq+j$\ $(q\in P_{j})$ lie in
the same constant residue of $\alpha $. Hence there exists $c_{t,j}\in 
\mathbb{A}_{\alpha }$, independent of $r$ and $q$, such that 
\begin{equation*}
\alpha _{t+mr+mq+j}=c_{t,j}\text{\ }(q\in P_{j}).
\end{equation*}%
Consequently, 
\begin{equation*}
\Psi (\alpha \lbrack t+mr+S])=\sum_{j\neq j_{t}}2^{j}\mathbf{e}%
_{c_{t,j}}+\Psi (x[(r+\varepsilon _{t})+P_{j_{t}}]).
\end{equation*}

Since $x$ is recurrent, every $P_{j_{t}}$-factor of $x$ occurs infinitely
often. Hence restricting the index to $r+\varepsilon _{t},$ $r\in \mathbb{N}%
_{0},$ does not remove any Abelian $P_{j_{t}}$-class. Therefore one has 
\begin{equation*}
\sharp V_{t}=p_{x}^{\mathrm{ab}}(P_{j_{t}})=2^{j_{t}}+1.
\end{equation*}

We next show that the sets $V_{t}$ $(0\leq t<m)$ are pairwise disjoint. For $%
c\in C$, every vector in $V_{t}$ has $c$-coordinate 
\begin{equation*}
E_{t}(c):=\sum_{\substack{ 0\leq j<m  \\ \sigma _{t+j}=c}}2^{j}.
\end{equation*}%
Suppose that $V_{t}\cap V_{t^{\prime }}\neq \varnothing .$ Then $%
E_{t}(c)=E_{t^{\prime }}(c)$ for all $c\in C.$ By uniqueness of binary
expansion, we obtain that%
\begin{equation*}
\{0\leq j<m:\sigma _{t+j}=c\}=\{0\leq j<m:\sigma _{t^{\prime }+j}=c\}\text{\ 
}(c\in C).
\end{equation*}%
It follows that $\sigma _{t+j}=\sigma _{t^{\prime }+j}$\ $(0\leq j<m).$
Since $m$ is the least period of $\sigma $, we must have $t=t^{\prime }.$
Thus the sets $V_{t}$ are pairwise disjoint.

Every $n\in \mathbb{N}_{0}$ has a unique representation $n=t+mr$ $(0\leq
t<m,r\in \mathbb{N}_{0}).$ Hence $\Psi (\mathcal{F}_{\alpha
}(S))=\bigcup_{t=0}^{m-1}V_{t}.$ Since the union is disjoint and the map $%
t\longmapsto j_{t}$ is a permutation of $\{0,\ldots ,m-1\}$, one has 
\begin{equation*}
p_{\alpha }^{\mathrm{ab}}(S)=\sum_{t=0}^{m-1}\sharp
V_{t}=\sum_{t=0}^{m-1}(2^{j_{t}}+1)=K+m.
\end{equation*}%
Hence 
\begin{equation*}
K+m=p_{\alpha }^{\mathrm{ab}}(S)\leq p_{\alpha }^{\ast \mathrm{ab}%
}(K)=K+\ell -1,
\end{equation*}%
and therefore $m\leq \ell -1.$

On the other hand, every letter of $C$ occurs in $\alpha $ and hence in $%
\sigma $, while $\sigma _{p}=\ast .$ Thus a period block of $\sigma $
contains all symbols of $C\cup \{\ast \}$ whose cardinality is $\ell -1$.
Thus $m\geq \ell -1.$ Together with the reverse inequality, this gives $%
m=\ell -1.$

Therefore each symbol of $C\cup \{\ast \}$ occurs exactly once in the period
block of $\sigma $. In particular, $\ast $ occurs only at $p$, while the
remaining residues are the constant words $c^{\infty }$ $(c\in C)$, each
occurring exactly once. Since $\mathbb{A}_{x}=B$ and $C\cap B=\varnothing $, 
$\alpha $ is a separated $(\ell -1)$-interleaving of $x$.
\end{proof}

\bigskip

\section{Basic lower bounds for aperiodic words}

\label{sec:basic-lower-AW}

In this section, we turn to aperiodic words without any recurrence
assumption. In Subsection \ref{subsec:binary-sharp}, we prove Theorem \ref%
{thm:threshold}, including the periodicity criterion and the exact binary
lower bound. Subsection \ref{subsec:recu-discr} gives a preliminary lower
bound for general aperiodic words.

We shall use the following periodic approximation lemma. A two-sided word $%
\gamma \in \mathbb{A}^{\mathbb{Z}}$ is $T$-\emph{periodic} if $\gamma
_{n+T}=\gamma _{n}$ for every $n\in \mathbb{Z}$.

\begin{lemma}
\label{lem:alignment} Suppose that $\alpha \in \mathbb{A}^{\mathbb{N}_{0}}$
is aperiodic and that $\beta \in \mathcal{O}(\alpha )$ is periodic with
period $T$. Then there exist a $T$-periodic word $\gamma \in \mathbb{A}^{%
\mathbb{Z}}$ and integers $b_{j}\in \mathbb{N}_{0}$, $L_{j}\in \mathbb{N}$
such that 
\begin{equation*}
b_{j}\longrightarrow \infty ,\text{\ }L_{j}\longrightarrow \infty ,\text{\
and }\alpha _{t}=\gamma _{t}\text{\ for all }b_{j}\leq t<b_{j}+L_{j}.
\end{equation*}%
In particular, $\gamma _{s}^{\infty }\in \mathcal{O}(\alpha ^{(s)})$ for
each $s\in \{0,\ldots ,T-1\}$.
\end{lemma}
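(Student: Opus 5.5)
The plan is to extract $\gamma$ directly from $\beta$ and then use the convergence $\mathsf{T}^{n_j}\alpha\to\beta$ to produce long windows of agreement. Since $\beta\in\mathcal{O}(\alpha)$ is periodic with period $T$, we let $\gamma\in\mathbb{A}^{\mathbb{Z}}$ be the unique $T$-periodic two-sided extension of $\beta$, namely $\gamma_t:=\beta_{t \bmod T}$ with residues taken in $\{0,\ldots,T-1\}$. Then $\gamma_t=\beta_t$ for all $t\geq 0$.

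First suppose that $\beta$ lies in the forward orbit of $\alpha$, say $\beta=\mathsf{T}^{n}\alpha$. Then $\alpha$ would be eventually periodic, which is impossible. Hence $\beta$ is a genuine limit point, and there is a sequence $n_j\to\infty$ with $\mathsf{T}^{n_j}\alpha\to\beta$. Passing to a subsequence, we may assume that $\alpha_{n_j+i}=\beta_i$ for $0\leq i<L'_j$, where $L'_j\to\infty$.

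There is one subtlety: this gives $\alpha_t=\beta_{t-n_j}=\gamma_{t-n_j}$, whereas the statement requires $\alpha_t=\gamma_t$ with the absolute index. To align, we pass to a further subsequence along which $n_j\equiv r\pmod T$ for a fixed $r$, using the pigeonhole principle. We then replace $\gamma$ by its shift $\gamma'_t:=\gamma_{t-r}$, which is still $T$-periodic. With this choice, $\alpha_t=\gamma_{t-n_j}=\gamma_{t-r}=\gamma'_t$ for $n_j\leq t<n_j+L'_j$, because $n_j-r\in T\mathbb{Z}$. Setting $b_j:=n_j$ and $L_j:=L'_j$ gives the first claim.

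For the last assertion, fix $s$. The word $\alpha^{(s)}$ reads $\alpha$ at positions $s+nT$. Let $N_j$ be the least $n$ with $s+nT\geq b_j$. For $0\leq i<\lfloor (L_j-T)/T\rfloor$, the position $s+(N_j+i)T$ lies in the window $[b_j,b_j+L_j)$. There $\alpha$ agrees with $\gamma$, and $\gamma_{s+(N_j+i)T}=\gamma_s$. So $\mathsf{T}^{N_j}\alpha^{(s)}$ begins with $\gamma_s^{M_j}$ with $M_j\to\infty$. Hence $\gamma_s^\infty$ is a limit of shifts of $\alpha^{(s)}$, and therefore lies in $\mathcal{O}(\alpha^{(s)})$.

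The main obstacle is this alignment bookkeeping, namely choosing a fixed residue of $n_j$ modulo $T$ and shifting $\gamma$ accordingly. The rest is routine.
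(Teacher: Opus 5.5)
Your proposal is correct and follows essentially the same approach as the paper: exclude bounded starting positions by aperiodicity, fix the residue of the starting positions modulo $T$ by pigeonhole and shift the periodic extension of $\beta$ to match (the paper's $\gamma_t:=\widetilde{\beta}_{t-\nu}$ is your $\gamma'$), then read off arbitrarily long constant blocks in each residue word. One cosmetic point: in your last paragraph $\gamma$ should be the shifted word $\gamma'$, which is harmless because $\gamma'$ is again $T$-periodic.
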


\begin{proof}
Since $\beta \in \mathcal{O}(\alpha )$, there exist integers $b_{j}\in 
\mathbb{N}_{0}$ and $L_{j}\in \mathbb{N}$ with $L_{j}\rightarrow \infty $
such that 
\begin{equation*}
\alpha _{b_{j}+k}=\beta _{k}\text{\ }(0\leq k<L_{j}).
\end{equation*}%
Necessarily $b_{j}\rightarrow \infty $. Otherwise, some subsequence would
satisfy $b_{j}=b$ for a fixed $b$, and hence $\alpha _{b+k}=\beta _{k}$ for
every $k\in \mathbb{N}_{0}.$ This implies that $\alpha $ eventually periodic.

Passing to a subsequence, we may assume that $b_{j}\equiv \nu \pmod T$ for
some $\nu \in \{0,\ldots ,T-1\}$. Let $\widetilde{\beta }\in \mathbb{A}^{%
\mathbb{Z}}$ be the $T$-periodic extension of $\beta $, and define $\gamma
_{t}:=\widetilde{\beta }_{t-\nu }$\ $(t\in \mathbb{Z}).$ Then $\gamma $ is $%
T $-periodic. If $b_{j}\leq t<b_{j}+L_{j}$, then $\alpha _{t}=\beta
_{t-b_{j}}=\widetilde{\beta }_{t-\nu }=\gamma _{t}.$

Fix $s\in \{0,\ldots ,T-1\}$. For every $n$ satisfying $b_{j}\leq
Tn+s<b_{j}+L_{j}$, we have $\alpha _{n}^{(s)}=\gamma _{s}.$ These values of $%
n$ form intervals of consecutive integers whose lengths tend to infinity and
whose initial points tend to infinity.\ Hence $\gamma _{s}^{\infty }\in 
\mathcal{O}(\alpha ^{(s)})$.
\end{proof}

\subsection{The binary bound and the periodicity criterion}

\ 

\label{subsec:binary-sharp}

Define 
\begin{equation*}
\kappa (1):=0,\text{ }\kappa (m):=1+\binom{m-1}{2}\ (m\geq 2).
\end{equation*}%
For integers $k\geq 1$ and $m\geq 2$, the definition of $f$ gives 
\begin{equation}
f(k)=m\Longleftrightarrow \binom{m-1}{2}<k\leq \binom{m}{2}.
\label{eq:threshold-integers}
\end{equation}%
Consequently, $f(k)=\max \{m\geq 1:\kappa (m)\leq k\}$\ and $2\leq f(k)\leq
k+1$ for $k\geq 1.$

\begin{lemma}
\label{lem:separation} Let $x\in \{0,1\}^{\mathbb{N}_{0}}$ have infinitely
many occurrences of $1$, and suppose that $0^{\infty }\in \mathcal{O}(x)$.
Let $u_{0}<\cdots <u_{r-1}$ be elements of$\ \mathbb{N}_{0}$, and let $S_{0}$
be a pattern such that $c_{S_{0}}(u_{i})\geq b\geq 1$ for all $i$. Then
there is a pattern $S_{1}\supset S_{0}$ such that $\sharp \left(
S_{1}\setminus S_{0}\right) \leq \binom{r}{2},$ the counts $%
c_{S_{1}}(u_{0}),\ldots ,c_{S_{1}}(u_{r-1})$ are pairwise distinct and $%
c_{S_{1}}(u_{i})\geq b\geq 1$ for all $i$. Moreover, for every $k\geq \sharp
S_{1}$, the pattern $S_{1}$ can be enlarged to a $k$-pattern without
changing any of these counts.
\end{lemma}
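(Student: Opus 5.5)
We add new elements to $S_0$ one at a time; each new element separates one pair of positions $u_i<u_j$ whose counts currently coincide. The tool is Lemma \ref{lem:zero-blocks}(1): for any length $L$ and threshold $N$ it yields a position $Y>\max\{N,L\}$ with $x_{Y-L}\cdots x_{Y-1}=0^{L}$ and $x_Y=1$.

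\textbf{Separating step.} Fix a pattern $S$ and indices $i<j$ with $c_S(u_i)=c_S(u_j)$. Set $d:=u_j-u_i>0$. Apply Lemma \ref{lem:zero-blocks}(1) with $L:=d$ and $N$ larger than $u_{r-1}+\max S$, and put $s:=Y-u_j$. Then:
\begin{itemize}
\item $s>\max S$, so $s\notin S$ and $S':=S\cup\{s\}$ has one more element;
\item $u_j+s=Y$, so $x_{u_j+s}=1$ and $c_{S'}(u_j)=c_S(u_j)+1$;
\item $u_i+s=Y-d$ lies in the zero block, so $x_{u_i+s}=0$ and $c_{S'}(u_i)=c_S(u_i)$.
\end{itemize}
Every other count $c_{S'}(u_h)$ either stays the same or goes up by one. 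Counts never decrease, so the condition $c\geq b$ is preserved.

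\textbf{Termination and the bound.} A separated pair must not become equal again later. Adding $s$ with the zero block long enough to cover the whole window could also be tried, but the simplest bookkeeping is to order the positions. Take the zero block long enough, $L:=u_{r-1}-u_0$, so that $x_{u_h+s}=0$ for every $h<j$, while $x_{u_h+s}$ for $h>j$ is arbitrary.

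The resulting "staircase" monotonicity is not automatic, so we use a potential argument instead. Let $\Phi(S)$ be the number of unordered pairs $\{h,h'\}$ with $c_S(u_h)=c_S(u_{h'})$; initially $\Phi\leq\binom r2$. We would like each step to decrease $\Phi$ by at least one, but this can fail, because raising $c(u_h)$ for $h>j$ can create new collisions.

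To avoid that, always choose $j$ to be the \emph{largest} index involved in any collision, and pair it with some $i<j$. Use a zero block covering $[u_0+s,\,u_j+s-1]$. Only positions $h>j$ can change besides $j$. Those $u_h$ are not in any collision, so their counts are pairwise distinct and distinct from all others.

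Possible new collisions among the indices $\geq j$ still need to be controlled. The fix is to pick $s$ so that $u_h+s$ also lands in zeros for $h>j$. This requires a zero block around $Y$ extending past $Y$, which Lemma \ref{lem:zero-blocks}(1) does not provide; so instead we make the changes form a staircase. This is the main obstacle: controlling the uncontrolled letters $x_{u_h+s}$ for $h>j$.

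\textbf{Handling the obstacle.} The plan is to process indices from the top down, $j=r-1,r-2,\dots,1$. At stage $j$, add at most $j$ elements, one for each $i<j$ that collides with $j$ (at most $j$ collisions, giving $\sum_j j=\binom r2$ in total). Each added $s$ has $x_{u_h+s}=0$ for all $h<j$ and $x_{u_j+s}=1$.

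We maintain the invariant that after stage $j$, the count $c(u_j)$ exceeds $c(u_h)$ for all $h<j$, and the indices above $j$ all have counts larger still. To protect this, every later step must raise $c(u_h)$ for $h>j$ by the same amounts that the stage-$j'$ increments raise $c(u_{j'})$. That is plausible but uncertain, since those increments are $0$ or $1$ uncontrollably.

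As a fallback, we first dominate: add elements making $c(u_{r-1})\gg c(u_{r-2})\gg\cdots$, using large gaps so that uncontrolled $+1$'s cannot close them. This needs one element per pair, which matches $\binom r2$.

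\textbf{Enlargement to a $k$-pattern.} Finally, for $k\geq\sharp S_1$, Lemma \ref{lem:zero-blocks}(3) applied to $R=\{u_0,\dots,u_{r-1}\}$ gives arbitrarily large $p$ with $x[p+R]=0^{r}$. Adjoin such $p$'s, distinct and beyond $\max S_1$, one at a time, until the pattern has $k$ elements. None of these additions changes any of the counts $c(u_i)$.
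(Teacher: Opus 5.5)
Your ingredients are the right ones. Lemma \ref{lem:zero-blocks}(1), applied with a zero window of length $u_j-u_0$, gives arbitrarily large $s$ with $x_{u_h+s}=0$ for all $h<j$ and $x_{u_j+s}=1$. The budget $\sum_{j=1}^{r-1}j=\binom r2$ is correct, and your enlargement step via Lemma \ref{lem:zero-blocks}(3) is correct. But the proposal never assembles these into a proof, and the order you finally adopt fails for exactly the reason you point out.

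Processing $j=r-1,r-2,\dots,1$ from the top, every element added at stage $j$ changes each count $c(u_h)$ with $h>j$ by an uncontrolled $0$ or $1$. So separations already obtained among the higher indices can be destroyed by later stages. The ``domination'' fallback does not repair this within the budget. Suppose all initial counts coincide and $n_j$ elements are added at stage $j$. In the worst case the final gap $c(u_{j+1})-c(u_j)$ is $n_{j+1}-(n_1+\cdots+n_j)$. Making the gaps immune to the uncontrolled increments therefore needs $n_{j+1}>n_1+\cdots+n_j$, i.e.\ $n_j\geq 2^{j-1}$ and $2^{r-1}-1$ elements in total. This exceeds $\binom r2$ once $r\geq 4$, so the claim that it ``needs one element per pair'' is unfounded.

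The missing idea is to reverse the order. Treat $j=1,2,\dots,r-1$ and maintain that $c(u_0),\dots,c(u_j)$ are pairwise distinct after stage $j$. An element $s$ of the above kind leaves $c(u_0),\dots,c(u_{j-1})$ unchanged and raises $c(u_j)$ by exactly one. Every later stage $j''>j$ also puts zeros at $u_0,\dots,u_{j''-1}$, so these counts are final once stage $j$ ends. The uncontrolled letters at $u_h+s$, $h>j$, are then harmless: those indices have not been separated yet and are handled later with whatever counts they have at that point.

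Also, ``one element for each $i<j$ colliding with $j$'' is not the right bookkeeping, since raising $c(u_j)$ by one may create a collision with a different $c(u_{i'})$. Instead, pick $j$ distinct such elements outside the current pattern and add $\lambda$ of them. Choose $\lambda\in\{0,\dots,j\}$ so that $c(u_j)+\lambda$ avoids the $j$ fixed values $c(u_0),\dots,c(u_{j-1})$, which pigeonhole allows. Counts never decrease, so $c\geq b$ is preserved, and at most $\sum_{j=1}^{r-1}j=\binom r2$ elements are added. This is the paper's argument.
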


\begin{proof}
The case $r=1$ is immediate. Assume $r\ge2$.

Set $S^{(0)}=S_{0}$. We construct inductively $S^{(0)}\subset S^{(1)}\subset
\cdots \subset S^{(r-1)}$ such that, for each $1\leq j<r$, the counts $%
c_{S^{(j)}}(u_{0}),\ldots ,c_{S^{(j)}}(u_{j})$ are pairwise distinct, while
the counts at $u_{0},\ldots ,u_{j-1}$ are unchanged in passing from $%
S^{(j-1)}$ to $S^{(j)}$.

Suppose that $S^{(j-1)}$ has been constructed. By Lemma \ref{lem:zero-blocks}%
(1), applied with $L=u_{j}-u_{0},$ there are arbitrarily large $p\in \mathbb{%
N}_{0}$ such that 
\begin{equation*}
x_{u_{i}+p}=0\text{\ }(0\leq i<j)\text{\ and }x_{u_{j}+p}=1.
\end{equation*}

Choose distinct such integers $p_{1},\ldots ,p_{j}$ outside $S^{(j-1)}$, and
put $a_{i}:=c_{S^{(j-1)}}(u_{i})$\ $(0\leq i\leq j).$ For $0\leq \lambda
\leq j$, let 
\begin{equation*}
S_{\lambda }^{(j-1)}:=S^{(j-1)}\cup \{p_{1},\ldots ,p_{\lambda }\},
\end{equation*}%
with $S_{0}^{(j-1)}=S^{(j-1)}$. Then $c_{S_{\lambda }^{(j-1)}}(u_{i})=a_{i}$%
\ for all $0\leq i<j,$ while $c_{S_{\lambda }^{(j-1)}}(u_{j})=a_{j}+\lambda
. $ For each $i<j$, the equality $a_{j}+\lambda =a_{i}$ excludes at most one
value of $\lambda $. Since there are $j+1$ choices $\lambda \in \{0,\ldots
,j\}$, some choice makes $a_{j}+\lambda $ distinct from $a_{0},\ldots
,a_{j-1}$. Define $S^{(j)}=S_{\lambda }^{(j-1)}$ for such a choice.

Set $S_{1}:=S^{(r-1)}.$ At the $j$th step at most $j$ elements were added,
and hence $\sharp \left( S_{1}\setminus S_{0}\right) \leq \sum_{j=1}^{r-1}j=%
\binom{r}{2}.$ By construction, the counts $c_{S_{1}}(u_{0}),\ldots
,c_{S_{1}}(u_{r-1})$ are pairwise distinct. It follows that $%
c_{S_{1}}(u_{i})\geq c_{S_{0}}(u_{i})\geq b$\ for all $0\leq i<r.$

Finally, let $k\geq \sharp S_{1}$. By Lemma \ref{lem:zero-blocks}(3), there
are arbitrarily large $p$ such that $x_{u_{i}+p}=0$\ for all $0\leq i<r.$
Choose $k-\sharp S_{1}$ distinct such integers outside $S_{1}$ and adjoin
them to $S_{1}$. This produces a $k$-pattern and leaves all the above counts
unchanged.
\end{proof}

\begin{proposition}
\label{prop:zerolower} If a binary word $x$ has infinitely many occurrences
of $1$ and $0^{\infty }\in \mathcal{O}(x)$, then $p_{x}^{\ast \mathrm{ab}%
}(k)\geq f(k)$ for every $k\in \mathbb{N}$.
\end{proposition}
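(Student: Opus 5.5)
We prove the bound by building, for each $k$, a $k$-pattern $S$ together with $m:=f(k)$ starting positions at which the counts $c_{S}$ take $m$ distinct values, using Lemma \ref{lem:separation} as the main engine. Fix $k\in \mathbb{N}$ and put $m:=f(k)$, so $\kappa (m)\leq k$ by (\ref{eq:threshold-integers}). If $m=1$ there is nothing to prove, and for $m=2$ Lemma \ref{lem:zero-blocks}(2) already gives two starting positions with counts $0$ and $1$ for any $k$-pattern. So assume $m\geq 3$. The aim is a $k$-pattern $S$ and positions $u_{0},\ldots ,u_{m-2}$ and $n_{0}$ such that the values $c_{S}(u_{0}),\ldots ,c_{S}(u_{m-2})$ are pairwise distinct and at least $1$, while $c_{S}(n_{0})=0$. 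Then $\sharp \mathcal{C}_{x}(S)\geq m$, and (\ref{eq:binary-abelian-count}) gives $p_{x}^{\ast \mathrm{ab}}(k)\geq m$.

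The construction uses $r:=m-1$ positions. Since $1$ occurs infinitely often, pick a position $Y$ with $x_{Y}=1$ and set $S_{0}:=\{s_{0}\}$. We want $u_{0}<\cdots <u_{r-1}$ with $x_{u_{i}+s_{0}}=1$ for every $i$. To get this, choose $u_{i}:=Y_{i}-s_{0}$, where $Y_{0}<Y_{1}<\cdots $ are occurrences of $1$ and $s_{0}:=0$; any $r$ distinct occurrences of $1$ will do. Then $c_{S_{0}}(u_{i})=1$ for all $i$, so the hypothesis of Lemma \ref{lem:separation} holds with $b=1$. That lemma produces $S_{1}\supset S_{0}$ with $\sharp S_{1}\leq 1+\binom{r}{2}=1+\binom{m-1}{2}=\kappa (m)\leq k$, on which the $r$ counts are pairwise distinct and all at least $1$. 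Its final clause lets us enlarge $S_{1}$ to a $k$-pattern $S$ without changing these counts.

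It remains to produce the zero value for this final $S$, and this is where Lemma \ref{lem:zero-blocks}(2) comes in: applied to the finite set $S$, it gives $n_{0}$ with $c_{S}(n_{0})=0$. Since every $c_{S}(u_{i})\geq 1$, the value $0$ is new, so $S$ has at least $r+1=m$ distinct counts.

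The only delicate point is the bookkeeping of the size: the budget $1+\binom{m-1}{2}$ must exactly match $\kappa (m)$. That matching is why one coordinate is spent to force every $u_{i}$ to have count at least $1$ (so they all differ from the zero count), with the $m-1$ remaining positions separated at cost $\binom{m-1}{2}$. I expect no real obstacle beyond checking that the separation lemma's hypotheses, namely infinitely many $1$'s and $0^{\infty }\in \mathcal{O}(x)$, are exactly those assumed.
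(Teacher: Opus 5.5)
Your proposal is correct and follows essentially the same route as the paper: take $r=f(k)-1$ occurrences of $1$ with $S_{0}=\{0\}$, apply Lemma~\ref{lem:separation} with $b=1$ at cost at most $\kappa(f(k))\le k$, pad to a $k$-pattern, and obtain the additional value $0$ from Lemma~\ref{lem:zero-blocks}(2). Your separate treatment of $m\le 2$ is harmless but unnecessary: $f(k)\ge 2$ always, and the general argument already covers $m=2$ via the trivial case $r=1$ of the separation lemma.
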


\begin{proof}
Fix $k\in \mathbb{N}$, and put $m:=f(k)$, $r:=m-1\geq 1$. Take $u_{0}<\cdots
<u_{r-1}$ such that $x_{u_{i}}=1$ for all $0\leq i<r,$ and let $S_{0}:=\{0\}$%
. Thus $c_{S_{0}}(u_{i})=1$\ for all $0\leq i<r.$ Applying Lemma \ref%
{lem:separation} with $b=1$, we obtain a pattern $S_{1}\supset S_{0}$ such
that the counts $c_{S_{1}}(u_{0}),\ldots ,c_{S_{1}}(u_{r-1})$ are pairwise
distinct and positive, and $\sharp S_{1}\leq 1+\binom{r}{2}=\kappa (m)\leq
k. $ By the last assertion of Lemma \ref{lem:separation}, $S_{1}$ can be
extended to a $k$-pattern $S$ without changing these counts. By Lemma \ref%
{lem:zero-blocks}(2), the value $0$ is also attained by $c_{S}$. Therefore $%
p_{x}^{\ast \mathrm{ab}}(k)\geq p_{x}^{\mathrm{ab}}(S)\geq f(k).$
\end{proof}

\begin{proposition}
\label{prop:binarylower} If $x\in \{0,1\}^{\mathbb{N}_{0}}$ is aperiodic,
then $p_{x}^{\ast \mathrm{ab}}(k)\geq f(k)$\ for all $k\in \mathbb{N}.$
\end{proposition}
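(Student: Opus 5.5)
We split according to the periodic points of $\mathcal{O}(x)$. If $\mathcal{O}(x)$ contains no periodic point, Lemma \ref{lem:noperiodic} gives $p_{x}^{\ast \mathrm{ab}}(k)\geq k+1\geq f(k)$, since $f(k)\leq k+1$ by (\ref{eq:threshold-integers}). So assume some periodic $\beta \in \mathcal{O}(x)$ exists, with period $T$. Lemma \ref{lem:alignment} then supplies a $T$-periodic $\gamma \in \{0,1\}^{\mathbb{Z}}$ with $\gamma _{s}^{\infty }\in \mathcal{O}(x^{(s)})$ for every residue $0\leq s<T$. Since $x$ is aperiodic, some residue word $x^{(s)}$ modulo $T$ is aperiodic: otherwise a common eventual period of all the residues would make $x$ eventually periodic.

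The aim is to pick a residue $s$ for which $y:=x^{(s)}$ is aperiodic, and then, possibly after exchanging the letters $0$ and $1$, to land in the situation of Proposition \ref{prop:zerolower}. That requires $\gamma _{s}^{\infty }\in \mathcal{O}(y)$ together with infinitely many occurrences of the other letter in $y$. The second condition is automatic: $y$ is aperiodic, so it is not eventually constant. Relabel so that $\gamma _{s}=0$. The relabeling is the map of alphabets exchanging $0$ and $1$, and it preserves Abelian maximal pattern complexity because it is invertible (apply Lemma \ref{lem:transfer}(3) in both directions). Proposition \ref{prop:zerolower} then gives $p_{y}^{\ast \mathrm{ab}}(k)\geq f(k)$.

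The remaining step is the transfer from $y$ back to $x$. Given a $k$-pattern $P$ for $y$, put $S:=TP$. Then $x[s+Tq+S]=y[q+P]$ for every $q\in \mathbb{N}_{0}$, so $\mathcal{F}_{y}(P)\subset \mathcal{F}_{x}(S)$. Hence $p_{x}^{\mathrm{ab}}(S)\geq p_{y}^{\mathrm{ab}}(P)$, and taking suprema gives $p_{x}^{\ast \mathrm{ab}}(k)\geq p_{y}^{\ast \mathrm{ab}}(k)\geq f(k)$.

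The main point to check is that the residue $s$ can be chosen so that both properties hold simultaneously, namely aperiodicity of $x^{(s)}$ and the constant word $\gamma _{s}^{\infty }$ lying in its orbit closure. Lemma \ref{lem:alignment} provides the latter for \emph{every} residue $s$, so this should be immediate. The only delicacy is that we must use the same modulus $T$ for both properties, which is why the argument fixes $T$ as the period of $\beta$ before choosing $s$.
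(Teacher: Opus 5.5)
Your proposal is correct and follows the paper's proof essentially step by step. Both arguments split off the case with no periodic point via Lemma~\ref{lem:noperiodic}, pick an aperiodic residue modulo $T$ whose orbit closure contains a constant word by Lemma~\ref{lem:alignment}, relabel so that Proposition~\ref{prop:zerolower} applies, and transfer back to $x$ through the dilated pattern $TP$. Your extra justifications, namely that the letter swap preserves $p^{\ast\mathrm{ab}}$ by applying Lemma~\ref{lem:transfer}(3) twice and that $f(k)\le k+1$, only make explicit what the paper leaves implicit.
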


\begin{proof}
If $\mathcal{O}(x)$ contains no periodic point, then Lemma \ref%
{lem:noperiodic} gives $p_{x}^{\ast \mathrm{ab}}(k)\geq k+1\geq f(k).$

Suppose that $\mathcal{O}(x)$ contains a periodic point with period $T$. By
Lemma \ref{lem:alignment}, each residue $x^{(s)}$ $(0\leq s<T)$ has a
constant point in its orbit closure. At least one $x^{(s)}$ is aperiodic;
otherwise, if $q$ is a common eventual period of the finitely many residue
words $x^{(s)}$, then $Tq$ is an eventual period of $x$. Fix such an $s$ and
put $z:=x^{(s)}$. Without loss of generality we can assume that $0^{\infty
}\in \mathcal{O}(z).$ Since $z$ is aperiodic, both letters occur infinitely
often in $z$. For every finite pattern $Q$ and every $n\in \mathbb{N}_{0}$,
one has $z[n+Q]=x[Tn+s+TQ]$ and $\sharp (TQ)=\sharp Q$. Hence Proposition %
\ref{prop:zerolower} gives $p_{x}^{\ast \mathrm{ab}}(k)\geq p_{z}^{\ast 
\mathrm{ab}}(k)\geq f(k)$ for all $k\in \mathbb{N}$.
\end{proof}

\begin{corollary}
\label{cor:universalbinary} If $\alpha \in \mathbb{A}^{\mathbb{N}_{0}}$ is
aperiodic, then $p_{\alpha }^{\ast \mathrm{ab}}(k)\geq f(k)$\ for all $k\in 
\mathbb{N}.$
\end{corollary}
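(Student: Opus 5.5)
The plan is to reduce to the binary case, via a letter projection, and then invoke Proposition \ref{prop:binarylower}. Let $\alpha$ be aperiodic. By Lemma \ref{lem:projection}, there is a letter $a\in \mathbb{A}$ such that $\pi_{a}(\alpha)\in\{0,1\}^{\mathbb{N}_0}$ is aperiodic. That lemma is proved by contradiction: if every indicator word $\pi_a(\alpha)$ were eventually periodic, then taking the lcm of the periods and the maximum of the thresholds would make $\alpha$ itself eventually periodic. So no recurrence hypothesis is needed for this step.

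Next, apply Proposition \ref{prop:binarylower} to the binary aperiodic word $x:=\pi_a(\alpha)$. This gives $p_{x}^{\ast\mathrm{ab}}(k)\geq f(k)$ for every $k\in\mathbb{N}$.

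Finally, $\pi_a$ is the coordinatewise extension of the alphabet map $\varphi=\mathbf{1}_{\{a\}}:\mathbb{A}\to\{0,1\}$. Lemma \ref{lem:transfer}(3) therefore yields $p_{\pi_a(\alpha)}^{\ast\mathrm{ab}}(k)\leq p_{\alpha}^{\ast\mathrm{ab}}(k)$, and chaining the two inequalities gives $p_{\alpha}^{\ast\mathrm{ab}}(k)\geq f(k)$ for all $k$.

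There is no real obstacle, since all the substantive work sits in Proposition \ref{prop:binarylower}. The only point to check is that the first part of Lemma \ref{lem:projection} is stated and proved for arbitrary, not necessarily recurrent, aperiodic words, and it is.
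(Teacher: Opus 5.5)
Your proposal is correct and is exactly the paper's argument: Lemma~\ref{lem:projection} gives an aperiodic indicator word $\pi_a(\alpha)$, Proposition~\ref{prop:binarylower} bounds its complexity below by $f(k)$, and Lemma~\ref{lem:transfer}(3) transfers this bound to $\alpha$. Your check that the first part of Lemma~\ref{lem:projection} needs no recurrence is also right.
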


\begin{proof}
By Lemma \ref{lem:projection}, $\pi _{a}(\alpha )$ is aperiodic for some $%
a\in \mathbb{A}$. By Proposition \ref{prop:binarylower} and Lemma \ref%
{lem:transfer}(3), one has $p_{\alpha }^{\ast \mathrm{ab}}(k)\geq p_{\pi
_{a}(\alpha )}^{\ast \mathrm{ab}}(k)\geq f(k)$\ for all $k\in \mathbb{N}.$
\end{proof}

We next construct a binary word giving the matching upper bound for $%
\mathcal{L}_{2}^{\mathrm{ab}}$. This construction will be revisited in
Section \ref{sec:APSW}.

Let 
\begin{equation}
E:=\{3^{j}:j\in \mathbb{N}_{0}\}\text{\ and }x:=\mathbf{1}_{E}\in \{0,1\}^{%
\mathbb{N}_{0}}.  \label{eq:def-sparse-bi-core}
\end{equation}%
Both $0$ and $1$ occur infinitely often in $x$. Moreover, the gaps between
successive occurrences of $1$ are $3^{j+1}-3^{j}=2\cdot 3^{j}$ and hence
tend to infinity. A periodic tail containing $1$ would have bounded gaps
between occurrences of $1$. Thus $x$ is aperiodic, and $x\in \mathcal{A}_{2}$%
.

\begin{lemma}
\label{lem:forest} Let $R,S\subset \mathbb{N}_{0}$ be finite. Then $\Gamma
_{E}[R,S]$ is a forest.
\end{lemma}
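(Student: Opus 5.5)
We must show that for $E=\{3^j\}$ the bipartite graph with edges $(r,s)$ whenever $r+s\in E$, restricted to finite $R,S$, has no cycle. Since any cycle is finite, it suffices to show that $\Gamma_E$ itself contains no cycle. We argue by contradiction: take a cycle $(r_1,\mathrm L),(s_1,\mathrm R),(r_2,\mathrm L),(s_2,\mathrm R),\ldots,(r_t,\mathrm L),(s_t,\mathrm R)$ with $t\geq 2$. The left vertices $r_i$ are pairwise distinct, the right vertices $s_i$ are pairwise distinct, and, with indices taken mod $t$,
\begin{equation*}
r_i+s_i=3^{a_i},\qquad s_i+r_{i+1}=3^{b_i}.
\end{equation*}
Consecutive edges share a vertex but end at different vertices. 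Hence $3^{a_i}\neq 3^{b_i}$, since $r_i\neq r_{i+1}$, and $3^{b_i}\neq 3^{a_{i+1}}$, since $s_i\neq s_{i+1}$. Thus the exponents along the closed walk $a_1,b_1,a_2,b_2,\ldots,a_t,b_t$ change at every step.

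Next we use the alternating-sum identity. Going around the cycle, $r_{i+1}-r_i=3^{b_i}-3^{a_i}$, and summing over $i$ gives
\begin{equation*}
\sum_{i=1}^{t}3^{a_i}=\sum_{i=1}^{t}3^{b_i}.
\end{equation*}
The main step is to exploit the edge of largest label. Let $M$ be the largest exponent occurring among the $a_i,b_i$, and suppose it occurs at some edge, say $r_i+s_i=3^M$. Both neighbouring edges have strictly smaller labels, so both are at most $3^{M-1}$. Then
\begin{equation*}
s_i\leq s_i+r_{i+1}\leq 3^{M-1},\qquad r_i\leq s_{i-1}+r_i\leq 3^{M-1},
\end{equation*}
which gives $r_i+s_i\leq 2\cdot 3^{M-1}<3^M$, a contradiction. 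The same bound applies if the maximal label sits at a $b$-edge. So no cycle exists, and $\Gamma_E[R,S]$ is a forest.

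I expect this maximal-edge step to be the only real point. The key is that the two edges adjacent to the heaviest edge each dominate one of its endpoints; this uses nonnegativity of the vertex labels. The argument in fact works for any $E$ in which each element exceeds twice the previous one. The alternating-sum identity is then not even needed, and I would keep it only as a remark. I will also make explicit why the neighbouring labels are strictly smaller: distinct vertices force distinct sums, so they are different elements of $E$ and hence smaller than the maximum.
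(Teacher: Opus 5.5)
Your proof is correct and is essentially the paper's argument. The paper also takes an edge of maximal sum $M$ on the cycle, notes that the two adjacent cycle edges have strictly smaller power-of-$3$ sums (hence at most $M/3$), and concludes $M=r+s\le 2M/3$. The alternating-sum identity is indeed unnecessary, and your extension to sets with $d_{n+1}>2d_n$ is exactly the paper's later Proposition~\ref{prop:lacunary}.
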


\begin{proof}
Suppose that $\Gamma _{E}[R,S]$ contains a cycle. For each edge $(r,s)$ of
the cycle, the integer $r+s$ is a power of $3$. Choose an edge $(r,s)$ for
which $M:=r+s$ is maximal among the edge sums on the cycle.

Let $(r,s^{\prime })$ be the other cycle edge incident with $r$. By $%
s^{\prime }\neq s$ and maximality of $M$, one has $r+s^{\prime }<M$. Both $%
r+s^{\prime }$ and $M$ are powers of $3$, and therefore $r+s^{\prime }\leq
M/3$. In particular, $r\leq M/3$. Similarly, if $(r^{\prime },s)$ is the
other cycle edge incident with $s$, then $r^{\prime }+s\leq M/3$, and hence $%
s\leq M/3$. Consequently, $M=r+s\leq \frac{2M}{3},$ a contradiction. Thus $%
\Gamma _{E}[R,S]$ is a forest.
\end{proof}

\begin{proposition}
\label{prop:forest-upper} Let $D\subset \mathbb{N}_{0}$ be infinite. If $%
\Gamma _{D}$ is a forest, then $p_{\mathbf{1}_{D}}^{\ast \mathrm{ab}}(k)\leq
f(k)$ for every $k\in \mathbb{N}.$
\end{proposition}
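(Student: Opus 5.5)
We need to show: if $\Gamma_D$ is a forest, then for every $k$-pattern $S$, $\sharp\mathcal{C}_x(S)\le f(k)$, where $x=\mathbf{1}_D$. Equivalently, if $c_S$ takes $m$ distinct values, then $\binom{m}{2}\ge k$ (when $m\ge 2$), i.e. $k\le\binom{m}{2}$.

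Fix $S$ with $\sharp S=k$. Suppose $c_S$ attains $m$ distinct values. Choose positions $n_1,\dots,n_m$ realizing them. Let $R=\{n_1,\dots,n_m\}$ and consider the finite bipartite graph $G=\Gamma_D[R,S]$. By hypothesis $G$ is a forest. In $G$ the degree of the left vertex $n_i$ is exactly $c_S(n_i)$, since $(n_i,s)$ is an edge iff $n_i+s\in D$. A forest on $m+k$ vertices has at most $m+k-1$ edges, so
\[
\sum_{i=1}^m c_S(n_i)\le m+k-1 .
\]
The values $c_S(n_i)$ are distinct nonnegative integers, so their sum is at least $0+1+\cdots+(m-1)=\binom{m}{2}$. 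This gives $\binom{m}{2}\le m+k-1$. That bound is too weak, so the plan is to sharpen it.

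The sharpening removes vertices of $S$ that do not help. If some $s\in S$ is isolated in $G$, it contributes nothing. So suppose instead that we bound the number of non-isolated right vertices. I do not see a clean way to make that work, so I would use a different idea: bound $k$ through edges seen from the right side. Each $s\in S$ must be distinguishable... but the difficulty is that $k$ can exceed the edge count, since isolated $s$ are allowed.

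So the direct approach is to compare the values $c_S(n_i)-c_S(n_j)$ for pairs. A cleaner route is to count pairs. For each pair $i<j$ the counts differ, so the symmetric difference of the neighborhoods $N(n_i), N(n_j)\subset S$ is nonempty. That alone does not force $k$ small either, which is the main obstacle: a pattern $S$ can have many elements never hitting $D$ from any $n_i$.

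The resolution is to choose the realizing positions cleverly. Take $n$ ranging over all of $\mathbb{N}_0$ instead: for each $s\in S$ there are positions where $s$ is "used". The statement bounds the number of values, not $k$. The correct form is therefore $m\le f(k)$, i.e. $\binom{m-1}{2}<k$ must fail. Since $f(k)=\max\{m:\kappa(m)\le k\}$, we must show $\kappa(m)=1+\binom{m-1}{2}\le k$.

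Here the forest bound becomes useful. Choose the $m$ positions so that the smallest value is attained at some $n_1$. Reindex the values as $v_1<\dots<v_m$, so that $v_i\ge v_1+(i-1)$. Restrict to the component structure: the edges number $\sum v_i\le m+k'-1$, where $k'\le k$ counts the right vertices used. This yields $\binom{m}{2}+m v_1\le m+k-1$, hence $k\ge\binom{m}{2}-m+1=\binom{m-1}{2}$. To get the extra $+1$, I would argue that equality forces $G$ to be a spanning tree with $v_1=0$. The vertex $n_1$ is then isolated, which contradicts connectedness when $m+k\ge 2$. So the bound is strict, giving $k\ge 1+\binom{m-1}{2}=\kappa(m)$ and therefore $m\le f(k)$. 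The case $m=1$ is trivial.

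Hence $p^{\mathrm{ab}}_{\mathbf 1_D}(S)\le f(k)$ by (\ref{eq:binary-abelian-count}), and taking the supremum over $S$ proves the proposition. The only delicate point is the strictness argument via the isolated vertex, which I would check carefully; it follows because a forest with an isolated vertex has at most $(m+k)-2$ edges.
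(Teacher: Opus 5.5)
Your final argument is correct and is essentially the paper's: the left vertices of the forest $\Gamma_D[R,S]$ have degrees equal to the distinct counts, so their sum is bounded by the forest edge count. The only difference is bookkeeping. The paper puts only the $r$ positive counts into $R$, obtains $1+\binom{r}{2}\le k$, and then adds one class for the count $0$. You instead keep the zero-count vertex and use that it is isolated, which lowers the edge bound by one and has the same effect.

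Two earlier statements in your proposal are misstated, though your final argument does not use them. The opening reformulation (``equivalently \dots\ $k\le\binom{m}{2}$'') is wrong. The phrase ``$\binom{m-1}{2}<k$ must fail'' is inverted. The correct target is $\kappa(m)=1+\binom{m-1}{2}\le k$, and that is what you actually prove.
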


\begin{proof}
Fix a $k$-pattern $S$. If $c_{S}$ has no positive value, then $p_{\mathbf{1}%
_{D}}^{\mathrm{ab}}(S)=1\leq f(k).$ Assume that $r\geq 1$. Let $1\leq
b_{1}<\cdots <b_{r}$ be the distinct positive values of $c_{S}$, and choose $%
n_{i}\in \mathbb{N}_{0}$ such that $c_{S}(n_{i})=b_{i}$\ for $1\leq i\leq r.$
Let $R:=\{n_{1},\ldots ,n_{r}\}.$ Since $\Gamma _{D}$ is a forest, so is $%
\Gamma _{D}[R,S]$. Its number of edges is $\sharp E(\Gamma
_{D}[R,S])=\sum_{i=1}^{r}b_{i}.$ Moreover, a forest on $r+k$ vertices has at
most $r+k-1$ edges. Hence%
\begin{equation*}
\frac{r(r+1)}{2}\leq \sum_{i=1}^{r}b_{i}=\sharp E(\Gamma _{D}[R,S])\leq
r+k-1.
\end{equation*}%
Thus $1+\binom{r}{2}\leq k.$ By (\ref{eq:threshold-integers}), one has $%
r+1\leq f(k).$

The positive counts contribute exactly $r$ Abelian classes, and the count $0$%
, if it occurs, contributes at most one further class. Therefore $p_{\mathbf{%
1}_{D}}^{\mathrm{ab}}(S)\leq f(k).$ Since $S$ was arbitrary, it follows that 
$p_{\mathbf{1}_{D}}^{\ast \mathrm{ab}}(k)\leq f(k).$
\end{proof}

\begin{corollary}
\label{cor:sparse-upper} Let $x$ be the word defined in (\ref%
{eq:def-sparse-bi-core}). Then $p_{x}^{\ast \mathrm{ab}}(k)=f(k)$ for all $%
k\in \mathbb{N}$. In particular, $\mathcal{L}_{2}^{\mathrm{ab}}(k)=f(k)\leq 
\sqrt{2k}+2$.
\end{corollary}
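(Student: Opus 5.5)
We combine the lower bound of Proposition \ref{prop:binarylower} with the upper bound of Proposition \ref{prop:forest-upper}, applied to the set $E=\{3^{j}:j\in \mathbb{N}_{0}\}$. The plan has three steps.

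First, we show $p_{x}^{\ast \mathrm{ab}}(k)\geq f(k)$. The discussion following (\ref{eq:def-sparse-bi-core}) shows that $x=\mathbf{1}_{E}$ is aperiodic, and it is binary. So Proposition \ref{prop:binarylower} gives $p_{x}^{\ast \mathrm{ab}}(k)\geq f(k)$ for every $k$. Alternatively, Proposition \ref{prop:zerolower} applies directly: $1$ occurs infinitely often, and the zero gaps $2\cdot 3^{j}$ tend to infinity, so $0^{\infty }\in \mathcal{O}(x)$.

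Second, we show $p_{x}^{\ast \mathrm{ab}}(k)\leq f(k)$. By Lemma \ref{lem:forest}, every finite induced subgraph $\Gamma _{E}[R,S]$ is a forest. Since a cycle is finite, the whole graph $\Gamma _{E}$ is a forest, as remarked in Section \ref{sec:def-preliminary}. The set $E$ is infinite, so Proposition \ref{prop:forest-upper} gives $p_{\mathbf{1}_{E}}^{\ast \mathrm{ab}}(k)\leq f(k)$. Together with the first step, this yields $p_{x}^{\ast \mathrm{ab}}(k)=f(k)$.

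Third, we identify $\mathcal{L}_{2}^{\mathrm{ab}}(k)$ and bound it.
\begin{itemize}
\item Every $\alpha \in \mathcal{A}_{2}$ is aperiodic, so Corollary \ref{cor:universalbinary} gives $p_{\alpha }^{\ast \mathrm{ab}}(k)\geq f(k)$. Hence $\mathcal{L}_{2}^{\mathrm{ab}}(k)\geq f(k)$.
\item Since $x\in \mathcal{A}_{2}$, the infimum is at most $p_{x}^{\ast \mathrm{ab}}(k)=f(k)$. Hence $\mathcal{L}_{2}^{\mathrm{ab}}(k)=f(k)$.
\item For the numerical bound, $f(k)=\lceil (1+\sqrt{8k+1})/2\rceil \leq (1+\sqrt{8k+1})/2+1$. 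Using $\sqrt{8k+1}\leq \sqrt{8k}+1$, this is at most $(2+2\sqrt{2k})/2+1=\sqrt{2k}+2$.
\end{itemize}

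Nothing here is a real obstacle, since all the substantive work is in the earlier results. The only points that need care are the passage from finite induced forests to $\Gamma _{E}$ being a forest, and the elementary square-root estimate.
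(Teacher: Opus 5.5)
Your proposal is correct and follows the paper's proof. The upper bound comes from Lemma \ref{lem:forest} combined with Proposition \ref{prop:forest-upper}, and the lower bound and the identification $\mathcal{L}_{2}^{\mathrm{ab}}(k)=f(k)$ come from Corollary \ref{cor:universalbinary} (equivalently Proposition \ref{prop:binarylower}); your explicit check that $f(k)\leq \sqrt{2k}+2$ via $\sqrt{8k+1}\leq \sqrt{8k}+1$ fills in a step the paper leaves to the reader.
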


\begin{proof}
By Lemma \ref{lem:forest}, $\Gamma _{E}$ is a forest. Hence Proposition \ref%
{prop:forest-upper} gives $p_{x}^{\ast \mathrm{ab}}(k)\leq f(k)$. By
Corollary \ref{cor:universalbinary}, we have $f(k)\leq \mathcal{L}_{2}^{%
\mathrm{ab}}(k)\leq p_{x}^{\ast \mathrm{ab}}(k).$ Therefore $p_{x}^{\ast 
\mathrm{ab}}(k)=\mathcal{L}_{2}^{\mathrm{ab}}(k)=f(k)$ for all $k\in \mathbb{%
N}$.
\end{proof}

\begin{proof}[Proof of Theorem~\protect\ref{thm:threshold}]
Suppose that $\alpha _{n+q}=\alpha _{n}$ for all $n\geq N$, where $N\in 
\mathbb{N}_{0}$ and $q\in \mathbb{N}$. By Lemma \ref{lem:transfer}(4) one
has $p_{\alpha }^{\ast \mathrm{ab}}(k)\leq N+q$ for every $k$. Thus $%
(1)\Rightarrow (2)$. Suppose next that $M:=\sup_{k\in \mathbb{N}}p_{\alpha
}^{\ast \mathrm{ab}}(k)<\infty .$ If $k>\binom{M}{2}$, then $\binom{%
p_{\alpha }^{\ast \mathrm{ab}}(k)}{2}\leq \binom{M}{2}<k,$ so $%
(2)\Rightarrow (3)$. And $(3)\Rightarrow (1)$ is the contrapositive of
Corollary\ \ref{cor:universalbinary} together with the definition of $f.$

Corollary \ref{cor:universalbinary} gives $\mathcal{L}_{2}^{\mathrm{ab}%
}(k)\geq f(k),$ while Corollary \ref{cor:sparse-upper} provides a binary
aperiodic word attaining equality. Therefore $\mathcal{L}_{2}^{\mathrm{ab}%
}(k)=f(k)$\ for all $k\in \mathbb{N}.$
\end{proof}

\begin{remark}
The conclusion cannot be strengthened to periodicity. Indeed, the word $%
10^{\infty }$ is not periodic, while $p_{10^{\infty }}^{\ast \mathrm{ab}%
}(k)=2$ for every $k\in \mathbb{N}.$
\end{remark}

\subsection{A preliminary lower bound}

\ 

\label{subsec:recu-discr}

Let $\alpha \in \mathbb{A}^{\mathbb{N}_{0}}$ and $P\in \Sigma _{m}(\mathbb{N}%
_{0})$. A finite word $v\in \mathbb{A}^{m}$ is called a \emph{recurrent }$P$-%
\emph{factor} of $\alpha $ if $\alpha \lbrack t+P]=v$ for infinitely many $%
t\in \mathbb{N}_{0}$. Let $\mathcal{R}_{\alpha }(P)$ denote the set of all
recurrent$\ P$-factors of $\alpha $. Since $\mathbb{A}^{m}$ is finite, $%
\mathcal{R}_{\alpha }(P)$ is nonempty.

Set $\Delta _{m}:=\{(i,j):0\leq i<j<m\}$.

\begin{lemma}
\label{lem:pair-separation}Let $\alpha \in \mathbb{A}^{\mathbb{N}_{0}}$ be
aperiodic and let $P=\{p_{0}<\cdots <p_{m-1}\}\in \Sigma _{m}(\mathbb{N}%
_{0}) $. Then for every $(i,j)\in \Delta _{m}$, there exists $v\in \mathcal{R%
}_{\alpha }(P)$ such that $v_{i}\neq v_{j}$.
\end{lemma}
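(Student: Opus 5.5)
We argue by contradiction, in the same spirit as Lemma \ref{lem:binary-two-coordinate}. Fix $(i,j)\in \Delta _{m}$ and set $d:=p_{j}-p_{i}>0$. Suppose that every recurrent $P$-factor $v\in \mathcal{R}_{\alpha }(P)$ satisfies $v_{i}=v_{j}$. The goal is to show that $\alpha _{t+d}=\alpha _{t}$ for all sufficiently large $t$. This would make $\alpha $ eventually periodic, contradicting the hypothesis.

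The first step is to pass from recurrent factors to a statement about all sufficiently late positions. Every $P$-factor that is not recurrent occurs only at finitely many starting positions $t$. Since $\mathbb{A}^{m}$ is finite, there are only finitely many non-recurrent $P$-factors. Hence there exists $t_{0}$ such that $\alpha \lbrack t+P]\in \mathcal{R}_{\alpha }(P)$ for all $t\geq t_{0}$. By the assumption, $\alpha _{t+p_{i}}=\alpha _{t+p_{j}}$ for all $t\geq t_{0}$.

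The second step is the substitution $n:=t+p_{i}$. This turns the previous identity into $\alpha _{n}=\alpha _{n+d}$ for all $n\geq t_{0}+p_{i}$. Thus $\alpha $ is eventually periodic with period $d$, which is the desired contradiction. Therefore for each pair $(i,j)$ some recurrent $v$ has $v_{i}\neq v_{j}$.

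There is no genuine obstacle here. The only point needing care is the finiteness argument in the first step: each non-recurrent factor has only finitely many occurrences, and there are only finitely many factors to consider. Taking the maximum over them of the last occurrence gives $t_{0}$, so recurrence controls all large $t$.
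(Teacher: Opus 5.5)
Your proof is correct and is essentially the paper's argument. The paper applies pigeonhole to the set of $t$ with $\alpha_{t+p_i}\neq\alpha_{t+p_j}$, whereas you observe that all sufficiently late $P$-factors are recurrent; both are the same finiteness-of-$\mathbb{A}^{m}$ observation, and both conclude that $\alpha_{n}=\alpha_{n+d}$ eventually.
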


\begin{proof}
Fix $(i,j)\in \Delta _{m}$ and set $d=p_{j}-p_{i}>0$. Suppose that $%
v_{i}=v_{j}$ for every $v\in \mathcal{R}_{\alpha }(P)$. Consider 
\begin{equation*}
E:=\{t\in \mathbb{N}_{0}:\alpha _{t+p_{i}}\neq \alpha _{t+p_{j}}\}.
\end{equation*}%
If $E$ is infinite, then, since $\mathbb{A}^{m}$ is finite, there exists $%
v\in \mathbb{A}^{m}$ such that $\alpha \lbrack t+P]=v$ for infinitely many $%
t\in E$. Thus $v\in \mathcal{R}_{\alpha }(P)$ and $v_{i}\neq v_{j}$, a
contradiction. Hence $E$ is finite. Therefore $\alpha _{t+p_{i}}=\alpha
_{t+p_{j}}$ for all sufficiently large $t$, which implies that $\alpha
_{s}=\alpha _{s+d}$ for all sufficiently large $s$. This contradicts the
aperiodicity of $\alpha $.
\end{proof}

We shall use the following elementary separation lemma.

\begin{lemma}
\label{lem:fixed-weight}Let $X$ be a finite nonempty set and let $\Phi $ be
a finite set. For every $\phi \in \Phi $, let $\mathcal{W}_{\phi }$ be a
real vector space and let $d_{\phi }:X\rightarrow \mathcal{W}_{\phi }$ be a
map which is not identically zero. If $k\in \mathbb{N}$ and $k\geq \sharp
\Phi $, then there exists $\lambda :X\rightarrow \mathbb{N}_{0}$ such that $%
\sum_{x\in X}\lambda (x)=k$ and $\sum_{x\in X}\lambda (x)d_{\phi }(x)\neq 0$%
\ for every $\phi \in \Phi .$
\end{lemma}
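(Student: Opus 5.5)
We argue by induction on $\sharp\Phi$, using a polynomial (or "avoid finitely many hyperplanes") argument. The cleanest route is to reduce to the statement that a nonzero polynomial cannot vanish on too many lattice points. For each $\phi\in\Phi$ choose a linear functional $\ell_\phi$ on $\mathcal{W}_\phi$ that is nonzero on some value $d_\phi(x_\phi)$; this is possible because $d_\phi$ is not identically zero. Then set $g_\phi(x):=\ell_\phi(d_\phi(x))\in\mathbb{R}$, which is a real function on $X$, not identically zero. It suffices to find $\lambda$ with $\sum_x\lambda(x)=k$ and $\sum_x\lambda(x)g_\phi(x)\neq0$ for all $\phi$, since then $\ell_\phi$ applied to $\sum_x\lambda(x)d_\phi(x)$ is nonzero.

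Now fix some $x^\ast\in X$ and restrict attention to weight vectors supported on at most two points. For each $\phi$ pick $y_\phi\in X$ with $g_\phi(y_\phi)\neq0$. Consider first the "pure" choices $\lambda=k\,\mathbf{1}_{y}$. These work for $\phi$ whenever $g_\phi(y)\neq0$. In general no single $y$ works for all $\phi$, so we instead use a one-parameter family along a line. Since $X$ is finite, write the candidate set as
\[
\lambda_t:=t\,\mathbf{1}_{y}+(k-t)\,\mathbf{1}_{y'}\qquad (0\le t\le k)
\]
for suitable $y,y'$. Then $\sum\lambda_t g_\phi = t\,g_\phi(y)+(k-t)g_\phi(y')$ is affine in $t$. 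It vanishes for at most one $t$ unless it is identically zero, which happens iff $g_\phi(y)=g_\phi(y')=0$. With $k+1\ge\sharp\Phi+1$ values of $t$ available, we can dodge one bad $t$ per $\phi$, provided every $\phi$ is nonvanishing at $y$ or $y'$. Two points need not suffice, so the real plan uses more general support: take all of $X$.

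For the general case, consider $\lambda$ ranging over the simplex of compositions of $k$. For each $\phi$, $Z_\phi:=\{\lambda:\sum\lambda(x)g_\phi(x)=0\}$ is a proper hyperplane section, proper because some composition (namely $k\mathbf{1}_{y_\phi}$) avoids it. We then argue by induction on $\sharp\Phi$, building $\lambda$ by successively moving one unit of mass at a time. Start from $\lambda^{(0)}=k\mathbf{1}_{y_{\phi_1}}$. At step $i$, given $\lambda$ good for $\phi_1,\dots,\phi_{i-1}$ and bad for $\phi_i$, consider the path of moves of units from the current support to $y_{\phi_i}$. Along this path the $\phi_i$-sum is affine in the number $t$ of moved units, and it is nonzero at the endpoint. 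Each earlier $\phi_j$'s sum is also affine in $t$ and nonzero at $t=0$, hence vanishes at most once.

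The main obstacle is the counting. We must choose $t\in\{0,\dots,k\}$ avoiding at most one zero for each of the $\sharp\Phi$ functionals, and the $k+1\ge\sharp\Phi+1$ values available make this possible. The natural simplification is to do everything at once on a single path. Take the path $\lambda_t$ obtained by transferring mass linearly between two fixed compositions $\mu,\nu$, where $\mu$ and $\nu$ each have total $k$ and every $\phi$ is nonzero at $\mu$ or at $\nu$. Then each $\phi$ kills at most one $t\in\{0,\dots,k\}$, and some $t$ survives because $k+1>\sharp\Phi$.

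Finding such $\mu$ and $\nu$ is the crux. I would take $\mu$ to be generic: choose $\mu$ with large distinct weights, e.g.\ $\mu(x)$ proportional to powers of a large base, rescaled so that its total is $k$. I expect this rescaling to be the delicate point, and if it fails I would fall back on the stepwise induction above.
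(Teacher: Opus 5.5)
There is a genuine gap: the proposal never actually produces $\lambda$, and each mechanism you offer for the tight budget $k\ge\sharp\Phi$ fails as stated.

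First, an affine path $\lambda_t=\mu+\tfrac{t}{k}(\nu-\mu)$ that is integral for every $t\in\{0,\ldots,k\}$ requires $\nu-\mu\in k\mathbb{Z}^{X}$. Since $0\le\mu(x),\nu(x)\le k$, this forces $\mu=k\mathbf{1}_{y}$ and $\nu=k\mathbf{1}_{y'}$, which is exactly the two-point family you already saw is insufficient. If you instead use a lattice path moving single units out of varying source points, the sums $\sum_{x}\lambda_t(x)g_\phi(x)$ are only piecewise affine in $t$. They can vanish several times: with $g=(0,1,-1)$ on $(a,b,c)$, the path $(0,2,1)\to(1,1,1)\to(2,1,0)\to(3,0,0)$ gives values $1,0,1,0$. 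So ``each $\phi$ kills at most one $t$'' is lost.

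Second, a ``generic'' $\mu$ with weights proportional to powers of a large base cannot be realized by nonnegative integers of total $k$ when $k$ is as small as $\sharp\Phi$. For $k=\sharp\Phi=1$ the only candidates are the $\mathbf{1}_{x}$. Moreover, a $\mu$ at which all the sums are nonzero would already be the required $\lambda$, so this step is the lemma itself, not a reduction of it.

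Third, in the stepwise fallback the path from $\lambda$ to $k\mathbf{1}_{y_{\phi_i}}$ has only $k-\lambda(y_{\phi_i})$ unit moves. This may be fewer than the $i$ values of $t\ge 1$ you need to dodge one zero of each of $\phi_1,\ldots,\phi_{i-1}$. The path is also again not affine when units leave several support points.

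The missing idea is a reservoir point combined with a triangular ordering of $\Phi$. Write $X=\{x_0,\ldots,x_s\}$, put $\delta_\phi(h):=d_\phi(x_h)-d_\phi(x_0)$, and let $\lambda(x_0):=k-\sum_{h\ge1}\lambda(x_h)$, so that
\[
\sum_{x\in X}\lambda(x)d_\phi(x)=k\,d_\phi(x_0)+\sum_{h=1}^{s}\lambda(x_h)\delta_\phi(h).
\]
Assign each $\phi$ with $d_\phi$ nonconstant to the last index $\rho(\phi)$ with $\delta_\phi(\rho(\phi))\neq0$, and set $c_h:=\sharp\{\phi:\rho(\phi)=h\}$. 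Then choose $\lambda(x_1),\lambda(x_2),\ldots$ in turn with $\lambda(x_h)\in\{0,\ldots,c_h\}$. Each $\phi$ with $\rho(\phi)=h$ forbids at most one value, and later choices cannot disturb it because $\delta_\phi(l)=0$ for $l>\rho(\phi)$. The mass placed off $x_0$ is at most $\sum_h c_h\le\sharp\Phi\le k$, so $\lambda(x_0)\ge0$; this is exactly where $k\ge\sharp\Phi$ enters, and it is what your argument lacks. Constant $d_\phi$ give $k\,d_\phi(x_0)\neq0$ automatically. Your reduction to real functionals $\ell_\phi$ is harmless but unnecessary, since $A+z\delta$ with $\delta\neq0$ vanishes for at most one real $z$ in any real vector space.
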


\begin{proof}
Write $X=\{x_{0},x_{1},\ldots ,x_{s}\}$ and let 
\begin{equation*}
\Phi ^{\prime }:=\{\phi \in \Phi :d_{\phi }\text{ is nonconstant on }X\}.
\end{equation*}%
For $\phi \in \Phi ^{\prime }$ and $1\leq h\leq s$, put 
\begin{equation*}
\delta _{\phi }(h):=d_{\phi }(x_{h})-d_{\phi }(x_{0}),\text{\ }\rho (\phi
):=\max \{1\leq h\leq s:\delta _{\phi }(h)\neq 0\}.
\end{equation*}%
For $1\leq h\leq s$, let 
\begin{equation*}
c_{h}:=\sharp \{\phi \in \Phi ^{\prime }:\rho (\phi )=h\}.
\end{equation*}%
We choose $\lambda (x_{1}),\ldots ,\lambda (x_{s})$ inductively. Suppose
that $\lambda (x_{1}),\ldots ,\lambda (x_{h-1})$ have been chosen. For each $%
\phi \in \Phi ^{\prime }$ with $\rho (\phi )=h$, consider 
\begin{equation*}
kd_{\phi }(x_{0})+\sum_{l<h}\lambda (x_{l})\delta _{\phi }(l)+z\delta _{\phi
}(h).
\end{equation*}%
Since $\delta _{\phi }(h)\neq 0$, this vector vanishes for at most one $z\in 
\mathbb{R}$, and hence for at most one $z\in \mathbb{N}_{0}$. Since there
are $c_{h}$ such $\phi $, at most $c_{h}$ values of $z$ are forbidden. Hence
we may choose $\lambda (x_{h})\in \{0,1,\ldots ,c_{h}\}$ such that 
\begin{equation*}
kd_{\phi }(x_{0})+\sum_{l\leq h}\lambda (x_{l})\delta _{\phi }(l)\neq 0
\end{equation*}%
for every $\phi \in \Phi ^{\prime }$ with $\rho (\phi )=h$.

Since $\delta _{\phi }(l)=0$ for all $l>\rho (\phi )$, it follows that 
\begin{equation*}
kd_{\phi }(x_{0})+\sum_{l=1}^{s}\lambda (x_{l})\delta _{\phi }(l)\neq 0\text{%
\ }(\phi \in \Phi ^{\prime }).
\end{equation*}%
Moreover, we have 
\begin{equation*}
w:=\sum_{h=1}^{s}\lambda (x_{h})\leq \sum_{h=1}^{s}c_{h}=\sharp \Phi
^{\prime }\leq \sharp \Phi \leq k.
\end{equation*}%
Set $\lambda (x_{0}):=k-w$. Then $\lambda (x_{0})\in \mathbb{N}_{0}$ and $%
\sum_{x\in X}\lambda (x)=k$. Hence for every $\phi \in \Phi ^{\prime }$, 
\begin{equation*}
\sum_{x\in X}\lambda (x)d_{\phi }(x)=kd_{\phi }(x_{0})+\sum_{h=1}^{s}\lambda
(x_{h})\delta _{\phi }(h)\neq 0.
\end{equation*}%
If $\phi \in \Phi \setminus \Phi ^{\prime }$, then $d_{\phi }$ is constant
on $X$. Since $d_{\phi }\not\equiv 0$, its constant value is nonzero, and
therefore 
\begin{equation*}
\sum_{x\in X}\lambda (x)d_{\phi }(x)=kd_{\phi }(x_{0})\neq 0.
\end{equation*}
\end{proof}

\begin{corollary}
\label{cor:coarse} Let $\alpha \in \mathbb{A}^{\mathbb{N}_{0}}$ be
aperiodic. If $m,k\in \mathbb{N}$ and $k\geq \binom{m}{2}$, then $p_{\alpha
}^{\ast \mathrm{ab}}(k)\geq m.$
\end{corollary}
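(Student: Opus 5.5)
We fix an $m$-pattern $P=\{p_{0}<\cdots <p_{m-1}\}$ (any one will do, say $P=\{0,1,\ldots ,m-1\}$) and build a $k$-pattern $S$ as a union of translates $P+t$ whose $S$-factors realize at least $m$ distinct Parikh vectors. The case $m=1$ is trivial, so assume $m\geq 2$. The idea is to take $S=\bigcup_{x}(P+t_{x})$-type multiset sums: if we pick positions $t$ far apart, then for a suitable shift $n$ the $S$-factor at $n$ is a concatenation of $P$-factors, and its Parikh vector is a weighted sum of Parikh vectors of recurrent $P$-factors. The weights come from Lemma \ref{lem:fixed-weight}.

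Concretely, let $X:=\mathcal{R}_{\alpha }(P)$, which is finite and nonempty. Let $\Phi :=\Delta _{m}$, so $\sharp \Phi =\binom{m}{2}\leq k$. For $\phi =(i,j)$ let $d_{\phi }(v):=\mathbf{e}_{v_{i}}-\mathbf{e}_{v_{j}}\in \mathbb{R}^{\mathbb{A}}$. By Lemma \ref{lem:pair-separation}, $d_{\phi }$ is not identically zero on $X$. Lemma \ref{lem:fixed-weight} then gives $\lambda :X\rightarrow \mathbb{N}_{0}$ with $\sum \lambda (v)=k$ and $\sum_{v}\lambda (v)(\mathbf{e}_{v_{i}}-\mathbf{e}_{v_{j}})\neq 0$ for all $i<j$.

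Next we realize these weights by a pattern. Choose $k$ integers $T_{1}<\cdots <T_{k}$, far apart, where exactly $\lambda (v)$ of them satisfy $\alpha \lbrack T+P]=v$. This is possible since each $v\in X$ recurs infinitely often. Put $S:=\{T_{1},\ldots ,T_{k}\}$, a $k$-pattern. Then for each $0\leq i<m$ the $S$-factor at shift $p_{i}$ has Parikh vector
\begin{equation*}
w_{i}:=\Psi (\alpha \lbrack p_{i}+S])=\sum_{v\in X}\lambda (v)\mathbf{e}_{v_{i}}.
\end{equation*}
For $i<j$, $w_{i}-w_{j}=\sum \lambda (v)d_{(i,j)}(v)\neq 0$, so $w_{0},\ldots ,w_{m-1}$ are pairwise distinct. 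Hence $p_{\alpha }^{\mathrm{ab}}(S)\geq m$, and so $p_{\alpha }^{\ast \mathrm{ab}}(k)\geq m$.

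The main point to get right is the reversal of roles: the translates are indexed by $S$ while the shifts range over $P$. One must check that the $T$'s can be chosen distinct, which holds because infinitely many occurrences are available for each $v$. Note also that "far apart" is not actually needed, since the $S$-factors are read one coordinate per $T$. With this observation there is no real obstacle; the substantive work was already done in Lemmas \ref{lem:pair-separation} and \ref{lem:fixed-weight}.
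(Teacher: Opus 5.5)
Your proposal is correct and is essentially the paper's own proof. It uses the same $d_{ij}(v)=\mathbf{e}_{v_i}-\mathbf{e}_{v_j}$ on $\mathcal{R}_\alpha(P)$, the weights from Lemma \ref{lem:fixed-weight} justified by Lemma \ref{lem:pair-separation}, and $S$ built from $\lambda(v)$ occurrence positions of each recurrent $P$-factor, read at the shifts $p_i$. Your opening description of $S$ as a union of translates $P+t$ is inaccurate, but the concrete construction you then give, and your remark that ``far apart'' is unnecessary, are exactly right.
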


\begin{proof}
Let $P=\{p_{0}<\cdots <p_{m-1}\}\in \Sigma _{m}(\mathbb{N}_{0})$. For $%
(i,j)\in \Delta _{m}$, define 
\begin{equation*}
d_{ij}:\mathcal{R}_{\alpha }(P)\longrightarrow \mathbb{R}^{\mathbb{A}},\text{%
\ }d_{ij}(v):=\mathbf{e}_{v_{i}}-\mathbf{e}_{v_{j}}.
\end{equation*}%
By Lemma \ref{lem:pair-separation}, each $d_{ij}$ is not identically zero.
Since $\sharp \Delta _{m}=\binom{m}{2}\leq k$, Lemma \ref{lem:fixed-weight}
gives a map $\lambda :\mathcal{R}_{\alpha }(P)\rightarrow \mathbb{N}_{0}$
such that 
\begin{equation*}
\sum_{v\in \mathcal{R}_{\alpha }(P)}\lambda (v)=k,\text{ }\sum_{v\in 
\mathcal{R}_{\alpha }(P)}\lambda (v)(\mathbf{e}_{v_{i}}-\mathbf{e}%
_{v_{j}})\neq 0\text{ }((i,j)\in \Delta _{m}).
\end{equation*}%
For each $v\in \mathcal{R}_{\alpha }(P)$, we can choose $S_{v}\subset \{t\in 
\mathbb{N}_{0}:\alpha \lbrack t+P]=v\}$ with $\sharp S_{v}=\lambda (v)$. Set 
\begin{equation*}
S:=\bigcup_{v\in \mathcal{R}_{\alpha }(P)}S_{v}.
\end{equation*}%
Then $\sharp S=\sum_{v\in \mathcal{R}_{\alpha }(P)}\lambda (v)=k$, since the
occurrence sets corresponding to distinct $v$ are disjoint. Moreover, for
every $0\leq i\leq m-1$, 
\begin{equation*}
\Psi (\alpha \lbrack p_{i}+S])=\sum_{t\in S}\mathbf{e}_{\alpha
_{p_{i}+t}}=\sum_{v\in \mathcal{R}_{\alpha }(P)}\sum_{t\in S_{v}}\mathbf{e}%
_{v_{i}}=\sum_{v\in \mathcal{R}_{\alpha }(P)}\lambda (v)\mathbf{e}_{v_{i}}.
\end{equation*}%
Therefore, for every $(i,j)\in \Delta _{m}$, 
\begin{equation*}
\Psi (\alpha \lbrack p_{i}+S])-\Psi (\alpha \lbrack p_{j}+S])=\sum_{v\in 
\mathcal{R}_{\alpha }(P)}\lambda (v)(\mathbf{e}_{v_{i}}-\mathbf{e}%
_{v_{j}})\neq 0.
\end{equation*}%
Thus the $m$ Parikh vectors $\Psi (\alpha \lbrack p_{0}+S]),\ldots ,\Psi
(\alpha \lbrack p_{m-1}+S])$ are pairwise distinct. Hence $p_{\alpha }^{\ast 
\mathrm{ab}}(k)\geq p_{\alpha }^{\mathrm{ab}}(S)\geq m.$
\end{proof}

\begin{remark}
Corollary\ \ref{cor:coarse} applies to every aperiodic word over a finite
alphabet, while Theorem \ref{thm:finite} assumes $\alpha \in \mathcal{A}%
_{\ell }$, which means that all $\ell $ letters occur infinitely often. For $%
\ell =2$, the two sufficient conditions coincide. For $\ell \geq 3$ and $%
m\geq 2$, put $g:=\ell -1$\ and $M:=\binom{m}{2}.$ Corollary\ \ref%
{cor:coarse} requires $k\geq M$, while Theorem \ref{thm:finite} requires $%
k\geq g-1+\left\lceil \frac{M}{g}\right\rceil .$ The latter integer
threshold is larger than $M$ if $M<g$, equal to $M$ if $M\in \{g,g+1\}$, and
smaller than $M$ if $M\geq g+2$. Thus Theorem \ref{thm:finite} improves the
bound for all sufficiently large $m$, but not at every small $k$.
\end{remark}

\begin{remark}
The proof of Corollary \ref{cor:coarse} applies to any $P=\{p_{0}<\cdots
<p_{m-1}\}\in \Sigma _{m}(\mathbb{N}_{0}),$ and produces a $k$-pattern $S$
such that $\Psi (\alpha \lbrack p_{0}+S]),\ldots ,\Psi (\alpha \lbrack
p_{m-1}+S])$ are pairwise distinct. This is not asserted in Theorem \ref%
{thm:finite} or Corollary \ref{cor:universalbinary}.

A further development of the above separation lemma yields a version of
Theorem \ref{thm:finite} with the constant $\ell -2$ replaced by $C_{\ell }m$%
, where $C_{\ell }\geq 0$ depends only on $\ell $ and $C_{2}=0$; we do not
pursue this approach here.
\end{remark}

\bigskip

\section{General lower bound for aperiodic words}

\label{sec:general-lower-AW}

We now establish a general lower bound for aperiodic words that reflects the
size of the alphabet. Together with a matching upper bound, this yields the
sharp square-root asymptotic of the extremal complexity.

\subsection{Canonical partition and residue structure}

\label{sec:partition}

\subsubsection{Eventually periodic alphabet partitions}

\ 

A \emph{partition} of $\mathbb{A}$ is a family of pairwise disjoint nonempty
subsets whose union is $\mathbb{A}$. Its members are called \emph{blocks}.

\begin{definition}
Let $\mathcal{P}$ be a partition of $\mathbb{A}$. Denote by 
\begin{equation*}
\pi_{\mathcal{P}}:\mathbb{A}\longrightarrow\mathcal{P}
\end{equation*}
the canonical projection, where $\pi_{\mathcal{P}}(a)$ is the unique block
of $\mathcal{P}$ containing $a$. The \emph{quotient word} of $\alpha\in%
\mathbb{A}^{\mathbb{N}_0}$ induced by $\mathcal{P}$ is 
\begin{equation*}
\pi_{\mathcal{P}}(\alpha):= \pi_{\mathcal{P}}(\alpha_0)\pi_{\mathcal{P}%
}(\alpha_1)\pi_{\mathcal{P}}(\alpha_2)\cdots \in\mathcal{P}^{\mathbb{N}_0}.
\end{equation*}
We call $\mathcal{P}$ an \emph{eventually periodic alphabet partition} of $%
\alpha$ if $\pi_{\mathcal{P}}(\alpha)$ is eventually periodic.
\end{definition}

For two partitions $\mathcal{P}$ and $\mathcal{Q}$ of $\mathbb{A}$, we say
that $\mathcal{P}$ \emph{refines} $\mathcal{Q}$, and write $\mathcal{P}%
\preceq \mathcal{Q}$, if every block of $\mathcal{P}$ is contained in a
block of $\mathcal{Q}$. The common refinement of $\mathcal{P}$ and $\mathcal{%
Q}$ is 
\begin{equation*}
\mathcal{P}\wedge \mathcal{Q}:=\{B\cap C:B\in \mathcal{P},\text{\ }C\in 
\mathcal{Q},\text{\ }B\cap C\neq \varnothing \}.
\end{equation*}

\begin{lemma}
\label{lem:finest-partition} Let $\alpha\in\mathbb{A}^{\mathbb{N}_0}$. Then
there exists a unique eventually periodic alphabet partition $\mathcal{P}%
_{*} $ of $\alpha$ such that $\mathcal{P}_{*}\preceq\mathcal{P}$ for every
eventually periodic alphabet partition $\mathcal{P}$ of $\alpha$.
\end{lemma}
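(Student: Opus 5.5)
The plan is to show that the family of eventually periodic alphabet partitions of $\alpha$ is nonempty, finite, and closed under common refinement; its iterated common refinement is then the required $\mathcal{P}_{*}$.

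\textbf{Nonemptiness and finiteness.} The trivial partition $\{\mathbb{A}\}$ is eventually periodic, since its quotient word is constant. Because $\mathbb{A}$ is finite, it has only finitely many partitions, so the family is finite.

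\textbf{Closure under common refinement.} This is the key step. Let $\mathcal{P},\mathcal{Q}$ be eventually periodic alphabet partitions of $\alpha$. Choose $N_{1},q_{1}$ and $N_{2},q_{2}$ such that $\pi_{\mathcal{P}}(\alpha_{n+q_{1}})=\pi_{\mathcal{P}}(\alpha_{n})$ for $n\geq N_{1}$, and $\pi_{\mathcal{Q}}(\alpha_{n+q_{2}})=\pi_{\mathcal{Q}}(\alpha_{n})$ for $n\geq N_{2}$. Put $q:=\func{lcm}(q_{1},q_{2})$ and $N:=\max\{N_{1},N_{2}\}$. Both quotient words then satisfy period $q$ from $N$ on, exactly as in the proof of Lemma \ref{lem:projection}. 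For every letter $a$,
\begin{equation*}
\pi_{\mathcal{P}\wedge\mathcal{Q}}(a)=\pi_{\mathcal{P}}(a)\cap\pi_{\mathcal{Q}}(a).
\end{equation*}
Hence $\pi_{\mathcal{P}\wedge\mathcal{Q}}(\alpha_{n})$ is determined by the pair $(\pi_{\mathcal{P}}(\alpha_{n}),\pi_{\mathcal{Q}}(\alpha_{n}))$. It follows that $\pi_{\mathcal{P}\wedge\mathcal{Q}}(\alpha_{n+q})=\pi_{\mathcal{P}\wedge\mathcal{Q}}(\alpha_{n})$ for $n\geq N$, so $\mathcal{P}\wedge\mathcal{Q}$ is eventually periodic.

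\textbf{Existence and uniqueness.} Define $\mathcal{P}_{*}$ to be the common refinement of all members $\mathcal{P}_{1},\ldots,\mathcal{P}_{r}$ of the family. Since $\wedge$ is associative, this is obtained by iterating the previous step, so $\mathcal{P}_{*}$ is eventually periodic. By construction $\mathcal{P}_{*}\preceq\mathcal{P}_{i}$ for each $i$. For uniqueness, suppose $\mathcal{P}_{*}'$ has the same property. Then $\mathcal{P}_{*}\preceq\mathcal{P}_{*}'$ and $\mathcal{P}_{*}'\preceq\mathcal{P}_{*}$. Since $\preceq$ is antisymmetric on partitions (each block of one is contained in a block of the other, and the blocks cover $\mathbb{A}$ disjointly), the two partitions coincide.

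None of these steps is a serious obstacle. The only point needing care is the closure step, and in particular the identity $\pi_{\mathcal{P}\wedge\mathcal{Q}}(a)=\pi_{\mathcal{P}}(a)\cap\pi_{\mathcal{Q}}(a)$, which shows the refined quotient letter is a function of the two coarser quotient letters.
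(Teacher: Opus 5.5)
Your proposal is correct and follows essentially the same argument as the paper: the family of eventually periodic alphabet partitions is nonempty (it contains $\{\mathbb{A}\}$), finite, and closed under $\wedge$ via the $\func{lcm}$ of eventual periods, so its total common refinement is the required $\mathcal{P}_{*}$. Uniqueness then follows from mutual refinement. Your explicit remarks on the identity $\pi_{\mathcal{P}\wedge\mathcal{Q}}(a)=\pi_{\mathcal{P}}(a)\cap\pi_{\mathcal{Q}}(a)$ and on the antisymmetry of $\preceq$ only spell out steps the paper leaves implicit.
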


\begin{proof}
Let $\mathfrak{E}$ denote the family of eventually periodic alphabet
partitions of $\alpha$. The family $\mathfrak{E}$ is nonempty, as the
trivial partition $\{\mathbb{A}\}$ is in $\mathfrak{E}$.

We first show that $\mathfrak{E}$ is closed under common refinement. Let $%
\mathcal{P},\mathcal{Q}\in \mathfrak{E}$. Suppose that $\pi _{\mathcal{P}%
}(\alpha )$ and $\pi _{\mathcal{Q}}(\alpha )$ are eventually periodic with
eventual periods $p$ and $q$, respectively. Set $r:=\func{lcm}(p,q)$. Then
for all sufficiently large $n$, we have 
\begin{equation*}
\pi _{\mathcal{P}}(\alpha _{n+r})=\pi _{\mathcal{P}}(\alpha _{n})\text{ and\ 
}\pi _{\mathcal{Q}}(\alpha _{n+r})=\pi _{\mathcal{Q}}(\alpha _{n}).
\end{equation*}%
Hence $\alpha _{n+r}$ and $\alpha _{n}$ lie in the same block of $\mathcal{P}
$ and in the same block of $\mathcal{Q}$. Therefore they lie in the same
block of $\mathcal{P}\wedge \mathcal{Q}$, and thus 
\begin{equation*}
\pi _{\mathcal{P}\wedge \mathcal{Q}}(\alpha _{n+r})=\pi _{\mathcal{P}\wedge 
\mathcal{Q}}(\alpha _{n})
\end{equation*}%
for all sufficiently large $n$. Hence $\mathcal{P}\wedge \mathcal{Q}\in 
\mathfrak{E}$.

Since $\mathbb{A}$ is finite, $\mathfrak{E}$ is finite. Let $\mathcal{P}%
_{\ast }$ be the common refinement of all partitions in $\mathfrak{E}$. By
the preceding argument, $\mathcal{P}_{\ast }\in \mathfrak{E}$. By the
definition of common refinement, one has $\mathcal{P}_{\ast }\preceq 
\mathcal{P}\ $for each $\mathcal{P}\in \mathfrak{E}.$

Finally, suppose that $\mathcal{Q}_{\ast }\in \mathfrak{E}$ also satisfies $%
\mathcal{Q}_{\ast }\preceq \mathcal{P}$ for every $\mathcal{P}\in \mathfrak{E%
}$. Then, since $\mathcal{P}_{\ast },\mathcal{Q}_{\ast }\in \mathfrak{E}$,
we obtain $\mathcal{P}_{\ast }\preceq \mathcal{Q}_{\ast }$\ and\ $\mathcal{Q}%
_{\ast }\preceq \mathcal{P}_{\ast },$ which implies $\mathcal{P}_{\ast }=%
\mathcal{Q}_{\ast }$.
\end{proof}

Let $\mathcal{P}_{\ast }$ be the partition given by Lemma \ref%
{lem:finest-partition}. Write 
\begin{equation*}
c_{\ast }:=\sharp \mathcal{P}_{\ast },\text{\ }\eta :=\pi _{\mathcal{P}%
_{\ast }}(\alpha ).
\end{equation*}%
Let $q_{\ast }\geq 1$ be the least positive eventual period of $\eta $.

\begin{lemma}
\label{lem:cq} If $\alpha\in\mathcal{A}_\ell$, then $1\le c_{*}\le\ell-1$
and $q_{*}\ge c_{*}$.
\end{lemma}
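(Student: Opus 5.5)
The statement has three parts, and I would prove them in order: $c_{\ast }\geq 1$, then $c_{\ast }\leq \ell -1$, then $q_{\ast }\geq c_{\ast }$.

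\emph{The bound $c_{\ast }\geq 1$.} This is immediate, since $\mathcal{P}_{\ast }$ is a partition of the nonempty alphabet $\mathbb{A}$. Here $\mathbb{A}$ has exactly $\ell $ letters, all occurring infinitely often, because $\alpha \in \mathcal{A}_{\ell }$.

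\emph{The bound $c_{\ast }\leq \ell -1$.} Suppose instead that $c_{\ast }=\ell $. Then $\mathcal{P}_{\ast }$ is the partition into singletons, so $\pi _{\mathcal{P}_{\ast }}$ is a bijection onto its image. Hence $\eta $ determines $\alpha $ letter by letter, and eventual periodicity of $\eta $ gives eventual periodicity of $\alpha $. This contradicts $\alpha \in \mathcal{A}_{\ell }$, which requires $\alpha $ to be aperiodic. Since a partition of an $\ell $-letter set has at most $\ell $ blocks, we get $c_{\ast }\leq \ell -1$.

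\emph{The bound $q_{\ast }\geq c_{\ast }$.} Write $N$ for a threshold beyond which $\eta _{n+q_{\ast }}=\eta _{n}$ holds.
\begin{itemize}
\item Every block $B\in \mathcal{P}_{\ast }$ contains some letter $a$.
\item That letter $a$ occurs in $\alpha $ at infinitely many positions, since $\alpha \in \mathcal{A}_{\ell }$. So the symbol $B$ occurs in $\eta $ at some position $n\geq N$.
\item By periodicity from $N$ on, the symbol $B$ then occurs in the window $\eta _{N}\eta _{N+1}\cdots \eta _{N+q_{\ast }-1}$.
\end{itemize}
This window has length $q_{\ast }$ and must contain all $c_{\ast }$ distinct blocks, so $q_{\ast }\geq c_{\ast }$.

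None of these steps is a real obstacle. The only point that needs care is the second part of the last step: the infinitely-often hypothesis of $\mathcal{A}_{\ell }$ is exactly what guarantees that every block reappears in the periodic tail of $\eta $, and not merely in some finite prefix.
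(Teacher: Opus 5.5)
Your proof is correct and follows essentially the same route as the paper: ruling out $c_{\ast }=\ell $ because $\pi _{\mathcal{P}_{\ast }}$ would then be injective, so eventual periodicity of $\eta $ would transfer to $\alpha $. You then use the infinitely-often hypothesis to force every block of $\mathcal{P}_{\ast }$ into a single period window $\eta _{N}\cdots \eta _{N+q_{\ast }-1}$, which gives $q_{\ast }\geq c_{\ast }$.
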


\begin{proof}
Since $\mathcal{P}_{\ast }$ is a partition of the $\ell $-letter alphabet $%
\mathbb{A}$, we have $1\leq c_{\ast }\leq \ell $. If $c_{\ast }=\ell $, then
every block of $\mathcal{P}_{\ast }$ is a singleton, so $\pi _{\mathcal{P}%
_{\ast }}:\mathbb{A}\rightarrow \mathcal{P}_{\ast }$ is bijective. Since $%
\eta $ is eventually periodic, so is $\alpha .$ This contradicts $\alpha \in 
\mathcal{A}_{\ell }$. Hence $1\leq c_{\ast }\leq \ell -1$.

For each $B\in\mathcal{P}_*$, choose $a\in B$. Since every letter of $%
\mathbb{A}$ occurs infinitely often in $\alpha$, the symbol $B$ occurs
infinitely often in $\eta$. Thus every symbol of $\mathcal{P}_*$ occurs
infinitely often in $\eta$.

Take $N$ such that $\eta _{n+q_{\ast }}=\eta _{n}$ for all $n\geq N$. Every
symbol occurring infinitely often in $\eta $ must occur among $\eta
_{N},\eta _{N+1},\ldots ,\eta _{N+q_{\ast }-1}$. Hence these $q_{\ast }$
positions contain all $c_{\ast }$ symbols of $\mathcal{P}_{\ast }$, and
therefore $q_{\ast }\geq c_{\ast }$.
\end{proof}

Put 
\begin{equation}
r_{\ast }:=\ell -c_{\ast },\text{\ }g:=\ell -1=c_{\ast }+r_{\ast }-1.
\label{eq:canonical}
\end{equation}%
Thus $r_{\ast }\geq 1$. We retain these notations throughout the
construction below.

\subsubsection{Residue alphabets}

\ 

\label{subsec:Residue-alph}

Assume throughout Subsections \ref{subsec:Residue-alph}--\ref{subsec:grouped}
that $\mathcal{O}(\alpha )$ contains a periodic point. Let $T$ be a common
multiple of $q_{\ast }$ and a period of such a periodic point, and let $%
\gamma $ be the $T$-periodic word given by Lemma \ref{lem:alignment}. For $%
0\leq s<T$, let 
\begin{equation*}
\mathbb{A}_{s}:=\{a\in \mathbb{A}:a\text{ occurs infinitely often in }\alpha
^{(s)}\}.
\end{equation*}%
Delete a sufficiently long prefix whose length is a multiple of $T$. By
Lemma \ref{lem:transfer}(2), any lower bound for this tail also holds for
the original word. We continue to write $\alpha $ for the resulting tail.
Thus we may assume, throughout Subsections \ref{subsec:Residue-alph}--\ref%
{subsec:grouped}, that the quotient $\pi _{\mathcal{P}_{\ast }}(\alpha )$ is
periodic, and every letter of $\alpha ^{(s)}$ belongs to $\mathbb{A}_{s}$.
The partition $\mathcal{P}_{\ast }$ and the least eventual period $q_{\ast }$%
\ are unchanged by deleting a finite prefix. Moreover, because the deleted
length is a multiple of $T,$ the residue classes modulo $T$ are unchanged,
and hence no rephasing of $\gamma $ is needed.

By Lemma \ref{lem:alignment}, the matching intervals between $\alpha $ and $%
\gamma $ have arbitrarily large lengths and occur arbitrarily far to the
right. Hence, after the above finite deletion, there is still a matching
interval of length at least $T.$ The quotient words $\pi _{\mathcal{P}_{\ast
}}(\gamma )$ and $\pi _{\mathcal{P}_{\ast }}(\alpha )$ are both $T$-periodic
and agree on this interval. They therefore agree at every nonnegative
position. In particular, every block of $\mathcal{P}_{\ast }$ occurs in $\pi
_{\mathcal{P}_{\ast }}(\gamma )$.

\begin{lemma}
\label{lem:residuegraph}Let $G$ be the simple undirected graph with vertex
set $\mathbb{A}$ in which two distinct letters are adjacent whenever they
belong to a common set $\mathbb{A}_{s}$. Then the connected components of $G$
are precisely the blocks of $\mathcal{P}_{\ast }$.
\end{lemma}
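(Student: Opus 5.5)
We prove two inclusions: every connected component of $G$ lies inside a single block of $\mathcal{P}_{\ast }$, and every block of $\mathcal{P}_{\ast }$ is connected in $G$.

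\emph{Edges stay inside blocks.} Fix $s$. After the prefix deletion, $\pi _{\mathcal{P}_{\ast }}(\alpha )$ is periodic with period $q_{\ast }$, and $q_{\ast }\mid T$. Hence the residue word $\pi _{\mathcal{P}_{\ast }}(\alpha ^{(s)})$ is constant. So all letters of $\alpha ^{(s)}$, and in particular all of $\mathbb{A}_{s}$, lie in one block of $\mathcal{P}_{\ast }$. Every edge of $G$ joins two letters of a common $\mathbb{A}_{s}$, so it joins letters of the same block. Consequently each connected component of $G$ is contained in a block.

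\emph{Blocks are connected.} Let $\mathcal{Q}$ be the partition of $\mathbb{A}$ into connected components of $G$. By the first step, $\mathcal{Q}\preceq \mathcal{P}_{\ast }$. By minimality (Lemma \ref{lem:finest-partition}), it suffices to show that $\mathcal{Q}$ is an eventually periodic alphabet partition of $\alpha $; then $\mathcal{P}_{\ast }\preceq \mathcal{Q}$, which forces equality.

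\emph{$\pi _{\mathcal{Q}}(\alpha )$ is eventually periodic.} Fix $s$. Every letter of $\alpha ^{(s)}$ belongs to $\mathbb{A}_{s}$, by the normalization made after deleting the prefix. Any two distinct letters of $\mathbb{A}_{s}$ are adjacent in $G$, so $\mathbb{A}_{s}$ lies in a single component of $G$. If $\mathbb{A}_{s}$ is a singleton, it trivially lies in one component. Hence $\pi _{\mathcal{Q}}(\alpha ^{(s)})$ is constant for every $s$. Therefore $\pi _{\mathcal{Q}}(\alpha )$ is $T$-periodic, as required.

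The only delicate point is bookkeeping rather than mathematics. One must check that the prefix deletion indeed guarantees $\mathbb{A}_{s}\neq \varnothing $ and that every letter of $\alpha ^{(s)}$ lies in $\mathbb{A}_{s}$. This holds because each residue is an infinite word over a finite alphabet, so some letter occurs infinitely often, and deleting a long enough prefix removes all letters that occur only finitely often in $\alpha ^{(s)}$. One must also check that $\mathcal{P}_{\ast }$ and $q_{\ast }$ are unchanged by the deletion; the text preceding the lemma already records this. Isolated vertices cause no trouble, since each letter appears in some $\mathbb{A}_{s}$ and its component is handled by the same argument.
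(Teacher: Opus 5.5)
Your proposal is correct and follows essentially the same argument as the paper. You show $\mathcal{Q}\preceq \mathcal{P}_{\ast }$ because $q_{\ast }\mid T$ forces each $\mathbb{A}_{s}$ into a single block, and $\mathcal{P}_{\ast }\preceq \mathcal{Q}$ because $\pi _{\mathcal{Q}}(\alpha )$ is $T$-periodic, so the minimality in Lemma~\ref{lem:finest-partition} gives equality.
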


\begin{proof}
Every letter of $\mathbb{A}$ belongs to some $\mathbb{A}_{s}$, since it
occurs infinitely often in $\alpha $ and there are only finitely many
residue classes modulo $T$. Let $\mathcal{Q}$ be the partition of $\mathbb{A}
$ into the connected components of $G.$

Since $q_{\ast }\mid T$, each $\mathbb{A}_{s}$ is contained in a single
block of $\mathcal{P}_{\ast }$, and hence $\mathcal{Q}\preceq \mathcal{P}%
_{\ast }$. On the other hand, each $\mathbb{A}_{s}$ lies in a single
component of $G$, and every letter of $\alpha ^{(s)}$ belongs to $\mathbb{A}%
_{s}$. Thus $\pi _{\mathcal{Q}}(\alpha _{Tn+s})$ is independent of $n$ for
each $s$. It follows that $\pi _{\mathcal{Q}}(\alpha )$ is $T$-periodic. By
the defining property of $\mathcal{P}_{\ast },$ one has $\mathcal{P}_{\ast
}\preceq \mathcal{Q}$. Consequently, $\mathcal{Q}=\mathcal{P}_{\ast }$.
\end{proof}

A \emph{left-infinite word} over $\mathbb{A}$ is a word $v\in \mathbb{A}^{%
\mathbb{Z}_{\leq 0}}.$ Let $\mathcal{N}\subset \mathbb{N}_{0}$ be infinite.
We say that $v$ is \emph{realized along $\mathcal{N}$ in }$\alpha $ if, for
every $L\geq 0$, 
\begin{equation*}
\alpha _{n+t}=v(t)\text{\ }(-L\leq t\leq 0)
\end{equation*}%
for all sufficiently large $n\in \mathcal{N}$.

\begin{lemma}
\label{lem:leftlimits} For each $0\leq s<T$, write $t_{s}:=\sharp \mathbb{A}%
_{s}-1.$ The letters of $\mathbb{A}_{s}$ can be ordered as 
\begin{equation*}
\mathbb{A}_{s}=\{a_{s,0},a_{s,1},\ldots ,a_{s,t_{s}}\},\text{\ }%
a_{s,0}=\gamma _{s},
\end{equation*}%
so that, for every $1\leq j\leq t_{s}$, there exist a left-infinite word $%
v_{s,j}$ and an infinite set $\mathcal{N}_{s,j}\subset s+T\mathbb{N}_{0}$
such that $v_{s,j}$ is realized along $\mathcal{N}_{s,j}$ in $\alpha $ and 
\begin{equation}
v_{s,j}(0)=a_{s,j},\text{\ }v_{s,j}(Tu)\in \{a_{s,0},\ldots ,a_{s,j-1}\}%
\text{\ }(u\in \mathbb{Z}_{<0}).  \label{eq:leftchain}
\end{equation}
\end{lemma}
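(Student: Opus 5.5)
The ordering will be built greedily, one letter at a time, working inside the residue word $z:=\alpha^{(s)}$. A left-infinite word realized along a subsequence of $s+T\mathbb{N}_0$ will be obtained as a limit of left-aligned windows of $\alpha$, via compactness of $\mathbb{A}^{\mathbb{Z}_{\le 0}}$. Fix $s$ and set $a_{s,0}:=\gamma_s$. By Lemma \ref{lem:alignment}, $\gamma_s$ occurs in $z$ along arbitrarily long runs arbitrarily far to the right, so $\gamma_s\in\mathbb{A}_s$. Now suppose $a_{s,0},\ldots,a_{s,j-1}$ have been chosen with $j\le t_s$, and put $A_{j}:=\{a_{s,0},\ldots,a_{s,j-1}\}\subsetneq\mathbb{A}_s$.

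The key observation is the following. Since $z$ contains arbitrarily long runs of $\gamma_s\in A_j$ arbitrarily far to the right, and letters of $\mathbb{A}_s\setminus A_j$ occur infinitely often in $z$ (all letters of $z$ lie in $\mathbb{A}_s$ after the prefix deletion), there are arbitrarily long runs of letters from $A_j$ in $z$ that are immediately followed by a letter outside $A_j$. To see this, take a run of $\gamma_s$ of length $L$ starting beyond $N$, and let $n'$ be the first index after it with $z_{n'}\notin A_j$. Then $z_{n'-u}\in A_j$ for $1\le u\le L$.

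This gives infinitely many positions $n_i\to\infty$ in $z$ with $z_{n_i}\notin A_j$ and $z_{n_i-u}\in A_j$ for $1\le u\le L_i$, where $L_i\to\infty$. Passing to a subsequence, I will make $z_{n_i}$ equal to a fixed letter, which becomes $a_{s,j}\in\mathbb{A}_s\setminus A_j$. Translating to $\alpha$, the positions are $m_i:=s+Tn_i$. Consider the left windows $(\alpha_{m_i+t})_{-L\le t\le 0}$. By a diagonal argument (König's lemma, or compactness of $\mathbb{A}^{\mathbb{Z}_{\le0}}$), I extract a further subsequence along which, for each $L$, the window of length $L$ is eventually constant. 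The limit $v_{s,j}$ is then realized along $\mathcal{N}_{s,j}:=\{m_i\}$ in the sense of the definition. Moreover $v_{s,j}(0)=a_{s,j}$, and for $u<0$ we have $v_{s,j}(Tu)=\lim \alpha_{m_i+Tu}=\lim z_{n_i+u}\in A_j$ once $L_i\ge |u|$, which is exactly (\ref{eq:leftchain}).

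Iterating for $j=1,\ldots,t_s$ exhausts $\mathbb{A}_s$, because at each step a new letter is taken outside $A_j$ and every letter of $\mathbb{A}_s$ can still be chosen. The only delicate point is the key observation in the second paragraph. It requires both that $\gamma_s$ supplies long $A_j$-runs far out, which follows from Lemma \ref{lem:alignment}, and that letters outside $A_j$ recur in $z$, which holds by definition of $\mathbb{A}_s$. The diagonal extraction itself is routine.
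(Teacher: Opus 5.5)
Your proposal is correct and follows the paper's proof essentially step for step. Both arguments build the ordering greedily from $\gamma_s$. They take the first non-$A_j$ letter after a long far-out $\gamma_s$-run in $\alpha^{(s)}$, fix that letter along a subsequence, and obtain $v_{s,j}$ by a diagonal extraction, with (\ref{eq:leftchain}) following because the preceding $A_j$-stretches have unbounded length.
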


\begin{proof}
Fix $s$. By Lemma \ref{lem:alignment}, the residue word $\alpha ^{(s)}$
contains arbitrarily long constant $\gamma _{s}$-blocks arbitrarily far to
the right. In particular, $\gamma _{s}\in \mathbb{A}_{s}.$ Set $%
a_{s,0}:=\gamma _{s}$ and $B:=\{a_{s,0}\}.$

Suppose inductively that $B=\{a_{s,0},\ldots ,a_{s,j-1}\}\varsubsetneq 
\mathbb{A}_{s}$. Take intervals $[p_{\nu },q_{\nu })\subset \mathbb{N}_{0}$
such that $p_{\nu }\longrightarrow \infty ,$\ $q_{\nu }-p_{\nu
}\longrightarrow \infty ,$ and $\alpha _{n}^{(s)}=\gamma _{s}$\ $(p_{\nu
}\leq n<q_{\nu }).$ Since every letter of $\mathbb{A}_{s}\setminus B$ occurs
infinitely often in $\alpha ^{(s)}$, the integer 
\begin{equation*}
y_{\nu }:=\min \{n\geq q_{\nu }:\alpha _{n}^{(s)}\notin B\}
\end{equation*}%
is well defined. Then $\alpha _{n}^{(s)}\in B$\ for all $p_{\nu }\leq
n<y_{\nu },$ and $y_{\nu }-p_{\nu }\longrightarrow \infty .$

After passing to a subsequence we may assume that $\alpha _{y_{\nu
}}^{(s)}=a $ for some fixed letter $a\in \mathbb{A}_{s}\setminus B$. Set $%
a_{s,j}:=a$ and $N_{\nu }:=Ty_{\nu }+s.$ Then $N_{\nu }\rightarrow \infty $
and $N_{\nu }\equiv s\pmod T$. Passing to a further subsequence if
necessary, we may assume that $N_1<N_2<\cdots$.

Since $\mathbb{A}$ is finite, a diagonal argument yields a subsequence,
still indexed by $\nu $, and a word $v_{s,j}\in \mathbb{A}^{\mathbb{Z}_{\leq
0}}$ such that for every $L\geq 0$, one has%
\begin{equation}
\alpha _{N_{\nu }+t}=v_{s,j}(t)\ (-L\leq t\leq 0)
\label{eq:realization-property}
\end{equation}%
for all sufficiently large $\nu $. Thus $v_{s,j}$ is realized along the
infinite set 
\begin{equation*}
\mathcal{N}_{s,j}:=\{N_{\nu }:\nu \geq 1\}\subset s+T\mathbb{N}_{0},
\end{equation*}%
and $v_{s,j}(0)=a_{s,j}.$

Fix $u\in \mathbb{Z}_{<0}$. Since $y_{\nu }-p_{\nu }\rightarrow \infty $, we
have $p_{\nu }\leq y_{\nu }+u<y_{\nu }$ for all sufficiently large $\nu $.
Hence $\alpha _{N_{\nu }+Tu}=\alpha _{y_{\nu }+u}^{(s)}\in B.$ By (\ref%
{eq:realization-property}), one has $v_{s,j}(Tu)\in B.$ This proves (\ref%
{eq:leftchain}).

Adjoin $a_{s,j}$ to $B$ and continue. Since each step adds one new letter,
the process terminates with $B=\mathbb{A}_{s}.$
\end{proof}

\subsubsection{Affine separation}

\ 

Vectors $v_1,\ldots,v_c$ are \emph{linearly independent} if $\sum_i z_i
v_i=0 $ implies $z_i=0$ for every $i$. They are \emph{affinely independent}
if the conditions $\sum_i z_i v_i=0$ and $\sum_i z_i=0$ together imply $%
z_i=0 $ for every $i$. A directed graph is \emph{strongly connected} if
there is a directed path from each vertex to every other vertex. Loops are
allowed below.

\begin{lemma}
\label{lem:affine} Let $v_1,\ldots,v_c$ be affinely independent vectors, and
let $H_1,\ldots,H_c$ be arbitrary vectors in the same real vector space.
There are at most $c-1$ real numbers $t$ for which two of the vectors $H_i+t
v_i$ coincide.
\end{lemma}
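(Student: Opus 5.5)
The plan is to count bad values of $t$ pair by pair, and to use affine independence to bound how many pairs can actually contribute. For each pair $i<j$, the condition $H_i+tv_i=H_j+tv_j$ reads $t(v_i-v_j)=H_j-H_i$. Affine independence forces $v_i\neq v_j$ for $i\neq j$: otherwise the coefficients $z_i=1$, $z_j=-1$ and all others $0$ would give a nontrivial affine relation. Hence each pair $\{i,j\}$ determines at most one bad value $t_{ij}$. Counting naively over all pairs would give $\binom{c}{2}$, so the real content is the sharper bound $c-1$.

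To get $c-1$, I would argue by contradiction with a graph built on the bad values. Suppose there are distinct bad values $t_1,\ldots,t_c$ (at least $c$ of them). For each $t_k$ pick one pair $\{i_k,j_k\}$ with $H_{i_k}+t_kv_{i_k}=H_{j_k}+t_kv_{j_k}$. These give $c$ edges on the vertex set $\{1,\ldots,c\}$, and the edges are distinct because each pair determines at most one $t$. A graph with $c$ vertices and $c$ edges contains a cycle, say $i_0,i_1,\ldots,i_{r-1},i_0$ with $r\geq 3$, whose edges carry pairwise distinct values $s_1,\ldots,s_r$. Along the cycle we have
\[
(H_{i_{m}}-H_{i_{m-1}})=s_m\,(v_{i_{m-1}}-v_{i_m}).
\]
Summing around the cycle, the left-hand sides telescope to zero, so
\[
\sum_m s_m\,(v_{i_{m-1}}-v_{i_m})=0.
\]
Rewrite this as $\sum_m (s_m-s_{m+1})\,v_{i_m}=0$ with indices taken cyclically. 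The coefficients sum to zero, and the vertices $i_m$ on the cycle are distinct, so affine independence forces $s_m=s_{m+1}$ for all $m$. All values on the cycle are then equal, which contradicts their distinctness since $r\geq 3$. Therefore there are at most $c-1$ bad values.

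The main obstacle is the bookkeeping in the cycle relation. First, the reindexing has to yield exactly one coefficient per distinct vertex, which holds because cycle vertices are distinct by definition. Second, the edges chosen for distinct values must be distinct, which follows from the single-solution observation in the first paragraph. Beyond those two points the argument is the standard forest edge count that the paper already uses in Proposition \ref{prop:forest-upper}.
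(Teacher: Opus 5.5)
Your proposal is correct and follows the paper's proof essentially step for step. You use affine independence to get $v_i\neq v_j$, choose one edge per bad value of $t$ to build a simple graph, and telescope around a cycle to obtain an affine relation forcing equal labels. The only cosmetic difference is that you assume $c$ bad values and derive a cycle, whereas the paper shows the graph is a forest and then counts its edges.
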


\begin{proof}
Let $\mathcal{T}:=\left\{ t\in \mathbb{R}:H_{i}+tv_{i}=H_{j}+tv_{j}\text{
for some }i\neq j\right\} .$ Affine independence implies $v_{i}\neq v_{j}$
whenever $i\neq j$. Thus the equality $H_{i}+tv_{i}=H_{j}+tv_{j}$ can hold
for at most one value of $t$ for each pair of distinct indices. For each $%
t\in \mathcal{T}$, choose one corresponding pair and join its indices by an
undirected edge labelled by $t$. The resulting graph is simple, and its edge
labels are pairwise distinct.

Suppose that this graph contains a cycle $i_{1},i_{2},\ldots ,i_{s},i_{1}$,
and let $t_{j}$ be the label of the edge $\{i_{j},i_{j+1}\}$, where $%
i_{s+1}=i_{1}$. Summing the corresponding equalities gives 
\begin{equation*}
\sum_{j=1}^{s}(t_{j}-t_{j-1})v_{i_{j}}=0,\text{\ }t_{0}=t_{s}.
\end{equation*}%
Since the coefficients sum to zero, affine independence implies $%
t_{j}=t_{j-1}$ for every $j$, which contradicts the distinctness of the edge
labels. The graph is therefore a forest and has at most $c-1$ edges. Hence $%
\sharp \mathcal{T}\leq c-1.$
\end{proof}

\begin{lemma}
\label{lem:function} Let $G$ be a strongly connected directed graph on $%
\{1,\ldots ,c\}$ in which every vertex has an outgoing edge. Then there
exists a map $\sigma :\{1,\ldots ,c\}\rightarrow \{1,\ldots ,c\}$ such that $%
i\rightarrow \sigma (i)$ is an edge of $G$ for every $i$, and the vectors 
\begin{equation*}
v_{i}:=\mathbf{e}_{i}-\mathbf{e}_{\sigma (i)}\in \mathbb{R}^{c}\text{\ }%
(1\leq i\leq c)
\end{equation*}%
are affinely independent.
\end{lemma}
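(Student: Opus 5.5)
The plan is to choose $\sigma$ so that the directed graph $\{i\to\sigma(i)\}$ is a functional graph whose cycle structure is as simple as possible, and then to verify affine independence through a direct linear-algebra argument on that structure. Concretely, when $c\ge 2$, I will pick a vertex $r$ and use strong connectivity to build a spanning in-arborescence rooted at $r$: by breadth-first search on reversed edges, every vertex $i\neq r$ has a directed path to $r$, and I let $\sigma(i)$ be the next vertex on a shortest such path. For $r$ itself, I choose any out-edge $r\to\sigma(r)$, which exists by hypothesis. The functional graph $i\mapsto\sigma(i)$ then has exactly one cycle: the one passing through $r$ formed by $r\to\sigma(r)$ followed by the tree path back to $r$. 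If $\sigma(r)=r$ (a loop), this cycle is a loop. When $c=1$, we have $\sigma(1)=1$, $v_1=0$, and a single vector is trivially affinely independent.

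For independence, suppose $\sum_i z_iv_i=0$ and $\sum_i z_i=0$. The coordinate of $\sum_i z_i(\mathbf e_i-\mathbf e_{\sigma(i)})$ at $j$ is
\[
z_j-\sum_{i:\sigma(i)=j}z_i ,
\]
which is the flow-conservation condition: inflow equals outflow at every vertex, with $z_i$ viewed as flow on the edge $i\to\sigma(i)$. I will then peel leaves. A vertex $j$ with no preimage satisfies $z_j=0$. Removing such vertices repeatedly, every vertex not on the unique cycle gets $z=0$, since the tree vertices off the cycle can be removed in order of decreasing height. On the cycle, conservation forces all $z_i$ to be equal to a common value $z$. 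Then $\sum_i z_i=Lz=0$, where $L\ge1$ is the cycle length, so $z=0$.

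In the loop case, $v_r=0$ and the conservation equations put no constraint on $z_r$, but the sum condition still forces $z_r=0$. This shows exactly why affine independence (rather than linear independence) is the right notion here, and the argument handles it uniformly.

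The only real care needed is making sure $\sigma$ produces exactly one cycle, which is why I build the in-arborescence rather than choosing an arbitrary out-edge at each vertex. An arbitrary choice could create several cycles, and then the vectors would admit nonzero combinations whose coefficients sum to zero, for example a difference of two cycle indicators weighted inversely to their lengths. Strong connectivity is used precisely to reach $r$ from every vertex. The remaining steps are routine.
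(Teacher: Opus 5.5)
Your proof is correct and is essentially the paper's argument. Both build $\sigma$ so that the functional graph $i\mapsto\sigma(i)$ has a single cycle with in-trees feeding into it: you take shortest paths to a root $r$ and close the cycle with an out-edge of $r$, while the paper fixes a cycle first and takes shortest paths to it. Both then verify affine independence in the same way, by peeling indegree-zero vertices, forcing equal coefficients on the cycle, and using $\sum_i z_i=0$ to conclude.
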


\begin{proof}
Choose a directed cycle in $G$, allowing a loop as a cycle of length one.
For vertices on the cycle, let $\sigma (i)$ be the next vertex on the cycle.
For each vertex $i$ outside the cycle, choose a shortest directed path from $%
i$ to the cycle and let $\sigma (i)$ be the next vertex on this path. Then $%
i\rightarrow \sigma (i)$ is an edge of $G$ for every $i$, and the selected
edges form the chosen cycle together with directed trees oriented towards it.

Suppose that 
\begin{equation*}
\sum_{i=1}^{c}z_{i}(\mathbf{e}_{i}-\mathbf{e}_{\sigma (i)})=0\text{\ and }%
\sum_{i=1}^{c}z_{i}=0.
\end{equation*}%
For a vertex $j,$ the $j$th coordinate equation is $z_{j}-\sum_{\sigma
(i)=j}z_{i}=0.$ Hence, at any vertex outside the cycle with indegree $0$,
one has $z_{j}=0.$ Removing such vertices successively shows that all
coefficients outside the cycle vanish. The remaining coordinate equations
imply that the coefficients on the cycle are all equal. Since all other
coefficients vanish and $\sum_{i=1}^{c}z_{i}=0$, this common value is zero.
Thus the vectors $v_{i}$ are affinely independent.
\end{proof}

\subsubsection{A grouped lower bound}

\ 

\label{subsec:grouped}

Write the blocks of $\mathcal{P}_{\ast }$ as $B_{1},\ldots ,B_{c_{\ast }}$.
Consider the directed graph on $\{1,\ldots ,c_{\ast }\}$ in which $%
h\rightarrow l$ is an edge if the factor $B_{h}B_{l}$ occurs in the periodic
word $\pi _{\mathcal{P}_{\ast }}(\gamma )$.$\ $Since every block occurs in
this word, it follows that following a complete period gives a directed path
from any vertex to every other vertex. Thus the graph is strongly connected,
and every vertex has an outgoing edge.

Choose $\sigma $ as in Lemma \ref{lem:function}. For each $h$, choose $%
s_{h}\in \{0,\ldots ,T-1\}$ such that 
\begin{equation}
\pi _{\mathcal{P}_{\ast }}(\gamma _{s_{h}})=B_{h}\text{\ and }\pi _{\mathcal{%
P}_{\ast }}(\gamma _{s_{h}+1})=B_{\sigma (h)}.  \label{eq:representatives}
\end{equation}%
The $s_{h}$ are pairwise distinct.

Let $\mathcal{E}$ denote the index set of the left-infinite words in Lemma %
\ref{lem:leftlimits}, that is, 
\begin{equation*}
\mathcal{E}=\{(s,j):0\leq s<T,1\leq j\leq t_{s}\},
\end{equation*}%
where $t_{s}:=\sharp \mathbb{A}_{s}-1$. For $e=(s,j)\in \mathcal{E}$, write $%
v_{e}:=v_{s,j}$, $a_{e}:=a_{s,j}$ and $\mathcal{N}_{e}:=\mathcal{N}_{s,j}.$

\begin{lemma}
\label{lem:columns} There exist an infinite set $\mathcal{N}%
=\{n_{0}<n_{1}<\cdots \}\subset \mathbb{N}_{0},$ $r_{\ast }=\ell -c_{\ast }$
distinct indices from $\mathcal{E}$, relabeled as $e=1,\ldots ,r_{\ast }$,
and letters $b_{e}(h,l)$ $(1\leq e\leq r_{\ast },1\leq h,l\leq c_{\ast })$
such that the following hold.

\begin{enumerate}
\item For every $0\leq \mu <\nu $, $1\leq e\leq r_{\ast }$ and $1\leq
h,l\leq c_{\ast }$%
\begin{equation*}
v_{e}(T(n_{\mu }-n_{\nu })+s_{h}-s_{l})=b_{e}(h,l).
\end{equation*}

\item For each $e$, the letter $b_{e}(h,h)$ is independent of $h$. Denote
its common value by $b_{e}$. The vectors $u_{e}:=\mathbf{e}_{a_{e}}-\mathbf{e%
}_{b_{e}}$ $(1\leq e\leq r_{\ast })$ are linearly independent.

\item Let $m\in \mathbb{N}$ and $h_{0},\ldots ,h_{m-1}\in \{1,\ldots
,c_{\ast }\},$ and set $z_{\nu }:=Tn_{\nu }+s_{h_{\nu }}$\ $(0\leq \nu <m).$
Then $z_{0}<z_{1}<\cdots <z_{m-1}.$ Moreover, for every $0\leq \nu <m$ and
every $1\leq e\leq r_{\ast }$, there exist arbitrarily large $p\in \mathbb{N}%
_{0}$ such that 
\begin{equation*}
\alpha _{z_{\nu }+p}=a_{e},\text{\ }\alpha _{z_{\mu }+p}=b_{e}(h_{\mu
},h_{\nu })\text{\ }(0\leq \mu <\nu ).
\end{equation*}
\end{enumerate}
\end{lemma}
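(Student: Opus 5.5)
The plan is to obtain all three items from a single Ramsey extraction followed by a linear-algebra selection. Everything is set up for the whole of $\mathcal{E}$ first, and only afterwards are $r_{\ast}$ indices chosen.

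\emph{Step 1: Ramsey extraction, giving (1).} Begin with $\mathbb{N}_{0}$. To a pair $\mu<\nu$ assign the colour
\begin{equation*}
\bigl(v_{e}(T(\mu-\nu)+s_{h}-s_{l})\bigr)_{e\in\mathcal{E},\,1\leq h,l\leq c_{\ast}}\in\mathbb{A}^{\mathcal{E}\times\{1,\ldots,c_{\ast}\}^{2}}.
\end{equation*}
This is a finite colouring. The argument $T(\mu-\nu)+s_{h}-s_{l}$ is negative because $\mu<\nu$ and $|s_{h}-s_{l}|<T$, so $v_{e}$ is evaluated inside its domain $\mathbb{Z}_{\leq0}$. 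Theorem \ref{thm:Ramsey} yields an infinite monochromatic set $\mathcal{N}=\{n_{0}<n_{1}<\cdots\}$. Define $b_{e}(h,l)$ as the common colour entry. Then (1) holds for every $e\in\mathcal{E}$, and therefore for whichever $r_{\ast}$ indices are selected later.

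\emph{Step 2: proving (2).} For $h=l$ the argument is $T(n_{\mu}-n_{\nu})$, which does not depend on $h$; this gives $b_{e}$. The argument is $Tu$ with $u<0$, so for $e=(s,j)$ property (\ref{eq:leftchain}) gives $b_{e}\in\{a_{s,0},\ldots,a_{s,j-1}\}$. Hence $u_{(s,j)}=\mathbf{e}_{a_{s,j}}-\mathbf{e}_{b_{(s,j)}}$ with $b_{(s,j)}$ an earlier letter of $\mathbb{A}_{s}$.

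By induction on $j$, the vectors $u_{(s,1)},\ldots,u_{(s,t_{s})}$ span $\{\mathbf{e}_{a}-\mathbf{e}_{a'}:a,a'\in\mathbb{A}_{s}\}$, since the system is triangular. Summing over $s$, the family $\{u_{e}:e\in\mathcal{E}\}$ spans the span of all $\mathbf{e}_{a}-\mathbf{e}_{a'}$ with $a,a'$ adjacent in the graph $G$ of Lemma \ref{lem:residuegraph}. That span is the space of vectors in $\mathbb{R}^{\mathbb{A}}$ whose coordinate sum over each connected component of $G$ vanishes. By Lemma \ref{lem:residuegraph} the components are the $c_{\ast}$ blocks of $\mathcal{P}_{\ast}$, so the dimension is $\ell-c_{\ast}=r_{\ast}$. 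Select $r_{\ast}$ indices whose $u_{e}$ form a basis and relabel them $1,\ldots,r_{\ast}$.

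\emph{Step 3: proving (3).} First, $z_{\nu}-z_{\mu}\geq T(n_{\nu}-n_{\mu})-(T-1)>0$, so the $z_{\nu}$ are increasing. Fix $\nu$ and $e=(s,j)$. Put $L:=z_{\nu}-z_{0}+T$. For all sufficiently large $n\in\mathcal{N}_{e}$, the realization property gives $\alpha_{n+t}=v_{e}(t)$ for $-L\leq t\leq0$. Set $p:=n-z_{\nu}$, which is arbitrarily large. Then $\alpha_{z_{\nu}+p}=v_{e}(0)=a_{e}$. For $\mu<\nu$ we have
\begin{equation*}
z_{\mu}+p=n+T(n_{\mu}-n_{\nu})+s_{h_{\mu}}-s_{h_{\nu}},
\end{equation*}
and the offset lies in $[-L,0)$. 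Hence $\alpha_{z_{\mu}+p}=v_{e}(T(n_{\mu}-n_{\nu})+s_{h_{\mu}}-s_{h_{\nu}})=b_{e}(h_{\mu},h_{\nu})$ by (1).

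The expected main obstacle is the rank count in Step 2. It requires checking that the span of the edge differences of $G$ has dimension exactly $\ell$ minus the number of components; this is the standard incidence-matrix rank of a graph. It also requires identifying those components with $\mathcal{P}_{\ast}$, which is exactly Lemma \ref{lem:residuegraph}. The essential point is that the Ramsey step comes first, so the $b_{e}$ are fixed before the independent subfamily is chosen.
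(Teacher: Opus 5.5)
Your proof is correct and follows the paper's argument: you use the same Ramsey colouring over all of $\mathcal{E}$ to get (1), and you obtain (3) exactly as the paper does, from the realization property with $p:=n-z_{\nu}$. The only difference is in (2). You extract a basis from the spanning family $\{u_{e}:e\in\mathcal{E}\}$ using the fact that the edge-difference space of $G$ has dimension $\ell-c_{\ast}$. The paper instead picks a spanning tree of each block from the union of the per-residue trees and proves independence by peeling leaves; this is the same fact phrased combinatorially rather than linearly.
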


\begin{proof}
For $n<n^{\prime }$, color the pair $\{n,n^{\prime }\}$ by 
\begin{equation}
(v_{e}(T(n-n^{\prime })+s_{h}-s_{l}))_{e\in \mathcal{E},\text{ }1\leq
h,l\leq c_{\ast }}.  \label{eq:color}
\end{equation}%
Since $T(n-n^{\prime })+s_{h}-s_{l}\leq -T+(T-1)=-1$ for $n<n^{\prime }$ and 
$0\leq s_{h},s_{l}<T,$ it follows that every coordinate in (\ref{eq:color})
is well defined. Since $\mathcal{E}$ and $\mathbb{A}$ are finite, (\ref%
{eq:color}) defines a finite coloring. By Theorem \ref{thm:Ramsey}, there is
an infinite set $\mathcal{N}=\{n_{0}<n_{1}<\cdots \}\subset \mathbb{N}_{0}$
on which this coloring is constant. Denote the value of its $(e,h,l)$
coordinate by $b_{e}(h,l)$. This proves (1), initially for every $e\in 
\mathcal{E}$.

If $h=l$, then the argument 
\begin{equation*}
T(n_{\mu }-n_{\nu })+s_{h}-s_{h}=T(n_{\mu }-n_{\nu })
\end{equation*}%
is independent of $h$. Hence $b_{e}(h,h)$ has a common value, which we
denote by $b_{e}$.

For $e=(s,j)$, (\ref{eq:leftchain}) implies $b_{e}\in \{a_{s,0},\ldots
,a_{s,j-1}\}.$ Consequently, for each fixed $s$, the edges $\{b_{e},a_{e}\}$ 
$(e=(s,j))$ form a tree on $\mathbb{A}_{s}$: at the $j$th step the new
vertex $a_{s,j}$ is joined to one of $a_{s,0},\ldots ,a_{s,j-1}$. The union
of these trees has the same connected components as the graph in Lemma \ref%
{lem:residuegraph}, and hence its connected components are exactly the
blocks of $\mathcal{P}_{\ast }$. For each block $B\in \mathcal{P}_{\ast },$
choose a spanning tree of the corresponding connected component. The total
number of selected edges is 
\begin{equation*}
\sum_{B\in \mathcal{P}_{\ast }}(\sharp B-1)=\ell -c_{\ast }=r_{\ast }.
\end{equation*}%
For each selected edge choose one index $e\in \mathcal{E}$ realizing it, and
relabel the chosen indices, together with their associated data, as $%
e=1,\ldots ,r_{\ast }$.

The corresponding vectors $u_{e}=\mathbf{e}_{a_{e}}-\mathbf{e}_{b_{e}}$ are
linearly independent. Indeed, in a linear relation among these vectors, the
coordinate corresponding to a leaf of one of the selected trees forces the
coefficient of its unique incident edge to vanish. Removing the leaf and its
incident edge, and repeating the argument shows that all coefficients
vanish. This proves (2). Since (1) was obtained simultaneously for all $e\in 
\mathcal{E}$, it remains valid for the selected indices.

For (3), if $\mu <\nu $, then 
\begin{equation*}
z_{\nu }-z_{\mu }=T(n_{\nu }-n_{\mu })+s_{h_{\nu }}-s_{h_{\mu }}\geq
T-(T-1)=1,
\end{equation*}%
so the $z_{\nu }$ are strictly increasing.

Fix $0\leq \nu <m$ and $1\leq e\leq r_{\ast }$. Since $v_{e}$ is realized
along $\mathcal{N}_{e}$, we may choose $Z\in \mathcal{N}_{e}$ sufficiently
large such that $Z\geq z_{\nu }$ and 
\begin{equation*}
\alpha _{Z+t}=v_{e}(t)\text{ }(-(z_{\nu }-z_{0})\leq t\leq 0).
\end{equation*}%
Set $p:=Z-z_{\nu }.$ Then $\alpha _{z_{\nu }+p}=v_{e}(0)=a_{e}.$ For $\mu
<\nu $, one has%
\begin{equation*}
z_{\mu }-z_{\nu }=T(n_{\mu }-n_{\nu })+s_{h_{\mu }}-s_{h_{\nu }},
\end{equation*}%
and hence, by (1), 
\begin{equation*}
\alpha _{z_{\mu }+p}=v_{e}(z_{\mu }-z_{\nu })=b_{e}(h_{\mu },h_{\nu }).
\end{equation*}%
Since $Z$ may be chosen arbitrarily large in $\mathcal{N}_e$, so may $p$.
\end{proof}

For $\lambda =(\lambda _{1},\ldots ,\lambda _{r})\in \mathbb{N}_{0}^{r},$
write $\left\Vert \lambda \right\Vert _{1}:=\sum_{e=1}^{r}\lambda _{e}.$ For 
$r,t\geq 1$, define 
\begin{equation*}
\rho _{r}(t):=\min \left\{ L\in \mathbb{N}_{0}:\binom{L+r}{r}\geq
t+1\right\} ,\text{\ }W_{r}(m):=\sum_{t=1}^{m-1}\rho _{r}(t)\text{\ }(m\geq
1).
\end{equation*}%
Thus $W_{r}(1)=0$. The quantity $W_{r}(m)$ bounds the number of coordinates
used below; it does not bound the diameter of the resulting pattern.

\begin{theorem}
\label{thm:grouped} Suppose that $\alpha \in \mathcal{A}_{\ell }$ has a
periodic point in its orbit closure, and let $c_{\ast },r_{\ast }$ be as in (%
\ref{eq:canonical}). Let $m_{1},\ldots ,m_{c_{\ast }}$ be positive integers
with sum $m$. If $k\in \mathbb{N}$ satisfies 
\begin{equation}
k\geq \sum_{h=1}^{c_{\ast }}W_{r_{\ast }}(m_{h})+c_{\ast }-1,
\label{eq:groupcost}
\end{equation}%
then $p_{\alpha }^{\ast \mathrm{ab}}(k)\geq m.$
\end{theorem}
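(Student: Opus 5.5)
We aim to construct a $k$-pattern $S$ and $m$ positions $z_{0}<\cdots<z_{m-1}$ such that the Parikh vectors $\Psi(\alpha[z_{\nu}+S])$ are pairwise distinct. This gives $p_{\alpha}^{\ast\mathrm{ab}}(k)\geq p_{\alpha}^{\mathrm{ab}}(S)\geq m$. We take $\mathcal{N}=\{n_{0}<n_{1}<\cdots\}$, the $r_{\ast}$ indices $e$ and the letters $b_{e}(h,l)$ from Lemma \ref{lem:columns}. We assign group labels $h_{0},\ldots,h_{m-1}$ so that exactly $m_{h}$ of them equal $h$, and put $z_{\nu}:=Tn_{\nu}+s_{h_{\nu}}$.

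The separation is done in two layers.

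\emph{Within-group separation.} We add points $z_{0},z_{1},\ldots$ one at a time, in the manner of Lemma \ref{lem:separation}. Suppose $z_{\nu}$ is the $(t+1)$-st point of its group $h$. We adjoin $\lambda_{e}$ new shifts $p$ of the type furnished by Lemma \ref{lem:columns}(3) for this $\nu$ and each $e$, chosen large and distinct. Such a shift puts $a_{e}$ at $z_{\nu}$ and $b_{e}(h_{\mu},h)$ at every earlier $z_{\mu}$. Later points may receive unknown letters, but they are only separated afterwards, so this is harmless. For an earlier $z_{\mu}$ in the same group, the difference of Parikh vectors changes by $\sum_{e}\lambda_{e}u_{e}$. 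By Lemma \ref{lem:columns}(2) the $u_{e}$ are linearly independent, so distinct $\lambda$ give distinct changes. Each of the $t$ earlier same-group points therefore forbids at most one $\lambda$. The set $\{\lambda\in\mathbb{N}_{0}^{r_{\ast}}:\Vert\lambda\Vert_{1}\leq L\}$ has $\binom{L+r_{\ast}}{r_{\ast}}$ elements, so $L=\rho_{r_{\ast}}(t)$ suffices and leaves a good $\lambda$. Summing over all groups, this layer uses at most $\sum_{h}W_{r_{\ast}}(m_{h})$ coordinates. Earlier differences are never disturbed, because later steps only need to avoid collisions with the point currently being added.

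\emph{Between-group separation.} We pass to the quotient by $\mathcal{P}_{\ast}$. Since $\pi_{\mathcal{P}_{\ast}}(\alpha)$ is $T$-periodic, the block of $\alpha_{z_{\nu}+p}$ depends only on $s_{h_{\nu}}+p \bmod T$. Hence the block-count vector of $\alpha[z_{\nu}+S]$ depends only on the group; call it $H_{h}\in\mathbb{R}^{c_{\ast}}$. Distinct block vectors force distinct Parikh vectors, so it suffices to make $H_{1},\ldots,H_{c_{\ast}}$ distinct.

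For padding we use shifts $p$ lying inside long matching intervals of $\alpha$ with $\gamma$ from Lemma \ref{lem:alignment}, taken far to the right. On these, $\alpha_{z_{\nu}+p}=\gamma_{s_{h_{\nu}}+p}$, which is the same letter for all $z_{\nu}$ in one group. Such shifts therefore never break within-group distinctness. Among them we take shifts $p\equiv0$ and $p\equiv1\pmod T$. Replacing one shift of the first kind by one of the second kind changes $H_{h}$ by $\mathbf{e}_{\sigma(h)}-\mathbf{e}_{h}=-v_{h}$, by (\ref{eq:representatives}). The vectors $v_{h}$ are affinely independent by Lemma \ref{lem:function}.

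We reserve $c_{\ast}-1$ padding coordinates and let $t$ of them be of the second type and $c_{\ast}-1-t$ of the first type, for $0\leq t\leq c_{\ast}-1$. This gives block vectors $H_{h}'-tv_{h}$ with $c_{\ast}$ choices of $t$. Lemma \ref{lem:affine} excludes at most $c_{\ast}-1$ values of $t$, so some $t$ makes all groups distinct. This accounts for the term $c_{\ast}-1$ in (\ref{eq:groupcost}). Finally, any remaining $k-\sharp S$ coordinates are filled with further $\gamma$-matching shifts that are $\equiv0\pmod T$. These add the same vector to every point of each group and the same $\mathbf{e}_{h}$-type vector to each group's block count, so they preserve both layers of separation.

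The main obstacle is bookkeeping rather than any single estimate. The chosen shifts must all be distinct and far enough right that the matching intervals and the realization property of Lemma \ref{lem:columns}(3) apply simultaneously. One must also confirm that the block vector is genuinely group-constant for the main-layer shifts, for which I would check that $b_{e}(h_{\mu},h_{\nu})$ and $a_{e}$ lie in blocks determined by residues. Lastly, the ordering must be arranged so that padding added after the main layer cannot collapse any within-group difference. I would handle this by fixing all main-layer shifts first, then choosing every padding shift in $\gamma$-matching intervals beyond all of them.
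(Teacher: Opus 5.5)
Your architecture is the paper's: inductive within-group separation using the shifts of Lemma~\ref{lem:columns}(3), the cost $\sum_h W_{r_\ast}(m_h)$, and between-group separation in the quotient via $\gamma$-matching shifts with $p\equiv 0,1\pmod T$ and Lemmas~\ref{lem:function} and~\ref{lem:affine}. The genuine error is in your last step.

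After choosing $t$ so that the block vectors $X_h:=H_h'-tv_h$ are pairwise distinct, you add $R:=k-\sharp S$ further $\gamma$-matching shifts with $p\equiv 0\pmod T$. Such a shift puts $\gamma_{s_h}\in B_h$ at every point of group $h$. It therefore adds $R\mathbf{e}_h$ to the block vector of group $h$, and this increment depends on $h$. Groups $h\neq h'$ then collide exactly when $X_h-X_{h'}=R(\mathbf{e}_{h'}-\mathbf{e}_h)$, and nothing in your construction excludes this. In the paper's parametrization, your final configuration is the split with $q=R+c_\ast-1-t$ shifts of the first kind out of $L=R+c_\ast-1$. Your $t$ was chosen for the configuration without the $R$ extra shifts, and the set of bad splits changes once they are added. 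So this padding preserves within-group separation but not between-group separation, and your claim that it preserves both layers is false.

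The repair is to fix the total number of padding coordinates before choosing the split. One option is to insert the surplus shifts first, where they are simply absorbed into the $H_h$ (Lemma~\ref{lem:affine} allows arbitrary base vectors), and then do the affine step with the last $c_\ast-1$ coordinates. The other is what the paper does: let $w$ be the number of main-layer coordinates and put $L:=k-w\geq c_\ast-1$. Use $q$ shifts with $p\equiv 0$ and $L-q$ with $p\equiv 1$, so that group $h$ has quotient vector $H_h+L\mathbf{e}_{\sigma(h)}+q(\mathbf{e}_h-\mathbf{e}_{\sigma(h)})$. Then pick a good $q\in\{0,\ldots,c_\ast-1\}\subset\{0,\ldots,L\}$.

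Two smaller remarks. First, earlier within-group differences survive later steps not because ``later steps only need to avoid collisions with the current point''. They survive because every new coordinate carries the letter $b_e(h_\mu,h_\nu)$ at $z_\mu$, which is the same for all earlier $\mu$ in one group; state this explicitly. Second, no check on $a_e$ or $b_e(h_\mu,h_\nu)$ is needed for the group-constancy of block vectors. That holds for every coordinate by the $T$-periodicity of $\pi_{\mathcal{P}_\ast}(\alpha)$ together with $z_\nu\equiv s_{h_\nu}\pmod T$, as you already observed.
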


\begin{proof}
We retain the notation and choices made above, in particular $T,$ $s_{h},$ $%
\sigma .$ We also fix the infinite set $\mathcal{N}=\{n_{0}<n_{1}<\cdots \},$
the indices $e=1,\ldots ,r_{\ast }$, and the associated data $a_{e},$ $%
b_{e}(h,l)$ and $u_{e}$ given by Lemma \ref{lem:columns}.

Take $h_{0},\ldots ,h_{m-1}\in \{1,\ldots ,c_{\ast }\}$ such that each $h\in
\{1,\ldots ,c_{\ast }\}$ occurs exactly $m_{h}$ times, and set $z_{\nu
}:=Tn_{\nu }+s_{h_{\nu }}$\ $(0\leq \nu <m).$ By Lemma \ref{lem:columns}(3),
one has $z_{0}<z_{1}<\cdots <z_{m-1}.$

For each $h\in \{1,\ldots ,c_{\ast }\},$ call the set of indices $\nu $ with 
$h_{\nu }=h$ the group $h.$ We first separate the positions belonging to the
same group. Set $S_{0}:=\varnothing .$ Suppose inductively that $0\leq \nu
<m $ and that $S_{\nu }$ has been chosen so that, for every $h$, the vectors 
$\Psi (\alpha \lbrack z_{\mu }+S_{\nu }])$\ $(0\leq \mu <\nu ,$\ $h_{\mu
}=h) $ are pairwise distinct.

Let 
\begin{equation*}
I_{\nu }:=\{\mu <\nu :h_{\mu }=h_{\nu }\},\text{\ }t_{\nu }:=\sharp I_{\nu }.
\end{equation*}%
If $t_{\nu }=0$, set $S_{\nu +1}:=S_{\nu }.$ Suppose that $t_{\nu }\geq 1$,
and put $L_{\nu }:=\rho _{r_{\ast }}(t_{\nu }).$ Using Lemma \ref%
{lem:columns}(3), we may choose pairwise distinct integers $p_{e,\tau }\in 
\mathbb{N}_{0}\setminus S_{\nu }$ $(1\leq e\leq r_{\ast },1\leq \tau \leq
L_{\nu })$ such that 
\begin{equation}
\alpha _{z_{\nu }+p_{e,\tau }}=a_{e}\text{ and }\alpha _{z_{\mu }+p_{e,\tau
}}=b_{e}(h_{\mu },h_{\nu })\ (0\leq \mu <\nu )  \label{eq:vector-comp}
\end{equation}%
for every $1\leq \tau \leq L_{\nu }$.

For $\lambda =(\lambda _{1},\ldots ,\lambda _{r_{\ast }})\in \mathbb{N}%
_{0}^{r_{\ast }}$\ with $\left\Vert \lambda \right\Vert _{1}\leq L_{\nu },$
set 
\begin{equation*}
P_{\lambda }:=\bigcup_{e=1}^{r_{\ast }}\{p_{e,\tau }:1\leq \tau \leq \lambda
_{e}\}.
\end{equation*}%
If $\mu \in I_{\nu }$, then $h_{\mu }=h_{\nu }$, so $b_{e}(h_{\mu },h_{\nu
})=b_{e}.$ It follows from (\ref{eq:vector-comp}) that 
\begin{equation}
\Psi (\alpha \lbrack z_{\nu }+\left( S_{\nu }\cup P_{\lambda }\right)
])-\Psi (\alpha \lbrack z_{\mu }+\left( S_{\nu }\cup P_{\lambda }\right)
])=D_{\mu }+\sum_{e=1}^{r_{\ast }}\lambda _{e}u_{e},
\label{eq:affinedifference}
\end{equation}%
where $D_{\mu }:=\Psi (\alpha \lbrack z_{\nu }+S_{\nu }])-\Psi (\alpha
\lbrack z_{\mu }+S_{\nu }]).$ By Lemma \ref{lem:columns}(2), $u_{1},\ldots
,u_{r_{\ast }}$ are linearly independent, and hence for each $\mu \in I_{\nu
},$ the equation 
\begin{equation*}
D_{\mu }+\sum_{e=1}^{r_{\ast }}\lambda _{e}u_{e}=0
\end{equation*}%
has at most one solution $\lambda $. On the other hand, one has%
\begin{equation*}
\sharp \left\{ \lambda \in \mathbb{N}_{0}^{r_{\ast }}:\left\Vert \lambda
\right\Vert _{1}\leq L_{\nu }\right\} =\binom{L_{\nu }+r_{\ast }}{r_{\ast }}%
\geq t_{\nu }+1.
\end{equation*}%
Hence there is a choice of $\lambda $ for which the difference in (\ref%
{eq:affinedifference}) is nonzero for every $\mu \in I_{\nu }$. Fix such a $%
\lambda $ and set $S_{\nu +1}:=S_{\nu }\cup P_{\lambda }.$

Now we verify that the separations obtained at earlier stages are preserved.
If $\mu ,\mu ^{\prime }<\nu $ and $h_{\mu }=h_{\mu ^{\prime }}$, then every
new coordinate $p_{e,\tau }$ satisfies 
\begin{equation*}
\alpha _{z_{\mu }+p_{e,\tau }}=b_{e}(h_{\mu },h_{\nu })=b_{e}(h_{\mu
^{\prime }},h_{\nu })=\alpha _{z_{\mu ^{\prime }}+p_{e,\tau }}.
\end{equation*}%
Thus 
\begin{equation*}
\Psi (\alpha \lbrack z_{\mu }+S_{\nu +1}])-\Psi (\alpha \lbrack z_{\mu
^{\prime }}+S_{\nu +1}])=\Psi (\alpha \lbrack z_{\mu }+S_{\nu }])-\Psi
(\alpha \lbrack z_{\mu ^{\prime }}+S_{\nu }]).
\end{equation*}%
The induction therefore continues.

Set $\widetilde{S}:=S_{m}$ and $w:=\sharp \widetilde{S}.$ By the induction
above, for each $h$, the vectors 
\begin{equation*}
\Psi (\alpha \lbrack z_{\nu }+\widetilde{S}])\text{\ }(0\leq \nu <m,\text{\ }%
h_{\nu }=h)
\end{equation*}%
are pairwise distinct. For a group containing $m_{h}$ indices, the
successive values of $t_{\nu }$ are $0,1,\ldots ,m_{h}-1.$ Since the step
corresponding to $t_{\nu }=t\geq 1$ adds at most $\rho _{r_{\ast }}(t)$
coordinates, one has%
\begin{equation}
w\leq \sum_{h=1}^{c_{\ast }}\sum_{t=1}^{m_{h}-1}\rho _{r_{\ast
}}(t)=\sum_{h=1}^{c_{\ast }}W_{r_{\ast }}(m_{h}).
\label{eq:within-group-cost}
\end{equation}

It remains to separate different groups and, at the same time, enlarge the
pattern to cardinality $k$. Define the linear map $Q:\mathbb{R}^{\mathbb{A}%
}\longrightarrow \mathbb{R}^{c_{\ast }}$ by $Q(\mathbf{e}_{a}):=\mathbf{e}%
_{h}$\ $(a\in B_{h}).$ Since $\pi _{\mathcal{P}_{\ast }}(\alpha )$ is $T$%
-periodic and $z_{\nu }\equiv s_{h_{\nu }}\pmod T$, for each fixed $h$ the
vector $Q(\Psi (\alpha \lbrack z_{\nu }+\widetilde{S}]))$ is independent of $%
\nu $ among the indices satisfying $h_{\nu }=h$. Denote this common vector
by $H_{h}$. Thus 
\begin{equation*}
Q(\Psi (\alpha \lbrack z_{\nu }+\widetilde{S}]))=H_{h_{\nu }}\text{\ }(0\leq
\nu <m).
\end{equation*}

Let $L:=k-w.$ By (\ref{eq:groupcost}) and (\ref{eq:within-group-cost}), we
have $L\geq c_{\ast }-1.$

For each $\varepsilon \in \{0,1\}$, there are arbitrarily large $p\in 
\mathbb{N}_{0}$ satisfying $p\equiv \varepsilon \pmod T$ and 
\begin{equation}
\alpha _{z_{\nu }+p}=\gamma _{s_{h_{\nu }}+\varepsilon }\text{\ }(0\leq \nu
<m).  \label{eq:periodiccolumns}
\end{equation}%
Indeed, by Lemma \ref{lem:alignment} we may choose an arbitrarily far
interval $J=[b,b+D)$ such that $\alpha \lbrack J]=\gamma \lbrack J]$ and $%
D>z_{m-1}-z_{0}+T.$ There is a unique $p\in \lbrack b-z_{0},b-z_{0}+T)$ such
that $p\equiv \varepsilon \pmod T.$ Then $z_{\nu }+p\in J$\ $(0\leq \nu <m),$
and $z_{\nu }+p\equiv s_{h_{\nu }}+\varepsilon \pmod T.$ Since $\gamma $ is $%
T$-periodic, (\ref{eq:periodiccolumns}) follows. As the intervals $J$ may be
chosen arbitrarily far to the right, so may $p$.

Let $q$ be an integer with $0\leq q\leq L$. Choose $L$\ pairwise distinct
integers outside $\widetilde{S},$ of which $q$ satisfy (\ref%
{eq:periodiccolumns}) with $\varepsilon =0$ and $L-q$ satisfy it with $%
\varepsilon =1$. By (\ref{eq:representatives}), the resulting quotient
vector of group $h$ is 
\begin{equation}
H_{h}+q\mathbf{e}_{h}+(L-q)\mathbf{e}_{\sigma (h)}=H_{h}+L\mathbf{e}_{\sigma
(h)}+q(\mathbf{e}_{h}-\mathbf{e}_{\sigma (h)}).  \label{eq:groupaffine}
\end{equation}%
By Lemma \ref{lem:function}, the vectors $\mathbf{e}_{h}-\mathbf{e}_{\sigma
(h)}$ $(1\leq h\leq c_{\ast })$ are affinely independent. Hence Lemma \ref%
{lem:affine} implies that at most $c_{\ast }-1$ real values of $q$ produce a
coincidence between two vectors in (\ref{eq:groupaffine}). Therefore some $%
q\in \{0,1,\ldots ,c_{\ast }-1\}$ makes all $c_{\ast }$ quotient vectors
pairwise distinct. Since $L\geq c_{\ast }-1$, this choice satisfies $0\leq
q\leq L$.

Fix such a $q$ and adjoin the corresponding $L$ integers to $\widetilde{S}$,
obtaining a pattern $S$. By (\ref{eq:periodiccolumns}), every added element
contributes the same letter at all $z_{\nu }$ belonging to a fixed group.
Hence all within-group Parikh-vector differences remain unchanged. By (\ref%
{eq:groupaffine}), different groups have distinct images under $Q$ . Thus
the $m$ vectors $\Psi (\alpha \lbrack z_{0}+S]),\ldots ,\Psi (\alpha \lbrack
z_{m-1}+S])$ are pairwise distinct. Therefore $p_{\alpha }^{\ast \mathrm{ab}%
}(k)\geq p_{\alpha }^{\mathrm{ab}}(S)\geq m.$
\end{proof}

\begin{remark}
At each inductive step, the newly added coordinates need only contribute the
same letters within each previously treated group. Thus the within-group
separations are preserved, while different groups are separated later by the
quotient argument in (\ref{eq:groupaffine}). The parameters $T$ and $%
\mathcal{E}$ affect only the locations of the chosen coordinates and do not
enter the bound (\ref{eq:groupcost}).
\end{remark}

\begin{lemma}
\label{lem:fewgroups} Suppose that $\alpha \in \mathcal{A}_{\ell }$ has a
periodic point in its orbit closure. Then $p_{\alpha }^{\ast \mathrm{ab}%
}(k)\geq c_{\ast }$\ for every $k\in \mathbb{N}.$
\end{lemma}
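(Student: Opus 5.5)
Under the hypotheses of the lemma we may keep the reductions of Subsections \ref{subsec:Residue-alph}--\ref{subsec:grouped}. In particular, we pass to a tail (legitimate by Lemma \ref{lem:transfer}(2)) so that $\pi _{\mathcal{P}_{\ast }}(\alpha )$ is $T$-periodic and equals $\pi _{\mathcal{P}_{\ast }}(\gamma )$ at every nonnegative position. The plan is to produce, for each $k$, a $k$-pattern $S$ and $c_{\ast }$ starting positions whose $S$-factors have pairwise distinct images under the linear map $Q:\mathbb{R}^{\mathbb{A}}\rightarrow \mathbb{R}^{c_{\ast }}$, $Q(\mathbf{e}_{a})=\mathbf{e}_{h}$ for $a\in B_{h}$. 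Distinct images under $Q$ force distinct Parikh vectors, so this gives $p_{\alpha }^{\mathrm{ab}}(S)\geq c_{\ast }$.

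If $k\geq c_{\ast }-1$, this is the special case of Theorem \ref{thm:grouped} with $m_{1}=\cdots =m_{c_{\ast }}=1$ and $m=c_{\ast }$. Indeed $W_{r_{\ast }}(1)=0$, so condition (\ref{eq:groupcost}) reads $k\geq c_{\ast }-1$. Concretely, one takes $z_{h}=Tn_{h}+s_{h}$ and $\widetilde{S}=\varnothing $, so all $H_{h}=0$. One then adjoins $q$ coordinates with $\varepsilon =0$ and $k-q$ with $\varepsilon =1$ from (\ref{eq:periodiccolumns}). This gives quotient vectors $k\mathbf{e}_{\sigma (h)}+q(\mathbf{e}_{h}-\mathbf{e}_{\sigma (h)})$, and Lemmas \ref{lem:function} and \ref{lem:affine} supply a good $q\in \{0,\ldots ,c_{\ast }-1\}$.

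The main obstacle is the range $k<c_{\ast }-1$, where the affine argument may not find a $q\leq k$. Here I would instead use the periodic quotient directly. Since $\pi _{\mathcal{P}_{\ast }}(\alpha )$ is periodic with least period $q_{\ast }\geq c_{\ast }$ and every block occurs in it, I would first try patterns inside a single period. Take $S=\{0,1,\ldots ,k-1\}$, or more generally a pattern chosen so that the $k$-tuples of blocks read at different phases $n$ modulo $T$ differ in their block counts. If this fails, I would fall back on the simplest choice, a pattern $S$ consisting of $k$ positions all congruent modulo $T$. Then the $S$-factor starting at a position $n\equiv s_{h}$ modulo $T$ has quotient vector exactly $k\mathbf{e}_{h}$, because every coordinate of $n+S$ lies in the same residue class modulo $T$ and hence reads a letter of $B_{h}$. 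Since the positions $s_{1},\ldots ,s_{c_{\ast }}$ realize all $c_{\ast }$ blocks, the vectors $k\mathbf{e}_{1},\ldots ,k\mathbf{e}_{c_{\ast }}$ are pairwise distinct.

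This last construction works for every $k$ and makes the case split unnecessary. So the proof reduces to taking $S=\{0,T,2T,\ldots ,(k-1)T\}$, observing that $Q(\Psi (\alpha \lbrack s_{h}+S]))=k\mathbf{e}_{h}$ for each $h$, and using that $\pi _{\mathcal{P}_{\ast }}(\alpha )$ agrees with $\pi _{\mathcal{P}_{\ast }}(\gamma )$ and contains every block at the representative positions $s_{h}$ of (\ref{eq:representatives}). Removing the finite prefix in the reduction only costs nothing, by Lemma \ref{lem:transfer}(2), since a lower bound for the tail is a lower bound for $\alpha $.
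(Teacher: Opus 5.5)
Your final argument is correct and is essentially the paper's. It uses the same pattern $S=\{0,T,\ldots,(k-1)T\}$ read from the residues $s_h$; the only difference is how distinctness is certified. You use the $T$-periodic quotient $\pi_{\mathcal{P}_{\ast}}(\alpha)$ and the map $Q$, getting quotient vectors $k\mathbf{e}_h$, whereas the paper picks a long interval on which $\alpha$ agrees with $\gamma$ (Lemma \ref{lem:alignment}), so that the factors are constant words with Parikh vectors $k\mathbf{e}_{\gamma_{s_h}}$.

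Your quotient route has a small advantage. Run with $q_{\ast}$ in place of $T$, and with starting points past the preperiod of $\eta$ (every block recurs there since $\alpha\in\mathcal{A}_{\ell}$), it does not use the periodic point at all. So the lemma in fact holds for every $\alpha\in\mathcal{A}_{\ell}$.
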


\begin{proof}
Retain the notation above, in particular $T$, $\gamma $, and $s_{1},\ldots
,s_{c_{\ast }}$. Fix $k\geq 1$ and set $S:=\{0,T,\ldots ,(k-1)T\}.$ By Lemma %
\ref{lem:alignment}, choose an interval $J=[b,b+D)$ with $D>kT$ such that $%
\alpha \lbrack J]=\gamma \lbrack J].$ For each $1\leq h\leq c_{\ast }$, let $%
t_{h}\in \lbrack b,b+T)$ be the unique integer satisfying $t_{h}\equiv s_{h}%
\pmod T.$ Then $t_{h}+S\subset J,$ and hence $\Psi (\alpha \lbrack
t_{h}+S])=k\mathbf{e}_{\gamma _{s_{h}}}.$

By (\ref{eq:representatives}), one has $\pi _{\mathcal{P}_{\ast }}(\gamma
_{s_{h}})=B_{h}.$ Thus the letters $\gamma _{s_{1}},\ldots ,\gamma
_{s_{c_{\ast }}}$ are pairwise distinct. Hence the vectors 
\begin{equation*}
\Psi (\alpha \lbrack t_{h}+S])=k\mathbf{e}_{\gamma _{s_{h}}},\text{\ }1\leq
h\leq c_{\ast },
\end{equation*}%
are pairwise distinct. Since $\sharp S=k$, it follows that $p_{\alpha
}^{\ast \mathrm{ab}}(k)\geq p_{\alpha }^{\mathrm{ab}}(S)\geq c_{\ast }.$
\end{proof}

\subsection{Proof of Theorem~\protect\ref{thm:finite}}

\ 

\label{sec:integers}

\begin{lemma}
\label{lem:rhoestimate} For $r\geq 2$ and $t\geq 2r$, one has $\rho
_{r}(t)\leq t/r$.
\end{lemma}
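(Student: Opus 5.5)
Since $\rho_r(t)$ is the least $L\ge 0$ with $\binom{L+r}{r}\ge t+1$, it suffices to show that $L:=\lfloor t/r\rfloor$ already satisfies $\binom{L+r}{r}\ge t+1$. Minimality of $\rho_r(t)$ then gives $\rho_r(t)\le \lfloor t/r\rfloor\le t/r$. We need $L\ge 0$, which holds because $t\ge 2r$ actually gives $L\ge 2$.

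To verify the binomial inequality, write $t=rL+j$ with $0\le j\le r-1$. Then $t+1\le rL+r=r(L+1)$, so it is enough to prove
\begin{equation*}
\binom{L+r}{r}\ge r(L+1)\qquad (L\ge 2,\ r\ge 2).
\end{equation*}
I would expand the left side as
\begin{equation*}
\binom{L+r}{r}=\prod_{i=1}^{r}\frac{L+i}{i}=(L+1)\prod_{i=2}^{r}\frac{L+i}{i}.
\end{equation*}
Each factor $\frac{L+i}{i}=1+\frac{L}{i}$ is at least $1+\frac{2}{i}$ because $L\ge 2$. Hence
\begin{equation*}
\prod_{i=2}^{r}\frac{L+i}{i}\ge\prod_{i=2}^{r}\frac{i+2}{i}=\frac{(r+1)(r+2)}{2\cdot 3}.
\end{equation*}
The last product telescopes.

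It remains to check $\frac{(r+1)(r+2)}{6}\ge r$, which is equivalent to $r^2-3r+2\ge 0$, that is, $(r-1)(r-2)\ge 0$. This holds for every integer $r\ge 1$, so in particular for $r\ge 2$. Combining the displays yields $\binom{L+r}{r}\ge r(L+1)\ge t+1$, which completes the argument.

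The only delicate point is the choice $L=\lfloor t/r\rfloor$ and the use of $t\ge 2r$ to guarantee $L\ge 2$. With $L=1$ we would get $\binom{r+1}{r}=r+1$, which can be smaller than $t+1$ when $t\ge r+1$; this is exactly why the hypothesis $t\ge 2r$ is needed. Everything else is a routine estimate.
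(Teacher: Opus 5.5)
Your proof is correct and is essentially the paper's proof: your key inequality $\binom{L+r}{r}\ge r(L+1)$ for $L\ge 2$ is exactly the paper's estimate $\binom{L+r-1}{r}\ge rL$ for $L\ge 3$ after shifting $L$ by one. Exhibiting the witness $\lfloor t/r\rfloor$ is the direct form of the paper's appeal to minimality of $\rho_r(t)$, and the only real difference is that you prove the binomial bound with the telescoping product $\prod_{i=2}^{r}(1+2/i)=(r+1)(r+2)/6$, whereas the paper checks the base case (the same quantity $(r-1)(r-2)/2$ appears there) and then inducts on $L$.
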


\begin{proof}
We claim that for $r\geq 2$ and $L\geq 3$, 
\begin{equation}
\binom{L+r-1}{r}\geq rL.  \label{eq:binomialestimate}
\end{equation}%
At $L=3$ the difference between the two sides of (\ref{eq:binomialestimate})
is $(r-1)(r-2)/2\geq 0$. Increasing $L$ by one increases the left side by $%
\binom{L+r-1}{r-1}\geq r$, so induction proves (\ref{eq:binomialestimate}).

Put $L=\rho _{r}(t)$. If $L\leq 2$, the conclusion follows from $t\geq 2r$.
If $L\geq 3$, minimality gives $\binom{L+r-1}{r}<t+1$, and hence $\binom{%
L+r-1}{r}\leq t$. By (\ref{eq:binomialestimate}), one has $rL\leq \binom{%
L+r-1}{r}\leq t,$ so $L\leq t/r$.
\end{proof}

\begin{lemma}
\label{lem:Westimate}For all integers $r,n\geq 1$, $W_{r}(n)\leq \frac{1}{r}%
\binom{n}{2}+r-1.$ For $r\geq 2$, equality holds when $n=2r$.
\end{lemma}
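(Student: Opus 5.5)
We need to prove $W_r(n)=\sum_{t=1}^{n-1}\rho_r(t)\le \frac1r\binom n2+r-1$, with equality at $n=2r$ when $r\ge2$.

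\emph{Case $r=1$.} Here $\rho_1(t)=\min\{L: L+1\ge t+1\}=t$. Hence $W_1(n)=\binom n2$, and the bound holds with equality.

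\emph{Case $r\ge2$, small $t$.} For $1\le t\le 2r-1$ I will compute $\rho_r(t)$ exactly. First, $\binom{1+r}{r}=r+1$, so $\rho_r(t)=1$ for $1\le t\le r$. Next, $\binom{2+r}{r}=\frac{(r+1)(r+2)}2\ge 2r$ for all $r\ge2$, since $(r+1)(r+2)-4r=r^2-r+2>0$. Therefore $\rho_r(t)=2$ for $r+1\le t\le 2r-1$. It follows that
\[
W_r(2r)=\sum_{t=1}^{2r-1}\rho_r(t)=r+2(r-1)=3r-2 .
\]
The right side at $n=2r$ is
\[
\tfrac1r\binom{2r}{2}+r-1=(2r-1)+r-1=3r-2 ,
\]
which gives the claimed equality at $n=2r$.

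\emph{Case $r\ge2$, $n\le 2r$.} I will show the difference
\[
\Delta(n):=\tfrac1r\binom n2+r-1-W_r(n)
\]
is nonnegative. The increment from $n$ to $n+1$ is $\frac{n}{r}-\rho_r(n)$.
\begin{itemize}
\item For $n\le r$ the increment is $\frac nr-1\le0$.
\item For $r+1\le n\le 2r-1$ the increment is $\frac nr-2<0$.
\end{itemize}
So $\Delta$ is nonincreasing on $1\le n\le 2r$. Since $\Delta(2r)=0$ by the computation above, $\Delta(n)\ge0$ throughout this range. As a check, $\Delta(1)=r-1\ge0$.

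\emph{Case $r\ge2$, $n>2r$.} For $t\ge 2r$, Lemma~\ref{lem:rhoestimate} gives $\rho_r(t)\le t/r$. Hence the increment $\frac nr-\rho_r(n)$ is $\ge0$ for every $n\ge 2r$. Starting from $\Delta(2r)=0$, we get $\Delta(n)\ge0$ for all $n\ge 2r$.

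Combining the cases completes the proof. The only step needing care is verifying the exact values of $\rho_r$ on $[1,2r-1]$, in particular that $\binom{r+2}{2}\ge 2r$ so that $\rho_r(t)\le2$ up to $t=2r-1$; the rest is the telescoping of increments.
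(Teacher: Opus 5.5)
Your proof is correct and follows essentially the same route as the paper. Both use the exact values $\rho_r(t)=1$ for $t\le r$ and $\rho_r(t)=2$ just beyond, the identity $W_r(2r)=3r-2$ for the equality case, and Lemma~\ref{lem:rhoestimate} to show the defect $\frac1r\binom n2+r-1-W_r(n)$ is nondecreasing for $n\ge 2r$. The only difference is for $n<2r$: you use the sign of the increments $n/r-\rho_r(n)$ to see that the defect is minimized at $n=2r$, while the paper writes out closed forms for $W_r(n)$ on $[1,r+1]$ and $[r+1,2r+1]$, which makes your version slightly more economical.
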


\begin{proof}
For $r=1$, $\rho _{1}(t)=t$ and $W_{1}(n)=\binom{n}{2}$. Suppose $r\geq 2$
and write $D_{r}(n)=W_{r}(n)-\binom{n}{2}/r$. The definition gives $\rho
_{r}(t)=1$ for $1\leq t\leq r$, and $\rho _{r}(t)=2$ for $r<t\leq 2r$. Hence
one has%
\begin{equation}
W_{r}(n)=n-1\text{\ }(1\leq n\leq r+1)\text{ and }W_{r}(n)=2n-r-2\text{\ }%
(r+1\leq n\leq 2r+1).  \label{eq:W_r-n}
\end{equation}%
In the first range, 
\begin{equation*}
D_{r}(n)=\frac{r-1}{2}-\frac{(n-r)(n-r-1)}{2r}\leq \frac{r-1}{2};
\end{equation*}%
in the second, 
\begin{equation*}
D_{r}(n)=r-1-\frac{(n-2r)(n-2r-1)}{2r}\leq r-1.
\end{equation*}%
Both inequalities use $z(z-1)\geq 0$ for integers $z$. In particular, $%
D_{r}(2r)=r-1$. For $n\geq 2r$, Lemma \ref{lem:rhoestimate} gives 
\begin{equation*}
D_{r}(n+1)-D_{r}(n)=\rho _{r}(n)-n/r\leq 0.
\end{equation*}%
These initial estimates and monotonicity prove the assertion for all $n$.
\end{proof}

For $c\geq 2$, $r\geq 1$, and $m\geq c$, write $m=ca+b$, where $a\geq 1$ and 
$0\leq b<c$. Define 
\begin{equation}
B_{c,r}(m):=(c-b)W_{r}(a)+bW_{r}(a+1)+c-1.  \label{eq:balancedcost}
\end{equation}%
This is the bound in Theorem \ref{thm:grouped} for groups whose sizes differ
by at most one.

\begin{lemma}
\label{lem:balanced} Let $g=c+r-1$, with $c\geq 2$ and $r\geq 1$. Then one
has%
\begin{alignat}{2}
B_{c,r}(m)& =m-1, & \text{if }c& \leq m\leq 2g,  \label{eq:smallgroups} \\
B_{c,r}(m)& \leq \frac{1}{g}\binom{m}{2}, & \text{if }m& \geq 2g.
\label{eq:largegroups}
\end{alignat}
\end{lemma}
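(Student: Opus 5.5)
Both assertions come down to the explicit values of $W_r$ recorded in (\ref{eq:W_r-n}) together with the upper bound of Lemma~\ref{lem:Westimate}. We write $m=ca+b$ with $a\geq 1$ and $0\leq b<c$, as in (\ref{eq:balancedcost}). The small range (\ref{eq:smallgroups}) will be an exact computation. The large range (\ref{eq:largegroups}) will follow from Lemma~\ref{lem:Westimate} plus a convexity estimate.

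\emph{Small range.} The key observation is that $W_r$ is piecewise linear on the relevant initial segment. By (\ref{eq:W_r-n}), $W_r(n)=n-1$ for $n\leq r+1$ and $W_r(n)=2n-r-2$ for $r+1\leq n\leq 2r+1$; for $r=1$ the formula $W_1(n)=\binom{n}{2}$ agrees with both in the range $n\le 3$. Since $m\leq 2g=2c+2r-2<2c+2r$, we get $a\leq 2+(2r-2)/c$, so both $a$ and $a+1$ lie in $[1,2r+1]$ when $c\ge 2$.

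If both $a,a+1\leq r+1$, then
\begin{equation*}
B_{c,r}(m)=(c-b)(a-1)+ba+c-1=ca+b-c+c-1=m-1 .
\end{equation*}
In general we use the slope formula $W_r(n+1)-W_r(n)=1$ for $n\leq r$ and $=2$ for $r+1\leq n\leq 2r$. This gives
\begin{equation*}
B_{c,r}(m)-(m-1)=\sum_{\text{groups}}\bigl(W_r(a_h)-(a_h-1)\bigr),
\end{equation*}
where $a_h\in\{a,a+1\}$ are the group sizes, and each excess is $\max(0,a_h-r-1)$. I expect this excess to vanish throughout $c\le m\le 2g$, but that needs care: for $c=2$, $r=3$, $m=8$ we get $a=4=r+1$, so excess $0$, while for $c=2$, $r=5$ we have $2g=12$, $a=6=r+1$, again $0$. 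The check is
\begin{equation*}
a+1\leq r+1\quad\text{when } b>0,\qquad a\leq r+1\quad\text{always}.
\end{equation*}
This follows from $ca+b\leq 2c+2r-2$: if $a\ge r+2$ then $ca\ge c(r+2)\ge 2r+4>2c+2r-2$ fails only when $c$ is large, so I will verify it by the inequality $(c-2)(r)\ge \dots$, splitting into the cases $a\le 2$ and $a\ge 3$. The case $a\le 2$ is trivial when $r\geq1$ (sizes $\le 3\le r+1$ unless $r=1$; for $r=1$, $g=c$ and $m\le 2c$ forces sizes $\le 2$). This case check is the main obstacle, and I would resolve it by direct inequalities.

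\emph{Large range.} Apply Lemma~\ref{lem:Westimate} to each group:
\begin{equation*}
B_{c,r}(m)\leq \frac1r\Bigl[(c-b)\binom{a}{2}+b\binom{a+1}{2}\Bigr]+c(r-1)+c-1 .
\end{equation*}
The balanced sum satisfies $\sum_h\binom{a_h}{2}\leq \frac{m^2}{2c}-\frac m2+\frac{c}{8}$; more precisely it equals $\frac{m(m-c)}{2c}+\frac{b(c-b)}{2c}$. The claim then reduces to the polynomial inequality
\begin{equation*}
\frac{m(m-c)+b(c-b)}{2cr}+cr-1\leq \frac{m(m-1)}{2g}\qquad (m\geq 2g).
\end{equation*}
Since $1/(cr)\ge 1/g$ fails in general, I would instead compare via the difference in $m$. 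The left side minus the right side is a quadratic in $m$ with leading coefficient $\frac{1}{2cr}-\frac1{2g}\geq 0$. So, as a backup, I would prove the inequality at $m=2g$ using (\ref{eq:smallgroups}): there $B=2g-1=\frac1g\binom{2g}{2}$, giving equality. Then I would show the increment $B_{c,r}(m+1)-B_{c,r}(m)\le m/g$. The increment equals $\rho_r(a)$, where $a$ is the size of the group being enlarged, and $a=\lfloor m/c\rfloor$. Using Lemma~\ref{lem:rhoestimate} when $a\ge 2r$, we get $\rho_r(a)\le a/r\le m/(cr)$, and we would still need $m/(cr)\le m/g$, which is false in general. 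So instead I would bound $\rho_r(a)$ via $\binom{L+r}{r}\ge$ growth relative to $g$. This increment comparison is the delicate point, and I would refine it using that $\rho_r(a)\le\rho_{g-c+1}$ is monotone decreasing in $r$ together with $a\approx m/c$.
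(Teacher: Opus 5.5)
\textbf{Gap: the large range (\ref{eq:largegroups}) is not proved.} Your backup plan is the right one. Start from $B_{c,r}(2g)=2g-1=\frac1g\binom{2g}{2}$, and show that for $m\ge 2g$ the increment $B_{c,r}(m+1)-B_{c,r}(m)=\rho_r(\lfloor m/c\rfloor)$ is at most $\frac1g\binom{m+1}{2}-\frac1g\binom{m}{2}=m/g$. But you abandon this at the decisive step, claiming that $m/(cr)\le m/g$ is ``false in general''. It always holds, because $cr-g=cr-c-r+1=(c-1)(r-1)\ge 0$.

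Hence for $r\ge 2$ and $a:=\lfloor m/c\rfloor\ge 2r$, Lemma~\ref{lem:rhoestimate} gives $\rho_r(a)\le a/r\le ca/g\le m/g$. The ``refinement'' via $\rho_{g-c+1}$ that you propose instead is empty, since $g-c+1=r$. You also never handle the values of $a$ to which Lemma~\ref{lem:rhoestimate} does not apply:
for $r\ge 2$ and $a<2r$, one has $\rho_r(a)\le 2\le m/g$ because $m\ge 2g$;
for $r=1$, one has $g=c$ and $\rho_1(a)=a\le m/c=m/g$.
With these three cases the induction from $m=2g$ closes.

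Your first attempt, applying Lemma~\ref{lem:Westimate} group by group, cannot be rescued. The additive $r-1$ per group accumulates to $c(r-1)$. Already for $c=r=2$ it gives only $B_{2,2}(6)\le 6$ and $B_{2,2}(7)\le 7.5$, while the targets are $5$ and $7$. Incidentally, the leading coefficient $\frac{1}{2cr}-\frac{1}{2g}$ there is $\le 0$, not $\ge 0$, again because $cr\ge g$.

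\textbf{Minor gap: the small range.} Your excess computation is correct. However, the step you call the main obstacle is left unproved: that all group sizes are at most $r+1$ when $c\le m\le 2g$. The chain $c(r+2)\ge 2r+4>2c+2r-2$ you sketch fails for $c\ge 3$. The needed fact is one line: $c(r+1)-2g=(c-2)(r-1)\ge 0$, so $m\le 2g\le c(r+1)$ gives $\lceil m/c\rceil\le r+1$. Hence $W_r(a_h)=a_h-1$ for every group, and $B_{c,r}(m)=m-1$. For $r=1$ the sizes are at most $2$, where $W_1(n)=\binom n2=n-1$. This also supplies the base case $m=2g$ needed for the large range.
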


\begin{proof}
Since $c(r+1)-2g=(c-2)(r-1)\geq 0$, if $m\leq 2g,$ then every group in a
partition of $m$ into $c$ groups whose sizes differ by at most one has size
at most $r+1$. It follows from (\ref{eq:W_r-n}) that $W_{r}(n)=n-1$ for all
such group sizes. Hence (\ref{eq:smallgroups}) follows. At $m=2g,$ (\ref%
{eq:smallgroups}) gives $B_{c,r}(2g)=2g-1=\binom{2g}{2}/g$.

From (\ref{eq:balancedcost}), we have 
\begin{equation*}
B_{c,r}(m+1)-B_{c,r}(m)=\rho _{r}(\lfloor m/c\rfloor ).
\end{equation*}%
Set $a=\lfloor m/c\rfloor $ and assume $m\geq 2g$. If $r=1$, then $g=c$ and $%
\rho _{1}(a)=a\leq m/g$. For $r\geq 2$, if $a\leq r,$ then $\rho _{r}(a)=1,$
while if $r<a<2r,$ then $\rho _{r}(a)=2$. In either case, $\rho _{r}(a)\leq
2\leq m/g$. If $a\geq 2r$, then by Lemma \ref{lem:rhoestimate} one has%
\begin{equation*}
\rho _{r}(a)\leq a/r\leq ca/g\leq m/g,
\end{equation*}%
where the middle inequality follows from $cr-g=(c-1)(r-1)\geq 0,$ and the
last from\ $ca\leq m$. Thus%
\begin{equation*}
B_{c,r}(m+1)-B_{c,r}(m)\leq \frac{1}{g}\binom{m+1}{2}-\frac{1}{g}\binom{m}{2}%
=\frac{m}{g}.
\end{equation*}%
Induction from $m=2g$ proves (\ref{eq:largegroups}).
\end{proof}

\begin{proof}[Proof of Theorem~\protect\ref{thm:finite}]
The case $m=1$ is immediate. For $\ell =2$, the assertion is Corollary \ref%
{cor:coarse}. Assume $\ell \geq 3$ and put $g=\ell -1$. For integers $%
g,m\geq 1$, one has%
\begin{equation}
\frac{1}{g}\binom{m}{2}+g-1-(m-1)=\frac{(m-g)(m-g-1)+g(g-1)}{2g}\geq 0.
\label{eq:smallidentity}
\end{equation}%
If $\mathcal{O}(\alpha )$ has no periodic point, then the hypothesis on $k$
and (\ref{eq:smallidentity}) gives $k\geq m-1.$ Hence Lemma \ref%
{lem:noperiodic} implies $p_{\alpha }^{\ast \mathrm{ab}}(k)\geq k+1\geq m$.

Suppose that $\mathcal{O}(\alpha )$ contains a periodic point, and retain
the notation of Subsection \ref{sec:partition}. If $c_{\ast }=1$, then $%
r_{\ast }=g$, and Lemma \ref{lem:Westimate} gives 
\begin{equation*}
W_{r_{\ast }}(m)\leq \frac{1}{g}\binom{m}{2}+g-1\leq k.
\end{equation*}%
Theorem \ref{thm:grouped}, applied with the single group of size $m$, proves
the result.

Assume $c_{\ast }\geq 2$. Recall that $r_{\ast }=\ell -c_{\ast }\geq 1$ and $%
c_{\ast }+r_{\ast }-1=g.$ If $m<c_{\ast }$, the conclusion follows from
Lemma \ref{lem:fewgroups}. For $c_{\ast }\leq m<2g$, Lemma \ref{lem:balanced}
and (\ref{eq:smallidentity}) give 
\begin{equation*}
B_{c_{\ast },r_{\ast }}(m)=m-1\leq \frac{1}{g}\binom{m}{2}+g-1\leq k.
\end{equation*}%
For $m\geq 2g$, Lemma \ref{lem:balanced} also gives $B_{c_{\ast },r_{\ast
}}(m)\leq \frac{1}{g}\binom{m}{2}\leq k$. In both cases, write $m=c_{\ast
}a+b$ with $0\leq b<c_{\ast }$. Taking $c_{\ast }-b$ groups of size $a$ and $%
b$ groups of size $a+1$, the right-hand side of (\ref{eq:groupcost}) is $%
B_{c_*,r_*}(m)\le k$. Theorem \ref{thm:grouped} therefore gives $p_\alpha^{*%
\mathrm{ab}}(k)\ge m.$
\end{proof}

\begin{remark}
Theorem\ \ref{thm:finite} remains valid if the word contains letters
occurring only finitely often, provided that $\ell $ denotes the number of
letters occurring infinitely often. Indeed, after deleting a finite prefix
containing all other occurrences, Theorem\ \ref{thm:finite} applies to the
resulting tail, and Lemma \ref{lem:transfer}(2) transfers the lower bound to
the original word. Theorem\ \ref{thm:finite} need not hold if $\ell $ is
instead taken to be $\sharp \mathbb{A}_{\alpha }$; see Remark\ \ref%
{rem:finite-letters}.
\end{remark}

\subsection{An upper bound and the extremal square-root law}

\ 

Fix $\ell \geq 2$ and put $g:=\ell -1$. Let $x$ be the sparse binary word in
(\ref{eq:def-sparse-bi-core}). Take distinct letters $c_{1},\ldots ,c_{g-1}$
outside $\{0,1\}$, and let $\varpi _{g}$ be the separated $g$-interleaving
of $x$ determined by 
\begin{equation*}
\varpi _{g}^{(0)}=x,\text{\ }\varpi _{g}^{(r)}=c_{r}^{\infty }\text{\ }%
(1\leq r\leq g-1).
\end{equation*}%
When $g=1$, this is just $x$. The word $\varpi _{g}$ has $\ell =g+1$
letters, each occurring infinitely often. Since $x$ is aperiodic, Lemma \ref%
{lem:interleaving-recurrent-aperiodic}(2) implies that $\varpi _{g}$ is
aperiodic. Hence $\varpi _{g}\in \mathcal{A}_{\ell }.$

\begin{proposition}
\label{prop:interleaving} The word $\varpi _{g}$ satisfies 
\begin{equation*}
p_{\varpi _{g}}^{\ast \mathrm{ab}}(k)\leq \sqrt{2gk}+2g=\sqrt{2(\ell -1)k}%
+2(\ell -1)\text{\ }(k\in \mathbb{N}).
\end{equation*}
\end{proposition}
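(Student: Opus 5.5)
Fix a $k$-pattern $S$. Split it by residues modulo $g$, $S_{r}:=\{s\in S:s\equiv r\pmod g\}$, $k_{r}:=\sharp S_{r}$, exactly as in Lemma \ref{lem:interleaving-decomposition}. That lemma gives
\begin{equation*}
p_{\varpi _{g}}^{\mathrm{ab}}(S)\leq \sharp \{r:k_{r}=0\}+\sum_{k_{r}>0}p_{x}^{\ast \mathrm{ab}}(k_{r}).
\end{equation*}
By Corollary \ref{cor:sparse-upper}, $p_{x}^{\ast \mathrm{ab}}(k_{r})=f(k_{r})$. We also use $f(j)\leq \sqrt{2j}+2$ for $j\geq 1$, which holds because $f(j)=\lceil (1+\sqrt{8j+1})/2\rceil \leq (1+\sqrt{8j}+1)/2+1$. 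Moreover $f(0)=1$, so a residue with $k_{r}=0$ contributes $1\leq \sqrt{0}+2$. Hence, uniformly over all $r$,
\begin{equation*}
p_{\varpi _{g}}^{\mathrm{ab}}(S)\leq \sum_{r=0}^{g-1}\bigl(\sqrt{2k_{r}}+2\bigr)=2g+\sqrt{2}\sum_{r=0}^{g-1}\sqrt{k_{r}}.
\end{equation*}

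Next apply Cauchy--Schwarz (concavity of the square root) with $\sum_{r}k_{r}=k$. This gives $\sum_{r}\sqrt{k_{r}}\leq \sqrt{g\sum_{r}k_{r}}=\sqrt{gk}$, and therefore $p_{\varpi _{g}}^{\mathrm{ab}}(S)\leq \sqrt{2gk}+2g$. Taking the supremum over $S\in \Sigma _{k}(\mathbb{N}_{0})$ proves the proposition. When $g=1$ the argument reduces directly to Corollary \ref{cor:sparse-upper}.

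There is no real obstacle. The only points needing care are the elementary estimate $f(j)\leq \sqrt{2j}+2$ and making sure the empty residue classes are absorbed into the additive constant $2g$ without an extra term. If a sharper constant were wanted, one could use $f(j)\leq \sqrt{2j}+3/2$ instead, but the stated bound does not need it.
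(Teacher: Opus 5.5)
Your proposal is correct and follows the paper's proof step for step. You split $S$ by residues modulo $g$ and apply Lemma~\ref{lem:interleaving-decomposition}, bound each term by $\sqrt{2k_r}+2$ via Corollary~\ref{cor:sparse-upper}, absorb the empty classes into the constant $2g$, and finish with Cauchy--Schwarz.
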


\begin{proof}
Let $S\in \Sigma _{k}(\mathbb{N}_{0}).$ For $0\leq r\leq g-1$, set 
\begin{equation*}
S_{r}:=\{s\in S:s\equiv r\pmod g\},\text{\ }k_{r}:=\sharp S_{r}.
\end{equation*}%
Then $\sum_{r=0}^{g-1}k_{r}=k.$

Since $\varpi _{g}$ is a $g$-interleaving of $x$, Lemma \ref%
{lem:interleaving-decomposition} gives 
\begin{equation}
p_{\varpi _{g}}^{\mathrm{ab}}(S)\leq \sharp \{r:k_{r}=0\}+\sum_{\substack{ %
0\leq r\leq g-1  \\ k_{r}>0}}p_{x}^{\ast \mathrm{ab}}(k_{r}).
\label{eq:pab(S)<=k_r-g-x}
\end{equation}%
By Corollary \ref{cor:sparse-upper}, one has $p_{x}^{\ast \mathrm{ab}%
}(t)\leq \sqrt{2t}+2$\ for all $t\in \mathbb{N}.$ For $k_{r}=0,$ the
corresponding contribution in (\ref{eq:pab(S)<=k_r-g-x}) is $1\leq 2=\sqrt{%
2k_{r}}+2.$ Hence one has 
\begin{equation*}
p_{\varpi _{g}}^{\mathrm{ab}}(S)\leq \sum_{r=0}^{g-1}\left( \sqrt{2k_{r}}%
+2\right) =\sqrt{2}\sum_{r=0}^{g-1}\sqrt{k_{r}}+2g\leq \sqrt{%
2g\sum_{r=0}^{g-1}k_{r}}+2g=\sqrt{2gk}+2g,
\end{equation*}%
where the last inequality follows from the Cauchy--Schwarz inequality.
Taking the supremum over $S\in \Sigma _{k}(\mathbb{N}_{0})$ gives $p_{\varpi
_{g}}^{\ast \mathrm{ab}}(k)\leq \sqrt{2gk}+2g.$ Since $g=\ell -1$, the
assertion follows.
\end{proof}

We now combine the lower and upper bounds to determine the asymptotic
behavior of $\mathcal{L}_{\ell }^{\mathrm{ab}}(k)$.

\begin{theorem}
\label{thm:extremal}For every $\ell \geq 2$, $\mathcal{L}_{\ell }^{\mathrm{ab%
}}(k)=\sqrt{2(\ell -1)k}+O_{\ell }(1).$ In particular, $\lim_{k\rightarrow
\infty }\frac{\mathcal{L}_{\ell }^{\mathrm{ab}}(k)}{\sqrt{k}}=\sqrt{2(\ell
-1)}.$ Here $O_{\ell }(1)$ denotes a quantity bounded in absolute value by a
constant depending only on $\ell $.
\end{theorem}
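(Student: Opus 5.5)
The plan is to sandwich $\mathcal{L}_{\ell }^{\mathrm{ab}}(k)$ between the lower bound of Theorem \ref{thm:finite} and the upper bound of Proposition \ref{prop:interleaving}, with $g:=\ell -1$ throughout.

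\emph{Upper bound.} The word $\varpi _{g}$ lies in $\mathcal{A}_{\ell }$, as shown just before Proposition \ref{prop:interleaving}. Hence
\begin{equation*}
\mathcal{L}_{\ell }^{\mathrm{ab}}(k)\leq p_{\varpi _{g}}^{\ast \mathrm{ab}}(k)\leq \sqrt{2gk}+2g
\end{equation*}
for every $k\in \mathbb{N}$. This gives the upper half with the constant $2g$.

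\emph{Lower bound.} Fix $\alpha \in \mathcal{A}_{\ell }$ and $k\in \mathbb{N}$. Let $m$ be the largest positive integer with
\begin{equation*}
\frac{m(m-1)}{2g}+g-1\leq k.
\end{equation*}
Such an $m$ exists when $k\geq g-1$, since $m=1$ qualifies. Theorem \ref{thm:finite} then gives $p_{\alpha }^{\ast \mathrm{ab}}(k)\geq m$. Maximality of $m$ means $m+1$ fails, that is, $\frac{(m+1)m}{2g}>k-g+1$. Therefore
\begin{equation*}
(m+1)^{2}>m(m+1)>2g(k-g+1),
\end{equation*}
so $m>\sqrt{2g(k-g+1)}-1$. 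Using $\sqrt{a-b}\geq \sqrt{a}-\sqrt{b}$ for $a\geq b\geq 0$, this yields
\begin{equation*}
m\geq \sqrt{2gk}-\sqrt{2g(g-1)}-1\quad \text{for }k\geq g-1.
\end{equation*}
For the finitely many $k<g-1$, the trivial bounds $1\leq \mathcal{L}_{\ell }^{\mathrm{ab}}(k)$ and $\sqrt{2gk}\leq \sqrt{2g(g-2)}$ keep the difference bounded by a constant depending only on $\ell $. Taking the infimum over $\alpha $ gives
\begin{equation*}
\mathcal{L}_{\ell }^{\mathrm{ab}}(k)\geq \sqrt{2gk}-C_{\ell }
\end{equation*}
with $C_{\ell }:=\sqrt{2g(g-1)}+\sqrt{2g(g-2)}+1$, say.

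Combining the two halves, $|\mathcal{L}_{\ell }^{\mathrm{ab}}(k)-\sqrt{2gk}|\leq \max \{2g,C_{\ell }\}$ for all $k$. Dividing by $\sqrt{k}$ and letting $k\rightarrow \infty $ gives the limit $\sqrt{2(\ell -1)}$.

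There is no real obstacle. The only care needed is the bookkeeping that inverts the quadratic threshold in Theorem \ref{thm:finite} into a square-root bound with an additive constant depending only on $\ell $, including the small-$k$ range.
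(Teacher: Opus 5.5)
Your proof is correct and follows the paper's argument. The upper bound comes from $\varpi_g$ via Proposition \ref{prop:interleaving}, and the lower bound comes from applying Theorem \ref{thm:finite} with the largest admissible $m$ (the paper's $m_k=\lfloor r_k\rfloor$), inverting the quadratic threshold, and absorbing the finitely many small $k$ into the constant. One cosmetic point: for $\ell=2$ the term $\sqrt{2g(g-2)}$ in your $C_\ell$ is not real, but the range $k<g-1$ is then empty, so replace that term by $\sqrt{2g\max\{g-2,0\}}$.
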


\begin{proof}
Put $g:=\ell -1$. The function $t\longmapsto \frac{t(t-1)}{2g}+g-1$ is
strictly increasing on $[1,\infty )$. For $k\geq g-1$, let 
\begin{equation*}
r_{k}:=\frac{1+\sqrt{1+8g(k-g+1)}}{2}.
\end{equation*}%
Then $r_{k}\geq 1$ and $r_{k}(r_{k}-1)/(2g)+g-1=k$. Put $m_{k}:=\lfloor
r_{k}\rfloor $. Since $m_{k}\leq r_{k},$ monotonicity gives $\frac{1}{g}%
\binom{m_{k}}{2}+g-1\leq k$. Hence Theorem \ref{thm:finite} implies 
\begin{equation*}
\mathcal{L}_{\ell }^{\mathrm{ab}}(k)\geq m_{k}\geq r_{k}-1.
\end{equation*}%
As $k\rightarrow \infty $, $r_{k}=\sqrt{2gk}+O_{g}(1)$. Hence 
\begin{equation}
\mathcal{L}_{\ell }^{\mathrm{ab}}(k)\geq \sqrt{2gk}-O_{\ell }(1).
\label{eq:extremallower}
\end{equation}%
After enlarging the implicit constant, the same bound holds for the finitely
many remaining positive values of $k$.

For the reverse inequality, Proposition \ref{prop:interleaving} provides a
fixed word $\varpi _{g}\in \mathcal{A}_{\ell }$ satisfying $p_{\varpi
_{g}}^{\ast \mathrm{ab}}(k)\leq \sqrt{2gk}+2g$. Thus 
\begin{equation}
\mathcal{L}_{\ell }^{\mathrm{ab}}(k)\leq \sqrt{2gk}+O_{\ell }(1).
\label{eq:extremalupper}
\end{equation}%
Combining (\ref{eq:extremallower}) and (\ref{eq:extremalupper}) gives $%
\mathcal{L}_{\ell }^{\mathrm{ab}}(k)=\sqrt{2(\ell -1)k}+O_{\ell }(1).$
Division by $\sqrt{k}$ yields the asserted limit.
\end{proof}

\begin{remark}
The coefficient $1/(2(\ell -1))$ of $m^{2}$ in Theorem \ref{thm:finite} is
optimal among sufficient bounds with a lower-order remainder. Indeed, if a
bound of the form $k\geq am^{2}+o(m^{2})$ were sufficient with $a<1/(2(\ell
-1))$, then applying it to $\varpi _{\ell -1}$ would contradict Proposition %
\ref{prop:interleaving} for large $m$. This does not assert that the
additive term $\ell -2$ in Theorem \ref{thm:finite} is optimal.
\end{remark}

\begin{remark}
\label{rem:finite-letters}For $\ell \geq 3$, the condition that every letter
occur infinitely often in the definition of $\mathcal{A}_{\ell }$ cannot be
replaced by the requirement that every letter occur at least once. Let $x$
be the sparse binary word in (\ref{eq:def-sparse-bi-core}), choose a finite
word $u$ containing $\ell -2$ new letters, and put $\widetilde{\alpha }:=ux,$%
\ $M:=|u|.$ Then $\widetilde{\alpha }$ is aperiodic and $\sharp \mathbb{A}_{%
\widetilde{\alpha }}=\ell $, while $\mathsf{T}^{M}\widetilde{\alpha }=x$. By
Lemma\ 2.2(2), one has 
\begin{equation*}
p_{\widetilde{\alpha }}^{\ast \mathrm{ab}}(k)\leq p_{x}^{\ast \mathrm{ab}%
}(k)+M=f(k)+M\leq \sqrt{2k}+M+2.
\end{equation*}
If Theorem \ref{thm:finite} remained valid with $\ell =\sharp \mathbb{A}_{%
\widetilde{\alpha }}$, it would imply $p_{\widetilde{\alpha }}^{\ast \mathrm{%
ab}}(k)\geq \sqrt{2(\ell -1)k}-O_{\ell }(1),$ which is impossible for large $%
k$.
\end{remark}

\bigskip

\section{Abelian pattern Sturmian words}

\label{sec:APSW}

In this section, we study Abelian pattern Sturmian words and their orbit
closures. In the first subsection, we prove Theorem \ref{thm:main-APS}.

\subsection{The incidence graph characterization}

\ 

\begin{lemma}
\label{lem:binarySturmian} Every Abelian pattern Sturmian word uses exactly
two letters, both occurring infinitely often, and is aperiodic.
\end{lemma}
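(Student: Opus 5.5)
We must show: if $p_x^{\ast\mathrm{ab}}(k)=f(k)$ for all $k$, then $x$ uses exactly two letters, both occurring infinitely often, and $x$ is aperiodic. We prove aperiodicity first, then the letter count.

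\emph{Aperiodicity.} Since $f(k)\to\infty$, we have $\sup_k p_x^{\ast\mathrm{ab}}(k)=\infty$. The equivalence (1)$\Leftrightarrow$(2) of Theorem \ref{thm:threshold} then shows that $x$ is not eventually periodic. In particular $x$ is aperiodic, so $\sharp\mathbb{A}_x\geq 2$.

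\emph{Letters occurring infinitely often.} Let $\ell'$ be the number of letters occurring infinitely often in $x$, and write $\mathsf{T}^N x$ for a tail containing only those letters.
\begin{itemize}
\item $\ell'\geq 2$: otherwise the tail is constant and $x$ would be eventually periodic.
\item $\ell'\leq 2$: suppose $\ell'\geq 3$. The tail lies in $\mathcal{A}_{\ell'}$, so Theorem \ref{thm:finite} and Lemma \ref{lem:transfer}(2) give $p_x^{\ast\mathrm{ab}}(k)\geq p_{\mathsf{T}^N x}^{\ast\mathrm{ab}}(k)\geq m$ whenever $k\geq \binom{m}{2}/(\ell'-1)+\ell'-2$. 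Asymptotically this yields $p_x^{\ast\mathrm{ab}}(k)\geq \sqrt{2(\ell'-1)k}-O(1)\geq\sqrt{4k}-O(1)$. This contradicts $f(k)\leq\sqrt{2k}+2$ for large $k$.
\end{itemize}
Hence exactly two letters, say $0$ and $1$, occur infinitely often.

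\emph{No other letters, even finitely often.} This is the main obstacle: the asymptotic argument above is blind to letters occurring only finitely often. We argue at small $k$ instead, first $k=1$ and, if needed, $k=2$.

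For $k=1$, $f(1)=2$, but $p_x^{\ast\mathrm{ab}}(1)=\sharp\mathbb{A}_x$, since for $S=\{0\}$ the $S$-factors are exactly the letters of $x$. So $\sharp\mathbb{A}_x\leq 2$ immediately, which finishes the proof. The case $k=1$ therefore settles everything, and no delicate argument is needed.

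So the proof is short: aperiodicity from Theorem \ref{thm:threshold}; exactly two letters from $p_x^{\ast\mathrm{ab}}(1)=f(1)=2$ together with aperiodicity; both occurring infinitely often because a letter occurring finitely often would make the tail constant, hence $x$ eventually periodic. The Theorem \ref{thm:finite} step above becomes unnecessary.
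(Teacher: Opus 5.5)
Your final argument is correct and is essentially the paper's proof. The paper also gets exactly two letters from $p_x^{\ast\mathrm{ab}}(1)=\sharp\mathbb{A}_x=f(1)=2$. It gets aperiodicity because an eventually periodic word has bounded $p_x^{\ast\mathrm{ab}}$ by Lemma \ref{lem:transfer}(4), which is the content of (1)$\Rightarrow$(2) in Theorem \ref{thm:threshold}, while $f(k)\to\infty$. Both letters occur infinitely often, since otherwise $x$ would be eventually constant. As you already noted, you can delete the intermediate detour through Theorem \ref{thm:finite}.
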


\begin{proof}
Let $x$ be an Abelian pattern Sturmian word. Since $p_{x}^{\ast \mathrm{ab}%
}(1)=f(1)=2,$ the word $x$ uses exactly two letters. If $x$ is eventually
periodic, then Lemma \ref{lem:transfer}(4) implies $p_{x}^{\ast \mathrm{ab}%
}(k)$ is bounded. This contradicts $f(k)\rightarrow \infty $, and hence $x$
is aperiodic. If one of its two letters occurred only finitely often, then $%
x $ would be eventually constant. Thus both letters occur infinitely often.
\end{proof}

\begin{lemma}
\label{lem:nonconstant-periodic} Let $x\in \{0,1\}^{\mathbb{N}_{0}}$ be
aperiodic. If $\mathcal{O}(x)$ contains a nonconstant periodic point, then $%
p_{x}^{\ast \mathrm{ab}}(3)=4.$
\end{lemma}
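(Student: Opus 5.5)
Since $p_x^{\ast\mathrm{ab}}(3)\le 4$ by (\ref{eq:binary-upper-bound}), it suffices to find a $3$-pattern $S$ with $\mathcal{C}_x(S)=\{0,1,2,3\}$. The plan uses the periodic approximation of Lemma~\ref{lem:alignment}, a frozen periodic background, and one or two extra coordinates placed where $x$ deviates from that background.

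\textbf{Step 1: set up the periodic background.} Let $\beta\in\mathcal{O}(x)$ be nonconstant periodic with period $T$. Lemma~\ref{lem:alignment} gives a nonconstant $T$-periodic $\gamma\in\{0,1\}^{\mathbb{Z}}$ and intervals $[b_j,b_j+L_j)$, with $b_j,L_j\to\infty$, on which $x=\gamma$. Since $\gamma$ is nonconstant, there are residues $s_0,s_1$ with $\gamma_{s_0}=0$ and $\gamma_{s_1}=1$.

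Take two coordinates $d_1<d_2$ with $d_1\equiv s_1$ and $d_2\equiv s_0 \pmod T$, shifted relative to a base point. Then an $n\equiv 0 \pmod T$ inside a matching interval sees the background pair $10$ at $n+\{d_1,d_2\}$. Shifting $n$ by suitable residues yields $00$ and $11$ as well: if $\gamma_{u+s_1}=\gamma_{u+s_0}=0$ for some $u$ we get $00$, and similarly for $11$. These may not both exist, so it is cleaner to work with $\gamma$ directly.

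\textbf{Step 2: reduce to counting along $\gamma$.} For a pattern $S$ of diameter less than $L_j$, every value $\sum_{s\in S}\gamma_{n+s}$ is attained by $c_S$, because the matching intervals are long and the window can be placed at any residue. So the target is to make $\{\sum_{s\in S}\gamma_{n+s}\}$ together with the counts at non-matching positions cover $\{0,1,2,3\}$.

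Choose $S=\{a,a+T,a+2T\}$. Then $\sum_{s\in S}\gamma_{n+s}=3\gamma_{n+a}$, giving the counts $0$ and $3$ since $\gamma$ is nonconstant. It remains to produce the counts $1$ and $2$. These come from positions where $x$ disagrees with its $T$-periodic background along a residue class. Aperiodicity of $x$ forces some residue $x^{(s)}$ to be nonconstant infinitely often, in fact aperiodic (as in Proposition~\ref{prop:binarylower}). Such a residue contains both a $T$-step block equal to $\gamma_s$ and, arbitrarily far out, a change of value. At a change point, the arithmetic progression with step $T$ straddling it gives a window reading $\gamma_s\gamma_s\bar\gamma_s$ or $\bar\gamma_s\bar\gamma_s\gamma_s$, etc. Which of these occur depends on the direction of the switches. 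Lemma~\ref{lem:leftlimits} gives left-limits, so we get a switch away from $\gamma_s$ preceded by a long $\gamma_s$ run, i.e.\ the word $\gamma_s\gamma_s\bar\gamma_s$, which has count $1$ or $2$.

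\textbf{Step 3 (main obstacle): obtain the other middle count.} The switch in Step 2 supplies only one of the values $1,2$. For the other, the equal-spacing choice must be abandoned. Take $S=\{a,a+T,a'\}$ with $a'\not\equiv a \pmod T$, chosen so that $\gamma_{a'+n}$ and $\gamma_{a+n}$ vary independently across residues $n$. The background then yields the counts $2\gamma_{n+a}+\gamma_{n+a'}$. Since $\gamma$ is nonconstant, $a'$ can be chosen so that both pairs $(\gamma_{n+a},\gamma_{n+a'})=(1,0)$ and $(0,1)$ occur, producing the counts $2$ and $1$. The switch argument of Step 2, applied to the pair $a,a+T$ with $a'$ frozen, must then supply the missing extreme $0$ or $3$; this requires the switch to occur at a residue where $\gamma_{n+a'}$ has the matching value.

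The fallback is to argue by contradiction. If all four counts never occur for any such $S$, then every progression window forces $x^{(s)}$ changes to be coupled across residues, and these couplings force eventual periodicity in the manner of Lemma~\ref{lem:binary-two-coordinate}. This case analysis on $\gamma$ is the step I expect to take the most work.
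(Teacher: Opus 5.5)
Your proposal has a genuine gap. The step you flag as the main obstacle is realizing both middle counts $1$ and $2$ on one $3$-pattern. You never carry it out, and the mechanisms you offer for it fail.

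A single example shows this. Let $x$ be the $2$-interleaving with $x^{(0)}=\mathbf{1}_{\{4^{j}:j\geq 0\}}$ and $x^{(1)}=1^{\infty}$. It is aperiodic, $(01)^{\infty}\in\mathcal{O}(x)$, and $T=2$.
\begin{itemize}
\item Your Step~2 pattern $\{a,a+2,a+4\}$ realizes only counts in $\{0,1,3\}$, because three consecutive entries of $x^{(0)}$ never contain two $1$'s. So the switch argument really gives only one middle value, and consecutive windows cannot do better.
\item In Step~3 you leave the residue class, which throws away the extremes the periodic background gave you for free. Your requirement that both pairs $(1,0)$ and $(0,1)$ occur forces $a'-a$ odd. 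Then every pattern $\{a,a+2,a'\}$ misses the count $0$: $x_{n+a}=0$ forces $n+a$ even, hence $n+a'$ odd and $x_{n+a'}=1$.
\item Your proposed repair cannot work either. A switch inside the pair $\{a,a+T\}$ makes that pair contribute exactly $1$, so with $a'$ frozen the total is $1$ or $2$, never an extreme.
\item The contradiction ``fallback'' names no mechanism at all.
\end{itemize}

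The missing idea is to stay inside one residue class but \emph{spread} the pattern within it. Take $S=TQ$ for an arbitrary $3$-set $Q$. If $\beta_{a}=0$ and $\beta_{b}=1$, then $\beta[a+S]=000$ and $\beta[b+S]=111$. So by (\ref{eq:rel}) the counts $0$ and $3$ lie in $\mathcal{C}_{x}(S)$ for \emph{every} $Q$; this is exactly where nonconstancy of $\beta$ is used.

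It remains to choose $Q$ so that an aperiodic residue $z=x^{(s)}$ realizes $1$ and $2$ on $Q$; then $x$ realizes them on $S$ at $Tu+s$ and $Tv+s$. Swapping letters if necessary (harmless, since it maps counts $c\mapsto 3-c$), assume $0^{\infty}\in\mathcal{O}(z)$. Since $z$ is aperiodic, it has infinitely many $1$'s.
\begin{itemize}
\item Pick $u<v$ with $z_{u}=z_{v}=1$.
\item By Lemma~\ref{lem:zero-blocks}(1), take $Y>v$ with $z_{Y}=1$ and $z_{Y-t}=0$ for $1\leq t\leq v-u$. Put $p:=Y-v$, so that $z_{u+p}=0$ and $z_{v+p}=1$.
\item By Lemma~\ref{lem:zero-blocks}(3), take $r>p$ with $z_{u+r}=z_{v+r}=0$.
\end{itemize}
For $Q=\{0,p,r\}$ the counts of $z$ at $u$ and $v$ are $1$ and $2$.

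This is the paper's argument. The key move is aligning $v+p$ with the $1$ that ends a long zero block, so that $u+p$ falls inside the block. This separates the two $1$'s, which a rigid arithmetic progression cannot do.
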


\begin{proof}
Let $\beta \in \mathcal{O}(x)$ be a nonconstant periodic point, and let $T$
be a period of $\beta $. By Lemma \ref{lem:alignment}, there exists a $T$%
-periodic word $\gamma \in \{0,1\}^{\mathbb{Z}}$ such that $\gamma
_{s}^{\infty }\in \mathcal{O}(x^{(s)})$ for every $0\leq s<T$. At least one
of the words $x^{(s)}$ is aperiodic. Fix such an $s$ and put $z=x^{(s)}$.
After interchanging the two letters if necessary, we may assume that $%
0^{\infty }\in \mathcal{O}(z).$ Since $z$ is aperiodic, both letters occur
infinitely often in $z$.

Choose $u<v$ such that $z_{u}=z_{v}=1.$ By Lemma \ref{lem:zero-blocks}(1),
there exists $Y>v$ such that $z_{Y}=1$ and $z_{Y-t}=0$ for $(1\leq t\leq
v-u).$ Set $p=Y-v.$ Then one has $z_{u+p}=0$ and $z_{v+p}=1.$ By Lemma \ref%
{lem:zero-blocks}(3), take $r>p$ such that $z_{u+r}=z_{v+r}=0.$ Hence, for $%
Q:=\{0,p,r\},$ the numbers of $1$'s in $z[u+Q]$ and $z[v+Q]$ are
respectively $1$ and $2$.

Now set $S:=TQ=\{0,Tp,Tr\}.$ Since $z_{n}=x_{Tn+s}$, the numbers of $1$'s in 
$x[(Tu+s)+S]$ and $x[(Tv+s)+S]$ are $1$ and $2$, respectively.

Since $\beta $ is nonconstant and $T$-periodic, there are indices $a,b$ such
that $\beta _{a}=0$ and $\beta _{b}=1.$ As $\beta \in \mathcal{O}(x)$, these
finite patterns also occur in $x$. Thus $\sharp \mathcal{C}_{x}(S)=4$. By (%
\ref{eq:binary-abelian-count})--(\ref{eq:binary-upper-bound}) one has $%
p_{x}^{\ast \mathrm{ab}}(3)=4.$
\end{proof}

\begin{proposition}
\label{prop:background} Let $x$ be binary and aperiodic. If $p_{x}^{\ast 
\mathrm{ab}}(3)\leq 3$, then, after exchanging the letters if necessary, $%
0^{\infty }$ is the unique periodic point of $\mathcal{O}(x)$.
\end{proposition}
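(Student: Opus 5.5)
Since $x$ is binary and aperiodic, we have $p_x^{\ast \mathrm{ab}}(3)\ge f(3)=3$ by Proposition~\ref{prop:binarylower}. The hypothesis $p_x^{\ast\mathrm{ab}}(3)\le 3$ therefore means $p_x^{\ast\mathrm{ab}}(3)\neq 4$. The plan is to establish three facts in order: $\mathcal{O}(x)$ contains a periodic point, every such point is constant, and not both constant words $0^\infty$, $1^\infty$ lie in $\mathcal{O}(x)$.

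\emph{Existence.} If $\mathcal{O}(x)$ contained no periodic point, Lemma~\ref{lem:noperiodic} would give $p_x^{\ast\mathrm{ab}}(3)\ge 4$, a contradiction. So $\mathcal{O}(x)$ has a periodic point.

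\emph{Constancy.} By Lemma~\ref{lem:nonconstant-periodic}, a nonconstant periodic point would force $p_x^{\ast\mathrm{ab}}(3)=4$. Hence every periodic point of $\mathcal{O}(x)$ is $0^\infty$ or $1^\infty$. After exchanging letters we may assume $0^\infty\in\mathcal{O}(x)$.

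\emph{Exclusion of $1^\infty$.} This is the main step. Suppose both $0^\infty$ and $1^\infty$ lie in $\mathcal{O}(x)$; I will build a $3$-pattern $S$ with $\mathcal{C}_x(S)=\{0,1,2,3\}$. The values $0$ and $3$ come from long $0$-blocks and long $1$-blocks, each covering $n+S$ for suitable $n$ (cf.\ Lemma~\ref{lem:zero-blocks}(3) and its symmetric version). By Lemma~\ref{lem:zero-blocks}(2), applied to $x$ and to its complement (both letters occur infinitely often because $x$ is aperiodic), every finite $S$ attains count $1$ (a long $0$-run followed by a $1$) and count $2$ (a long $1$-run followed by a $0$). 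Concretely, take $S=\{0,1,L\}$ with $L$ large and place $n+L$ at the first $1$ after a $0$-run of length $>L$; this gives $x[n+S]=001$ and count $1$. Complementing, take $n'+L$ at the first $0$ after a $1$-run; this gives $110$ and count $2$. All four counts occur, so $p_x^{\ast\mathrm{ab}}(3)=4$, a contradiction. The one detail to check is that the runs really precede a letter change: this holds because both letters occur infinitely often, exactly as in the proof of Lemma~\ref{lem:zero-blocks}(1), and the same construction works for any $S$.

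Combining the three facts, $0^\infty$ is the unique periodic point of $\mathcal{O}(x)$, after exchanging letters if necessary.
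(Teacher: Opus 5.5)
Your proof is correct and follows essentially the same route as the paper: you show a periodic point exists (your appeal to Lemma~\ref{lem:noperiodic} packages the paper's minimal-subsystem argument), that it is constant by Lemma~\ref{lem:nonconstant-periodic}, and you exclude $1^{\infty}$ by exhibiting all four counts on a $3$-pattern. The one difference is that the paper gets the counts $1$ and $2$ on $\{0,1,2\}$ from the discrete intermediate-value property $|c(n+1)-c(n)|\leq 1$, whereas you take them from run boundaries (equivalently, Lemma~\ref{lem:zero-blocks}(2) applied to $x$ and to its complement); your opening appeal to Proposition~\ref{prop:binarylower} is harmless but unnecessary.
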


\begin{proof}
By Lemma \ref{lem:minimal}, $\mathcal{O}(x)$ contains a minimal subsystem $M$%
. If $M$ is infinite, then any $y\in M$ is recurrent and aperiodic. Hence by
Theorem \ref{thm:KWZ2015} and Lemma \ref{lem:transfer}, one has $%
4=p_{y}^{\ast \mathrm{ab}}(3)\leq p_{x}^{\ast \mathrm{ab}}(3)$, which
contradicts the hypothesis. Thus $M$ is finite, and hence a periodic orbit.
By Lemma \ref{lem:nonconstant-periodic}, this orbit must be constant.
Interchanging the two letters if necessary, we may assume that $0^{\infty
}\in \mathcal{O}(x)$.

Suppose that $1^{\infty }\in \mathcal{O}(x)$ as well. Then both $000$ and $%
111$ occur in $x$. Let $c(n):=x_{n}+x_{n+1}+x_{n+2}.$ Since $%
c(n+1)-c(n)=x_{n+3}-x_{n}$, we have $|c(n+1)-c(n)|\leq 1.$ As $c$ attains
both $0$ and $3$, it also attains $1$ and $2$. Therefore one has $%
p_{x}^{\ast \mathrm{ab}}(3)=4,$ again a contradiction. Therefore $1^{\infty
}\notin \mathcal{O}(x)$. By Lemma \ref{lem:nonconstant-periodic}, $\mathcal{O%
}(x)$ contains no nonconstant periodic point. Hence $0^{\infty }$ is its
unique periodic point.
\end{proof}

\begin{theorem}
\label{thm:cycle} Let $x=\mathbf{1}_{D}$, where $D\subset \mathbb{N}_{0}$ is
infinite and $0^{\infty }\in \mathcal{O}(x).$ Suppose that $\Gamma _{D}$
contains a cycle of length $2r$, where $r\geq 2$. Then 
\begin{equation*}
p_{x}^{\ast \mathrm{ab}}\left( \binom{r+1}{2}\right) \geq r+2>f\left( \binom{%
r+1}{2}\right) =r+1.
\end{equation*}
\end{theorem}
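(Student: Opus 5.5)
Write the cycle as alternating left and right vertices
\[
(u_1,\mathrm{L}),(s_1,\mathrm{R}),(u_2,\mathrm{L}),(s_2,\mathrm{R}),\ldots ,(u_r,\mathrm{L}),(s_r,\mathrm{R}),(u_1,\mathrm{L}).
\]
The $u_i$ are pairwise distinct and the $s_i$ are pairwise distinct. The plan is to use the left vertices as starting positions and the right vertices as the initial pattern. Set $S_0:=\{s_1,\ldots ,s_r\}$ and reorder the $u_i$ increasingly. By (\ref{eq:graph}), each left vertex $u_i$ is adjacent in the cycle to two distinct right vertices of $S_0$. Hence $u_i+s\in D$ for at least two values $s\in S_0$, that is, $c_{S_0}(u_i)\geq 2$ for every $i$. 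So the cycle provides $r$ starting positions whose counts are all at least $2$, using only $r$ pattern coordinates.

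Next I apply Lemma \ref{lem:separation} with $b=2$ to the $r$ positions $u_1<\cdots <u_r$ and to $S_0$. Its hypotheses hold: $D$ is infinite, so $x$ has infinitely many $1$'s, and $0^\infty\in\mathcal{O}(x)$ is assumed. The lemma gives $S_1\supset S_0$ such that the counts $c_{S_1}(u_i)$ are pairwise distinct and all at least $2$, and
\[
\sharp S_1\leq r+\binom{r}{2}=\binom{r+1}{2}.
\]
By the last assertion of that lemma, $S_1$ extends to a $k$-pattern $S$ with $k=\binom{r+1}{2}$ without changing these $r$ counts.

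Then I apply Lemma \ref{lem:zero-blocks}(2) to this final pattern $S$. It gives positions $n_0,n_1$ with $c_S(n_0)=0$ and $c_S(n_1)=1$. These two values differ from each other and from the $r$ distinct counts, which are all at least $2$. Therefore $\sharp\mathcal{C}_x(S)\geq r+2$, and by (\ref{eq:binary-abelian-count}) this gives $p_x^{\ast\mathrm{ab}}(\binom{r+1}{2})\geq r+2$.

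Finally, $f(\binom{r+1}{2})=r+1$ follows directly from (\ref{eq:threshold-integers}), since $\binom{r}{2}<\binom{r+1}{2}\leq\binom{r+1}{2}$. The main point to get right is the first step: it must use genuinely distinct left vertices, and each left vertex must be adjacent to two distinct right vertices within $S_0$. Both facts come from the cycle being simple and of length $2r\geq 4$. This initial bonus of $+1$ in each count (the value $b=2$ rather than $1$) is exactly what frees up the value $1$ for the extra class.
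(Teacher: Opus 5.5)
Your proposal is correct and follows the paper's proof essentially step for step. You use the left vertices of the cycle as starting positions with counts at least $2$ on the right-vertex set $S_0$, apply Lemma~\ref{lem:separation} with $b=2$ to reach a pattern of size $\binom{r+1}{2}$, and then invoke Lemma~\ref{lem:zero-blocks}(2) to add the counts $0$ and $1$; your explicit check of $f\bigl(\binom{r+1}{2}\bigr)=r+1$ via (\ref{eq:threshold-integers}) supplies a step the paper leaves implicit.
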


\begin{proof}
Let $R$ and $S_{0}$ be the sets of left and right vertices, respectively, of
such a cycle. Then $\sharp R=\sharp S_{0}=r.$ Each vertex of $R$ is incident
with two edges of the cycle. Hence 
\begin{equation*}
c_{S_{0}}(u)=\sharp \left( (u+S_{0})\cap D\right) \geq 2\text{\ }(u\in R).
\end{equation*}

Applying Lemma \ref{lem:separation} to $R$ with $b=2$, we obtain a pattern $%
S_{1}\supset S_{0}$ such that the $r$ counts $c_{S_{1}}(u)$\ $(u\in R)$ are
pairwise distinct and at least $2$, and 
\begin{equation*}
\sharp S_{1}\leq \sharp S_{0}+\binom{r}{2}=r+\binom{r}{2}=\binom{r+1}{2}.
\end{equation*}%
By the last assertion of Lemma \ref{lem:separation}, $S_{1}$ can be extended
to a pattern $S$ of cardinality $\sharp S=\binom{r+1}{2}$ without changing
these $r$ counts.

By Lemma \ref{lem:zero-blocks}(2), the values $0$ and $1$ are also attained
by $c_{S}$. Therefore one has 
\begin{equation*}
p_{x}^{\ast \mathrm{ab}}\left( \binom{r+1}{2}\right) \geq p_{x}^{\mathrm{ab}%
}(S)=\sharp \mathcal{C}_{x}(S)\geq r+2>f\left( \binom{r+1}{2}\right) =r+1,
\end{equation*}%
which proves the result.
\end{proof}

A set $D\subset \mathbb{N}_{0}$\ is called a \emph{Sidon set} if all
positive differences $b-a$ with $a,b\in D$ and $a<b$ are distinct.

\begin{lemma}
\label{lem:sidon} If $\Gamma _{D}$ is a forest, then $D$ is a Sidon set.
Moreover, if $D=\{d_{0}<d_{1}<\cdots \}$ is an infinite Sidon set, then 
\begin{equation}
d_{n+1}-d_{n}\longrightarrow \infty .  \label{eq:gaps}
\end{equation}%
For every $M\in \mathbb{N}_{0}$ and $L\in \mathbb{N}$, if $h=\mathcal{\sharp 
}(D\cap \lbrack M,M+L)),$ then 
\begin{equation}
\binom{h}{2}\leq L-1.  \label{eq:density}
\end{equation}%
Consequently, $\mathbf{1}_{D}$ is aperiodic and $0^{\infty }\in \mathcal{O}(%
\mathbf{1}_{D}).$
\end{lemma}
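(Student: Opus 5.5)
The plan is to prove the four assertions in order, each resting on the previous one.

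\emph{Sidon property.} Suppose $\Gamma _{D}$ is a forest but $b-a=b^{\prime }-a^{\prime }=d>0$ with $a,b,a^{\prime },b^{\prime }\in D$ and $\{a,b\}\neq \{a^{\prime },b^{\prime }\}$. Then $a\neq a^{\prime }$. Assume $a<a^{\prime }$ and put $\delta :=a^{\prime }-a>0$. The left vertices $0,\delta $ and right vertices $a,b$ satisfy
\begin{equation*}
0+a=a\in D,\quad 0+b=b\in D,\quad \delta +a=a^{\prime }\in D,\quad \delta +b=b^{\prime }\in D.
\end{equation*}
Since $\delta >0$ and $d>0$, these are four distinct vertices forming a $4$-cycle in $\Gamma _{D}$, a contradiction.

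\emph{Density bound (\ref{eq:density}).} The $h$ elements of $D\cap \lbrack M,M+L)$ give $\binom{h}{2}$ positive differences. By the Sidon property they are pairwise distinct, and each lies in $\{1,\ldots ,L-1\}$. Hence $\binom{h}{2}\leq L-1$.

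\emph{Gaps (\ref{eq:gaps}).} Suppose $d_{n+1}-d_{n}=g$ for infinitely many $n$. Distinct such $n$ give distinct pairs $(d_{n},d_{n+1})$ with the same difference $g$, contradicting the Sidon property. So each value $g$ occurs as a consecutive gap only finitely often, and hence the gaps tend to infinity. Alternatively, (\ref{eq:density}) with $L$ fixed shows that windows of bounded length contain boundedly many points; but the Sidon argument is cleaner.

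\emph{Consequences.} Since $D$ is infinite, $1$ occurs infinitely often in $\mathbf{1}_{D}$. By (\ref{eq:gaps}), the zero runs between consecutive $1$'s are arbitrarily long. Shifting to the starting points of these runs and taking a limit point gives $0^{\infty }\in \mathcal{O}(\mathbf{1}_{D})$. For aperiodicity: a periodic tail containing infinitely many $1$'s has bounded gaps between consecutive $1$'s, contradicting (\ref{eq:gaps}). This is the same argument used for $E=\{3^{j}\}$.

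There is no real obstacle. The only care needed is checking that the four vertices in the first step are distinct; this follows from $\delta >0$ and $d>0$.
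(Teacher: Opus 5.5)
Your proof is correct and follows the paper's argument essentially step for step: the same $4$-cycle on left vertices $0,\delta$ and right vertices $a,b$, the same count of $\binom{h}{2}$ distinct differences lying in $\{1,\ldots,L-1\}$, the observation that each gap value can occur for at most one index, and the same zero-run and bounded-gap arguments for $0^{\infty}\in\mathcal{O}(\mathbf{1}_{D})$ and aperiodicity. The one thing to drop is your aside that (\ref{eq:density}) alone would also give (\ref{eq:gaps}); it does not (the set $\{10^{j},10^{j}+1:j\geq 0\}$ satisfies (\ref{eq:density}) for every window yet has infinitely many gaps equal to $1$), so the Sidon argument you actually use is the one that is needed.
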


\begin{proof}
Suppose that $D$ is not Sidon. Then there exist $t>0$ and distinct $a,b\in D$
such that $a+t,\ b+t\in D.$ The left vertices $0,t$ and the right vertices $%
a,b$ then form a $4$-cycle in $\Gamma _{D}$, since the corresponding edge
sums are $a,\ a+t,\ b,\ b+t\in D.$ This contradicts the assumption that $%
\Gamma _{D}$ is a forest.

Now suppose that $D=\{d_{0}<d_{1}<\cdots \}$ is infinite. Fix $L\in \mathbb{N%
}$. If $d_{n+1}-d_{n}\leq L,$ then $d_{n+1}-d_{n}\in \{1,\ldots ,L\}$. Since 
$D$ is Sidon, each of these positive differences can occur for at most one
index $n$. Thus there are at most $L$ indices for which $d_{n+1}-d_{n}\leq
L. $ Since $L$ is arbitrary, this proves\ (\ref{eq:gaps}).

Let $M\in \mathbb{N}_{0}$ and $L\in \mathbb{N}$, and write 
\begin{equation*}
D\cap \lbrack M,M+L)=\{e_{1}<\cdots <e_{h}\}.
\end{equation*}%
The $\binom{h}{2}$ positive differences $e_{j}-e_{i}$ $(1\leq i<j\leq h)$
are pairwise distinct by the Sidon property. Since $1\leq e_{j}-e_{i}\leq
L-1,$ they all belong to $\{1,\ldots ,L-1\}$. Hence $\binom{h}{2}\leq L-1,$
which proves\ (\ref{eq:density}).

Finally,\ (\ref{eq:gaps}) implies that the intervals $[d_{n}+1,d_{n+1})$ are
zero blocks of $\mathbf{1}_{D}$ whose lengths tend to infinity, and hence $%
0^{\infty }\in \mathcal{O}(\mathbf{1}_{D}).$ Since $D$ is infinite, $\mathbf{%
1}_{D}$ contains infinitely many $1$'s. An eventually periodic binary word
with infinitely many $1$'s has bounded gaps between successive occurrences
of $1$, which contradicts (\ref{eq:gaps}). Thus $\mathbf{1}_{D}$ is
aperiodic.
\end{proof}

\begin{proof}[Proof of Theorem~\protect\ref{thm:main-APS}]
Suppose first that $x$ is Abelian pattern Sturmian. By Lemma\ \ref%
{lem:binarySturmian}, $x$ is binary and aperiodic. Since $p_{x}^{\ast 
\mathrm{ab}}(3)=f(3)=3,$ Proposition \ref{prop:background} allows us, after
interchanging the two letters if necessary, to assume that $0^{\infty }\in 
\mathcal{O}(x).$ Write $x=\mathbf{1}_{D}.$ Since both letters occur
infinitely often in $x$, the set $D$ is infinite.

Suppose that $\Gamma _{D}$ contains a cycle of length $2r$, with $r\geq 2$.
Then Theorem\ \ref{thm:cycle} gives 
\begin{equation*}
p_{x}^{\ast \mathrm{ab}}\left( \binom{r+1}{2}\right) \geq r+2>r+1=f\left( 
\binom{r+1}{2}\right) ,
\end{equation*}%
which contradicts the defining equality $p_{x}^{\ast \mathrm{ab}}(k)=f(k)$
for all $k\in \mathbb{N}.$ Hence $\Gamma _{D}$ is a forest.

Conversely, let $D\subset \mathbb{N}_{0}$ be infinite and suppose that $%
\Gamma _{D}$ is a forest. By Lemma\ \ref{lem:sidon}, one has $0^{\infty }\in 
\mathcal{O}(\mathbf{1}_{D}),$ and $\mathbf{1}_{D}$ is aperiodic. Since $D$
is infinite, $1$ occurs infinitely often in $\mathbf{1}_{D}$. Therefore
Proposition \ref{prop:zerolower} gives $p_{\mathbf{1}_{D}}^{\ast \mathrm{ab}%
}(k)\geq f(k)$\ for all $k\in \mathbb{N}.$ On the other hand, Proposition %
\ref{prop:forest-upper} gives the upper bound $p_{\mathbf{1}_{D}}^{\ast 
\mathrm{ab}}(k)\leq f(k)$\ for all $k\in \mathbb{N}.$ Thus $p_{\mathbf{1}%
_{D}}^{\ast \mathrm{ab}}(k)=f(k)$\ for all $k\in \mathbb{N}.$
\end{proof}

\begin{corollary}
\label{cor:ordinary} Every Abelian pattern Sturmian word is a pattern
Sturmian word.
\end{corollary}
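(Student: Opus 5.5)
By Theorem~\ref{thm:main-APS}, an Abelian pattern Sturmian word $x$ can be written, after relabeling its two letters, as $x=\mathbf{1}_{D}$, where $D$ is infinite and $\Gamma_{D}$ is a forest. Lemma~\ref{lem:sidon} then shows that $D$ is a Sidon set, that $x$ is aperiodic, and that $0^{\infty}\in\mathcal{O}(x)$. It follows from (\ref{KZ2002}) that $p_{x}^{\ast}(k)\geq 2k$ for every $k$. What remains is the upper bound $p_{x}(S)\leq 2k$ for every $k$-pattern $S$. Here we must count the actual words $x[n+S]$, and not merely their Abelian classes.

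Fix $S=\{s_{1}<\cdots<s_{k}\}$ and sort the $S$-factors by the number of $1$'s they contain.
\begin{itemize}
\item Weight $0$: the only word is $0^{k}$.
\item Weight $1$: such a word is $0\cdots010\cdots0$, so there are at most $k$ of them, one for each position of the $1$.
\item Weight at least $2$: suppose $x[n+S]$ has $1$'s in the positions $s_{i}<s_{j}$. Then $n+s_{i},n+s_{j}\in D$, so $s_{j}-s_{i}$ is a difference of two elements of $D$. By the Sidon property this difference determines the pair $(n+s_{i},n+s_{j})$, and therefore determines $n$.
\end{itemize}
Consequently every pair $i<j$ is realized by at most one starting point $n$, and each word of weight at least $2$ occurs at only finitely many $n$.

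The key step is to bound the number of distinct words of weight at least $2$ by $k-1$. For this we use the forest structure directly rather than the Sidon property alone. Let $R$ be the set of starting points $n$ that produce these words, choosing one representative $n$ for each distinct word. In the bipartite graph $\Gamma_{D}[R,S]$, each $n\in R$ has degree at least $2$. Since $\Gamma_{D}$ is a forest, this graph is a forest on $\sharp R+k$ vertices, so it has at most $\sharp R+k-1$ edges. On the other hand it has at least $2\sharp R$ edges. Combining these gives $\sharp R\leq k-1$.

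Adding the three cases, $p_{x}(S)\leq 1+k+(k-1)=2k$. Together with the lower bound, $p_{x}^{\ast}(k)=2k$ for every $k$, so $x$ is pattern Sturmian.

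The step most likely to need care is the weight-$\geq 2$ count. The issue is that we must select one $n$ per distinct word; distinct words automatically correspond to distinct $n$, so the representatives are distinct vertices. The edge count must also be taken in the induced subgraph. Both points are direct, and the argument mirrors Proposition~\ref{prop:forest-upper}.
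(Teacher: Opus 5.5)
Your proof is correct and is essentially the paper's argument. The paper also bounds the number of distinct $S$-factors containing at least two $1$'s by $k-1$, by counting edges in the forest $\Gamma_{D}[R,S]$, and then adds at most $k$ factors with exactly one $1$ and the all-zero factor; the only differences are that it puts the weight-one representatives into $R$ as well, which changes nothing, and your Sidon remark in the third bullet is correct but unused in the final count.
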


\begin{proof}
Let $x$ be an Abelian pattern Sturmian word. By Theorem \ref{thm:main-APS},
we may assume that $x=\mathbf{1}_{D},$ where $\Gamma _{D}$ is a forest.

Fix a $k$-pattern $S$. For $n\in \mathbb{N}_{0}$, define $N_{S}(n):=\{s\in
S:n+s\in D\}.$ Then $x[n+S]$ is uniquely determined by $N_{S}(n)$, and hence 
$p_{x}(S)=\sharp \{N_{S}(n):n\in \mathbb{N}_{0}\}.$

Let $R\subset \mathbb{N}_{0}$ be a subset such that the map $n\longmapsto
N_{S}(n)$ restricts to a bijection from $R$ onto $\{N_{S}(n):n\in \mathbb{N}%
_{0},\ N_{S}(n)\neq \varnothing \}.$ Let $a$ be the number of indices $n\in
R $ for which $\sharp N_{S}(n)=1$, and let $b$ be the number of those for
which $\sharp N_{S}(n)\geq 2$. Since the sets $N_{S}(n)$ of cardinality $1$
are distinct subsets of $S$, one has $a\leq k.$ Moreover, the degree of a
left vertex $n\in R$ in $\Gamma _{D}[R,S]$ is $\mathcal{\sharp }N_{S}(n)$.
Therefore $\sharp E(\Gamma _{D}[R,S])\geq a+2b.$ Since $\Gamma _{D}[R,S]$ is
a forest on $\sharp R+\sharp S=a+b+k$ vertices, it follows that $\sharp
E(\Gamma _{D}[R,S])\leq a+b+k-1.$ Consequently, we have $b\leq k-1.$

There is at most one further pattern, corresponding to the empty set $%
N_{S}(n)=\varnothing $. Thus $p_{x}(S)\leq a+b+1\leq 2k.$ Since $S$ was
arbitrary, one has $p_{x}^{\ast }(k)\leq 2k.$ On the other hand, $x$ is
aperiodic, and hence (\ref{KZ2002}) gives $p_{x}^{\ast }(k)\geq 2k$ for
every $k\in \mathbb{N}$. Consequently, $p_{x}^{\ast }(k)=2k$ for all $k\in 
\mathbb{N},$ and therefore $x$ is a pattern Sturmian word.
\end{proof}

\begin{corollary}
No Abelian pattern Sturmian word is recurrent.
\end{corollary}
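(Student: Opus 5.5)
The plan is to argue by contradiction, combining the earlier results in two different ways. Let $x$ be an Abelian pattern Sturmian word and suppose that $x$ is recurrent. By Lemma \ref{lem:binarySturmian}, $x$ is binary and aperiodic. I see two routes, and I will use the first since it is immediate.

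First route: Corollary \ref{cor:binary} (the recurrent binary case of Theorem \ref{thm:KWZ2015}) gives $p_{x}^{\ast \mathrm{ab}}(k)=k+1$ for every $k$. On the other hand, the defining property of Abelian pattern Sturmian words gives $p_{x}^{\ast \mathrm{ab}}(k)=f(k)$. At $k=3$ one has $f(3)=3$, since $\binom{3}{2}=3\geq 3$ and $\binom{2}{2}=1<3$. Hence $3=p_{x}^{\ast \mathrm{ab}}(3)=4$, which is a contradiction.

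Second route, as a backup that avoids the citation: by Theorem \ref{thm:main-APS}, after relabeling we may write $x=\mathbf{1}_{D}$ with $\Gamma_{D}$ a forest. Lemma \ref{lem:sidon} then shows that $D$ is an infinite Sidon set. Consequently $0^{\infty}\in \mathcal{O}(x)$, and the gaps $d_{n+1}-d_{n}$ tend to infinity. Recurrence fails at once: the factor $x_{d_{0}}\cdots x_{d_{1}}=10^{d_{1}-d_{0}-1}1$ would have to reappear infinitely often. That would produce infinitely many indices $n$ with $d_{n+1}-d_{n}=d_{1}-d_{0}$, contradicting (\ref{eq:gaps}).

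There is no real obstacle here. The only points to check are the numerical value $f(3)=3$ in the first route, or, in the second route, that the repeated occurrence of that specific factor forces a repeated gap value.
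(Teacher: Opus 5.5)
Your first route is exactly the paper's proof: Lemma \ref{lem:binarySturmian} makes $x$ binary and aperiodic, and then the recurrent binary case of Theorem \ref{thm:KWZ2015} (Corollary \ref{cor:binary}) forces $p_{x}^{\ast \mathrm{ab}}(3)=4$, contradicting $f(3)=3$. Your second route is also valid, but it does not actually avoid that citation: the forward direction of Theorem \ref{thm:main-APS} goes through Proposition \ref{prop:background}, which itself invokes Theorem \ref{thm:KWZ2015}.
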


\begin{proof}
If an Abelian pattern Sturmian word $x$ were recurrent, then by Lemma \ref%
{lem:binarySturmian} it would be binary and aperiodic. Theorem \ref%
{thm:KWZ2015} would then give $p_{x}^{\ast \mathrm{ab}}(3)=4.$ This
contradicts $p_{x}^{\ast \mathrm{ab}}(3)=f(3)=3.$
\end{proof}

\subsection{Orbit closures}

\ 

For a compact set $K$, its \emph{derived set} $K^{\prime }$ is the set of
its nonisolated points. Write $K^{\prime \prime }=(K^{\prime })^{\prime }$
and $K^{\prime \prime \prime }=(K^{\prime \prime })^{\prime }$. The \emph{$%
\omega $-limit set} of a word $x$ is 
\begin{equation*}
\omega (x)=\bigcap_{N\geq 0}\overline{\{\mathsf{T}^{n}x:n\geq N\}}.
\end{equation*}%
An invariant Borel probability measure on $\mathcal{O}(x)$ is a probability
measure on the sigma-algebra generated by its open sets such that $\mu (%
\mathsf{T}^{-1}B)=\mu (B)$ for every Borel set $B$. The Dirac measure at $z$%
, assigning mass one to $\{z\}$, is denoted by $\delta _{z}$.

\begin{theorem}
\label{thm:dynamics} Let $D$ be an infinite Sidon set and set$\ x:=\mathbf{1}%
_{D}.$ For $j\geq 0$, put $z_{j}:=0^{j}10^{\infty }$, $z_{\infty
}:=0^{\infty },$ and let $Y:=\{z_{\infty }\}\cup \{z_{j}:j\geq 0\}.$ Then

\begin{enumerate}
\item $\mathcal{O}(x)=\{\mathsf{T}^{n}x:n\geq 0\}\cup Y$ and $\omega (x)=Y$;

\item the points $\mathsf{T}^{n}x$ are isolated in $\mathcal{O}(x)$, and $%
\mathsf{T}(\mathcal{O}(x))=\mathcal{O}(x)\setminus \{x\}$;

\item $\left( \mathcal{O}(x)\right) ^{\prime }=Y$, $\left( \mathcal{O}%
(x)\right) ^{\prime \prime }=\{z_{\infty }\}$ and $\left( \mathcal{O}%
(x)\right) ^{\prime \prime \prime }=\varnothing $;

\item The point $z_{\infty }$ is the unique recurrent point of $\mathcal{O}%
(x)$. Consequently, $\{z_{\infty }\}$ is the unique minimal subsystem, and $%
\delta _{z_{\infty }}$ is the unique invariant Borel probability measure on $%
\mathcal{O}(x)$.
\end{enumerate}
\end{theorem}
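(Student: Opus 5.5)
The whole argument rests on one finiteness property of Sidon sets, which I would establish first. Write $D=\{d_0<d_1<\cdots\}$. By (\ref{eq:gaps}) in Lemma \ref{lem:sidon}, $d_{n+1}-d_n\to\infty$. More precisely, for every fixed $L$ only finitely many windows $[n,n+L)$ contain two or more elements of $D$: two such elements would give a difference in $\{1,\dots,L-1\}$, and by the Sidon property each difference is realized by only one pair.

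\textbf{Part (1): orbit closure and $\omega$-limit set.} Let $y\in\mathcal{O}(x)$ and suppose $y$ is not in the orbit of $x$. Then $y=\lim \mathsf{T}^{n_i}x$ with $n_i\to\infty$. Fix $L$. By the finiteness property, for large $i$ the block $x_{n_i}\cdots x_{n_i+L-1}$ contains at most one $1$, so $y_0\cdots y_{L-1}$ contains at most one $1$. Since $L$ is arbitrary, $y$ has at most one $1$, so $y\in Y$.

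Conversely, each element of $Y$ is a limit of shifts with $n\to\infty$:
\begin{itemize}
\item $z_\infty=\lim \mathsf{T}^{d_n+1}x$, because the gaps tend to infinity;
\item $z_j=\lim\mathsf{T}^{d_n-j}x$, because $d_n-d_{n-1}\to\infty$ makes the $j$ preceding symbols zero and $d_{n+1}-d_n\to\infty$ makes the following symbols zero.
\end{itemize}
This gives $\omega(x)\supset Y$. The inclusion just proved, applied to $n_i\to\infty$, gives $\omega(x)\subset Y$.

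\textbf{Part (2): isolated points.} Suppose $\mathsf{T}^{n}x$ were a limit of other points of $\mathcal{O}(x)$. Since $D$ is infinite, $\mathsf{T}^nx$ has infinitely many $1$'s. Choose a prefix of $\mathsf{T}^nx$ containing two $1$'s. Points of $Y$ have at most one $1$, so they do not share this prefix. The only other candidates are points $\mathsf{T}^{m}x$. Only finitely many $m$ have $x_{m}\cdots x_{m+L-1}$ equal to this fixed word: $x$ is aperiodic, and equal windows with two $1$'s at a prescribed difference occur only once, by the Sidon property. Hence the prefix cylinder isolates $\mathsf{T}^nx$.

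For $\mathsf{T}(\mathcal{O}(x))$: we have $\mathsf{T}Y=Y$, using $\mathsf{T}z_0=z_\infty$, $\mathsf{T}z_{j+1}=z_j$ and $\mathsf{T}z_\infty=z_\infty$. So the image is $\{\mathsf{T}^nx:n\ge1\}\cup Y$. Finally $x\ne\mathsf{T}^nx$ for $n\ge1$, since otherwise $x$ would be periodic, and $x\notin Y$, since $D$ is infinite.

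\textbf{Part (3): derived sets.} Points of $Y$ are nonisolated because of the limits in (1), and the points of the orbit are isolated by (2). Hence $\mathcal{O}(x)'=Y$. Inside $Y$, each $z_j$ is isolated, witnessed by the cylinder $0^j1$, while $z_j\to z_\infty$. Hence $\mathcal{O}(x)''=\{z_\infty\}$, and $\mathcal{O}(x)'''=\varnothing$.

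\textbf{Part (4): recurrence, minimality and invariant measures.}
\begin{itemize}
\item \emph{Recurrent points.} The points $z_j$ are not recurrent, since their $1$ never returns. The points $\mathsf{T}^nx$ are not recurrent, since their prefix with two $1$'s recurs only finitely often by the Sidon argument above. The point $z_\infty$ is clearly recurrent, so it is the only recurrent point.
\item \emph{Minimal subsystems.} Every point of a minimal subsystem is recurrent by Lemma \ref{lem:minimal}. So any minimal subsystem lies in $\{z_\infty\}$, and $\{z_\infty\}$ is the unique one.
\item \emph{Invariant measures.} I would use Poincaré recurrence: the support of an invariant measure is contained in the closure of its recurrent points, which is $\{z_\infty\}$. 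A direct argument also works. Isolated points carry zero mass, since $\mu(\{\mathsf{T}^nx\})$ would equal the mass of infinitely many disjoint sets $\mathsf{T}^{-k}$-preimages whose measures are equal and sum to at most $1$. The same holds for the $z_j$, since $\mu(\{z_j\})=\mu(\mathsf{T}^{-1}\{z_{j-1}\})\ge\mu(\{z_{j-1}\})$ and the total mass is finite.
\end{itemize}

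The main obstacle is verifying these isolation and non-recurrence claims carefully, in particular that a cylinder containing two $1$'s at distance $d$ meets the orbit only finitely often; all of these rest on the uniqueness of differences in a Sidon set.
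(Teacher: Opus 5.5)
Your proof is correct and follows the paper's argument in essentially every part: Sidon uniqueness of differences forces every limit $\lim \mathsf{T}^{n_i}x$ with $n_i\to\infty$ to contain at most one $1$, and the shifts $\mathsf{T}^{d_n-j}x$ and $\mathsf{T}^{d_n+1}x$ realize $Y$. As in the paper, a prefix containing two $1$'s isolates each $\mathsf{T}^nx$, and the derived sets and recurrent points are computed the same way.

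The only real deviation is for invariant measures, where you appeal to Poincar\'e recurrence (or a wandering-set argument). The paper instead uses the direct observation that every point other than $z_\infty$ has a unique preimage in $\mathcal{O}(x)$, while $x$ has none. That fact is also exactly what your equality $\mu(\{z_j\})=\mu(\mathsf{T}^{-1}\{z_{j-1}\})$ tacitly relies on, so you should state it explicitly.
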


\begin{proof}
By Lemma \ref{lem:sidon}, writing $D=\{d_{0}<d_{1}<\cdots \},$ one has 
\begin{equation}
d_{n+1}-d_{n}\longrightarrow \infty .  \label{eq:dynamics-gaps}
\end{equation}

We first determine the $\omega $-limit set. Let $n_{i}\rightarrow \infty $
and suppose that $\mathsf{T}^{n_{i}}x\longrightarrow y.$ If $y_{a}=y_{b}=1$
for some $a<b$, then $n_{i}+a,n_{i}+b\in D$ for all sufficiently large $i$.
After passing to a strictly increasing subsequence of $(n_{i})$, these give
distinct representations $b-a=(n_{i}+b)-(n_{i}+a)$ of the same positive
difference, which contradicts the Sidon property. Hence $y$ contains at most
one occurrence of $1$, and therefore $\omega (x)\subset Y.$

Conversely,\ (\ref{eq:dynamics-gaps}) gives arbitrarily long zero blocks
arbitrarily far to the right, and hence $z_{\infty }\in \omega (x).$ For
every fixed $j\geq 0$, we have $\mathsf{T}^{d_{n}-j}x\longrightarrow z_{j}.$
Indeed, for all sufficiently large $n$, one has $d_{n}-d_{n-1}>j$ and $%
d_{n+1}-d_{n}\rightarrow \infty .$ Thus $Y\subset \omega (x),$ and
consequently $\omega (x)=Y.$

Now let $y\in \mathcal{O}(x)$. Then there is a sequence $(n_{i})$ such that $%
\mathsf{T}^{n_{i}}x\rightarrow y$. If $(n_{i})$ is bounded, it has a
constant subsequence and hence $y=\mathsf{T}^{n}x$ for some $n\geq 0$. If $%
(n_{i})$ is unbounded, it has a subsequence tending to infinity, and hence $%
y\in \omega (x)=Y$. Therefore one has $\mathcal{O}(x)=\{\mathsf{T}%
^{n}x:n\geq 0\}\cup Y,$ which proves\ (1).

Fix $n\geq 0$. Since $D$ is infinite, $\mathsf{T}^{n}x$ contains infinitely
many occurrences of $1$. Choose a finite prefix of $\mathsf{T}^{n}x$
containing two occurrences of $1$, say at positions $a<b$. The Sidon
property implies that this prefix cannot occur in $x$ at any starting
position $m\neq n$. On the other hand, no point of $Y$ contains two
occurrences of $1$. Hence the cylinder determined by this prefix intersects $%
\mathcal{O}(x)$ only in $\mathsf{T}^{n}x$. Thus every $\mathsf{T}^{n}x$ is
isolated.

The points $\mathsf{T}^{n}x$ are pairwise distinct, since equality of two
distinct shifts would make $x$ eventually periodic. Moreover, one has $%
\mathsf{T(T}^{n}x\mathsf{)=T}^{n+1}x,$ and $\mathsf{T}z_{j}=z_{j-1}$\ $%
(j\geq 1),$ $\mathsf{T}z_{0}=\mathsf{T}z_{\infty }=z_{\infty }.$ It follows
that $\mathsf{T}(\mathcal{O}(x))=\mathcal{O}(x)\setminus \{x\},$ which
proves\ (2).

By\ (1), every point of $Y$ is a limit of distinct orbit points, while every 
$\mathsf{T}^{n}x$ is isolated. Hence $\left( \mathcal{O}(x)\right) ^{\prime
}=Y.$ For every $j\geq 0$, the prefix $0^{j}1$ isolates $z_{j}$ in $Y$,
while $z_{j}\longrightarrow z_{\infty }.$ Therefore we have $Y^{\prime
}=\{z_{\infty }\}$ and $\{z_{\infty }\}^{\prime }=\varnothing ,$ which
proves\ (3).

Finally, $\omega (\mathsf{T}^{n}x)=Y$ for every $n\geq 0$, and $\mathsf{T}%
^{n}x\notin Y$, which implies that no orbit point $\mathsf{T}^{n}x$ is
recurrent. For $j\geq 0,$ $\mathsf{T}^{j+1}z_{j}=z_{\infty },$ so $\omega
(z_{j})=\{z_{\infty }\}$ and $z_{j}$ is not recurrent. Since $z_{\infty }$
is fixed, it is the unique recurrent point. By Lemma \ref{lem:minimal}, $%
\{z_{\infty }\}$ is the unique minimal subsystem.

Let $\mu $ be an invariant Borel probability measure on $\mathcal{O}(x)$.
The point $x$ has no preimage in $\mathcal{O}(x)$, while $\mathsf{T}^{n}x$
has the unique preimage $\mathsf{T}^{n-1}x$ for $n\geq 1$. Hence invariance
gives $\mu (\{x\})=0$ and $\mu (\{\mathsf{T}^{n}x\})=0$\ for all $n\geq 1.$
For every $j\geq 0$, the point $z_{j}$ has the unique preimage $z_{j+1}$,
and hence $\mu (\{z_{j}\})=\mu (\{z_{j+1}\}).$ All these masses are equal
and therefore must be zero. Since $\mathcal{O}(x)$ is countable, it follows
that $\mu (\{z_{\infty }\})=1.$ Thus $\mu =\delta _{z_{\infty }}.$
Conversely, $\delta _{z_{\infty }}$ is invariant since $z_{\infty }$ is
fixed. This proves (4).
\end{proof}

The set $Y$ is the one-sided analogue of the \emph{sunny-side-up subshift},
the two-sided binary subshift consisting of sequences with at most one
nonzero symbol; see \cite{Salo2020}.

\begin{corollary}
\label{cor-ASP}Let $x$ be an Abelian pattern Sturmian word. After a suitable
relabeling of its alphabet, the conclusions of Theorem \ref{thm:dynamics}
hold for $x$.
\end{corollary}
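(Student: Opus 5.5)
The plan is to reduce the corollary to Theorem \ref{thm:dynamics} by showing that an Abelian pattern Sturmian word becomes, after relabeling, the characteristic word of an infinite Sidon set.

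Let $x$ be Abelian pattern Sturmian. By Lemma \ref{lem:binarySturmian}, $x$ uses exactly two letters, each occurring infinitely often. By Theorem \ref{thm:main-APS}, after identifying these letters with $0$ and $1$ suitably, we have $x=\mathbf{1}_{D}$ for an infinite set $D\subset \mathbb{N}_{0}$ such that $\Gamma _{D}$ is a forest.

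Lemma \ref{lem:sidon} then shows that $D$ is a Sidon set. Since $D$ is an infinite Sidon set, the hypotheses of Theorem \ref{thm:dynamics} hold for $\mathbf{1}_{D}$, and all four conclusions follow for $x$ in this labeling.

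The only point needing care is that the relabeling is the one produced by Theorem \ref{thm:main-APS}. In that labeling, $1$ marks the sparse letter, and $0^{\infty }\in \mathcal{O}(x)$ holds by Lemma \ref{lem:sidon}. Under the opposite labeling, the points $z_{j}$ would become $1^{j}01^{\infty }$, so the statement of Theorem \ref{thm:dynamics} would not hold literally. This is why the corollary says "after a suitable relabeling." Beyond this bookkeeping there is no real obstacle: the argument is a direct chaining of Lemma \ref{lem:binarySturmian}, Theorem \ref{thm:main-APS}, Lemma \ref{lem:sidon} and Theorem \ref{thm:dynamics}.
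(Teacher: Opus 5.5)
Your proposal is correct and follows the paper's proof essentially verbatim: Theorem \ref{thm:main-APS} gives $x=\mathbf{1}_{D}$ with $D$ infinite and $\Gamma_{D}$ a forest, Lemma \ref{lem:sidon} makes $D$ a Sidon set, and Theorem \ref{thm:dynamics} then applies. The appeal to Lemma \ref{lem:binarySturmian} is harmless but redundant, since Theorem \ref{thm:main-APS} already supplies the two-letter structure and the infiniteness of $D$.
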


\begin{proof}
By Theorem \ref{thm:main-APS}, after interchanging the two letters we may
write $x=\mathbf{1}_{D}$, where $D$ is infinite and $\Gamma _{D}$ is a
forest. By Lemma \ref{lem:sidon}, $D$ is a Sidon set. Applying Theorem \ref%
{thm:dynamics} completes the proof.
\end{proof}

\begin{remark}
For a nonempty set $Z\subset \mathbb{A}^{\mathbb{N}_{0}}$, define 
\begin{equation*}
p_{Z}^{\ast \mathrm{ab}}(k):=\sup_{S\in \Sigma _{k}(\mathbb{N}_{0})}\#\{\Psi
(y[n+S]):y\in Z,\text{\ }n\in \mathbb{N}_{0}\}.
\end{equation*}%
If $x$ is an Abelian pattern Sturmian word, then $p_{\mathcal{O}(x)}^{\ast 
\mathrm{ab}}(k)=f(k).$ After the relabeling in Corollary \ref{cor-ASP}, we
have $\omega (x)=\{0^{\infty }\}\cup \{0^{j}10^{\infty }:j\geq 0\}.$ Every
point of $\omega (x)$ contains at most one $1$, and therefore $p_{\omega
(x)}^{\ast \mathrm{ab}}(k)=2$ for every $k\in \mathbb{N}.$
\end{remark}

\subsection{Sidon sets and examples}

\ 

\begin{proposition}
\label{prop:three} Let $x\in \{0,1\}^{\mathbb{N}_{0}}$ be aperiodic. Then $%
p_{x}^{\ast \mathrm{ab}}(3)=3$ if and only if, after exchanging the letters
if necessary, $x=\mathbf{1}_{D}$ for an infinite Sidon set $D\subset \mathbb{%
N}_{0}$.
\end{proposition}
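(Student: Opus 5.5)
We prove the two directions separately, using the earlier results on $0^{\infty }\in \mathcal{O}(x)$ and on the graph $\Gamma _{D}$.

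\emph{Necessity.} Assume $p_{x}^{\ast \mathrm{ab}}(3)=3$. By Proposition \ref{prop:background}, after exchanging letters, $0^{\infty }$ is the unique periodic point of $\mathcal{O}(x)$; in particular $0^{\infty }\in \mathcal{O}(x)$. Write $x=\mathbf{1}_{D}$. Since $x$ is aperiodic, $1$ occurs infinitely often, so $D$ is infinite.

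Suppose $D$ is not Sidon. As in the proof of Lemma \ref{lem:sidon}, there are $t>0$ and distinct $a,b\in D$ with $a+t,b+t\in D$. Then the left vertices $0,t$ and the right vertices $a,b$ form a $4$-cycle in $\Gamma _{D}$. Theorem \ref{thm:cycle} with $r=2$ gives $p_{x}^{\ast \mathrm{ab}}(3)\geq 4$, a contradiction. Hence $D$ is Sidon. This direction is essentially immediate.

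\emph{Sufficiency.} Let $x=\mathbf{1}_{D}$ with $D$ an infinite Sidon set. Lemma \ref{lem:sidon} (its second part uses only the Sidon property) gives that $x$ is aperiodic and $0^{\infty }\in \mathcal{O}(x)$. Proposition \ref{prop:zerolower} then gives $p_{x}^{\ast \mathrm{ab}}(3)\geq f(3)=3$.

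For the upper bound, fix $S=\{s_{1}<s_{2}<s_{3}\}$. By (\ref{eq:binary-abelian-count}) it suffices to show that $c_{S}$ cannot take all four values $0,1,2,3$.

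\textbf{Main step: a $3$ and a $2$ cannot both occur.} Suppose $c_{S}(n)=3$ and $c_{S}(n^{\prime })\geq 2$ with $n^{\prime }\neq n$. Then $n+S\subset D$ and two elements $n^{\prime }+s_{i},\,n^{\prime }+s_{j}$ (with $i<j$) lie in $D$. The pairs $\{n+s_{i},n+s_{j}\}$ and $\{n^{\prime }+s_{i},n^{\prime }+s_{j}\}$ are distinct pairs in $D$ with the same positive difference $s_{j}-s_{i}$, contradicting the Sidon property. Hence, if $3\in \mathcal{C}_{x}(S)$, then $3$ is attained at a single $n$ and the value $2$ is never attained, so $\sharp \mathcal{C}_{x}(S)\leq 3$. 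If $3\notin \mathcal{C}_{x}(S)$, then $\mathcal{C}_{x}(S)\subset \{0,1,2\}$, so again $\sharp \mathcal{C}_{x}(S)\leq 3$.

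Either way $p_{x}^{\mathrm{ab}}(S)\leq 3$. Taking the supremum over $S$ gives $p_{x}^{\ast \mathrm{ab}}(3)\leq 3$, and hence equality.

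The only point needing care is the exchange of letters in the statement. Proposition \ref{prop:background} fixes the labelling with $0^{\infty }\in \mathcal{O}(x)$, and the Sidon set $D$ is taken as the set of positions of $1$ in that labelling. In the converse, the given labelling is already one in which $x=\mathbf{1}_{D}$ with $D$ Sidon, so no further relabelling is needed.
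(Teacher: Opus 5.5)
Your proof is correct and follows the paper's argument essentially step for step: necessity via Proposition \ref{prop:background}, the $4$-cycle from a repeated difference, and Theorem \ref{thm:cycle} with $r=2$; sufficiency via the Sidon obstruction to the counts $3$ and $2$ both occurring on a $3$-pattern. The only deviation is the lower bound $p_{x}^{\ast \mathrm{ab}}(3)\geq 3$. You obtain it from Proposition \ref{prop:zerolower}; Corollary \ref{cor:universalbinary} would also work, since $x$ is aperiodic. The paper instead exhibits it directly, taking $S\subset D$ with $\sharp S=3$ and using Lemma \ref{lem:zero-blocks}(2) for the counts $0$ and $1$.
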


\begin{proof}
Suppose first that $p_{x}^{\ast \mathrm{ab}}(3)=3$. After interchanging the
two letters if necessary, Proposition \ref{prop:background} gives $0^{\infty
}\in \mathcal{O}(x)$. Let $D:=\{n:x_{n}=1\}$. Since $x$ is aperiodic, $D$ is
infinite. If $D$ is not Sidon, then there exists a $4$-cycle, as in the
proof of Lemma\ \ref{lem:sidon}. Then Theorem \ref{thm:cycle} with $r=2$
gives $p_{x}^{\ast \mathrm{ab}}(3)\geq 4,$ a contradiction. Thus $D$ is
Sidon.

Conversely, let $x=\mathbf{1}_{D},$\ where $D$ is infinite and Sidon. By
Lemma\ \ref{lem:sidon}, one has $0^{\infty }\in \mathcal{O}(x).$ Let $%
S:=\{s_{1}<s_{2}<s_{3}\}$ be a $3$-pattern. We claim that the counts $2$ and 
$3$ cannot both occur on $S$. Indeed, suppose $c_{S}(n)=3$\ and\ $c_{S}(m)=2$
for some $n,m\in \mathbb{N}_{0}.$ Take $i<j$ such that $%
x_{m+s_{i}}=x_{m+s_{j}}=1.$ Since $c_{S}(n)=3$, we also have $%
x_{n+s_{i}}=x_{n+s_{j}}=1.$ Thus 
\begin{equation*}
s_{j}-s_{i}=(n+s_{j})-(n+s_{i})=(m+s_{j})-(m+s_{i})
\end{equation*}%
gives two distinct representations of the same positive difference by
elements of $D$, which contradicts the Sidon property. Hence $p_{x}^{\mathrm{%
ab}}(S)\leq 3$ for every $S\in \Sigma _{3}(\mathbb{N}_{0})$, and therefore $%
p_{x}^{\ast \mathrm{ab}}(3)\leq 3.$

Let $S\subset D$ with $\sharp S=3$. Then $|x[S]|_{1}=3,$ while Lemma \ref%
{lem:zero-blocks}(2) gives starting positions on the same pattern $S$ with
counts $0$ and $1$. Thus we have $p_{x}^{\mathrm{ab}}(S)=3,$ and
consequently $p_{x}^{\ast \mathrm{ab}}(3)=3.$
\end{proof}

Define the \emph{upper Banach density} of $D\subset \mathbb{N}_{0}$ by 
\begin{equation*}
d^{\ast }(D)=\limsup_{L\rightarrow \infty }\sup_{M\in \mathbb{N}_{0}}\frac{%
\sharp (D\cap \lbrack M,M+L))}{L}.
\end{equation*}%
Equation\ (\ref{eq:density}) implies $d^{\ast }(D)=0$ for every Sidon set $D$%
. Consequently, if $x$ is an Abelian pattern Sturmian word, then, after
interchanging the two letters if necessary, one has $d^{\ast }(\{n\in 
\mathbb{N}_{0}:x_{n}=1\})=0.$ The Sidon condition is strictly weaker than
the forest condition in Theorem \ref{thm:main-APS}, as the examples below
show.

\begin{proposition}
\label{prop:lacunary} Let $D=\{d_{0}<d_{1}<\cdots \}\subset \mathbb{N}_{0}$
be infinite. If $d_{n+1}>2d_{n}$ for every $n$, then $\Gamma _{D}$ is a
forest. Consequently, $\mathbf{1}_{D}$ is Abelian pattern Sturmian.
\end{proposition}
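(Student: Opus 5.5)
The plan is to copy the proof of Lemma \ref{lem:forest} for $E=\{3^{j}\}$, replacing the power-of-$3$ gap by the lacunarity condition $d_{n+1}>2d_{n}$. The consequence then follows immediately from Theorem \ref{thm:main-APS}: $D$ is infinite and $\Gamma _{D}$ will be a forest, so $\mathbf{1}_{D}$ is Abelian pattern Sturmian. Alternatively, one can combine Lemma \ref{lem:sidon}, Proposition \ref{prop:zerolower} and Proposition \ref{prop:forest-upper} exactly as in the converse part of that theorem.

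First record the key consequence of lacunarity. If $a<b$ are elements of $D$, say $a=d_{i}$ and $b=d_{j}$ with $i<j$, then $b\geq d_{i+1}>2d_{i}=2a$. Hence $a<b/2$ for any two distinct elements $a<b$ of $D$.

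Now suppose $\Gamma _{D}$ contains a cycle. Since $\Gamma _{D}$ is simple and bipartite, the cycle has length at least $4$. Every vertex on it is incident with exactly two distinct cycle edges, and these correspond to distinct sums in $D$: for fixed $r$, different $s$ give different $r+s$.

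Choose an edge $(r,s)$ of the cycle whose sum $M:=r+s\in D$ is maximal among the edge sums of the cycle. Let $(r,s^{\prime })$ be the other cycle edge at $r$, with $s^{\prime }\neq s$. Then $r+s^{\prime }\neq M$, so $r+s^{\prime }<M$ by maximality. Since $r+s^{\prime }$ and $M$ are both in $D$, lacunarity gives $r+s^{\prime }<M/2$, and in particular $r<M/2$. Symmetrically, the other cycle edge $(r^{\prime },s)$ at $s$ gives $s<M/2$. Adding, $M=r+s<M$, which is a contradiction, so $\Gamma _{D}$ is a forest.

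I do not expect any step to be a real obstacle. The only points needing care are that the two cycle edges at a vertex have distinct sums (because the graph is simple and bipartite), and that the strict inequality $d_{n+1}>2d_{n}$ gives $r+s^{\prime }<M/2$. That inequality is even stronger than the $M/3$ bound used in Lemma \ref{lem:forest}, so the argument closes with room to spare.
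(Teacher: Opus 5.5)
Your proof is correct and is essentially the paper's argument: take a cycle edge $(r,s)$ with maximal sum $M$, use lacunarity to get that the other cycle edges at $r$ and at $s$ have sums below $M/2$, deduce $r,s<M/2$, which contradicts $r+s=M$. The consequence then follows from Theorem \ref{thm:main-APS}, exactly as in the paper.
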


\begin{proof}
Suppose that $\Gamma _{D}$ contains a cycle. Take an edge $(r,s)$ of the
cycle such that $M:=r+s$ is maximal among the sums corresponding to its
edges. Let $(r,s^{\prime })$ and $(r^{\prime },s)$ be the other two cycle
edges incident to $r$ and $s$, respectively. By maximality, it follows that $%
r+s^{\prime }<M$ and $r^{\prime }+s<M.$ Since all three sums belong to $D$,
the hypothesis implies $r+s^{\prime }<\frac{M}{2}$ and $r^{\prime }+s<\frac{M%
}{2}.$ It follows that $r<\frac{M}{2}$ and $s<\frac{M}{2},$ which
contradicts $r+s=M$. Hence $\Gamma _{D}$ is a forest. The last assertion
follows from Theorem\ \ref{thm:main-APS}.
\end{proof}

\begin{example}
\label{ex:powers} The word $\mathbf{1}_{\{3^{j}:j\geq 0\}}$ is Abelian
pattern Sturmian by Proposition \ref{prop:lacunary} or Corollary \ref%
{cor:sparse-upper}. In contrast, let $F:=\{2^{j}:j\geq 0\}$ and $y:=\mathbf{1%
}_{F}$. Suppose that $2^{b}-2^{a}=2^{d}-2^{c}$ with $b>a$ and $d>c$. Then $%
2^{a}(2^{b-a}-1)=2^{c}(2^{d-c}-1).$ Since both parenthesized factors are
odd, we obtain $a=c$, and hence $b=d$. Thus $F$ is a Sidon set. Since $y$ is
aperiodic, Proposition \ref{prop:three} gives $p_{y}^{\ast \mathrm{ab}}(3)=3$%
.

However, in $\Gamma _{F}[R,Q]$, where $R=Q=\{0,2,4\},$ the edges 
\begin{equation*}
(0,2),\text{\ }(2,2),\text{\ }(2,0),\text{\ }(4,0),\text{\ }(4,4),\text{\ }%
(0,4)
\end{equation*}%
form a $6$-cycle. Hence Theorem\ \ref{thm:cycle} yields $p_{y}^{\ast \mathrm{%
ab}}(6)\geq 5>f(6)=4$. Therefore $y$ is not Abelian pattern Sturmian.
\end{example}

\begin{remark}
The two words in Example \ref{ex:powers} have the same $\omega $-limit set $%
Y $, the same unique recurrent point $0^{\infty }$, and the same unique
invariant Borel probability measure $\delta _{0^{\infty }}$. Thus these
dynamical properties do not characterize Abelian pattern Sturmian words. The
example also shows that the absence of $4$-cycles does not imply the forest
condition in Theorem \ref{thm:main-APS}.
\end{remark}

Theorem \ref{thm:main-APS} characterizes Abelian pattern Sturmian words in
terms of incidence graphs. For words in $\mathcal{A}_{\ell }$ with $\ell
\geq 3$, the lower bounds and their attainment lead to the following
questions.

\begin{question}
Can the additive term $\ell -2$ in Theorem \ref{thm:finite} be removed?
Equivalently, for each $\ell \geq 3$, does the implication 
\begin{equation*}
\binom{m}{2}\leq (\ell -1)k\Longrightarrow p_{\alpha }^{\ast \mathrm{ab}%
}(k)\geq m
\end{equation*}%
hold for every $\alpha \in \mathcal{A}_{\ell }$ and all $m,k\geq 1$?
\end{question}

\begin{question}
For each $\ell \geq 3$, determine $\mathcal{L}_{\ell }^{\mathrm{ab}}(k)$ for
all $k\geq 1$. Moreover, does there exist a single word $\alpha _{\ell }\in 
\mathcal{A}_{\ell }$ such that $p_{\alpha _{\ell }}^{\ast \mathrm{ab}}(k)=%
\mathcal{L}_{\ell }^{\mathrm{ab}}(k)$\ for every $k\geq 1$?
\end{question}

\section*{Acknowledgments}

This work was partially supported by the National Natural Science Foundation
of China (No. 12301107) and the Shandong Provincial Natural Science
Foundation, China (No. ZR202209010046).


\begin{thebibliography}{99}
\bibitem{Bruin2022} H. Bruin, Topological and Ergodic Theory of Symbolic
Dynamics, Graduate Studies in Mathematics, vol. 228, American Mathematical
Society, 2022.

\bibitem{CH1973} E. M. Coven, G. A. Hedlund, Sequences with minimal block
growth, Math. Syst. Theory \textbf{7} (1973) 138--153.

\bibitem{FP2023} G. Fici, S. Puzynina, Abelian combinatorics on words: A
survey, Comput. Sci. Rev. \textbf{47} (2023) 100532.

\bibitem{Kamae2011} T. Kamae, Uniform sets and super-stationary sets over
general alphabets, Ergodic Theory Dynam. Systems \textbf{31} (2011)
1445--1461.

\bibitem{KR2006} T. Kamae, H. Rao, Maximal pattern complexity of words over $%
\ell $ letters, European J. Combin. \textbf{27} (2006) 125--137.

\bibitem{KRTX2006} T. Kamae, H. Rao, B. Tan, Y.-M. Xue, Language structure
of pattern Sturmian words, Discrete Math. \textbf{306} (2006) 1651--1668.

\bibitem{KWZ2015} T. Kamae, S. Widmer, L. Q. Zamboni, Abelian maximal
pattern complexity of words, Ergodic Theory Dynam. Systems \textbf{35}
(2015) 142--151.

\bibitem{KZ2002a} T. Kamae, L. Q. Zamboni, Sequence entropy and the maximal
pattern complexity of infinite words, Ergodic Theory Dynam. Systems \textbf{%
22} (2002) 1191--1199.

\bibitem{KZ2002b} T. Kamae, L. Q. Zamboni, Maximal pattern complexity for
discrete systems, Ergodic Theory Dynam. Systems \textbf{22} (2002)
1201--1214.

\bibitem{LPS} A. N. Le, R. Pavlov, C. Schlortt, On subshifts with low
maximal pattern complexity, Trans. Amer. Math. Soc., to appear.
doi:10.1090/tran/9786.

\bibitem{MH1940} M. Morse, G. A. Hedlund, Symbolic dynamics II. Sturmian
trajectories, Amer. J. Math. \textbf{62}(1) (1940) 1--42.

\bibitem{Ramsey} F. P. Ramsey, On a problem of formal logic, Proc. London
Math. Soc. (2) \textbf{30} (1930) 264--286.

\bibitem{RSZ2011} G. Richomme, K. Saari, L. Q. Zamboni, Abelian complexity
of minimal subshifts, J. Lond. Math. Soc. (2) \textbf{83}(1) (2011) 79--95.

\bibitem{Salo2020} V. Salo, Subshifts with sparse traces, Studia Math. 
\textbf{255} (2020) 159--207.

\bibitem{Schlortt2026} C. Schlortt, On the structure of sequences with
minimal maximal pattern complexity, Ergodic Theory Dynam. Systems \textbf{46}%
(8) (2026) 2131--2148.
\end{thebibliography}
\end{document}